\documentclass[11pt]{article}
\usepackage[T1]{fontenc}
\usepackage[margin=1in]{geometry}
\usepackage{amsmath, amsthm, amssymb, enumitem}
\usepackage{mathtools}
\mathtoolsset{showonlyrefs}
\usepackage{verbatim}
\usepackage[abbrev,lite,nobysame]{amsrefs}
\usepackage{times}
\usepackage{orcidlink}
\usepackage{esint}
\usepackage[compact]{titlesec}
\usepackage{booktabs}

\usepackage{hyperref}

\makeatletter
\def\DOI@strip#1https://doi.org/#2\@nil{#2}
\newcommand{\PrintDOIlink}[1]{%
  \in@{https://doi.org/}{#1}%
  \ifin@
  \href{#1}{DOI\nobreakspace\nolinkurl{\DOI@strip#1\@nil}}%
  \else
  \url{#1}%
  \fi
}
\makeatother
\BibSpec{article}{%
  +{}  {\PrintAuthors}                {author}
  +{,} { \textit}                     {title}
  +{.} { }                            {part}
  +{:} { \textit}                     {subtitle}
  +{,} { \PrintContributions}         {contribution}
  +{.} { \PrintPartials}              {partial}
  +{,} { }                            {journal}
  +{}  { \textbf}                     {volume}
  +{}  { \PrintDatePV}                {date}
  +{,} { \issuetext}                  {number}
  +{,} { \eprintpages}                {pages}
  +{,} { }                            {status}
  +{,} { \PrintDOI}                   {doi}
  +{,} { available at \eprint}        {eprint}
  +{}  { \parenthesize}               {language}
  +{}  { \PrintTranslation}           {translation}
  +{;} { \PrintReprint}               {reprint}
  +{.} { }                            {note}
  +{,} { \PrintDOIlink}               {url}
  +{.} {}                             {transition}
  +{}  {\SentenceSpace \PrintReviews} {review}
}
\BibSpec{book}{%
  +{}  {\PrintPrimary}                {transition}
  +{,} { \textit}                     {title}
  +{.} { }                            {part}
  +{:} { \textit}                     {subtitle}
  +{,} { \PrintEdition}               {edition}
  +{}  { \PrintEditorsB}              {editor}
  +{,} { \PrintTranslatorsC}          {translator}
  +{,} { \PrintContributions}         {contribution}
  +{,} { }                            {series}
  +{,} { \voltext}                    {volume}
  +{,} { }                            {publisher}
  +{,} { }                            {organization}
  +{,} { }                            {address}
  +{,} { \PrintDateB}                 {date}
  +{,} { }                            {status}
  +{}  { \parenthesize}               {language}
  +{}  { \PrintTranslation}           {translation}
  +{;} { \PrintReprint}               {reprint}
  +{.} { }                            {note}
  +{,} { \PrintDOIlink}               {url}
  +{.} {}                             {transition}
  +{}  {\SentenceSpace \PrintReviews} {review}
}

\usepackage{stmaryrd, mathrsfs}

\newcommand{\CC}{\mathbf{C}}
\newcommand{\RR}{\mathbf{R}}
\newcommand{\NN}{\mathbf{N}}
\newcommand{\ZZ}{\mathbf{Z}}

\newcommand{\TT}{\mathbf{T}}

\newcommand{\EE}{\mathbb{E}}

\newcommand{\PP}{\mathbb{P}}

\newcommand{\Sph}{\mathbb{S}}

\newcommand{\mL}{\mathcal{L}}

\newcommand{\mQ}{\mathcal{Q}}
\newcommand{\mC}{\mathcal{C}}
\newcommand{\mW}{\mathcal{W}}
\newcommand{\mM}{\mathcal{M}}

\newcommand{\mF}{\mathcal{F}}
\newcommand{\mD}{\mathcal{D}}
\newcommand{\mX}{\mathcal{X}}

\newcommand{\mE}{\mathcal{E}}
\newcommand{\mS}{\mathcal{S}}
\newcommand{\mB}{\mathcal{B}}
\newcommand{\mA}{\mathcal{A}}
\newcommand{\mH}{\mathcal{H}}

\newcommand{\mK}{\mathcal{K}}

\newcommand{\mf}[1]{\mathfrak{#1}}

\newcommand{\ve}{\varepsilon}
\newcommand{\vr}{\varrho}

\newcommand{\lqm}{``}
\newcommand{\rqm}{'' }
\newcommand*{\ud}{\mathrm{\,d}}
\newcommand{\eqdef}{\overset{\mathrm{\scriptscriptstyle def}}{=}}
\newcommand{\supp}{\operatorname{supp} } 
\renewcommand{\div}{\operatorname{div}}
\newcommand{\curl}{\operatorname{curl}\,}

\newcommand{\Op}{\mathrm{Op}}

\makeatletter
\newcommand{\opnorm}{\@ifstar\@opnorms\@opnorm}
\newcommand{\@opnorms}[1]{%
  \left|\mkern-1.5mu\left|\mkern-1.5mu\left|
   #1
  \right|\mkern-1.5mu\right|\mkern-1.5mu\right|
}
\newcommand{\@opnorm}[2][]{%
  \mathopen{#1|\mkern-1.5mu#1|\mkern-1.5mu#1|}
  #2
  \mathclose{#1|\mkern-1.5mu#1|\mkern-1.5mu#1|}
}
\makeatother

\renewcommand{\leq}{\leqslant}
\renewcommand{\geq}{\geqslant}
\renewcommand{\le}{\leqslant}
\renewcommand{\ge}{\geqslant}

\numberwithin{equation}{section}
\newtheorem{theorem}{Theorem}[section]
\newtheorem{lemma}[theorem]{Lemma}
\newtheorem{proposition}[theorem]{Proposition}
\newtheorem{corollary}[theorem]{Corollary}

\theoremstyle{definition}
\newtheorem{definition}[theorem]{Definition}
\newtheorem{remark}[theorem]{Remark}
\newtheorem{assumption}[theorem]{Assumption}

\newcommand{\h}{\mathfrak{h}}

\newcommand{\R}{\RR}

\newcommand{\CM}{H_{\mathcal{C}\mathcal{M}}}
\newcommand{\Zstar}{\ZZ^2_*}
\newcommand{\Zstarp}{\ZZ^2_{*,+}}

\DeclareMathOperator{\ad}{ad}

\newcommand{\vf}[1]{{#1}^\sharp}
\newcommand{\Opbeta}{\Op^{\beta_\star}}
\newcommand{\Opinf}{\Op^{\infty}}

\DeclareMathOperator{\Ran}{Ran}

\title{Unique ergodicity of projective cocycles over the 2D stochastic Navier--Stokes equations}
\author{
  Sam Punshon-Smith\,\orcidlink{0000-0003-1827-220X} \\
  \small Department of Mathematics, Tulane University \\
  \small \href{mailto:spunshonsmith@tulane.edu}{\texttt{spunshonsmith@tulane.edu}}
  \and
  Tommaso Rosati\,\orcidlink{0000-0001-5255-6519} \\
  \small Department of Mathematics, Imperial College London \\
  \small \href{mailto:trosati@ic.ac.uk}{\texttt{trosati@ic.ac.uk}}
}

\begin{document}

\maketitle

\begin{abstract}
  We consider the linear cocycles generated by the linearized vorticity
  equation and by passive scalar advection--diffusion, both driven by the
  two-dimensional stochastic Navier--Stokes flow on the torus
  with non-degenerate additive forcing, and we prove uniqueness of the stationary
  measures for the projective process associated to such linear dynamics. The
  proof relies on a localized asymptotic strong coupling construction, and on
  the non-degeneracy of the Malliavin matrix of the projective
  process. Establishing this non-degeneracy for passive scalar advection poses additional challenges, and
  requires a proof (based on Cameron--Martin analyticity arguments) that generically the passive scalar is nowhere one-dimensional.
\end{abstract}

\setcounter{tocdepth}{2}
\tableofcontents

\section{Introduction}
\label{sec:introduction} 
This work concerns long-time properties of linear dynamics associated to the two-dimensional
stochastic Navier--Stokes equations
\begin{equation}\label{eq:sns_vorticity_intro}
  \partial_t w + u \cdot \nabla w = \Delta w + \curl \xi ,
  \qquad u = K * w , \qquad w_0 \in \mH ,
\end{equation}
where $w$ is the vorticity, $u = K*w$ the velocity recovered through the
Biot--Savart kernel, and $\xi$ a white-in-time,
spatially colored forcing, see
Assumption~\ref{ass:noise}, and $\mH$ is $L^2$ with mean zero. 
The equation is posed on the torus $\TT^2 = \RR^2 /
(2\pi\ZZ)^2$.
The linear dynamics that we are interested in are of the form
\begin{equation}\label{eq:linearized_abstract}
  \partial_t \zeta = \Delta \zeta - L^0_{w} \zeta ,
  \qquad \zeta_0 \in \mH \setminus \{0\} ,
\end{equation}
where, writing $B(f,g) \eqdef (K * f) \cdot \nabla g$, the operator $L^0_w$ is
one of the following:
\begin{subequations}\label{eq:two_operators_intro}
  \begin{align}
    L^0_w \zeta &\eqdef B(w, \zeta) + B(\zeta, w)
    && \textbf{(LNS)} , \label{eq:two_operators_intro_NS} \\[2pt]
    L^0_w \zeta &\eqdef B(w, \zeta)
    && \textbf{(PSA)} . \label{eq:two_operators_intro_PS}
  \end{align}
\end{subequations}
The first is the equation for the linearization
of~\eqref{eq:sns_vorticity_intro} along $w$. The second is the equation for the
advection-diffusion of a passive scalar by the divergence free velocity field $u$.

The central object of this work is the top Lyapunov exponent $\lambda_1$ associated
to~\eqref{eq:two_operators_intro}. If $S_t$ is the solution operator
to~\eqref{eq:two_operators_intro}, so that $\zeta_t = S_t \zeta_0$, then 
\begin{equation}\label{eq:lambda1-def}
  \lambda_1 = \lim_{t \to \infty} \frac1t \log \|S_t\|_{\mathrm{op}},
\end{equation}
with $\| \cdot \|_{\mathrm{op}}$ the operator norm on $\mH$. The limit exists by
the subadditive ergodic theorem~\cite{Kingman73}.

The linear dynamics~\eqref{eq:two_operators_intro} and their Lyapunov exponents
are fundamental in the study of fluid dynamics and turbulence. In
the case of linearized Navier--Stokes at large Reynolds number, the top Lyapunov
exponent is expected to become positive, implying Eulerian chaos for the
Navier--Stokes dynamics~\cite{BCCV97}, with precise conjectures on the rate at which it
explodes, both in 2D~\cite{ohkitani,constantin_foias_temam} and in
3D~\cite{ruelle}, where it is connected to the emergence of spontaneous stochasticity~\cite{malibayev_eyink}. However, numerical
verifications are not
conclusive and sometimes in disagreement with the predictions~\cite{boffetta_musacchio,mohan_fitzsimmons_moser}.

In the case of passive scalar advection, the Lyapunov exponent captures the
mixing properties of the advecting velocity field. For a large class of velocity
fields, including~\eqref{eq:sns_vorticity_intro}, recent results have proven
exponential mixing uniform in diffusivity \cite{BBPS-AOP-22}. This corresponds to a
uniform in diffusivity upper bound $\lambda_1(\kappa )\lesssim -1$ (here $\kappa$ is the diffusivity parameter that would be
present in front of the Laplacian in~\eqref{eq:two_operators_intro_PS} but in
this work is set to $\kappa=1$). A matching lower bound $\lambda_1(\kappa)
\gtrsim -1$ is equally expected and studied numerically~\cite{miles_doering}, but at
least for generic velocity fields remains yet unproven. The lower bound
is connected to the formation of a critical lengthscale $\ell(\kappa) \simeq
\sqrt{\kappa}$ in the passive scalar, which is referred to as the Batchelor
scale~\cites{batchelor, miles_doering}.
Similar questions appear in many other areas of fluids, including for
instance the fast dynamo problem in magnetohydrodynamics
\cite{coti_zelati_sorella_villringer, gilbert}. 

Overall, these open problems show our severe shortcomings, not only in obtaining
mathematically rigorous estimates on Lyapunov exponents, but even in the
corroboration of their numerical study. The present article wants to take a step
in this direction.

In previous works~\cites{HairerRosati25, HPRY24} together with Hairer and Yi, we introduced a method
to prove that there is no spectral collapse (meaning $\lambda_1 > -\infty$) with
some quantitative lower bounds on the exponent in terms of the diffusivity
coefficient, but at unit Reynolds number. Note that while the infinite
dimensional ergodic theorem guarantees the existence of such exponents~\cites{Ruelle82, Lian-Lu10, Blumenthal16}, nothing
prevents them from collapsing to $-\infty$: this is the case for
pathological examples~\cites{BessaCarvalho08, Rowan}. Proving that such
exponents are not only finite, but become positive is a major challenge
even in finite dimensions~\cites{BBPS-JEMS-22, BBPS-L96-2021, Baxendale89}, and
widely open for SPDEs.

The starting point of the analysis in~\cite{HPRY24}
is the classical observation that properties of Lyapunov exponents are captured by the
long-time statistics of the couple $z_t= (w_t, \pi_t) \in \mX = \mH \times
\mathbb{S}$, where $\pi_t= \zeta_t /
\|\zeta_t\|$ is the projective process (although for simplicity we consider it
with values in the sphere $\mathbb{S} = \{\varphi \in \mH \colon \| \varphi
\|=1\}$). The projective process is itself the solution to a non-local,
non-parabolic SPDE
\begin{equation}\label{eq:proj-spde}
  \partial_t \pi = G_{w} \pi - \langle \pi, G_{w} \pi \rangle \pi ,
  \qquad G_w \eqdef \Delta - L^0_w ,
\end{equation}
which is hard to analyze.
However, the joint process $z$ is Markov, and the main
achievement of~\cite{HPRY24} is to construct a Lyapunov functional for that
process in order to prove existence of stationary measures for $z$. As a
consequence, we could obtain a lower bound on $\lambda_1$
\begin{equation*}
  \lambda_1 \geq \sup_{\nu} \int_{\mX} \langle \pi, G_w \pi \rangle \ud\nu(w, \pi) ,
\end{equation*}
where the supremum is taken over all stationary measures. The achievement of this
work is to prove uniqueness of such stationary measures, and as a consequence
derive the Furstenberg--Khasminskii formula.
\begin{theorem}\label{thm:main_result_intro}
Let Assumption~\ref{ass:noise} hold, and let $L^0_w$ be given by either \eqref{eq:two_operators_intro_NS} or~\eqref{eq:two_operators_intro_PS}. Then the Markov process $z_t = (w_t, \pi_t)$ on $\mX = \mH \times \Sph$ admits at most one stationary probability measure. In particular the stationary measure $\nu$ constructed in \cite{HPRY24} is the unique one.
\end{theorem}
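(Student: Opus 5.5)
The plan is to combine an asymptotic strong Feller / asymptotic coupling argument, in the spirit of Hairer--Mattingly, with a topological irreducibility statement. Together these give that any two ergodic stationary measures of $z_t=(w_t,\pi_t)$ must coincide. Suppose $\nu_1\neq\nu_2$ are distinct ergodic stationary measures on $\mX$. Then they are mutually singular. The goal is to reach a contradiction by showing two things:
\begin{itemize}[label={}]
\item \textbf{(a) Common accessible point.} The supports of $\nu_1$ and $\nu_2$ share a common accessible point.
\item \textbf{(b) Local coupling.} Near every point there is a neighbourhood $U$ such that, for $z,z'\in U$, the laws of the trajectories started from $z$ and $z'$ admit an asymptotic coupling with positive probability. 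This is a localized version of the ``asymptotic strong Feller'' property, which suffices for uniqueness (the e-property is not needed).
\end{itemize}
Since the projective process lives on the non-compact, non-locally-compact space $\mX$ and the Lyapunov structure from \cite{HPRY24} only controls moments, everything will be localized to bounded sets where $\|w\|$ and the higher norms of $\pi$ are controlled.

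\emph{Step (a).} For the $w$-component, non-degenerate additive forcing makes $0$ accessible for the Navier--Stokes flow. For the projective component, I would use controllability. With zero forcing, the linear dynamics are driven by $\Delta-L^0_w$ with $w\to0$, so $\pi_t$ aligns with the lowest Fourier modes, that is, the eigenspace of $\Delta$ for $|k|=1$. This eigenspace is not a single point. To pin down a common point, I would use small controls on $w$ to rotate within that eigenspace. By a support-theorem argument, every stationary measure then charges every neighbourhood of some fixed $(0,\pi_\ast)$.

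\emph{Step (b).} This is where the Malliavin matrix enters. Following the asymptotic coupling strategy, the Girsanov shift of the noise is chosen so that the difference between the two $w$-trajectories contracts exponentially. This part is standard because the forcing acts on enough modes: high modes are contracted by dissipation and low modes are controlled directly. For the $\pi$-component, the noise acts only indirectly, through $w$. The shift must therefore also steer the projective difference, which requires inverting the Malliavin covariance of $\pi_t$ restricted to finitely many directions, with quantitative lower bounds on its smallest eigenvalue on the localization set. The higher $\pi$-directions are then contracted by the spectral gap of the projective dynamics in high frequencies. Concretely, I would prove the following. On the event that $(w,\pi)$ stays in a bounded regular set, the Malliavin matrix $\mathcal M_t$ satisfies $\langle \mathcal M_t\phi,\phi\rangle\geq \epsilon\|\Pi_N\phi\|^2$ with high probability, uniformly over the set. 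The proof would go through a Hörmander-type bracket computation: brackets of constant noise fields $e_k$ with the drift produce the terms $L^0_{e_k}\pi$ (or $B(e_k,\pi)+B(\pi,e_k)$), and iterated brackets span the low modes whenever $\pi$ is non-degenerate. Quantitative spectral bounds would then follow from a Norris-type lemma.

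\emph{Main obstacle.} I expect the hard step to be the spanning condition for (PSA). There the directions generated are $B(e_k,\pi)=(K*e_k)\cdot\nabla\pi$, which degenerate if $\pi$ is locally one-dimensional: if $\nabla\pi$ points in a fixed direction on an open set, then the velocity fields generated by finitely many modes cannot span. The abstract warns exactly about this. My first approach would be to show that for almost every noise realization, $\pi_t$ is nowhere locally one-dimensional for $t>0$. The argument would use analyticity of $\pi_t$ in Cameron--Martin directions of the noise: a real-analytic function of the control that vanishes on a set of positive Gaussian measure vanishes identically. It would therefore suffice to exhibit one smooth control for which the relevant determinant, built from $\nabla\pi$ and $\nabla^2\pi$, is nonzero everywhere. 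A further obstacle is making all the bounds uniform over non-compact localization sets, but I expect the Lyapunov functional of \cite{HPRY24} to handle this.
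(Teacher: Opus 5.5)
Your overall architecture is the one the paper uses. It has four pieces: an asymptotic coupling verified through the Hairer--Mattingly--Scheutzow criterion; a common accessible point $(0,\pi_\star)$ reached by controlling the first Fourier shell; a Tikhonov-regularized inversion of the joint Malliavin matrix for low modes, with dissipation handling high modes; and, for PSA, almost sure nowhere one-dimensionality via Cameron--Martin analyticity plus one explicit control.

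\textbf{The gap: the non-degeneracy estimate.} You propose a quantitative bound, uniform over the localization set, obtained from a H\"ormander bracket computation and a Norris-type lemma. This does not go through, for three reasons.
First, the quantitative Hairer--Mattingly machinery consumes polynomial moments of $J_t$. These are not available here, since they would require exponential moments of $\|\nabla\pi\|^2$.
Second, and more fundamentally, for PSA there is no uniform spanning constant to feed a Norris lemma. The bracket family spans only at states that are nowhere one-dimensional, which is a generic but non-closed condition. One-dimensional states lie in the central regions, including $\pi_\star=\cos(x_1)/(\sqrt2\pi)$ itself, around which the coupling is run. The zero--one law only gives $\PP(\pi_T \text{ is one-dimensional on some open set})=0$ for each fixed $z_0$, with no rate.
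Third, the inequality $\langle \mathcal M_t\phi,\phi\rangle\geq\epsilon\|\Pi_N\phi\|^2$ for all $\phi$ is a Schur-complement bound on a trace-class operator that one should not expect. What is provable, and sufficient, is a bound on the cone slice $S_{\alpha,N,T}$.

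The missing idea is to drop quantitative bounds entirely. For each fixed $z_0$, show that almost surely $\inf_{\eta\in S_{\alpha,N,T}}\langle M_T(z_0)\eta,\eta\rangle>0$. The infimum is attained by weak compactness. The bracket relations are then extracted pathwise from the vanishing of the quadratic form via Strassen's functional law of the iterated logarithm, which needs no moments. For PSA, transitivity is only proved against $H^5$ test vectors, so the test vector is first moved to earlier times $s_k\uparrow T$, where it lies in $H^5$, and one passes to the weak limit.
You then upgrade this to $\sup_{z_0\in\mC_R}\PP\bigl(\inf_{S_{\alpha,N,T}}\langle M_T(z_0)\eta,\eta\rangle<\ve\bigr)\to0$ using compactness of $\mC_R$. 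Strong continuity of $z_0\mapsto J_{s,T}(\varphi,0)$, together with weak lower semicontinuity and Fatou, gives $m(z_0)\le\alpha^{-2}\liminf_n m(z_0^{(n)})$ pathwise. Reverse Fatou then contradicts almost sure positivity. This rate-free tail bound, combined with pathwise Jacobian bounds on $E_K$, is all the Tikhonov step needs.

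\textbf{Further points your sketch passes over.}
\emph{Anticipating Girsanov.} The control $\psi_K(\mE_n)A_n^*(\beta I+M_n)^{-1}J_n\vartheta^\sharp_{2n}$ depends on the noise across the whole active window, so it is not adapted. Absolute continuity therefore needs the Ramer--Kusuoka theorem. That requires Malliavin differentiability of the control, which is why the energy cutoff must be smooth rather than an indicator, and Hilbert--Schmidt bounds on its derivative. The infinite horizon is then handled by a Kakutani-type criterion with $\EE[(\sqrt{L_n}-1)^2\mid\mathcal B_n]\lesssim e^{-2n}$.
\emph{Excursions.} The set $\mC_R$ is compact, so the real difficulty is not non-compactness but that trajectories leave $\mC_R$ infinitely often. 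The paper switches the control off there and bounds the growth of the difference by $e^{C(1+\mE_n+\mE'_n)}$. It then shows via the super-Lyapunov functional that the time-averaged log-growth is negative, so $\PP(\tau=\infty)>0$.
\emph{PSA brackets.} Brackets among the $H^k$ close on themselves, $[H^k,H^j]\propto H^{k+j}$, reflecting the conserved Casimirs. Spanning must therefore come from brackets with $\Delta$, truncated to respect the finite regularity of $\pi$.
\emph{Step (a).} The unforced projective flow aligns with the lowest shell that $\pi$ charges, not necessarily $|k|=1$. You must first inject mass into the first shell with a vorticity field on modes $\pm j$. Only then can you use the spectral gap and the $SU(2)$ controllability on the shell.
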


As a consequence we deduce a Furstenberg--Khasminskii type formula for the
Lyapunov exponent.

\begin{corollary}\label{cor:FK_formula}
With $\nu$ as in Theorem~\ref{thm:main_result_intro},
  \begin{equation}\label{eq:FK_k}
    \lambda_1 = \int_{\mX} \langle \pi, G_w \pi \rangle \ud\nu(w, \pi) .
  \end{equation}
\end{corollary}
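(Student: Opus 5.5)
The plan is to deduce \eqref{eq:FK_k} from Theorem~\ref{thm:main_result_intro} combined with a random Birkhoff/Oseledets argument applied to the projective process. First, I would write the growth of $\|\zeta_t\|$ along a single trajectory. Since $\pi_t = \zeta_t/\|\zeta_t\|$ and $\partial_t \zeta = G_w \zeta$, we get
\begin{equation}
  \frac{\ud}{\ud t}\log\|\zeta_t\| = \langle \pi_t, G_{w_t}\pi_t\rangle ,
\end{equation}
and integrating,
\begin{equation}
  \frac1t\log\|S_t\zeta_0\| = \frac1t\int_0^t \langle \pi_s, G_{w_s}\pi_s\rangle \ud s .
\end{equation}
The function $\langle \pi, G_w\pi\rangle = -\|\nabla\pi\|^2 - \langle \pi, L^0_w\pi\rangle$ is unbounded, and it is only defined for $\pi \in H^1$. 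I would therefore use the regularizing bounds and the Lyapunov functional from \cite{HPRY24}, which give integrability of this quantity under $\nu$ (and under the law of $z_t$ for $t>0$).

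Second, since $\nu$ is the unique stationary measure, it is ergodic. Birkhoff's theorem for the stationary Markov process then gives, for $\nu$-a.e.\ initial condition $(w_0,\pi_0)$ and almost surely,
\begin{equation}
  \frac1t\log\|S_t\pi_0\| \to \int\langle\pi,G_w\pi\rangle\ud\nu =: \bar\lambda .
\end{equation}
This immediately yields $\lambda_1 \ge \bar\lambda$, which was already known from \cite{HPRY24}.

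Third, for the reverse inequality I would use the multiplicative ergodic theorem on $\mH$, as in \cite{Lian-Lu10, Blumenthal16}, for the cocycle over the ergodic base $(w_t)$. Since $S_t$ is compact for $t>0$, for $\mu$-a.e.\ $w_0$ (with $\mu$ the unique stationary measure of the Navier--Stokes equation) we have $\lim \frac1t\log\|S_t\zeta_0\| = \lambda_1$ for every $\zeta_0$ outside a closed subspace $F(\omega)$ of finite codimension. Here $F$ depends on the noise path and the base point. I would then build a stationary measure $\tilde\nu$ supported on directions that realize $\lambda_1$. Take any deterministic $\pi_0$; it lies outside $F(\omega)$ almost surely, provided $F$ has positive codimension and $\pi_0$ is chosen appropriately, for instance averaged over a Gaussian law on $\Sph$ so that it avoids the finite-codimension subspace almost surely. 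Form the Krylov--Bogoliubov time averages of the law of $(w_t,\pi_t)$, and extract a limit $\tilde\nu$ using the tightness provided by the Lyapunov functional of \cite{HPRY24}. Passing to the limit in the time-averaged growth identity (justified by uniform integrability, again from the Lyapunov bounds) gives
\begin{equation}
  \int\langle\pi,G_w\pi\rangle\ud\tilde\nu = \lambda_1 .
\end{equation}
Uniqueness forces $\tilde\nu=\nu$, which proves \eqref{eq:FK_k}.

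The main obstacle is the uniform integrability in the third step. We need $\sup_T \frac1T\int_0^T \EE\,|\langle\pi_s,G_{w_s}\pi_s\rangle|^{1+\delta}\ud s<\infty$, or an equivalent Lyapunov-type control of $\|\nabla\pi\|^2$ and $\langle\pi,L^0_w\pi\rangle$ uniformly in time. We also need convergence of expectations, not just almost sure convergence, of $\frac1t\log\|S_t\pi_0\|$ to $\lambda_1$. For the latter I would bound $\frac1t\log\|S_t\pi_0\|$ from above by $\frac1t\log\|S_t\|_{\mathrm{op}}$, which is integrable by the energy estimates, and from below via Fatou's lemma. The one-sided inequality from Fatou is exactly the direction needed.
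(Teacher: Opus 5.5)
Your argument takes a genuinely different route from the paper and can be completed. However, two of its steps need a different justification from the one you give.

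\textbf{Comparison with the paper.} The paper never randomizes the initial direction or works in expectation. Writing $\Lambda(w,\pi)\eqdef\langle\pi,G_w\pi\rangle$ and $\lambda_\nu\eqdef\int\Lambda\,\ud\nu$, it proves the stronger pathwise statement that $\frac1T\int_0^T\Lambda(z_t)\,\ud t\to\lambda_\nu$ almost surely for \emph{every} deterministic $z_0$. The ingredients are almost sure tightness of the occupation measures of $(z_{2j})_j$, the Feller property, uniqueness, the martingale law of large numbers of Lemma~\ref{lem:lln_lag}, and the smoothed observable $F=\int_2^4P^r\Lambda\,\ud r$. It then identifies $\lambda_\nu=\lambda_1$ by contradiction. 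If $\lambda_\nu<\lambda_1$, every deterministic $\zeta_0\neq0$ would lie in the Oseledets subspace $L$. A countable dense family then forces $L=\mH$, contradicting properness of $L$, which comes from $\lambda_1>-\infty$.

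Your scheme uses Birkhoff under $\nu$ for $\lambda_\nu\le\lambda_1$. For the reverse inequality you take a Gaussian-random initial direction avoiding the finite-codimension subspace, then Krylov--Bogoliubov averages and uniqueness. This is the classical expectation-level Furstenberg--Khasminskii argument. It needs only time-uniform moment bounds rather than a pathwise ergodic theorem, but it yields the formula alone and not Corollary~\ref{cor:deterministic_datum}. In particular, your aside that any deterministic $\pi_0$ avoids $F(\omega)$ almost surely is exactly that corollary and is not available at this stage. What makes your step work is the randomization, together with $\lambda_1>-\infty$ from \cite{HPRY24}, which gives positive codimension.

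\textbf{First repair: passing to the limit.} Uniform integrability and $\nu_T\Rightarrow\nu$ do not by themselves give $\int\Lambda\,\ud\nu_T\to\int\Lambda\,\ud\nu$. The reason is that $\Lambda$ is not continuous on $\mX$, which is why the paper smooths it. You only need $\limsup_T\int\Lambda\,\ud\nu_T\le\int\Lambda\,\ud\nu$. This holds because $\Lambda$, set to $-\infty$ off $H^1$, is upper semicontinuous on $\mX$:
\begin{itemize}
\item the term $-\|\nabla\pi\|^2$ is upper semicontinuous;
\item the LNS stretching term is bounded by $C\|w\|\,\|\nabla\pi\|^{1+\ve}$ for unit $\pi$, and it is continuous along sequences bounded in $H^1$.
\end{itemize}
You then truncate $\Lambda^+$ using uniform integrability.

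\textbf{Second repair: the Fatou step.} As stated, it is oriented the wrong way. Bounding $\frac1t\log\|S_t\pi_0\|$ above by $\frac1t\log\|S_t\|_{\mathrm{op}}$ only gives $\limsup_T\EE[\cdot]\le\lambda_1$, which you already have. What you need is $\lambda_1\le\lim_T\EE\bigl[\frac1T\int_{t_0}^T\Lambda(z_t)\,\ud t\bigr]$, i.e.\ uniform integrability of the negative parts. This follows from $\sup_{t\ge t_0}\EE\,\Lambda(z_t)^2<\infty$. That bound comes from Lemma~\ref{lem:uniform_fibre_moment} (which is uniform in $\pi_0$), the Markov property, and \eqref{eq:super_lyapunov} under $w_0\sim\mu$. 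The initial segment contributes $\frac1T\log\|S_{t_0}\pi_0\|\to0$ pathwise and can be discarded.
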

Moreover, the Lyapunov exponent is obtained from any deterministic initial
condition. This point is somewhat subtle: in principle the
multiplicative ergodic theorem implies the result below only for $\nu$-almost every
initial data $(w_0, \pi_0)$. 
\begin{corollary}\label{cor:deterministic_datum}
For every deterministic $w_0 \in \mH, \zeta_0 \in \mH \setminus \{0\}$, the solution
of~\eqref{eq:linearized_abstract} satisfies $$\lim_{t \to \infty} \frac{1}{t} \log
\|\zeta_t\| = \lambda_1$$ almost surely, with $\lambda_1$ as in Corollary~\ref{cor:FK_formula}.
\end{corollary}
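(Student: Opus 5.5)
The plan is to deduce Corollary~\ref{cor:deterministic_datum} from Theorem~\ref{thm:main_result_intro} and Corollary~\ref{cor:FK_formula}, via a Birkhoff-type argument for the Markov process $z_t=(w_t,\pi_t)$ started from the deterministic point $z_0=(w_0,\zeta_0/\|\zeta_0\|)$. Itô's formula (or a direct energy computation, made rigorous by Galerkin approximation or by parabolic smoothing for $t>0$) gives
\begin{equation*}
  \log\|\zeta_t\| = \log\|\zeta_0\| + \int_0^t \langle \pi_s, G_{w_s}\pi_s\rangle \ud s .
\end{equation*}
There is no martingale term, since the equation for $\zeta$ is linear and driven only through $w$. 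It therefore suffices to show that $\frac1t\int_0^t \phi(z_s)\ud s \to \int \phi \ud\nu$ almost surely, where $\phi(w,\pi)=\langle \pi, G_w\pi\rangle = -\|\nabla\pi\|^2 - \langle \pi, L^0_w\pi\rangle$.

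The first step is the upper bound $\limsup \frac1t\log\|\zeta_t\|\le\lambda_1$. This is immediate, because $\|\zeta_t\|\le\|S_t\|_{\mathrm{op}}\|\zeta_0\|$ and \eqref{eq:lambda1-def} holds almost surely for the deterministic $w_0$. Indeed, Kingman's theorem is usually stated under the stationary law, but one can couple to stationarity using the exponential mixing of the 2D SNS (the base process $w$ alone is uniquely ergodic with contraction). If this coupling step is cumbersome, I would instead use the lower bound and a symmetric argument for the upper bound.

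The second step is the lower bound, which is the main obstacle. I would first consider the random occupation measures $\mu_t=\frac1t\int_0^t\delta_{z_s}\ud s$. The Lyapunov functional of \cite{HPRY24} supplies moment bounds that are uniform in time: bounds on $\|w\|_{H^s}$ and on some weighted quantity controlling $\|\nabla\pi\|^2$, for example a pathwise bound $\frac1t\int_0^t V(z_s)\ud s\le C$ eventually almost surely. I would use these to show that, almost surely, $\{\mu_t\}$ is tight in a topology in which $\phi$ is continuous, or at least upper semicontinuous and uniformly integrable. Next I would show that every almost sure weak limit point of $\mu_t$ is an invariant measure for $z$. The standard argument writes $\frac1t\int_0^t(P_h f - f)(z_s)\ud s$ as a martingale term plus a boundary term; the martingale part vanishes by the martingale law of large numbers, and the boundary term vanishes by Feller continuity, which needs some care on the non-locally-compact space $\mX$. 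By Theorem~\ref{thm:main_result_intro}, every such limit point equals $\nu$. Passing to the limit in $\int\phi\ud\mu_t$ is the delicate part, because $\phi$ is unbounded ($-\|\nabla\pi\|^2$ is unbounded below). Uniform integrability from the Lyapunov bounds of \cite{HPRY24} gives convergence of the $L^0_w$ term. For the gradient term only upper semicontinuity is needed for a $\limsup$ bound, whereas the lower bound requires genuine uniform integrability. I would therefore try to obtain a pathwise bound on $\frac1t\int_0^t\|\nabla\pi_s\|^{2+\delta}$ (or $\|\pi_s\|_{H^{1+\delta}}^2$) from the Lyapunov functional of \cite{HPRY24}.

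Combining the two steps gives $\lim\frac1t\log\|\zeta_t\|=\int\phi\ud\nu=\lambda_1$ by Corollary~\ref{cor:FK_formula}. As an alternative to the pathwise tightness argument, if the paper's coupling construction yields asymptotic strong coupling of trajectories from $z_0$ and from a $\nu$-typical point, then time averages of $\phi$ along the two trajectories agree asymptotically. The conclusion would then transfer from the $\nu$-almost-every statement of the multiplicative ergodic theorem to every deterministic initial datum. I would try this route first, since it bypasses the uniform-integrability issue whenever the coupling holds in a norm controlling $\|\nabla\pi\|$.
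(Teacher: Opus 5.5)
Your overall architecture matches the paper's: the chain rule for $\log\|\zeta_t\|$, almost sure tightness of occupation measures, and invariance of limit points, which then equal $\nu$ by Theorem~\ref{thm:main_result_intro}. However, the step you flag as the main obstacle is left open, and the remedy you propose would not close it.

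The obstruction to passing to the limit in $\int\phi\ud\mu_t$ is continuity, not integrability. The map $\pi\mapsto\|\nabla\pi\|^2$ is only lower semicontinuous on $\mX$, and no moment bound repairs this. For instance, $\pi_n\propto\cos x_1+n^{-1}\sin(nx_1)$ converges in $\mH$ with $\|\nabla\pi_n\|$ bounded, so every power of $\|\nabla\pi_n\|$ is uniformly integrable. Yet $\|\nabla\pi_n\|^2$ does not converge to the gradient energy of the limit. Hence averaged bounds on $\|\nabla\pi_s\|^{2+\delta}$ do not give $\limsup_t\int\|\nabla\pi\|^2\ud\mu_t\le\int\|\nabla\pi\|^2\ud\nu$. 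You would need tightness in a topology in which $\phi$ is continuous, that is, averaged $H^{1+\delta}$ bounds on $\pi$. Your plan does not produce these, and they are not among the available estimates: Lemma~\ref{lem:uniform_fibre_moment} controls only $\|\pi_t\|_{H^1}$.

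The missing idea is to let the dynamics supply the regularity. The paper uses the discrete occupation measures $\nu_n=\frac1n\sum_{j<n}\delta_{z_{2j}}$, whose tightness in $\mX$ needs only averaged bounds on $\|\pi_{2k}\|_{H^1}^2+\widehat V(u_{2k})$. It then replaces $\Lambda=\langle\pi,G_w\pi\rangle$ by $F=\int_2^4P^r\Lambda\ud r$ through the decomposition $\int_{2j+2}^{2j+4}\Lambda(z_t)\ud t=F(z_{2j})+d_j$. Here $\EE[d_j\mid\mF_{2j}]=0$, and $\frac1n\sum_{j<n}d_j\to0$ by Lemma~\ref{lem:lln_lag}.

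Two properties of $F$ make this work:
\begin{itemize}
\item Fibre moments at positive times are bounded uniformly in the initial fibre (Lemma~\ref{lem:uniform_fibre_moment}). Hence $F^2\le C\widehat V$, which yields uniform integrability under $\nu_n$ from the base Lyapunov averages alone.
\item On sublevel sets of $\widehat V$, the flow depends continuously on $z\in\mX$ in $L^2([2,4];H^1)$ (Lemmas~\ref{lem:base_difference} and~\ref{lem:fibre_difference}). So $F$ is continuous there by Vitali's theorem.
\end{itemize}
Together these give $\int F\ud\nu_n\to\int F\ud\nu=2\lambda_\nu$, with no regularity of $\pi$ beyond $H^1$.

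Two further remarks.
\begin{itemize}
\item Your Step~1 is not immediate. Kingman's theorem gives \eqref{eq:lambda1-def} only for $w_0\sim\mu$. Transferring it to a fixed $w_0$ needs more than mixing of the base in law, for example total variation convergence, or a pathwise coupling together with stability of the cocycle. In any case the step is redundant once the occupation-measure argument works.
\item The coupling route fails as stated. The coupling of Section~\ref{sec:proof_of_main_theorem} synchronizes only with positive probability, only in $\mH\times\mH$, and with a shifted noise $W'$. So it yields neither an almost sure statement for every $z_0$ nor control of $\|\nabla\pi\|$.
\end{itemize}
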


This work opens the path toward
an understanding of the quantitative convergence of finite time Lyapunov
exponents to their long time limits, and the
corroboration of numerical methods for their estimation. In
deterministic systems, the convergence rates of finite-time Lyapunov exponents to
their long-time limits can exhibit pathological behavior, for example if the
dynamic is chaotic but trapped close to a periodic orbit for arbitrarily long
times. Stochastic settings should avoid such pathological cases by naturally
selecting generic behaviors~\cites{crauel_flandoli, blumenthal_engel_neamtu}. We expect this to be the case also in
the present setting, however the question of the existence of a spectral gap, and
central limit theorems or large deviation principles for finite time Lyapunov
exponents about their long-time limits is very difficult. Even more difficult is
to quantify these convergence rates in the diffusivity of the system.
Establishing uniqueness of the stationary measure is a first significant step in
this direction.

There are several challenges in establishing uniqueness of the stationary
measure. First, the noise acts only on the vorticity dynamics. This is a
standard example in which one does not expect strong Feller to hold, and
the uniqueness of the stationary measures passes through the H\"ormander
condition (the approximate invertibility of the Malliavin matrix). No matter how
non-degenerate our driving noise is (we must choose a highly non-degenerate noise in
order to apply~\cite{HPRY24}), the argument for uniqueness requires asymptotic
coupling techniques, and our setting lies beyond the
essentially elliptic one~\cites{glatt_holtz_mattingly_richards, kulik_scheutzow,
butkovsky_kulik_scheutzow}.

Moreover, the projective process does not solve a parabolic equation, and
in order to analyze~\eqref{eq:proj-spde}, we must rely on the spectral median
techniques developed in~\cites{HairerRosati25, HPRY24}. One issue is that~\cite{HPRY24} does not
prove sufficient control on $\pi$ in order to guarantee that the Jacobian
$J_t = D_{z_0} z_t$ of the process $z$ has finite polynomial moments: it would require
exponential moments of $\|\nabla \pi \|^2$, which are currently not established.
The lack of moments prevents us from
applying black box results for asymptotic strong
Feller~\cites{HairerMattingly06, HairerMattingly08, HairerMattingly11}. Instead, we
construct by hand an (anticipative) asymptotic coupling. To overcome the moment
problem, we localize the coupling by controlling the equation only when the
process lives in a central region of the state space (and then make sure that
there is no particularly strong separation outside of that region). Eventually, we apply the criterion from Hairer, Mattingly, and Scheutzow~\cite{HMS11} for
uniqueness. Some of the technical steps are inspired by the work by
Dong and Peng~\cite{DongPeng24}. In~\cite{DongPeng24} the degeneracy lies in the
driving noise, which acts in a few directions only, while here it lies in the
bundle structure, since the projective fiber is not forced directly. A related 
result is that of Kuksin, Nersesyan and Shirikyan~\cite{KNS20}, who obtain
uniqueness of the stationary measure and exponential mixing for a class of
dissipative equations from approximate controllability of the linearization.
Their result applies to
bounded, decomposable noise on a compact state space. In our setting the forcing is
unbounded (however, it is Gaussian) and the state space is not compact, which is
roughly why we require a localization argument.

In order to construct the coupling we require the approximate
invertibility of the Malliavin matrix for a sphere-valued dynamic. This is in
general highly non-trivial and model dependent. In the case of
LNS~\eqref{eq:two_operators_intro_NS}, we essentially use the results
of~\cite{BedrossianPunshon-Smith-Chaos-2024y}, where a similar problem was
tackled to prove chaos of Galerkin truncations of Navier--Stokes. That proof has
a strong algebraic flavor and requires the study of eigenvalues of a certain
class of 
diagonal matrices. Instead, PSA~\eqref{eq:two_operators_intro_PS} is entirely
new and poses significant challenges. The main issue is that the transport
dynamics alone preserve all Casimirs, and therefore the coupling between the
vorticity and the scalar can not act transitively by itself. This is a somewhat unusual situation to be in: in all the works
the authors are familiar with, including the
classical~\cite{HairerMattingly06} and the case
of LNS, the approximate Malliavin invertibility is induced by the nonlinearity.
Here instead, we must make use of the Laplacian.

This causes several problems. The parabolic Lie algebra generated by bracketing with the Laplacian
consists of operators of arbitrarily high order, and because of the strength of
our noise, the processes we consider have only finite (if arbitrarily large)
Sobolev regularity. Therefore, while some (AI assisted) algebraic calculations show that
the full parabolic Lie algebra should be transitive, we are not able to use this
fact. Instead, we show that a portion of that Lie algebra suffices, as long as
the scalar is sufficiently generic, in the sense that it is not locally of the
(one-dimensional) form $\zeta(t,x) = f(t, \ell \cdot x)$ for some $\ell \in
\RR^2$ and $x$ in some open patch. To prove that our stochastic passive scalar
is nowhere one-dimensional we use analyticity arguments. We note that this
approach is indirectly related to problems in fluid dynamics concerning for example
functional dependence in the steady states of Euler~\cites{ConstantinDrivasGinsberg21, HamelNadirashvili19, CotiZelatiElgindiWidmayer23, ElgindiHuangSaidXie}, see
Remark~\ref{rem:steady_euler}.

Shortly before finalizing this work, a related article has
appeared, addressing the same problem \cite{LianLiuLu26}. While the question is
similar, the proof methods are different: for
instance~\cite{LianLiuLu26} do not construct an explicit control or use
explicitly the approximate invertibility of the Malliavin matrix.
The article~\cite{LianLiuLu26} does not apply to the passive scalar case.

Overall, we set up a framework for the study of uniqueness of
stationary measures to projective processes in infinite dimensions. We prove that
the tools introduced here and in~\cite{HPRY24} are sufficiently strong to tackle
physically relevant systems, and that the study of Lyapunov exponents raises
interesting analytical and algebraic questions for future studies.

\section*{AI Disclosure}

In preparing this paper the authors made extensive use of AI tools (LLMs) for exploratory discussion, for drafting
text, and for automated verification of intermediate arguments. In
particular, we acknowledge the proof of Proposition~\ref{cor:control} (there are many proofs, but this one seemed particularly
elegant), and several preliminary versions of the Lie algebra
calculations behind the proof of Proposition~\ref{prop:transitivity_PSA} (which we regard as a substantial contribution). All
other uses were to fill in standard arguments in a human-defined path. In
preparing the final version we carefully optimized and rewrote any AI generated
arguments by hand, so the proofs and the exposition as they appear here are our
own. We have verified every statement and every proof ourselves, and we are
solely responsible for the correctness of the results and for any errors that
remain.

\section*{Notation}

We let $\TT^2 = \RR^2 / 2\pi\ZZ^2$ denote the two-dimensional torus and write
$\NN = \{0, 1, 2, \ldots\}$, $\NN_* = \NN \setminus \{0\}$, $\ZZ^2_* = \ZZ^2
\setminus \{0\}$, and $\iota = \sqrt{-1}$. We define $\Zstarp \eqdef \{k =
(k_1,k_2) \in \ZZ^2_* : k_1 > 0\} \cup \{(0,k_2) : k_2 > 0\}$, so that $\ZZ^2_*
= \Zstarp \sqcup (-\Zstarp)$. For a vector $j = (j_1, j_2) \in \RR^2$ we write
$j^\perp \eqdef (j_2, -j_1)$, and analogously we define the perpendicular
gradient $\nabla^\perp \eqdef (\partial_2, -\partial_1)$. For a sufficiently
smooth vector field $u$ we write $\curl u \eqdef \partial_1 u_2 - \partial_2
u_1$, which in the notation just fixed is $-\nabla^\perp \cdot u$. Then we set
$e_k(x) = e^{\iota k \cdot x}$ for $k \in \ZZ^2_*$, and for any mean-zero function
$\varphi \in L^2(\TT^2)$ we write $\varphi = \sum_{k \in \ZZ^2_*}
\hat{\varphi}(k) e_k$ with \[ \hat{\varphi}(k) = \frac{1}{(2\pi)^2} \int_{\TT^2}
\varphi(x) e^{-\iota k \cdot x} \ud x . \] 
We write $L^2(\TT^2)= \mH$ for
the Hilbert space of real-valued, mean-zero, square-integrable scalar functions
on $\TT^2$, equipped with the inner product $\langle f, g \rangle = \int_{\TT^2}
f(x) g(x) \ud x$. We denote by $\|\cdot\|$ the norm in $\mH$. We also use the
Bessel potential spaces $H^s(\TT^2)$ defined through the norm 
\begin{equation*}
  \| \varphi \|_{H^s}^2 = \sum_{k \in \ZZ^2_*} |k|^{2s} |\hat{\varphi}(k)|^2.
\end{equation*}
For $N \in \NN_*$, we denote by $P_N$ the orthogonal projection of $\mH$ onto
the span of the modes $\{e_k : k \in \ZZ^2_*,\ |k| \le N\}$, and $Q_N \eqdef I -
P_N$. We use the notation $a \lesssim b$ if $a \le C b$ for a constant $C$.

\section{Preliminaries}\label{sec:preliminaries}

We work throughout with the vorticity formulation~\eqref{eq:sns_vorticity_intro}
of the 2D stochastic Navier--Stokes equations, started from $w(0, \cdot) =
w_0(\cdot)$. In the Fourier basis, where $\hat{w}_k$ are the Fourier
coefficients of the vorticity, the
nonlinear term takes the form:
\begin{equation}\label{eq:nonlinearity_fourier}
  (u \cdot \nabla w)_\ell = \frac{1}{2}\sum_{\ell = k+ j\in \Zstar} c_{j,k} \hat{w}_{k} \hat{w}_j ,
\end{equation}
where
\begin{equation}\label{eq:interaction_coefficients}
  c_{j,k} = \langle j^\perp, k \rangle ( |k|^{-2} - |j|^{-2} ) 
\end{equation}
are the interaction coefficients. Recall from Section~\ref{sec:introduction} the bilinear vorticity advection operator $B(f,g) = (K * f) \cdot \nabla g$, with $K = \nabla^\perp(-\Delta)^{-1}$ the Biot--Savart kernel.

The noise $\xi$  is given by
\begin{equation}\label{eq:forcing}
  \xi = \sum_{k \in \ZZ^2_*} \sigma_k \frac{\iota k^\perp }{|k|^2} e_k\dot{W}_k ,
\end{equation}
where $\{\sigma_k\}$ are non-zero real coefficients with $\sigma_{-k} =
\sigma_k$, and $\{W_k\}_{k \in \ZZ^2_*}$ are a collection of standard, complex-valued
Brownian motions with covariance $\langle W_k, W_l \rangle_t = t \delta_{k+l=0}$
on a filtered probability space $(\Omega, \mathcal{F}, \{\mathcal{F}_t\}_{t \ge
0}, \PP)$. Equivalently, $W_{-k} = \overline{W_k}$ with $\{W_k\}_{k \in
\Zstarp}$ independent, so that $\xi$ is real-valued. The scaling in the
definition of $\xi$ is chosen so that
\begin{equation*}
  \curl \xi = \sum_{k \in \ZZ^2_*} \sigma_k e_k \dot{W}_k.
\end{equation*}
We require a non-degeneracy condition on the noise.
\begin{assumption} \label{ass:noise}
  There exists an $ \alpha_\star > 10$ such that for some $c, C> 0$
  \begin{equation}
    c |k|^{-\alpha_\star} \leq \sigma_k \leq C|k|^{-\alpha_\star} .
  \end{equation}
\end{assumption}

Under this assumption, it is classical that the SNS equation~\eqref{eq:sns_vorticity_intro} possesses a unique ergodic stationary measure $\mu$ on $L^2(\TT^2)$ \cite{Flandoli94}.

The non-degeneracy in Assumption~\ref{ass:noise} is essential for the results of
our previous paper \cite{HPRY24}, which supplies the stationary measure of the
fiber process and the Lyapunov structure on the sphere. Currently, it is not
known how to obtain the latter without non-degeneracy. It is also used less
fundamentally in several places to invert the noise covariance on all of $\mH$
(Lemma~\ref{lem:vanishing_implies_orthogonality}) and build admissible controls
(Proposition~\ref{prop:accessibility}). However these particular uses are
out of convenience and we expect the results used for uniqueness to hold also for
hypoelliptic forcing such as in \cite{HairerMattingly06}. The noise exponent of
\cite{HPRY24} corresponds roughly to $2\alpha_\star + 2$ in terms of our
$\alpha_\star$, so Assumption~\ref{ass:noise} implies the hypotheses of~\cite{HPRY24}.

The state space $\mX = \mH \times \Sph$ of $z = (w,\pi)$, has the structure of a
trivial fiber bundle over the base $\mH$. We view the unit sphere fiber $\Sph$
as an embedded infinite-dimensional Hilbert manifold, so the tangent space to
$\Sph$ at a unit vector $\pi$ is its orthogonal complement in $\mH$:
\begin{equation}\label{eq:sphere_tangent}
  T_\pi \Sph \cong \pi^\perp \eqdef \{v \in \mH : \langle v, \pi
  \rangle = 0\} .
\end{equation}
Then, the tangent space at a point $z \in \mX$ is the direct sum:
\begin{equation}
  T_{z}\mX = T_w \mH \oplus T_\pi \Sph \cong
  \mH \oplus \pi^\perp .
\end{equation}
A tangent vector at $z$ is an ordered pair $\eta = (\delta w, \delta\pi)$ where
$\delta w \in \mH$ and $\delta\pi \in \pi^\perp$. 

Whenever useful, we will work with functions of higher regularity. Under
Assumption~\ref{ass:noise}, standard parabolic regularity theory guarantees that
for any $w_0, \zeta_0 \in \mH$ we have $w_t \in H^s$ for all $s < \alpha_\star$
and $\zeta_t \in H^s$ for all $s < \alpha_\star+1$ (PSA has better regularity
than LNS, but here we treat both models at once), for all $t >0$. Since this
work is concerned with the long time behavior of the pair, it makes no
difference whether we start in $\mX$ or with more regular initial data, and
sometimes it will be useful to fix the latter. Therefore, we also define
\begin{equation}\label{eq:X_star}
  \mX_\star \eqdef H^{s_\star} \times (\Sph \cap H^{\beta_\star}), \qquad s_\star \eqdef \alpha_\star - \frac 32 , \qquad \beta_\star \eqdef \alpha_\star - \frac 12 .
\end{equation}
The parameters $(s_\star, \beta_\star)$ need only satisfy the bounds above. This
choice is somewhat arbitrary, but fixed for convenience to match the Lyapunov
functional taken from~\cite{HPRY24}. We record the regularity of $z$ in the
following remark.
\begin{remark}\label{rem:smoothing}
Almost surely, for any $z_0\in \mX$, it holds that $z_t \in \mX_\star$ for all $t >0$ in both~\eqref{eq:two_operators_intro_NS} and~\eqref{eq:two_operators_intro_PS}.
\end{remark}

Next, recall from~\eqref{eq:proj-spde} the generator $G_w = \Delta - L^0_w$
of the linear flow from \eqref{eq:linearized_abstract} on $\mH$, under either choice~\eqref{eq:two_operators_intro_NS} or~\eqref{eq:two_operators_intro_PS} of $L^0_w$. The map $w \mapsto L^0_w$ is linear, so setting $H^k \eqdef L^0_{e_k}$ for $k \in \Zstar$ gives the mode expansion
\begin{equation}\label{eq:generator_modes}
  G_w = \Delta - \sum_{k \in \Zstar} \hat w(k)\, H^k ,
\end{equation}
through which the noise acts on the fiber. Explicitly, $H^k = B(e_k,\cdot) +
B(\cdot,e_k)$ for the linearized Navier--Stokes flow and $H^k = B(e_k,\cdot)$
for the passive scalar. Throughout, $H^k$ with a lattice index $k \in \Zstar$
denotes this operator and $H^s$ with a real exponent the Sobolev space. The
difference between the two will be clear from context. Let $\Phi_t: \mX \to \mX$
be the stochastic flow map of the joint process, such that $z_t = \Phi_t(z_0)$.
We write $\Phi_t(z_0, W)$ when the driving path is to be displayed. Solutions
are constructed pathwise, by a fixed point run on each realization of the noise,
so that
\begin{equation*}
  (z_0, W) \ \longmapsto\ \bigl( \Phi_t(z_0, W) \bigr)_{t \ge 0}
\end{equation*}
is jointly Borel measurable from $\mX \times C\bigl([0,\infty); H^{-\alpha}(\TT^2)\bigr)$, $\alpha > 1$, into $C([0,\infty); \mX)$ \cites{HPRY24, Flandoli94, KuksinShirikyan12}. Section~\ref{sec:proof_of_main_theorem} uses this to transport an absolute continuity statement about (initial data, driving path) to one about path laws.

Next, we consider the Jacobian $J_t = D \Phi_t $, which is the Fr\'echet derivative of the flow with respect to the initial data:
\begin{equation}
  J_t(z_0): T_{z_0}\mX \to T_{z_t}\mX .
\end{equation}
The evolution of a tangent vector $\eta_t = (\delta w_t, \delta\pi_t)$ under the Jacobian
\begin{equation*}
  \eta_t = J_t(z_0) \eta_0
\end{equation*}
is then governed by the following equation:
\begin{equation}\label{eq:grand_jacobian_system}
  \partial_t
  \eta =
  \begin{pmatrix}
    L^{ww}_{w} & 0 \\
    L_{z}^{\pi w} & L_{z}^{\pi \pi}
  \end{pmatrix}
  \eta .
\end{equation}
The three non-zero blocks are defined immediately below. These terms contain differential operators and therefore do not, in general, map the tangent space of $\mX$ into itself. We define these operators on a smooth core, and then in the remainder of the article we will extend their definition to larger domains by density and continuity arguments. We define the smooth core
\begin{equation}\label{eq:smooth_core}
  \mS \eqdef C^\infty(\TT^2) \cap \mH,
\end{equation}
and then we set for given $ z = (w, \pi) \in \mS \times (\mS \cap \Sph)$:
\begin{itemize}
  \item $L^{ww}_w \colon \mS \to \mS $
is the linearization of the base SNS dynamics on the vorticity component, \[ L^{ww}_w\delta w \eqdef \Delta\delta w - B(w, \delta w) - B(\delta w, w) , \] with $B$ the bilinear advection operator introduced above. This block is independent of $L^0_w$ in~\eqref{eq:linearized_abstract}.
  \item $L_{z}^{\pi w} \colon \mS \to  \mS$ acts by
    \begin{equation}\label{eq:coupling_operator}
      L_{z}^{\pi w}[\delta w]
      = - (DL^0_w)[\delta w] \pi
      + \bigl\langle \pi, (DL^0_w)[\delta w] \pi \bigr\rangle \pi ,
    \end{equation}
    where $(DL^0_w)[\delta w]$ is the Fr\'echet derivative of $L^0_w$ with respect to $w$, given by
    \begin{equation*}
      (DL^0_w)[\delta w] v
      =
      \begin{cases}
        B(\delta w, v) + B(v, \delta w)
        & \text{in case~\eqref{eq:two_operators_intro_NS}} ,\\[2pt]
        B(\delta w, v)
        & \text{in case~\eqref{eq:two_operators_intro_PS}} .
      \end{cases}
    \end{equation*}
  \item $L_{z}^{\pi \pi} \colon \mS \to
    \mS$ acts by
    \begin{equation}\label{eq:fiber_operator}
      L_{z}^{\pi \pi}\delta\pi
      = G_w\delta\pi
      - \langle \pi, G_w\delta\pi \rangle \pi
      - \langle \delta \pi, G_w \pi \rangle \pi
      - \langle \pi, G_w \pi \rangle \delta\pi .
    \end{equation}
\end{itemize}
Overall, we write for short
\begin{equation*}
  L_{z} \eqdef
  \begin{pmatrix}
    L^{ww}_{w} & 0 \\
    L_{z}^{\pi w} & L_{z}^{\pi\pi}
  \end{pmatrix} .
\end{equation*}
Due to this lower-triangular structure, the Jacobian $J_t$ has also lower-triangular form:
\begin{equation}\label{eq:J-matrix}
  J_t =
  \begin{pmatrix} J_t^{ww} & 0 \\ J_t^{\pi w} & J_t^{\pi\pi}
  \end{pmatrix} ,
\end{equation}
and it solves the equation
\begin{equation*}
  \partial_t J_t = L_{z_t} J_t .
\end{equation*}
\subsection{Standing estimates}
Many estimates of this paper will use the following
inequalities. For $f, g \in \mH$ and any $\ve \in (0,1]$,
\begin{equation}\label{eq:l4_toolkit}
  \|g\|_{L^4} \lesssim \|g\|^{1/2} \|g\|_{H^1}^{1/2} ,
  \qquad
  \|K * f\|_{L^4} \lesssim \|f\| ,
  \qquad
  \|K * f\|_{L^\infty} \lesssim_\ve \|f\|_{H^\ve}.
\end{equation}
For a state $z = (w,\pi) \in \mX$ we write
\begin{equation}\label{eq:gamma_r}
  \Gamma(z) \eqdef \|w\|_{H^1}^2 + \|\pi\|_{H^1}^2 ,
\end{equation}
abbreviating $\Gamma_r \eqdef \Gamma(z_r)$ along a trajectory and dropping the argument when the state is clear from the context. Since $|k| \ge 1$ for $k \in \ZZ^2_*$, the Poincar\'e inequality on mean-zero functions gives $\|\psi\|_{H^1} \ge \|\psi\|$ for every $\psi \in \mH$, so that $\Gamma(z) \ge \|\pi\|_{H^1}^2 \ge \|\pi\|^2 = 1$.

We will also use a super-Lyapunov functional for $u = K*w$. For $\rho > 1$ define \cite{HPRY24}*{Eq.~(7.10)}
\begin{equation}\label{eq:lyapunov_S4}
  V_\rho(u) \eqdef (1 + \|u\|_{H^{\beta_\star}}^2)^\rho \exp\big(c_\star \|u\|_{H^1}^2\big) ,
\end{equation}
where $\beta_\star = \alpha_\star-1/2$ {as in~\eqref{eq:X_star}} and $c_\star >
0$ are fixed constants depending only on the noise regularity $\alpha_\star$. It
enjoys the super-Lyapunov property (cf. \cite{HPRY24}*{Lemma~7.4}), namely that
for every $\gamma > 0$ there exists $C_\gamma > 0$ with
\begin{equation}\label{eq:super_lyapunov}
  \EE[V_\rho(u_t)] \le e^{-\gamma t} V_\rho(u_0) + C_\gamma , \qquad t \ge 0 .
\end{equation}
The second is an energy--enstrophy bound stated in H\"older spaces, so for a
non-integer $\alpha \in (0, \infty) \setminus \NN_*$ we write $\mC^\alpha =
\mC^\alpha(\TT^2)$ for the space of $\alpha$-H\"older continuous functions.
With $m = \lfloor \alpha \rfloor$ and $\theta = \alpha - m \in
(0,1)$, it consists of those $f$ for which 
\begin{equation}\label{eq:holder_def_S4}
  \|f\|_{\mC^\alpha} \eqdef \sum_{|j| \le m} \|\partial^j f\|_{L^\infty} + \sum_{|j| = m} \sup_{x \ne y} \frac{|\partial^j f(x) - \partial^j f(y)|}{|x-y|^\theta} < \infty.
\end{equation}
The following is the content of \cite[Lemma 6.4 and Lemma 7.5]{HPRY24}. 
\begin{proposition} \label{prop:energy_enstrophy}
  Fix $T > 0$. Then for $V_\rho$ as in~\eqref{eq:lyapunov_S4}, the following hold:
  \begin{enumerate}[label=(\roman*)]
    \item There exists a $q \geq 1$ such that for every $p \ge 1$ and every $\beta \in (1, \alpha_\star) \setminus \NN$
      there exists a $C = C(p, \beta, T) > 0$ with
      \begin{equation}\label{eq:ee_poly}
        \EE\Big[ \sup_{t \in [0,T]} \|w_t\|_{\mC^{\beta}}^p \Big] \le C(1 + \| w_0 \|_{\mC^{\beta}})^{pq} .
      \end{equation}
    \item For every $p \geq 1$ and
      $0 \le s \le t$ there exists $C = C(p, t) > 0$ such that, at $\rho = p$,
      \begin{equation}\label{eq:ee_fiber}
        \EE\big[ \|\pi_t\|_{H^1}^p \big]
        \le C \EE\big[ \|\pi_s\|_{H^1}^{2p} + 1 \big]^{1/2} V_\rho(u_0)^{1/2} .
      \end{equation}
  \end{enumerate}
\end{proposition}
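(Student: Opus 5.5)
The paper states that this proposition is exactly the content of \cite{HPRY24}*{Lemma~6.4 and Lemma~7.5}. The proof is therefore a matter of translating those statements into the present notation and checking that our hypotheses imply theirs. I plan to do this in three steps.

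\emph{Step 1: check the noise hypotheses.} As noted after Assumption~\ref{ass:noise}, the noise exponent in \cite{HPRY24} corresponds to roughly $2\alpha_\star+2$. Since $\alpha_\star>10$, the regularity thresholds required there are met. The same check covers the range $\beta\in(1,\alpha_\star)\setminus\NN$ in item (i) and the Sobolev index $\beta_\star=\alpha_\star-\tfrac12$ appearing in $V_\rho$.

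\emph{Step 2: item (i).} I would prove this by a standard parabolic argument, applied pathwise with a stochastic convolution.
\begin{itemize}[label={}]
  \item Split $w_t=\Psi_t+v_t$, where $\Psi$ is the Ornstein--Uhlenbeck solution of $\partial_t\Psi=\Delta\Psi+\curl\xi$ with zero initial data.
  \item Kolmogorov and Gaussian hypercontractivity give $\EE\sup_{t\le T}\|\Psi_t\|_{\mC^\beta}^p<\infty$ for all $\beta<\alpha_\star$.
  \item The remainder $v$ solves a random Navier--Stokes equation with transport by $K*(v+\Psi)$. Its $L^\infty$ norm is controlled by the maximum principle, with the forcing term coming from $\Psi$.
  \item Bootstrapping through Schauder estimates in $\mC^\beta$ gives a bound polynomial in $\|w_0\|_{\mC^\beta}$ and in $\sup_t\|\Psi_t\|_{\mC^\beta}$. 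The exponent $q$ comes from the number of bootstrap steps, since each step loses a fixed power.
  \item Taking expectations then yields \eqref{eq:ee_poly}.
\end{itemize}

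\emph{Step 3: item (ii).} This is the genuinely delicate point, because $\pi$ solves the non-parabolic projective equation \eqref{eq:proj-spde}. The route I would take is the following.
\begin{itemize}[label={}]
  \item Compute $\partial_t\|\nabla\pi\|^2$. The quadratic normalization term cancels against the $\langle\pi,G_w\pi\rangle$ contribution up to a non-positive spectral term, leaving $\frac{d}{dt}\|\pi\|_{H^1}^2\le C\|u\|_{W^{1,\infty}}\|\pi\|_{H^1}^2$ in the PSA case.
  \item In the LNS case there is an extra term $B(\pi,w)$, bounded using \eqref{eq:l4_toolkit}.
  \item Gr\"onwall between times $s$ and $t$ then gives $\|\pi_t\|_{H^1}\le\|\pi_s\|_{H^1}\exp\bigl(C\int_s^t\|u_r\|_{H^{2+\ve}}\,dr\bigr)$.
  \item Apply Cauchy--Schwarz to separate the two factors. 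The exponential moment of $\int\|u\|_{H^{2+\ve}}$ is bounded by $V_\rho(u_0)$, using the super-Lyapunov property \eqref{eq:super_lyapunov}. The key input is that the exponential weight $e^{c_\star\|u\|_{H^1}^2}$ together with the enstrophy dissipation controls $\exp(\int\|u\|_{H^2}^2)$-type quantities, via the exponential martingale inequality applied to the enstrophy balance. Interpolation then handles the $H^{2+\ve}$ norm.
\end{itemize}

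The main obstacle is this last exponential-moment bound for $\int\|u\|_{H^{2+\ve}}$, which is precisely where \cite{HPRY24}*{Lemma~7.5} is needed. In the write-up I would simply cite it rather than reprove it.
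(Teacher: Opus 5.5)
Your top-level plan matches the paper. The paper gives no argument; it simply identifies (i) and (ii) with \cite{HPRY24}*{Lemma~6.4 and Lemma~7.5}. So a pure citation, together with the hypothesis check of Step~1, is complete.

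The problem is Step~3 if you actually carry it out. For LNS, the pathwise estimate $\|\pi_t\|_{H^1}\le\|\pi_s\|_{H^1}\exp\bigl(C\int_s^t\|u_r\|_{H^{2+\ve}}\ud r\bigr)$ is false. With $\lambda=\|\nabla\pi\|^2$ and $\|\pi\|=1$, one has exactly
\[
  \tfrac12\tfrac{\ud}{\ud t}\lambda=-\|(\Delta+\lambda)\pi\|^2+\langle\Delta\pi,u\cdot\nabla\pi\rangle+\langle(\Delta+\lambda)\pi,B(\pi,w)\rangle .
\]
Once you discard the first term, the last one cannot be bounded by $\|u\|_{H^{2+\ve}}\lambda$, since it needs $\nabla^2\pi$ or $\nabla^2 w$.

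Here is a concrete obstruction. Take $\pi_s$ in the first shell and $w_s=a(e_k+e_{-k})$ with $a=|k|^{1-\ve}$; this is a shear, so $B(w,w)=0$, and the noise is negligible on the short window below. Over a time $|k|^{-2}$:
\begin{itemize}
  \item the stretching term $(K*\pi)\cdot\nabla w$ creates a component of $\pi$ near frequency $k$ of $L^2$-size $\approx a/|k|\ll 1$ but $H^1$-size $\approx a$;
  \item hence $\|\pi\|_{H^1}$ grows by a factor $\approx|k|^{1-\ve}$;
  \item meanwhile $\int\|u_r\|_{H^{2+\ve}}\ud r=O(1)$ on that window.
\end{itemize}
What is true comes from keeping the dissipation and absorbing, using $\|B(\pi,w)\|\lesssim\|\pi\|\,\|w\|_{H^{1+\ve}}$:
\[
  \tfrac{\ud}{\ud t}\lambda\le 2\|\nabla u\|_{L^\infty}\lambda+C\|w\|_{H^{1+\ve}}^2 .
\]
This bound has an additive source. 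It has the same structure as the estimate the paper quotes from \cite{HPRY24}*{Lemma~6.4} in~\eqref{eq:import_transport}.

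You could instead absorb using $\|B(\pi,w)\|\lesssim\|\pi\|_{H^1}\|w\|_{H^1}$, as in Lemma~\ref{lem:fibre_difference}(i). That puts $C\int\|w_r\|_{H^1}^2\ud r$ in the exponent, and the enstrophy balance controls exponential moments of this only for small $C$, which is useless for large $p$. Your PSA computation is correct as written. Your outline of (i) is also fine, with two points to note:
\begin{itemize}
  \item the maximum-principle step for $v$ produces a factor $\exp\bigl(C\int_0^T\|\nabla\Psi\|_{L^\infty}\ud t\bigr)$, whose moments come from Fernique;
  \item the number of bootstrap steps is bounded in terms of $\alpha_\star$, so $q$ is uniform in $\beta$, as the statement requires.
\end{itemize}

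The additive term is harmless, but it changes the bookkeeping. After Cauchy--Schwarz you need polynomial moments of $\int_s^t\|w_r\|_{H^{1+\ve}}^2\ud r$. This is where the polynomial prefactor $(1+\|u\|_{H^{\beta_\star}}^2)^\rho$ of $V_\rho$ at $\rho=p$, and the ``$+1$'' in~\eqref{eq:ee_fiber}, come from. Your multiplicative version accounts for neither.

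Finally, you attribute the exponential moment of $\int_s^t\|u_r\|_{H^{2+\ve}}\ud r$ (with an arbitrary prefactor) to Lemma~7.5. The paper instead cites \cite{HPRY24}*{Lemma~7.4} for it, in the proof of Lemma~\ref{lem:uniform_fibre_moment}. It does not follow from the enstrophy exponential martingale plus interpolation: interpolation can only reach above $H^2$ if some norm above $H^2$ already has exponential-type control, and the enstrophy balance does not provide one. Citing it is the right call, but the justification you sketch for it would not work.
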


\section{Non-degeneracy of the Malliavin matrix}
\label{sec:quantitative_nondegeneracy}

One of the key steps in establishing uniqueness of stationary measures for stochastic PDEs via asymptotic coupling is the non-degeneracy of the Malliavin matrix. This is the content of the present section. We defer the algebraic density arguments that are required in this step to the next Section~\ref{sec:transitivity}.

We denote with $Q\colon \mH \to \mH$ the covariance operator of the stochastic
forcing $\curl \xi$ introduced in~\eqref{eq:forcing}. This operator is diagonal
in the Fourier basis, with $Q e_k = \sigma_k^2  e_k$ and positive square root
$Q^{1/2} e_k = \sigma_k e_k$. Under Assumption~\ref{ass:noise}, $\sigma_k^2
\simeq |k|^{-2\alpha_\star}$ with $\alpha_\star > 10$, so $Q$ is trace-class on
$\mH$. The Cameron--Martin space of the colored noise is
$\mathrm{Range}(Q^{1/2}) = \bigl\{f \in \mH : \textstyle\sum_k
|\hat{f}_k|^2/\sigma_k^2 < \infty\bigr\}$. However, we will use controls that
act on the underlying space-time white noise rather than the colored noise. We
therefore define the Cameron--Martin space \[ \CM \eqdef L^2\bigl([0,\infty);
\mH\bigr) \] so that to any $h \in \CM$ we associate the perturbation
$Q^{1/2}h(s)$ of the colored noise at time $s$. Next we extend the definition of
the Jacobian to two time points, allowing us to start from some arbitrary time
$s>0$. For $0 \le s \le t$, it is the Fr\'echet derivative of the flow map
$\Phi_{s,t}\colon \mX \to \mX$ with respect to the initial condition at time
$s$: \[ J_{s,t} \eqdef D\Phi_{s,t}(z_s) \colon T_{z_s}\mX \to T_{z_t}\mX . \] It
satisfies $J_{s,s} = \mathrm{Id}$ and the cocycle identity $J_{s,t} = J_{r,t}
\circ J_{s,r}$ for $s \le r \le t$. As a function of $t$, it solves \[ \frac{\ud}{\ud t} J_{s,t} = L_{z_t} J_{s,t} , \] where
$L_{z_t}$ is the matrix appearing in~\eqref{eq:grand_jacobian_system} evaluated
along the trajectory $z_t$. Then we define the Malliavin derivative of
$\Phi_t(z_0) = z_t$ in the direction $h$:
\begin{equation}
  A_t \colon \CM \to T_{z_t} \mX, \qquad A_t h = \int_0^t J_{s,t} (Q^{1/2}h(s), 0) \ud s .
\end{equation}
The term $(Q^{1/2}h(s), 0)$ reflects that the noise acts only on the $w$ component. Next, we define the Malliavin matrix to be the operator
\begin{equation*}
  M_t (z_0): T_{z_t} \mX \to T_{z_t} \mX, \qquad M_t(z_0) = A_t A_t^* .
\end{equation*}
Whenever clear from context, we will drop the dependency on the initial condition $z_0 \in \mX$. For a tangent vector $\eta = (\delta w, \delta\pi) \in T_{z_t} \mX$, the quadratic form of the Malliavin matrix is given by:
\begin{equation}
  \langle M_t \eta, \eta \rangle = \|A_t^* \eta\|^2_{L^2([0,t]; \mH)} = \int_0^t \|Q^{1/2} \Pi_w (J_{s,t})^* \eta \|^2 \ud s ,
\end{equation}
where $\Pi_w$ is the projection onto the base component of the tangent space and
\begin{equation*}
  (A_t^*\eta)(s) = Q^{1/2}\Pi_w(J_{s,t})^*\eta .
\end{equation*}
In terms of the block structure of $J$ we rewrite:
\begin{equation}
  \langle M_t \eta, \eta \rangle = \int_0^t \left\| Q^{1/2} \left( (J_{s,t}^{ww})^* \delta w + (J_{s,t}^{\pi w})^* \delta\pi \right) \right\|^2 \ud s .
\end{equation}
The objective of this section is to identify (and later verify in Section~\ref{sec:transitivity}) algebraic conditions under which the Malliavin matrix can not be degenerate. To state the main result of the section, consider parameters $\alpha, T>0$ and $N \geq 1$, and define the sliced (because of the condition $\| \eta \| =1$) cone
\begin{equation}\label{eq:cone_SaN}
  S_{\alpha, N, T} \eqdef \left\{ \eta = (\delta w, \delta\pi) \in T_{z_T} \mathcal{X} :    \|P_N \eta\| \geq \alpha \| \eta \| ,  \| \eta \| =1 \right\} ,
\end{equation}
where $P_N$ is the orthogonal projection onto the first $N$ Fourier modes of the
tangent space $T_{z_T} \mathcal{X}$.
Furthermore define the following \lqm central region\rqm of the state space.
This will play a role throughout the paper, since we must localize our arguments
to situations in which we have good control on the projective process (this is a
consequence of not having finite moments for the Jacobian). 
\begin{definition}\label{def:central_region}
  For a fixed parameter $R > 0$, the {\em central region} $\mC_R$ is the sublevel set
  \begin{equation}\label{eq:central_region}
    \mC_R \eqdef \left\{ z = (w, \pi) \in \mX : \|w\|_{H^{s_\star}} + \|\pi\|_{H^1} \le R \right\} .
  \end{equation}  
\end{definition}

Here $s_\star$ is as in~\eqref{eq:X_star}. Note that $\mC_R \subseteq \mX$ is
compact. The main result of this section is the following non-degeneracy.

\begin{proposition}\label{prop:uniform_tail}
  For any $\ve, T, R, N > 0$ and $\alpha \in (0,1]$, define:
  \begin{equation}
    r(\ve, \alpha, T, R, N) \eqdef \sup_{z_0 \in \mC_R} \PP\left( \inf_{\eta \in S_{\alpha, N, T}} \langle M_T(z_0) \eta, \eta \rangle < \ve \right) .
  \end{equation}
  Then $\lim_{\ve \to 0} r(\ve, \alpha, T, R, N) = 0$.
\end{proposition}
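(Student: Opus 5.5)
Since the event decreases as $\ve \downarrow 0$, I will argue by contradiction combined with compactness. If the claim fails, there are $\delta>0$, $\ve_n \to 0$ and $z_0^n \in \mC_R$ with $\PP(\inf_{S_{\alpha,N,T}} \langle M_T(z_0^n)\eta,\eta\rangle < \ve_n) \ge \delta$. Because $\mC_R$ is compact, a subsequence gives $z_0^n \to z_0 \in \mC_R$.

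The first step is continuity. I want the map $z_0 \mapsto \inf_{\eta \in S_{\alpha,N,T}} \langle M_T(z_0)\eta,\eta\rangle$ to be lower semicontinuous in probability, or continuous on an event of probability close to one. The issue is that $S_{\alpha,N,T}$ lives in $T_{z_T}\mX$, which depends on $z_T$ through $\pi_T^\perp$. I will remove this dependence by extending the quadratic form to all $\eta \in \mH\times\mH$ via the projection onto $\pi_T^\perp$. On the event where $\sup_{t\le T}\Gamma_t$ and $\sup_t\|w_t\|_{\mC^\beta}$ are bounded by some $K$, the Jacobian $J_{s,t}$ depends continuously on $z_0$ in operator norm on $\mH$. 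This should follow from energy estimates for~\eqref{eq:grand_jacobian_system} and Gronwall, using~\eqref{eq:l4_toolkit}. Proposition~\ref{prop:energy_enstrophy} makes the bad event small uniformly over $\mC_R$. With continuity in hand, a Portmanteau-type argument gives $\PP(\inf_{S} \langle M_T(z_0)\eta,\eta\rangle = 0) \ge \delta/2$. So it suffices to show that for each fixed $z_0$ the Malliavin matrix is almost surely nondegenerate on the cone, i.e. $\langle M_T\eta,\eta\rangle>0$ for all $\eta\in S_{\alpha,N,T}$.

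For this I will use weak compactness of the cone. Suppose a minimizing sequence $\eta_n \in S_{\alpha,N,T}$ has $\langle M_T\eta_n,\eta_n\rangle \to 0$. Pass to a weak limit $\eta$. Since $P_N$ has finite rank, $\|P_N\eta\|\ge\alpha$, so $\eta\neq0$, and weak lower semicontinuity of the (bounded, positive) quadratic form gives $\langle M_T\eta,\eta\rangle=0$. The problem is therefore reduced to qualitative nondegeneracy: if $Q^{1/2}\Pi_w J_{s,T}^*\eta=0$ for all $s\le T$, then $\eta=0$. Since $Q^{1/2}$ is injective under Assumption~\ref{ass:noise}, this means $\Pi_w J^*_{s,T}\eta\equiv0$ on $[0,T]$.

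The main obstacle is the algebraic step of showing that this vanishing forces $\eta=0$. Set $\phi_s = J_{s,T}^*\eta = (\phi^w_s,\phi^\pi_s)$. It solves the backward adjoint equation $-\partial_s\phi = L_{z_s}^*\phi$, with $\phi^w \equiv 0$. The $w$-component of the equation then gives $(L^{\pi w}_{z_s})^*\phi^\pi_s = 0$ for all $s$. The plan is to differentiate this identity in time repeatedly. Each differentiation brings in $L^{\pi\pi}$, and the noise in $w_s$ enters through Itô/Stratonovich expansion. Applying a Norris-type lemma, or using the roughness of Brownian paths, the coefficients of the independent Brownian increments $dW_k$ must each vanish. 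This produces orthogonality of $\phi^\pi_s$ to the brackets $[H^k, G_w]$, $[H^k,[H^j,G_w]]$ and so on applied to $\pi_s$, as in Lemma~\ref{lem:vanishing_implies_orthogonality}. The conclusion then follows from a density statement in $\pi_s^\perp$: the transitivity result of Section~\ref{sec:transitivity}. For LNS this uses the nonlinearity; for PSA it uses Proposition~\ref{prop:transitivity_PSA} together with the almost sure fact that $\pi_s$ is nowhere one-dimensional. Density forces $\phi^\pi_s = 0$, and hence $\eta = \phi_T = 0$. The technical difficulty I expect here is the limited regularity: only finitely many brackets make sense in $H^{\beta_\star}$, so the density has to be proved with brackets of bounded order, and this is exactly where Remark~\ref{rem:smoothing} and $\alpha_\star>10$ enter.
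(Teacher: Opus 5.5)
Your proposal is correct and follows the paper's route: contradiction plus compactness of $\mC_R$, continuity of the Jacobian in the initial data, and a contradiction with the almost sure nondegeneracy of Proposition~\ref{prop:as_nondegeneracy}, which you could simply cite. Your re-sketch of that proposition matches Section~\ref{sec:hormander}; there the roughness step is carried out with Strassen's functional law of the iterated logarithm, precisely because $\phi^\pi_s$ is anticipating and the exceptional null set must not depend on $\eta$.

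The paper needs less than operator-norm continuity on a good event. It uses pathwise strong convergence $J^{(n)}_{s,T}(\varphi,0)\to J^{(0)}_{s,T}(\varphi,0)$ (Lemma~\ref{lem:continuity_in_data}) together with weakly convergent near-minimizers. Their limit automatically lies in $T_{z_T}\mX$ because $\pi^{(n)}_T\to\pi^{(0)}_T$ strongly, and this gives $m_0\le\alpha^{-2}\liminf_n m_n$ pathwise; reverse Fatou then replaces your Portmanteau step. If you instead extend the form to $\mH\times\mH$, add the identity on the normal direction rather than merely composing with the projection onto $\pi_T^\perp$. The projection alone is degenerate along $(0,\pi_T)$, which lies in the cone whenever $\|P_N\pi_T\|\ge\alpha$.
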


For the base equation alone this is \cite{DongPeng24}*{Proposition~3.2}, and the argument below extends it to the bundle.
The proof of this result can be found at the end of Section~\ref{sec:uniform_tail}.
The key intermediate step toward Proposition~\ref{prop:uniform_tail} is the
pointwise (in the initial condition) almost sure non-degeneracy stated below. In
Section~\ref{sec:uniform_tail}, we use a compactness argument due to
\cite{DongPeng24} to obtain the result uniformly over initial data.

\begin{proposition}\label{prop:as_nondegeneracy}
  For any $\alpha \in (0,1]$, $T > 0$, $N \geq 1$, and any initial condition $z_0 \in \mathcal{X}$:
  \begin{equation}
    \PP\left( \inf_{\eta \in S_{\alpha, N, T}} \langle M_T(z_0) \eta, \eta \rangle > 0 \right) = 1 .
  \end{equation}
\end{proposition}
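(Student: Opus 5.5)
Arguing by contradiction along the lines of \cite{HairerMattingly06} and \cite{DongPeng24}*{Proposition~3.2}, but for the full bundle, I would proceed as follows. Suppose that with positive probability there is a sequence $\eta_n \in S_{\alpha,N,T}$ with $\langle M_T \eta_n,\eta_n\rangle \to 0$. Since $\|\eta_n\|=1$ and $\|P_N\eta_n\|\ge\alpha$, I would pass to a weak limit $\eta$. Weak convergence keeps the finitely many low modes, so $\|P_N \eta\|\ge \alpha$ and $\eta\neq 0$. The quadratic form $\eta \mapsto \langle M_T\eta,\eta\rangle$ is a convex continuous quadratic form, and it is weakly lower semicontinuous because $A_T^*$ is bounded. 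Hence on this event there is a nonzero $\eta=(\delta w,\delta\pi)\in T_{z_T}\mX$ with
\[ Q^{1/2}\Pi_w J_{s,T}^*\eta = 0 \quad \text{for all } s\in[0,T]. \]
Since $Q^{1/2}$ is injective under Assumption~\ref{ass:noise}, this is the vanishing of the backward adjoint $\rho_s \eqdef J_{s,T}^*\eta$ in its $w$-component on $[0,T]$. This is the point where Lemma~\ref{lem:vanishing_implies_orthogonality} would be invoked.

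Next I would differentiate $\Pi_w\rho_s\equiv 0$ in $s$. The adjoint solves $-\partial_s \rho_s = L_{z_s}^*\rho_s$. Writing $\rho_s=(0,\varrho_s)$, the condition $\Pi_w\rho_s=0$ forces $(L^{\pi w}_{z_s})^*\varrho_s=0$ for all $s$. This says $\varrho_s$ is orthogonal to $L^{\pi w}_{z_s}[e_k]$, i.e.\ to the projected vector fields $-H^k\pi_s + \langle \pi_s, H^k\pi_s\rangle \pi_s$, for every $k$. Iterating, I would use Itô/Stratonovich expansions of $s\mapsto\langle\varrho_s, V(z_s)\rangle$ for smooth vector fields $V$ on the sphere. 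I would exploit that the $w$-noise is nondegenerate (all $\hat w(k)$ are driven) together with a Norris-type lemma, or the semimartingale uniqueness of decomposition with the quadratic variation argument. From this I would obtain that $\varrho_s$ annihilates the Lie brackets $[G_{w_s}, H^k\cdot]$, $[H^j,H^k]$ and further brackets, all projected onto $\pi_s^\perp$. Because $\rho$ lives in a low-regularity dual space and the operators are differential, I would work at $s$ slightly before $T$ using Remark~\ref{rem:smoothing}, so that $z_s\in\mX_\star$. I would only use brackets of bounded order, keeping them well defined on $H^{\beta_\star}$.

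The resulting algebraic statement is that the span of the projected brackets evaluated at $z_s$ is dense in $\pi_s^\perp$ for a.e.\ $s$ and almost every realization. This is deferred to Section~\ref{sec:transitivity}, and specifically to Proposition~\ref{prop:transitivity_PSA} for PSA and the \cite{BedrossianPunshon-Smith-Chaos-2024y} algebra for LNS. Given density, $\varrho_s=0$, and then $\eta = \rho_T$ would vanish by continuity as $s\uparrow T$, a contradiction. The step I expect to be the main obstacle is this density/transitivity input, especially for PSA. There the brackets $H^k$ preserve Casimirs, so the Laplacian brackets must be used. Genericity must also hold almost surely: $\pi_s$ must be nowhere locally one-dimensional. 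I would establish this by a Cameron--Martin analyticity argument, showing that local one-dimensionality is a null event for a.e.\ $s$. A secondary technical difficulty is justifying the differentiation of $\langle\varrho_s,\cdot\rangle$ without moment bounds on $J$. For that I would localize with stopping times when $z$ leaves a central region $\mC_R$, which suffices for an almost sure statement.
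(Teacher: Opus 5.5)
Your outline follows the paper's proof. You argue by contradiction and extract a nonzero weak limit $\eta$ with $\langle M_T\eta,\eta\rangle=0$. You then use Lemma~\ref{lem:vanishing_implies_orthogonality} ($\delta w=0$ and $\varrho_s\perp\vf{H^k}\pi_s$) and propagate along a bracket-generated family of bounded differential order. You close with Theorem~\ref{thm:transitivity}, using nowhere one-dimensionality from Cameron--Martin analyticity in the PSA case, and a passage $s\uparrow T$.

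The gap is in extracting the noise brackets $[H^k,A]$. The process $s\mapsto\langle\varrho_s,\vf{A}\pi_s\rangle$ is not adapted. The vector $\varrho_s=(J^{\pi\pi}_{s,T})^*\delta\pi$ depends on the noise on $[s,T]$, and $\delta\pi$ is itself a minimizer selected from the whole path. So It\^o/Stratonovich expansions, uniqueness of the semimartingale decomposition and the classical (adapted) Norris lemma are not available as you state them. Because $\eta$ is random, whatever replaces them must hold for every $\eta$ outside one null set. Localizing with stopping times does not address this: the obstruction is anticipation, not moments, and pathwise bounds with random constants already suffice for an almost sure statement.

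The paper handles this pathwise in Lemma~\ref{lem:bracket_extraction}. Since the noise enters only additively in $w$, both $\pi_s$ and $\varrho_s$ are $C^1$ in time. Differentiating $\langle\varrho_s,\vf{A}\pi_s\rangle\equiv 0$ therefore gives $\langle\varrho_s,\vf{[G_{w_s},A]}\pi_s\rangle=0$ with no stochastic integral. Expanding $w_r-w_s$ inside $[G_{w_r},A]$ then gives $\sum_k c_k\bigl(W_k(u)-W_k(s)\bigr)=O(u-s)$, where $c_k=\sigma_k\langle\varrho_s,\vf{[H^k,A]}\pi_s\rangle$ are anticipating. Strassen's functional law of the iterated logarithm has almost sure limit set equal to the whole unit ball of $\ell^2$. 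It therefore applies simultaneously to every coefficient vector, random or not, and forces $c=0$. A non-adapted Wiener-polynomial estimate of Hairer--Mattingly type would serve the same purpose. The bracket $[\Delta,A]$ then follows by subtracting $\sum_l\hat w_s(l)[H^l,A]$ (Corollary~\ref{cor:propagation}).

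For PSA you also do not have density in $L^2$. Proposition~\ref{prop:transitivity_PSA} only rules out annihilators in $H^s$, $s>4$. You therefore need $\varrho_s\in H^5$ for $s<T$ (Lemma~\ref{lem:adjoint_smoothing}), applied at countably many $s_k\uparrow T$ with fixed-time genericity, followed by $\varrho_{s_k}\rightharpoonup\delta\pi$.
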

For the SNS equations alone, driven by degenerate noise, this is
\cite{DongPeng24}*{Proposition~3.1}, the proof below must control the projective
process.
The proof of this result can be found in Section~\ref{sec:proof-as-nondegeneracy}.

\subsection{Admissible operators and vector fields}\label{sec:admissible_operators}
Because of the finite Sobolev regularity of the forcing in
Assumption~\ref{ass:noise}, base vorticity $w_t$ (and consequently the projective
process $\pi_t$) has only finite Sobolev regularity, so we may only work with
operators that take derivatives up to a fixed order. This does not limit the
number of Lie brackets as such. Brackets among the generators below do not raise
the order and may be iterated freely. It is the brackets with the Laplacian that
raise the order, so only finitely many of these are allowed, as
Definition~\ref{def:bracket_regular} makes precise. However, all operators in
this section preserve the common smooth invariant domain $\mS = C^\infty(\TT^2)
\cap \mH$ of~\eqref{eq:smooth_core}. Given two Banach spaces $X$ and $Y$, we
denote by $\mL(X; Y)$ the space of bounded linear operators from $X$ to $Y$,
equipped with the operator norm
\begin{equation*}
  \| T \|_{\mL(X; Y)} \eqdef \sup_{x \in X \setminus \{0\}} \frac{\| T x \|_Y}{\| x \|_X} ,
\end{equation*}
and we abbreviate $\mL(X) \eqdef \mL(X; X)$.
\begin{definition}\label{def:admissible_operators}
  A linear operator $A \colon \mS \to \mS$ is \emph{admissible} if it is continuous in the Fr\'echet topology of $\mS$. (Differential operators with smooth coefficients, while possibly unbounded on $L^2$, are continuous on $\mS$).
  \begin{itemize}
    \item   We denote with $\Opinf$ the set of
      admissible operators on $\mS$.
    \item We denote with $\Opbeta \subseteq \Opinf$ the
      subset of admissible operators that extend (uniquely) to bounded linear operators $H^{\beta_\star} \to \mH$, with $\beta_\star$ as in~\eqref{eq:X_star}. We endow $\Opbeta$ with the operator norm topology of $\mL(H^{\beta_\star};\mH)$, and write $\|A\|_{\Opbeta} \eqdef \|A\|_{\mL(H^{\beta_\star};\mH)}$ for the corresponding norm.
  \end{itemize}
\end{definition}
The operators we build from the Fourier modes are complex valued. All the definitions here and below extend from real to complex scalars in the obvious way, as in Section~\ref{sec:transitivity}. By Schauder theory, the solution $\zeta$ to \eqref{eq:linearized_abstract} satisfies $\zeta \in H^{\beta}$ for every $\beta < \alpha_\star+1$. Therefore any $A \in \Opbeta$ has a well defined action on the solution $\zeta$ to \eqref{eq:linearized_abstract} and similarly on its projective component $\pi = \zeta / \|\zeta \| $, see Remark~\ref{rem:smoothing}. We also note that while $\Opinf$ is a Lie algebra, the same does not hold for $\Opbeta$.

The test operators we construct in the sequel are differential operators with
smooth Fourier coefficients. We work directly with the classes $\Opbeta$ rather
than abstract pseudodifferential operators, because the drift operator $G_w$ has
only finite Sobolev regularity $H^{s_\star}$, and the solutions $\pi_t$ live in
$H^{\beta_\star}$. For a differential operator $A$ of order $d$, brackets with
the first-order operators $H^k$ of~\eqref{eq:generator_modes} preserve the
differential order, whereas each bracket with the Laplacian $\Delta$ increases
the order by two. Writing $\ad(X) Y \eqdef [X, Y]$ and $\ad(X)^j$ for its $j$-fold
iteration, we find for $A \in \mL(H^d; \mH)$:
\begin{equation*}
  \ad(H^k)^j A \in \mL(H^{d}; \mH) \quad \text{for }j \ge 0 ,
  \quad
  \ad(\Delta)^m A \in \mL(H^{d+2m}; \mH) \subseteq \mL(H^{\beta_\star}; \mH)
  \quad \text{for } 2m \le \beta_\star - d .
\end{equation*}
Finally, for any admissible operator $A$ and any unit vector $\pi \in \Sph \cap \mS$, we define the projective action
\begin{equation}\label{eq:fundamental_vf}
  \vf{A}\pi \eqdef P_{\pi^\perp} A \pi
  = A\pi - \langle \pi, A\pi \rangle \pi
  \in \pi^\perp \cap \mS .
\end{equation}
 We view $\vf{A}$ as a vector field on $\Sph$, and in this sense consider the Lie bracket between two $\vf{A}, \vf{B}$. We record the relationship between the Lie algebra structure of $\Opinf$ and that of the induced vector fields on $\mathbb{S}$.

\begin{proposition}
  \label{prop:anti_homomorphism} For $A, B \in \Opinf$ it holds that
  \begin{equation}\label{eq:anti_hom}
    [\vf{A}, \vf{B}] = -\vf{[A,B]} ,
  \end{equation}
  where the left side is the Lie bracket of vector fields on $\Sph \cap \mS$ and $[A,B] = AB - BA$, is the standard commutator.
\end{proposition}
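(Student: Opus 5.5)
The plan is to compute both sides directly, treating $\Sph\cap\mS$ as a submanifold of the vector space $\mS$. For a vector field $X$ on an open set of $\mS$ (or a smooth extension of a tangent field on $\Sph$), we use the convention $[X,Y](\pi) = DY(\pi)[X(\pi)] - DX(\pi)[Y(\pi)]$. With this convention the Lie bracket of the linear fields $\pi\mapsto A\pi$ and $\pi\mapsto B\pi$ is $BA\pi - AB\pi = -[A,B]\pi$; this is the source of the sign in~\eqref{eq:anti_hom}. The first step is to extend $\vf{A}$ off the sphere by the same formula $\vf{A}(\pi) = A\pi - \langle \pi, A\pi\rangle \pi$ for all $\pi\in\mS$. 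Since this extension is tangent to $\Sph$ along $\Sph$, the restriction of the ambient bracket to $\Sph$ equals the intrinsic bracket. No issue of the manifold structure arises, because everything is polynomial in $\pi$ and $A,B$ preserve $\mS$ continuously.

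Next we differentiate the extended field:
\begin{equation*}
D\vf{B}(\pi)[v] = Bv - \langle v, B\pi\rangle \pi - \langle \pi, Bv\rangle\pi - \langle \pi, B\pi\rangle v .
\end{equation*}
We substitute $v = \vf{A}(\pi) = A\pi - a\pi$, where $a \eqdef \langle \pi, A\pi\rangle$, $b \eqdef \langle\pi,B\pi\rangle$, and $\|\pi\|=1$. Expanding gives
\begin{equation*}
D\vf{B}(\pi)[\vf{A}\pi] = BA\pi - aB\pi - \langle A\pi,B\pi\rangle\pi + ab\pi - \langle \pi,BA\pi\rangle\pi + ab\pi - bA\pi + ab\pi,
\end{equation*}
where we used $\langle \pi, \pi\rangle = 1$ and $\langle a\pi, B\pi\rangle = ab$.

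The corresponding expression with $A$ and $B$ swapped is then subtracted. The symmetric terms cancel: $-aB\pi - bA\pi$ is symmetric, as are $-\langle A\pi,B\pi\rangle\pi$ and the $ab\pi$ terms. What remains is
\begin{equation*}
(BA-AB)\pi - \langle \pi,(BA-AB)\pi\rangle\pi = -\vf{[A,B]}\pi .
\end{equation*}
This is exactly the claim.

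The only delicate point is the convention for the vector field bracket, since the sign of~\eqref{eq:anti_hom} depends on it, together with the justification that ambient brackets of tangent extensions restrict to intrinsic brackets. Both are standard. We will state the convention explicitly, the one for which linear fields satisfy $[A\cdot,B\cdot]=-[A,B]\cdot$. We will also note that the pairing $\langle\cdot,\cdot\rangle$ is real, so that the expansion above is valid; complex operators are handled by complexifying linearly.
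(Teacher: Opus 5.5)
Your proposal is correct and follows essentially the same route as the paper: the same convention $[X,Y] = DY\cdot X - DX\cdot Y$, the same Leibniz formula for the derivative of $\pi \mapsto A\pi - \langle \pi, A\pi\rangle\pi$, substitution of $\vf{A}\pi$, and cancellation of the terms symmetric in $A$ and $B$. The only cosmetic difference is that you extend $\vf{A}$ to an ambient field on $\mS$, whereas the paper differentiates only along tangent directions $h \in \pi^\perp \cap \mS$; this gives the same formula once $h = \vf{B}\pi$ is inserted.
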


\begin{proof}
  Fix $\pi \in \Sph \cap \mS$ and note that $[\vf{A}, \vf{B}] = D \vf{B} \cdot \vf{A} - D \vf{A} \cdot \vf{B}$. Differentiating $\vf{A}\pi = A\pi - \langle \pi, A\pi\rangle\pi$ along $h \in \pi^\perp \cap \mS$ by the Leibniz rule gives
  \begin{equation}\label{eq:frechet_vf}
    D_\pi \vf{A} \cdot h = Ah
    - \langle h, A\pi \rangle \pi
    - \langle \pi, Ah \rangle \pi
    - \langle \pi, A\pi \rangle h .
  \end{equation}
  Inserting $h = \vf{B}\pi$, (and similarly with $A$ and $B$ flipped) one obtains
  \begin{align*}
    [\vf{A}, \vf{B}](\pi)
    & = (BA - AB)\pi + \langle \pi, (AB - BA)\pi\rangle\pi
    = -\vf{[A,B]}\pi .
  \end{align*}
\end{proof}
As we will see in the next section, the analysis of the parabolic H\"ormander
condition quickly reduces to the transitivity of certain Lie algebras of
admissible operators.

\subsection{A truncated parabolic H\"ormander condition}
\label{sec:hormander}

The aim of this subsection is to identify a suitable subset $\mf{g} \subseteq
\Opbeta$ such that if for some $\eta = (\delta w, \delta \pi) \in T_{z_T} \mX$
we have $ \langle M_T(z_0) \eta, \eta \rangle =0$, then we have $\delta w=0$ and
$ \langle \delta \pi, \vf{A} \pi \rangle = 0$ for all $A \in \mf{g}$. The choice
of $\mf{g}$ will differ substantially depending on whether we are in the setting
of~\eqref{eq:two_operators_intro_NS} or~\eqref{eq:two_operators_intro_PS}.
Omitting the issue of finite
regularity, the reader should think of both $\mf{g}$ and $\Opbeta$ as Lie
algebras. Because we work with finite regularity, we sometimes truncate the Lie
algebra at a level that is useful to us, which is why the eventual $\mf{g}$
might not be a Lie algebra (it turns out that it is one for the LNS case, and it
is not for PSA). Now, recall from~\eqref{eq:generator_modes} that
\begin{equation}\label{eq:Hj_def}
  H^k = -D_w G_w[e_k] \in \Opbeta,
\end{equation}
where the inclusion is verified in Lemma~\ref{lem:lns_algebra} and Lemma~\ref{lem:psa_algebra}.

\begin{lemma}\label{lem:vanishing_implies_orthogonality}
  Fix $T > 0, z_0 \in \mX, \eta = (\delta w, \delta\pi) \in T_{z_T}
  \mathcal{X}$. Suppose that $\langle M_T(z_0) \eta, \eta \rangle = 0$ and define
  \begin{equation}\label{eq:eta_def}
    \eta_s \eqdef (J^{\pi\pi}_{s,T})^* \delta\pi \in T_{\pi_s}\Sph , \qquad \forall s \in [0,T] .
  \end{equation}
  Then the following hold.
  \begin{enumerate}[label=(\roman*)]
    \item We have $\delta w =0$ and $(J^{\pi w}_{s,T})^* \delta\pi = 0 $, $\forall s \in [0,T]$.
      Hence, $\langle \eta, J_{s,T}(f, 0) \rangle = 0$ for all $f \in \mH, s \in [0,T]$.
    \item For every $s \in (0,T]$ and every $k \in \Zstar$ it holds that $
      \langle \eta_s, \vf{H^k}\pi_s\rangle = 0$.
  \end{enumerate}
\end{lemma}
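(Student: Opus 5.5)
The plan is to read off both conclusions from the vanishing of the integrand in the Malliavin quadratic form, and then to differentiate the resulting identity in the time variable $s$.

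\emph{Part (i).} Since $\langle M_T\eta,\eta\rangle=\int_0^T\|Q^{1/2}\Pi_w J_{s,T}^*\eta\|^2\ud s=0$, the integrand vanishes for almost every $s$. The map $s\mapsto J_{s,T}^*\eta$ is continuous in $s$ with values in a weak enough space, so the integrand in fact vanishes for every $s\in[0,T]$. By Assumption~\ref{ass:noise} all $\sigma_k\neq0$, so $Q^{1/2}$ is injective on $\mH$. Hence
\[
(J^{ww}_{s,T})^*\delta w+(J^{\pi w}_{s,T})^*\delta\pi=0\qquad\text{for all } s .
\]
Evaluating at $s=T$, where $J_{T,T}=\mathrm{Id}$ and $J^{\pi w}_{T,T}=0$, gives $\delta w=0$. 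The identity then reduces to $(J^{\pi w}_{s,T})^*\delta\pi=0$ for all $s$. Equivalently, $\langle\eta,J_{s,T}(f,0)\rangle=\langle \delta\pi, J^{\pi w}_{s,T}f\rangle=0$.

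\emph{Part (ii).} The first step is to obtain a backward equation for $J^{\pi w}_{s,T}$ in $s$. Differentiating the cocycle identity $J_{s,T}=J_{r,T}J_{s,r}$ in $s$ gives $\partial_s J_{s,T}=-J_{s,T}L_{z_s}$. Taking the lower-left block,
\[
\partial_s J^{\pi w}_{s,T}=-J^{\pi w}_{s,T}L^{ww}_{w_s}-J^{\pi\pi}_{s,T}L^{\pi w}_{z_s}.
\]
Pair this with $\delta\pi$ and apply it to a test function $f\in\mS$. The first term vanishes by part (i), since $\langle\delta\pi,J^{\pi w}_{s,T}(L^{ww}f)\rangle=0$. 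Differentiating the identity $\langle\delta\pi,J^{\pi w}_{s,T}f\rangle\equiv0$ therefore yields
\[
\langle\eta_s,L^{\pi w}_{z_s}f\rangle=0\qquad\text{for all } f\in\mS,\ s\in(0,T].
\]
Taking $f=e_k$ (complexified, or using real and imaginary parts), \eqref{eq:coupling_operator} and \eqref{eq:Hj_def} give $L^{\pi w}_{z}e_k=-H^k\pi+\langle\pi,H^k\pi\rangle\pi=-\vf{H^k}\pi$. This is exactly the claim.

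\emph{Main obstacle.} The main obstacle is justifying the differentiation in $s$, because $L_{z_s}$ involves unbounded operators and $\pi_s$ only has finite regularity. I would avoid pointwise derivatives and work in integrated form instead. Using the Duhamel/variation of constants formula
\[
J^{\pi w}_{s,T}f=\int_s^T J^{\pi\pi}_{r,T}L^{\pi w}_{z_r}J^{ww}_{s,r}f\ud r ,
\]
one can write the difference quotient in $s$. Its limit is controlled by continuity of $r\mapsto L^{\pi w}_{z_r}f$ in $\mH$ for smooth $f$, which holds because $z_r\in\mX_\star$ for $r>0$ by Remark~\ref{rem:smoothing}, together with strong continuity of $J^{\pi\pi}_{r,T}$ and of $J^{ww}_{s,r}f\to f$ as $r\downarrow s$. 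Restricting to $s>0$ is what makes the regularity available, and this explains why the statement in (ii) is for $s\in(0,T]$.
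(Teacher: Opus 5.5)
Your proposal is correct and follows the paper's proof. Part (i) is identical. In part (ii) the paper likewise differentiates $\langle\delta\pi,J^{\pi w}_{s,T}f\rangle$ in $s$ through the Duhamel representation $\int_s^T J^{\pi\pi}_{r,T}L^{\pi w}_{z_r}J^{ww}_{s,r}f\,\ud r$, and kills the term produced by $\partial_s J^{ww}_{s,r}=-J^{ww}_{s,r}L^{ww}_{w_s}$ using part (i) with $f=L^{ww}_{w_s}e_k$, which is exactly your backward-equation computation, restricted to $s>0$ for the same regularity reason.
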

While the initial data $\pi_0 \in \mathbb{S}$ is not necessarily smooth, at a
later time we have $z_s \in \mX_\star$, so that $ \vf{H^k}\pi_s \in
H^{\beta_\star -1} \subseteq \mH$, for all $s >0$ (see Remark~\ref{rem:smoothing}), and in particular the last
bracket is well defined. Such considerations are made silently throughout the
remainder of this section.
\begin{proof}
  \emph{Part~(i).} From the definition of the Malliavin matrix, if $\langle M_T \eta, \eta \rangle=0$, then $Q^{1/2} \Pi_w (J_{s,T})^* \eta = 0$ for all $s \in [0, T]$ since the integrand is continuous in time. Therefore by injectivity of $Q^{1/2}$, we conclude
  \[
    \Pi_w (J_{s,T})^* \eta = 0  \qquad \forall s \in [0,T].
  \]
  By the block structure~\eqref{eq:grand_jacobian_system}, this reads $(J^{ww}_{s,T})^* \delta w + (J^{\pi w}_{s,T})^* \delta\pi = 0$. Since at $s=T$ we have $J^{\pi w}_{s,T} =0$ we conclude that $\delta w =0$. Pairing with any $f \in \mH$ leads to
  \[
    \langle \delta w, J^{ww}_{s,T} f \rangle + \langle \delta\pi, J^{\pi w}_{s,T} f \rangle = \langle \eta, J_{s,T}(f, 0) \rangle = 0,
  \]
  which is the last assertion of part~(i).

  \emph{Part~(ii).}
  We observe that $J^{\pi w}_{s, t}$ starts from $J^{\pi w}_{s,s} = 0$ and satisfies
  \begin{equation}
    J^{\pi w}_{s,T} f = \int_s^T J^{\pi\pi}_{r,T} L^{\pi w}_{\pi_r} J^{ww}_{s,r} f \ud r ,
  \end{equation}
  which is the Duhamel formula for $\partial_t J^{\pi w}_{s,t} = L^{\pi \pi}_{z_t} J^{\pi w}_{s,t} + L^{\pi w}_{z_t} J^{w w}_{s, t}$. We insert this identity above and move $(J^{\pi\pi}_{r,T})^*$ onto $\delta\pi$ through the definition $\eta_r = (J^{\pi\pi}_{r,T})^* \delta\pi$. This gives
  \begin{equation}\label{eq:integral_vanishing}
    \int_s^T \langle \eta_r, L^{\pi w}_{\pi_r} J^{ww}_{s,r} f\rangle \ud r = 0 \qquad \forall f \in \mH,s \in [0,T] .
  \end{equation}
  We recover the integrand by differentiating \eqref{eq:integral_vanishing} in
  $s$. Fix $k \in \Zstar$ and take $f = e_k$, which is smooth, so that $s
  \mapsto J^{ww}_{s,r} e_k$ is differentiable with $\partial_s J^{ww}_{s,r} =
  -J^{ww}_{s,r} L^{ww}_{w_s}$. Here $e_k$ is complex valued, but this does not
  matter, as we can take separately the real and imaginary part. Using also
  $J^{ww}_{s,s} = I$, differentiation of \eqref{eq:integral_vanishing} at any
  $s \in (0,T]$ gives
  \begin{equation}
    \langle \eta_s, L^{\pi w}_{\pi_s} e_k\rangle = - \int_s^T \langle \eta_r, L^{\pi w}_{\pi_r} J^{ww}_{s,r} L^{ww}_{w_s} e_k\rangle \ud r .
  \end{equation}
  The vector $L^{ww}_{w_s} e_k$ lies in $\mH$ for $s > 0$, by
  Remark~\ref{rem:smoothing}, so the right side is the left side of
  \eqref{eq:integral_vanishing} evaluated at $f = L^{ww}_{w_s} e_k$, and it
  therefore vanishes by part~(i). We deduce $ \langle \eta_s, L^{\pi
  w}_{\pi_s} e_k\rangle = 0$ for all $s \in (0,T]$.
  Since  $L^{\pi w}_{\pi} e_k = -\vf{H^k}\pi$, the result is proven.
\end{proof}

We now extend the previous result to iterated Lie brackets of $H^k$. The extension asks three regularity conditions of the operator, and it applies to families generated from the $H^k$ by brackets that keep them.

\begin{definition}\label{def:bracket_regular}
  Fix $T > 0$ and $z_0 \in \mX$. An operator $A \in \Opbeta$ is bracket-regular if:
  \begin{enumerate}[label=(\roman*)]
    \item for every $k \in \Zstar$ the commutator $[H^k, A]$ is bounded on both
      of the scales we shall test it against:
      \begin{enumerate}[label=(i-\alph*), ref=(i-\alph*)]
        \item $[H^k, A] \in \Opbeta$ with
          $\sup_{k \in \ZZ^2_*}  |k|^{-2} \| [H^k, A] \|_{\Opbeta} < \infty$,
        \item $[H^k, A] \in \mL(H^{\beta_\star-2}; H^2)$ with
          $\sup_{k \in \ZZ^2_*} |k|^{-4} \| [H^k, A] \|_{\mL(H^{\beta_\star-2};\, H^2)} < \infty$,
      \end{enumerate}
    \item the map $s\mapsto [G_{w_s}, A]$ belongs to $C( (0, T];
      \mL(H^{\beta_\star-2}; H^2))$ almost surely,
    \item the commutator ${[L^0_{w}, A]}$ satisfies $ \|{[L^0_{w}, A]}
      \|_{\mL(H^{\beta_\star}; \mH)} \lesssim \| w \|_{H^{s_\star-2}}$.
  \end{enumerate}
  A subspace $\mf{g} \subseteq \Opbeta$ is \emph{bracket-generated} if $\mf{g} = \bigcup_{j \geq 0} \mf{g}_j$ for an increasing sequence of subspaces starting from $\mf{g}_0 = \mathrm{Span}(H^k \colon k \in \Zstar)$ and satisfying
  \begin{equation}\label{eq:reachable_chain}
    \mf{g}_{j+1} = \mf{g}_j + \mathrm{Span}(\mathcal{B}_j) ,
    \qquad
    \mathcal{B}_j \subseteq \bigl\{ [\Delta, A], \ [H^k, A] \colon A \in \mf{g}_j , \ k \in \Zstar \bigr\} ,
  \end{equation}
  for some countable sets $\mathcal{B}_j$ such that the generators $H^k$ and every element of every $\mathcal{B}_j$ are bracket-regular. Bracket-regular operators form a linear space and every element of a bracket-generated family is bracket-regular.
\end{definition}

The polynomial weights in~(i) are all that the brackets of either model supply, and it is the decay of $\sigma_k$ that absorbs them. Only an inclusion is asked of $\mathcal{B}_j$ in~\eqref{eq:reachable_chain}, so a particular bracket chain is free to take fewer brackets than are available to it, and in particular cannot take differential order that exceeds what bracket-regularity permits. This is what happens for the passive scalar, and it is the reason the family we use there is not a Lie algebra.

\begin{lemma}\label{lem:bracket_extraction}
  In the same setting of Lemma~\ref{lem:vanishing_implies_orthogonality}, and with $\eta_s$ given by \eqref{eq:eta_def}, let $A$ be bracket-regular and suppose that $ \langle \eta_s, \vf{A}\pi_s \rangle = 0$ for all $s \in (0,T]$, $\PP$-almost surely. Then we have:
  \begin{equation}
    \langle \eta_s, \vf{[G_{w_s}, A]}\pi_s\rangle = 0 , \qquad \langle \eta_s, \vf{[H^k, A]}\pi_s \rangle = 0 \qquad \text{for all } s \in (0,T] , \quad  \PP\text{-almost surely.}
  \end{equation}
\end{lemma}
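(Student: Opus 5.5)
The plan is the usual two-step Hörmander extraction: a drift step, then a noise step. The drift step gives the first identity. The noise step treats the coefficients of the Fourier modes $\hat w_s(k)$ as rough signals and reads off the second identity from them.

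\emph{Step 1 (drift bracket).} Set $f(s) \eqdef \langle \eta_s, \vf{A}\pi_s\rangle$, which vanishes on $(0,T]$ by assumption.
\begin{itemize}
\item The fiber has no direct noise, so $s\mapsto \pi_s$ is $C^1$ in $H^{\beta_\star-2}$ for $s>0$, with $\partial_s\pi_s = \vf{G_{w_s}}\pi_s$.
\item Since $\eta_s = (J^{\pi\pi}_{s,T})^*\delta\pi$, we have $\partial_s\eta_s = -(L^{\pi\pi}_{z_s})^*\eta_s$.
\item Comparing with \eqref{eq:frechet_vf}, $L^{\pi\pi}_{z}$ is exactly $D_\pi \vf{G_w}$, so differentiating $f$ gives
\[
0=f'(s)=\langle \eta_s, D\vf{A}\cdot \vf{G_{w_s}}\pi_s - D\vf{G_{w_s}}\cdot\vf{A}\pi_s\rangle .
\]
\item By Proposition~\ref{prop:anti_homomorphism}, this is $0 = -\langle \eta_s, \vf{[G_{w_s},A]}\pi_s\rangle$.
\end{itemize}
This yields the first identity. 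Justifying the differentiation is routine: the products are well defined because of condition (ii) of Definition~\ref{def:bracket_regular} and Remark~\ref{rem:smoothing}, and one can approximate $A$ and $\pi$ on $\mS$ and pass to the limit.

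\emph{Step 2 (noise brackets).} By \eqref{eq:generator_modes},
\[
[G_{w_s},A]=[\Delta,A]-\sum_k \hat w_s(k)[H^k,A],
\]
so the first identity reads
\[
a(s)=\sum_k \hat w_s(k)\, b_k(s), \qquad a(s)\eqdef\langle\eta_s,\vf{[\Delta,A]}\pi_s\rangle,\quad b_k(s)\eqdef\langle\eta_s,\vf{[H^k,A]}\pi_s\rangle .
\]
\begin{itemize}
\item Each $b_k$ is $C^1$ in $s$ by the same computation as in Step 1. This uses (i-a) and (i-b), which let $[H^k,A]$ be tested against $\pi_s\in H^{\beta_\star}$ and against $\partial_s\pi_s$. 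The bounds grow at most like $|k|^{4}$.
\item $a$ is continuous. The sum converges because the weights grow polynomially while the modes $\hat w_s(k)$ decay at the rate dictated by $w_s\in H^{s_\star}$.
\item Write $\hat w_s(k)=v_k(s)+\sigma_k W_k(s)$, where $v_k$ is the $C^1$ drift part. Condition (iii) controls the $L^0_w$ commutator pieces of the drift, so $\sum_k v_k b_k$ has bounded variation. Hence
\[
\sum_k \sigma_k W_k(s)\, b_k(s)=a(s)-\sum_k v_k(s) b_k(s)
\]
has zero quadratic variation on every subinterval.
\end{itemize}

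\emph{Main obstacle: $\eta_s$ is anticipative.} Since $\eta_s$ depends on the noise up to time $T$, the $b_k$ are not adapted, and we cannot simply invoke uniqueness of the semimartingale decomposition. I would argue pathwise instead.
\begin{itemize}
\item Compute the quadratic covariation of both sides with each $W_j$ along dyadic partitions. For $C^1$ coefficients this is a deterministic limit: $\sum_k \sigma_k b_k\, \Delta W_k\, \Delta W_j$ converges to $\sigma_j\int b_j\ud s$ on a single full-measure event, chosen independently of $\eta$. For instance, take the event on which dyadic quadratic covariations of the $W_k$ converge uniformly for all $k$.
\item The tail $\sum_{|k|>K}$ is controlled uniformly using $\sigma_k\lesssim|k|^{-\alpha_\star}$ against the $|k|^{2}$ and $|k|^{4}$ weights from (i).
\item The left side has zero covariation, so $\int_{s_1}^{s_2} b_j\ud s=0$ for all intervals. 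Continuity of $b_j$ then gives $b_j\equiv0$ on $(0,T]$, which is the second identity.
\end{itemize}
Making this robust is where I expect the work: there are countably many $k$, the coefficients are non-adapted, and the exceptional null set must be chosen independently of $\eta$.
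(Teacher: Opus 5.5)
Your Step~1 is essentially the paper's argument. The paper integrates $\partial_r\bigl(J^{\pi\pi}_{r,u}\vf{A}\pi_r\bigr)=-J^{\pi\pi}_{r,u}\vf{[G_{w_r},A]}\pi_r$ from $s$ to $u$, tests against $\eta_u$, and then differentiates in $s$. You differentiate $\langle\eta_s,\vf{A}\pi_s\rangle$ directly. Both rest on $L^{\pi\pi}_z=D_\pi\vf{G_w}$ and Proposition~\ref{prop:anti_homomorphism}.

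Step~2 is a genuinely different route. The paper freezes the coefficients at a fixed rational $s$, using
$[G_{w_r},A]=[G_{w_s},A]-\sum_k\sigma_k(W_k(r)-W_k(s))[H^k,A]-[L^0_{d(s,r)},A]$.
This reduces everything to $\sum_k c_k\,(W_k(u)-W_k(s))=O(u-s)$, with random and anticipating but \emph{constant} coefficients $c_k=\sigma_k\langle\eta_s,\vf{[H^k,A]}\pi_s\rangle$. It then kills $c$ with Strassen's functional law of the iterated logarithm in $\ell^2(|\kappa|^{-4})$, and extends from rational $s$ by continuity.

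You instead keep the coefficients time dependent and take the pathwise dyadic covariation with each $W_j$. This is a deterministic functional of the path, so anticipation is irrelevant. You get $\int_{s_1}^{s_2}b_j\ud s=0$ on every interval at once, using only L\'evy's dyadic quadratic variation theorem and dominated convergence rather than an infinite-dimensional LIL.

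The trade-off is as follows. The LIL is local: it only needs an $o(\sqrt{h\log\log(1/h)})$ remainder at the single time $s$. Your route needs bounded variation of all non-noise terms on whole intervals $[s_0,T]$. It also needs an $\eta$-independent, partition-uniform majorant for the tail, which you should put into the full-measure event explicitly. For instance, take $\sum_k\sigma_k|k|^4\sup_{t\le T}|W_k(t)|<\infty$ and $\sum_k\sigma_k|k|^2\sup_n\bigl([W_k]^{(n)}_T\bigr)^{1/2}<\infty$, where $[W_k]^{(n)}_T$ is the level-$n$ dyadic quadratic variation. In practice both routes are fed by the same Lipschitz bounds; the paper proves $|\Psi_k|\lesssim_s|r-s|\,|k|^4$.

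One inference is wrong as written. From ``$a$ is continuous'' and ``$\sum_k v_kb_k$ has bounded variation'' you cannot conclude that $a-\sum_kv_kb_k$ has zero quadratic variation; you need $a$ itself to have bounded variation. This is true and follows from your own tools. By (ii) and (i-b),
$[\Delta,A]=[G_{w_s},A]+\sum_k\hat w_s(k)[H^k,A]\in\mL(H^{\beta_\star-2};H^2)$.
The series converges since $\sum_k|\hat w_s(k)|\,|k|^4\lesssim\|w_s\|_{H^{s_\star}}$ for $s_\star>5$. Hence the computation you used for $b_k$ shows that $a(s)=\langle\eta_s,\vf{[\Delta,A]}\pi_s\rangle$ is Lipschitz on $[s_0,T]$. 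This is the same fact the paper uses when it bounds $\langle\eta_r,\vf{[G_{w_s},A]}\pi_r\rangle$ by $|r-s|$.

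Finally, a minor indexing point. With the convention $\langle W_k,W_l\rangle_t=t\,\delta_{k+l=0}$, the covariation with $W_j$ selects $\sigma_{-j}\int b_{-j}$ rather than $\sigma_j\int b_j$; alternatively, pair with $\overline{W_j}$. This is harmless, since $j$ ranges over all of $\Zstar$.
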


\begin{proof}
  Using $J^{\pi\pi}_{u,u} = \mathrm{Id}$, the fiber equation $\partial_r \pi_r = \vf{G}_{w_r}\pi_r$ and the backward equation $\partial_r J^{\pi\pi}_{r,u} = -J^{\pi\pi}_{r,u} L^{\pi\pi}_{z_r}$ with $L^{\pi\pi}_{z_r} = D_\pi \vf{G}_{w_r}(\pi_r)$ from~\eqref{eq:fiber_operator}, the Leibniz rule gives
  \begin{equation}
    \partial_r \bigl( J^{\pi\pi}_{r,u} \vf{A}\pi_r \bigr)
    = J^{\pi\pi}_{r,u} \bigl( D_{\pi}\vf{A}\cdot \vf{G}_{w_r} - D_{\pi}\vf{G}_{w_r}\cdot \vf{A}\bigr)(\pi_r)
    = J^{\pi\pi}_{r,u} \, [\vf{G}_{w_r}, \vf{A}]\pi_r ,
  \end{equation}
  where the last equality is the definition of Lie bracket for vector fields $[\vf{X},\vf{Y}] = D\vf{Y}\cdot\vf{X} - D\vf{X}\cdot\vf{Y}$. Now by Proposition~\ref{prop:anti_homomorphism}, and integrating from $s$ to $u$, $[\vf{G}_{w_r}, \vf{A}] = -\vf{[G_{w_r}, A]}$, yields:
  \begin{equation}\label{eq:corrected_increment}
    \vf{A}\pi_{u} - J^{\pi\pi}_{s,u} \vf{A}\pi_s
    = \int_s^u J^{\pi\pi}_{r,u} \, [\vf{G}_{w_r}, \vf{A}]\pi_r \, \ud r
    = -\int_s^u J^{\pi\pi}_{r,u} \, \vf{[G_{w_r}, A]}\pi_r \, \ud r ,
    \qquad 0 < s \le u \le T .
  \end{equation}
  Testing both sides against $\eta_u$ makes the left side vanish, since $\langle \eta_u, J^{\pi\pi}_{s,u}\vf{A}\pi_s\rangle = \langle \eta_s, \vf{A}\pi_s\rangle$ by~\eqref{eq:eta_def} and both pairings vanish by hypothesis. We therefore conclude that
  \begin{equation}\label{eq:drift_vanishes_inline}
    \int_s^u \langle \eta_r, \vf{[G_{w_r}, A]}\pi_r\rangle \, \ud r = 0
    \qquad \forall 0 < s \le u \le T .
  \end{equation}
  Differentiating this quantity in $s$, which is allowed by our assumption on
  the commutator and by the continuity in $r$ of $\eta_r$ and $\pi_r$
  established below, delivers as desired
  $\langle \eta_s, \vf{[G_{w_s}, A]}\pi_s\rangle = 0$ for all $s \in (0, T]$.

  To conclude the second identity we use a (rough) Taylor expansion. Starting from \eqref{eq:corrected_increment} tested against $\eta_u$, and by expanding
  \begin{equation}
    w_r - w_s = \int_s^r b(w_q)\,\ud q + \sum_{k \in \Zstar} \sigma_k e_k \,\bigl( W_k(r) - W_k(s)\bigr) ,
    \qquad b(w) = \Delta w - B(w,w) ,
  \end{equation}
  we arrive at
  \begin{equation}\label{eq:corrected_expansion}
    \begin{aligned}
      0 & = \int_s^u \langle \eta_r, \vf{[G_{w_r}, A]}\pi_r\rangle  \ud r                                                                                                                                     \\
      & = (u - s)\langle \eta_s, \vf{[G_{w_s}, A]}\pi_s\rangle - \sum_{k \in \Zstar} \sigma_k \, \langle \eta_s,  \vf{[H^k, A]}\pi_s \rangle \, \int_s^u \bigl( W_k(r) - W_k(s)\bigr) \ud r
      + \mathcal E_{s,u}                                                                                                                                                                        \\
      & = -\sum_{k \in \Zstar} \sigma_k \, \langle \eta_s,  \vf{[H^k, A]}\pi_s \rangle \, \int_s^u \bigl( W_k(r) - W_k(s)\bigr) \ud r
      + \mathcal E_{s,u}.
    \end{aligned}
  \end{equation}
  where the last step uses the identity just proven and introduces the error term $\mE_{s, u}$. It is convenient to differentiate in $u$, which gives
  \begin{equation}\label{eq:diff_in_u}
    \sum_{k \in \Zstar} \sigma_k \, \langle \eta_s,  \vf{[H^k, A]}\pi_s \rangle \, \bigl( W_k(u) - W_k(s)\bigr)
    = \partial_u \mathcal E_{s,u} .
  \end{equation}
  We claim that $|\partial_u \mE_{s,u}| \lesssim |s-u|$. By~\eqref{eq:corrected_expansion} we may write $\mE_{s,u} = \int_s^u \rho(s,r)\,\ud r$ with
  \begin{equation*}
    \rho(s,r) \eqdef \langle \eta_r, \vf{[G_{w_r}, A]}\pi_r\rangle
    + \sum_{k \in \Zstar} \sigma_k \bigl( W_k(r) - W_k(s)\bigr)\langle \eta_s, \vf{[H^k, A]}\pi_s\rangle,
  \end{equation*}
  since $ \langle \eta_r, \vf{[G_{w_r}, A]}\pi_r\rangle= 0$. With $d(s,r) \eqdef \int_s^r b(w_u) \ud u$, the expansion of $w_r - w_s$ gives
  \begin{equation*}
    [G_{w_r}, A] = [G_{w_s}, A] - \sum_k \sigma_k(W_k(r)-W_k(s))[H^k, A] - [L^0_{d(s,r)}, A],
  \end{equation*}
  so that
  \begin{equation}\label{eq:rho_expansion}
    \begin{aligned}
      \rho(s,r) = & \langle \eta_r, \vf{[G_{w_s}, A]}\pi_r\rangle
      - \langle \eta_r, \vf{[L^0_{d(s,r)}, A]}\pi_r\rangle         \\
      & - \sum_k \sigma_k \bigl( W_k(r) - W_k(s)\bigr)
      \bigl( \langle \eta_r, \vf{[H^k, A]}\pi_r\rangle - \langle \eta_s, \vf{[H^k, A]}\pi_s\rangle\bigr).
    \end{aligned}
  \end{equation}
  Now Schauder theory guarantees that $[s, T] \ni r \mapsto \eta_r$ is Lipschitz in $H^{-2}$ (because the terminal data at time $T$ lies in $L^2$) and $[s,T]\ni r\mapsto \pi_r$ is Lipschitz in $H^{\beta_\star - 2}$. Therefore, we estimate the first term in \eqref{eq:rho_expansion} as
  \begin{equation*}
    \begin{aligned}
      |\langle \eta_r, \vf{[G_{w_s}, A]}\pi_r\rangle| & \leq     |\langle \eta_r - \eta_s, \vf{[G_{w_s}, A]}\pi_r\rangle| +     |\langle \eta_s, \vf{[G_{w_s}, A]}(\pi_r- \pi_s) \rangle| \lesssim_s |r -s|,
    \end{aligned}
  \end{equation*}
  by our assumption on $\vf{[G_{w_s}, A]}$.

  Similarly for the second term, condition~(iii) of Definition~\ref{def:bracket_regular} and the definition of $d$ give
  \begin{equation*}
    |\langle \eta_r, \vf{[L^0_{d(s,r)}, A]}\pi_r\rangle|
    \lesssim \| d(s, r)\|_{H^{s_\star-2}}
    \leq |s-r| \sup_{u \in [s, T]} \| b(w_u) \|_{H^{s_\star-2}}
    \lesssim_s |s-r|
  \end{equation*}
  since $w_u$ takes values in $H^{s_\star}$ uniformly in $[s, T]$ (but the estimate degenerates at $s=0$ because $w_0 \in L^2$) and $H^{s_\star -2}$ is an algebra. The last term in \eqref{eq:rho_expansion} pairs the noise against the bracket increments
  \[
    \Psi_k \eqdef \langle \eta_r, \vf{[H^k, A]}\pi_r\rangle - \langle \eta_s, \vf{[H^k, A]}\pi_s\rangle.
  \]
  By Assumption~\ref{ass:noise} and $\alpha_\star > 10$, $\mW \eqdef \sum_{k \in \Zstar} \sigma_k W_k e_k$ is a Wiener process with values in $H^6$. Hence by Cauchy--Schwarz
  \[
    \Bigl| \sum_{k \in \Zstar} \sigma_k \bigl( W_k(r) - W_k(s)\bigr) \Psi_k \Bigr|
    \leq \bigl\| \mW(r) - \mW(s) \bigr\|_{H^6}
    \Bigl( \sum_{k \in \Zstar} |k|^{-12} |\Psi_k|^2 \Bigr)^{1/2}
    \lesssim_s |r-s|,
  \]
  where for the last inequality we used a similar splitting as above with $[H^k, A]$ in place of $[G_{w_s}, A]$, so that
  \[
    |\Psi_k| \lesssim_s |r-s| \, \| [H^k, A] \|_{\mL(H^{\beta_\star-2};\, H^2)} \lesssim_s |r-s| \, |k|^4
  \]
  by condition~(i-b) of Definition~\ref{def:bracket_regular}, so that $\sum_k |k|^{-12}|\Psi_k|^2 \lesssim_s |r-s|^2 \sum_k |k|^{-4}$
  Altogether $|\rho(s,r)| \lesssim_s |r-s|$ with a random but almost surely finite constant, which is the claimed bound on $\partial_u \mE_{s,u}$.

  Now fix a rational $s \in (0,T)$ and set $c_k \eqdef \sigma_k \langle \eta_s, \vf{[H^k, A]}\pi_s\rangle$, so that the differentiated identity \eqref{eq:diff_in_u} reads
  \[
    \sum_{k \in \Zstar} c_k \bigl( W_k(u) - W_k(s)\bigr) = \partial_u\mE_{s,u}.
  \]
  Since $\sigma_k \neq 0$, it is enough to show that $c_k = 0$ for every $k \in \Zstar$.

  The assumption $\sup_k |k|^{-2} \|[H^k, A]\|_{\Opbeta} < \infty$ together with Assumption~\ref{ass:noise} gives
  \begin{equation}\label{eq:coeff_decay}
    |c_k| \lesssim |k|^{2-\alpha_\star} , \qquad \sum_{k \in \Zstar} |c_k|^2 < \infty .
  \end{equation}
  By reality $W_{-k} = \overline{W_k}$ and $c_{-k} = \overline{c_k}$, so we can split the sum into real and imaginary parts and pair conjugate indices to write it over the real index set $\mathcal K \eqdef \Zstarp \times \{\Re, \Im\}$,
  \begin{equation*}
    \sum_{k \in \Zstar} c_k \bigl( W_k(u) - W_k(s)\bigr) = \sum_{\kappa \in \mathcal K} c_\kappa\, \zeta_\kappa(s,u) ,
    \qquad
    \zeta_\kappa(s,u) \eqdef \beta_\kappa(u) - \beta_\kappa(s) ,
  \end{equation*}
  with $\{\beta_\kappa\}_{\kappa \in \mathcal K}$ independent standard real Brownian motions and $c_{(k,\Re)} = \sqrt 2\, \Re c_k$, $c_{(k,\Im)} = -\sqrt 2\, \Im c_k$ of the same size as the $c_k$, so that $c = 0$ forces every $c_k$ to vanish. Each $\zeta_\kappa$ is a Brownian increment with $\EE[\zeta_\kappa(s,u)^2] = \vr(u-s)$ for some $\vr > 0$. Then if we set for all $0 < h < e^{-1}$
  \begin{equation*}
    \phi(h) \eqdef \sqrt{2\vr\, h \log\log(1/h)},
  \end{equation*}
  we obtain from the differentiated identity and $|\partial_u\mE_{s,u}| \lesssim |s-u|$ that
  \begin{equation}\label{eq:scale_gap}
    \lim_{u \downarrow s} \frac{1}{\phi(u-s)} \sum_{\kappa \in \mathcal K} c_\kappa\, \zeta_\kappa(s,u) = 0 ,
  \end{equation}
  since $(u-s)/\phi(u-s) \to 0$.

  If $c = (c_\kappa)_{\kappa \in \mathcal K}$ were deterministic, the process $u \mapsto \sum_\kappa c_\kappa\, \zeta_\kappa(s,u)$ would be a one-dimensional Brownian motion with variance rate $\vr\|c\|_{\ell^2}^2$, and the law of the iterated logarithm gives that as $h \downarrow 0$, the limit points of
    \begin{equation*}
      \beta(h)\big/\sqrt{2h\log\log(1/h)}
    \end{equation*}
    are $[-1,1]$, which would force $c = 0$ in view of~\eqref{eq:scale_gap}. But $c$ is random, and through $\eta_s$ it depends on the increments $\zeta_\kappa(s,u)$, so we need this conclusion on the full vector of increments $(\zeta_{\kappa}(s,u))_{\kappa \in \mathcal K}$. This particular version is provided by Strassen's functional law of the iterated logarithm, which states that almost surely, the family
    \begin{equation*}
      \bigl(\zeta_\kappa(s,u)\bigr)_{\kappa \in \mathcal K} \big/ \phi(u-s) ,
      \qquad u \in (s, s+e^{-1}/2) ,
    \end{equation*}
    is relatively compact, with set of limit points as $u \downarrow s$ equal to the closed unit ball
    \begin{equation*}
      \overline \mB \eqdef \bigl\{ h \in \ell^2(\mathcal K) : \|h\|_{\ell^2(\mathcal K)} \le 1\bigr\} .
    \end{equation*}
    We use the Banach-space form of the theorem \cite[Theorem~8.5]{LedouxTalagrand}, transferred to small times by the time inversion $\beta_\kappa(t) \mapsto t\beta_\kappa(1/t)$. The vector of increments $(\zeta_\kappa(s,u))_{\kappa \in \mathcal K}$ does not lie in $\ell^2$, so the compactness takes place in the larger space $B \eqdef \ell^2(\mathcal K; |\kappa|^{-4})$, while the limit set is nonetheless the unit ball of $\ell^2$. Pairing with $c$ is continuous on $B$, because
    \begin{equation*}
      \sum_{\kappa \in \mathcal K} |\kappa|^{4} c_\kappa^2
      \ \lesssim\ \sum_{\kappa \in \mathcal K} |\kappa|^{8 - 2\alpha_\star}
      \ <\ \infty
    \end{equation*}
    by~\eqref{eq:coeff_decay}. As a consequence
    $\phi(u-s)^{-1}\sum_\kappa c_\kappa\, \zeta_\kappa(s,u)$ has set of limit points as $u \downarrow s$ equal to the closed interval
    \begin{equation*}
      \bigl\{ \langle h, c\rangle : h \in \overline{\mB} \bigr\}
      = \bigl[ -\|c\|_{\ell^2(\mathcal K)}, \|c\|_{\ell^2(\mathcal K)} \bigr] ,
    \end{equation*}
  and by~\eqref{eq:scale_gap} this set is $\{0\}$. Hence $c = 0$.

  Finally, for each fixed $k$ the map $s \mapsto \langle \eta_s, \vf{[H^k,
  A]}\pi_s\rangle$ is continuous on $(0,T]$, since $[H^k, A]$ is a fixed element
  of $\Opbeta$ and $s \mapsto \eta_s, \pi_s$ are continuous with values in $\mH$
  and $H^{\beta_\star}$ respectively. The claim therefore extends from rational
  $s$ to every $s \in (0,T]$, almost surely.

\end{proof}

We deduce the following corollary.
\begin{corollary}\label{cor:propagation}
  In the setting of Lemma~\ref{lem:bracket_extraction}, let $A$ be bracket-regular and $\langle \eta_s, \vf{A}\pi_s \rangle = 0$ for all $s \in (0,T]$, almost surely. Then, almost surely for all $s \in (0,T]$,
  \begin{equation}\label{eq:propagation}
    \langle \eta_s, \vf{[H^k, A]}\pi_s \rangle = 0
    \qquad \forall k \in \Zstar ,
  \end{equation}
  and, provided $[\Delta, A] \in \Opbeta$, also $\langle \eta_s, \vf{[\Delta, A]}\pi_s \rangle = 0$.
\end{corollary}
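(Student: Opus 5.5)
The first identity \eqref{eq:propagation} is literally the second conclusion of Lemma~\ref{lem:bracket_extraction}, so nothing beyond quoting it is needed there. The plan for the $\Delta$-bracket is to start from the first conclusion of Lemma~\ref{lem:bracket_extraction},
\[
  \langle \eta_s, \vf{[G_{w_s}, A]}\pi_s\rangle = 0 \qquad \forall s \in (0,T],
\]
and remove the drift part $L^0_{w_s}$ of $G_{w_s} = \Delta - L^0_{w_s}$.

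By linearity of $w \mapsto L^0_w$ and the mode expansion \eqref{eq:generator_modes},
\[
  [G_{w_s}, A] = [\Delta, A] - \sum_{k \in \Zstar} \hat w_s(k)\, [H^k, A],
\]
and the projective action $B \mapsto \vf{B}\pi$ is linear in $B$. So I would pair with $\eta_s$ and split:
\[
  0 = \langle \eta_s, \vf{[\Delta,A]}\pi_s\rangle - \sum_{k} \hat w_s(k)\, \langle \eta_s, \vf{[H^k,A]}\pi_s\rangle .
\]
Every term of the sum vanishes by the first part, which leaves $\langle \eta_s, \vf{[\Delta,A]}\pi_s\rangle = 0$.

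The only genuine step is justifying the termwise expansion, that is, showing the series converges so that the pairing may be exchanged with the sum. Here I would use condition~(i-a) of Definition~\ref{def:bracket_regular}, which gives $\|[H^k,A]\|_{\Opbeta} \lesssim |k|^2$, together with $\pi_s \in H^{\beta_\star}$ for $s>0$ (Remark~\ref{rem:smoothing}). This yields $\|\vf{[H^k,A]}\pi_s\| \lesssim |k|^2 \|\pi_s\|_{H^{\beta_\star}}$. Since $w_s \in H^{s_\star}$ with $s_\star = \alpha_\star - 3/2 > 8$, Cauchy--Schwarz gives
\[
  \sum_k |\hat w_s(k)|\, |k|^2 \le \|w_s\|_{H^{s_\star}} \Bigl(\sum_k |k|^{4-2s_\star}\Bigr)^{1/2} < \infty .
\]
Hence the series $\sum_k \hat w_s(k)\,[H^k,A]$ converges absolutely in $\Opbeta$ to $[L^0_{w_s},A]$. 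Condition~(iii) of the definition also confirms that this limit is the bounded operator appearing in Lemma~\ref{lem:bracket_extraction}.

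The hypothesis $[\Delta,A] \in \Opbeta$ ensures that $\vf{[\Delta,A]}\pi_s$ is a well-defined element of $\mH$. It also makes the identity $[\Delta,A] = [G_{w_s},A] + [L^0_{w_s},A]$ hold as an identity of operators in $\Opbeta$ rather than only on $\mS$, which is legitimate by density of $\mS$ in $H^{\beta_\star}$. I expect no real obstacle beyond this summability check. The null set is the union of the null sets from Lemma~\ref{lem:bracket_extraction}, which is still null, so the conclusion holds almost surely for all $s \in (0,T]$ simultaneously.
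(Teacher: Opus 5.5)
Your proposal is correct and follows the paper's proof essentially step for step. You quote Lemma~\ref{lem:bracket_extraction} for the $H^k$ brackets, expand $[G_{w_s},A]$ through \eqref{eq:generator_modes}, and use condition~(i-a) with Cauchy--Schwarz to obtain absolute convergence in $\Opbeta$; you then evaluate the continuous functional $B \mapsto \langle \eta_s, \vf{B}\pi_s\rangle$ term by term. The only cosmetic difference is that you bound the series by $\|w_s\|_{H^{s_\star}}$ where the paper uses $\|w_s\|_{H^{s_\star-2}}$, which is immaterial.
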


\begin{proof}
  The first identity is part of the conclusion of Lemma~\ref{lem:bracket_extraction}. For the second, observe that $B \mapsto \langle \eta_s, \vf{B}\pi_s\rangle$ is a continuous linear functional on $\Opbeta$, since $\eta_s \in \mH$ and $\pi_s \in H^{\beta_\star}$ with $\|\pi_s\| = 1$, we have
  \begin{equation*}
    \bigl| \langle \eta_s, \vf{B}\pi_s \rangle \bigr|
    \leq 2 \|\eta_s\| \, \|\pi_s\|_{H^{\beta_\star}} \, \|B\|_{\Opbeta} .
  \end{equation*}
  The series in~\eqref{eq:generator_modes} converges absolutely in the norm of $\mathcal L(H^{\beta_\star}; \mH)$, which is all the functional sees. Condition~(i-a) of Definition~\ref{def:bracket_regular} gives $\|[H^l, A]\|_{\Opbeta} \lesssim_A |l|^2$, so that
  \[
    \sum_l |\hat w_s(l)| \, \|[H^l, A]\|_{\Opbeta} \lesssim_A \|w_s\|_{H^{s_\star-2}}
  \]
  by Cauchy--Schwarz, using $s_\star - 2 > 3$. We may therefore evaluate the functional term by term, and each term vanishes by~\eqref{eq:propagation}. Since the left side of~\eqref{eq:generator_modes} is annihilated as well, so is $[\Delta, A]$.
\end{proof}

Orthogonality therefore propagates along the two operations $A \mapsto [H^k, A]$
and $A \mapsto [\Delta, A]$, for as long as the result stays bracket-regular.

\begin{proposition}\label{prop:harvest}
  Fix $T > 0$ and $z_0 \in \mX$, and let $\eta = (\delta w, \delta\pi) \in T_{z_T}\mX$ satisfy $\langle M_T(z_0)\eta, \eta\rangle = 0$, with $\eta_s$ as in~\eqref{eq:eta_def}. Then $\delta w = 0$ and, for every bracket-generated $\mf{g} \subseteq \Opbeta$,
  \begin{equation}\label{eq:harvest}
    \langle \eta_s, \vf{A}\pi_s \rangle = 0 ,
    \qquad \forall s \in (0,T] , \ A \in \mf{g} ,
  \end{equation}
  almost surely, with an exceptional null set that depends only on $T$, $z_0$, and
  $\mf{g}$,  but
  not on $\eta$.
\end{proposition}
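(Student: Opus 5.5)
The plan is to induct along the chain $\mf{g}_0 \subseteq \mf{g}_1 \subseteq \cdots$ of Definition~\ref{def:bracket_regular}. The only delicate point is that the exceptional null set must not depend on $\eta$. The claim $\delta w = 0$ is Lemma~\ref{lem:vanishing_implies_orthogonality}(i) and needs no probabilistic input.

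For the base case, Lemma~\ref{lem:vanishing_implies_orthogonality}(ii) gives $\langle \eta_s, \vf{H^k}\pi_s\rangle = 0$ for all $s\in(0,T]$ and $k\in\Zstar$. By linearity of $A \mapsto \vf{A}\pi_s$, this holds for every $A \in \mf{g}_0$. Its proof is deterministic given $\langle M_T\eta,\eta\rangle=0$, using only continuity of the integrand, so no $\eta$-dependent null set enters.

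For the inductive step, assume \eqref{eq:harvest} holds on $\mf{g}_j$. Each element of $\mathcal{B}_j$ has the form $[H^k,A]$ or $[\Delta,A]$ with $A\in\mf{g}_j$. Every element of $\mf{g}_j$ is a finite linear combination of bracket-regular operators, hence bracket-regular, so Corollary~\ref{cor:propagation} applies to $A$. The element $[\Delta,A]$ lies in $\mathcal{B}_j\subseteq\Opbeta$, so the extra proviso of Corollary~\ref{cor:propagation} is met. Thus the new generators are annihilated, and by linearity all of $\mf{g}_{j+1}$ is. Taking the union over $j$ gives \eqref{eq:harvest} on $\mf{g}$.

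The main obstacle is uniformity in $\eta$. As stated, Lemma~\ref{lem:bracket_extraction} fixes $\eta$ and produces an exceptional set through Strassen's law applied to a random vector $c$ that depends on $\eta$. To remove this dependence, I would reread that proof. The almost sure inputs there are of three kinds, none of which involves $\eta$:
\begin{enumerate}[label=(\alph*)]
  \item regularity of the path $z$ on $[s,T]$ (Remark~\ref{rem:smoothing} and Schauder bounds);
  \item continuity of $s\mapsto[G_{w_s},A]$;
  \item Strassen's functional LIL for $(\zeta_\kappa(s,\cdot))_\kappa$ at each rational $s$.
\end{enumerate}
There are countably many such events: one per rational $s$, and one per element of the countable families $\mathcal{B}_j$ and the generators. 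The intersection of these events is a single null-set complement depending only on $T,z_0,\mf{g}$. On this event the argument of Lemma~\ref{lem:bracket_extraction} is deterministic in $\eta$. The terminal datum $\eta_T=\delta\pi$ enters only through $\|\delta\pi\|$ in the Lipschitz bounds for $r\mapsto\eta_r$, which are driven by $J^{\pi\pi}$. Strassen's theorem yields the limit set $\overline{\mB}$ for the increment vector itself, so $\langle h,c\rangle=0$ for all $h\in\overline{\mB}$ holds for every $c$ in the weighted dual simultaneously.

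Hence, on this fixed full-measure event, the implication ``annihilates $A$ $\Rightarrow$ annihilates $[H^k,A]$ and $[\Delta,A]$'' holds for every $\eta$ with $\langle M_T\eta,\eta\rangle=0$. Running the induction on this event gives the statement.
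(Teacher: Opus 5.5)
Your proposal is correct and follows the paper's proof: an induction along the chain $\mf{g}_0 \subseteq \mf{g}_1 \subseteq \cdots$. Lemma~\ref{lem:vanishing_implies_orthogonality} gives $\delta w = 0$ and the base case, and Corollary~\ref{cor:propagation} gives the inductive step. The paper only asserts that the exceptional set does not depend on $\eta$, and your accounting of the almost sure inputs correctly justifies that claim: path regularity, countably many continuity events for the generators, and Strassen's law at rational $s$, which handles the pairing with every admissible $c$ at once.
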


\begin{proof}
  That $\delta w = 0$ is Lemma~\ref{lem:vanishing_implies_orthogonality}(i). The
  rest  follows by induction from
  Definition~\ref{def:bracket_regular} and Corollary~\ref{cor:propagation}.
\end{proof}

\subsection{Proof of the non-degeneracy for fixed initial data}\label{sec:proof-as-nondegeneracy}

We are now ready to prove the first main result of the section.

\begin{proof}[Proof of Proposition~\ref{prop:as_nondegeneracy}]
  The proof is by contradiction. Assume that the set
  \begin{equation}
    L \eqdef \left\{ \omega \in \Omega : \inf_{\eta \in S_{\alpha, N, T}} \langle M_T(z_0) \eta, \eta \rangle = 0 \right\}
  \end{equation}
  has $\PP(L) > 0$. Note that the fact that $L$ is measurable requires some
  care since the slice over which we are minimizing is random (depending on the
  tangent space): this issue can be solved by extending the quadratic form to
  the identity on the orthogonal of $T_{\pi_T} \mathbb{S}$ in $\mH$.

  Fix $\omega \in L$ and let us show that the infimum is attained. Fix a
  minimizing sequence $\eta_k \in S_{\alpha,N,T}$, so that $\langle
  M_T(z_0)\eta_k, \eta_k\rangle \to 0$, and consider the closed cone
  \begin{equation}\label{eq:cone_unsliced_as}
    \Lambda \eqdef \bigl\{ \eta = (\delta w, \delta\pi) \in \mH \times \mH :
    \| P_N \eta \| \geq \alpha \|\eta\| , \ \langle \delta\pi, \pi_T \rangle = 0 \bigr\} .
  \end{equation}
  Since $\Lambda$ is weakly sequentially closed and $P_N$ has finite rank, we
  have up to taking a subsequence $\eta_k \rightharpoonup \eta_*$ and $P_N\eta_k \to P_N\eta_*$
  strongly. Moreover
  \begin{equation*}
    \|\eta_*\| \le \liminf_{k \to \infty} \|\eta_k\| \le 1 ,
    \qquad
    \|P_N\eta_*\| = \lim_{k \to \infty} \|P_N \eta_k\| \geq \alpha > 0 ,
  \end{equation*}
  so that $\eta_* \neq 0$ and one can easily check $\eta_* \in T_{z_T}\mX$.
  For each $s \in [0,T]$ the map $\eta \mapsto Q^{1/2}\Pi_w (J_{s,T})^*\eta$
  is bounded, hence weakly continuous by
  Lemma~\ref{lem:fibre_dissipation}(iii). Writing $\hat\eta \eqdef
  \eta_*/\|\eta_*\|$ we have $\hat\eta \in S_{\alpha,N,T}$, and weak lower
  semicontinuity at each $s$ together with Fatou's lemma imply
  \begin{equation*}
    0 \le \|\eta_*\|^2 \langle M_T(z_0)\hat\eta, \hat\eta\rangle
    = \int_0^T \bigl\| Q^{1/2}\Pi_w (J_{s,T})^*\eta_* \bigr\|^2 \ud s
    \le \liminf_{k \to \infty} \langle M_T(z_0)\eta_k, \eta_k\rangle = 0 .
  \end{equation*}
  Hence for $\omega \in L$ we may fix a (random) $\eta = (\delta w, \delta\pi)
  \in T_{z_T}\mX$ with $\|\eta\| = 1$, $\|P_N\eta\| \geq \alpha$ and $\langle
  M_T(z_0)\eta,\eta\rangle = 0$. Moreover, since the exceptional null set of
  Proposition~\ref{prop:harvest} does not depend on the tangent vector, that
  proposition may be applied to this $\eta$.

  Now consider the two bracket-generated vector fields $\mf{g}_{\mathrm{LNS}}$
  for LNS~\eqref{eq:two_operators_intro_NS} and $\mf{g}_{\mathrm{PSA}}$ for
  PSA~\eqref{eq:two_operators_intro_PS}, both defined
  in~\eqref{eq:lie_algebras}. We write simply $\mf{g}$ when we do not need to define the
  model. Then by Proposition~\ref{prop:harvest} $\delta w = 0$ and
  \begin{equation}\label{eq:claim-alg}
    \langle \eta_s , \vf{A}\pi_s \rangle = 0 ,
    \qquad \forall s \in (0, T] , \ A \in \mf{g} ,
  \end{equation}
  where $\eta_s = (J^{\pi\pi}_{s,T})^* \delta\pi$ as in~\eqref{eq:eta_def}. By~\eqref{eq:claim-alg} the algebraic spanning of Theorem~\ref{thm:transitivity} gives $\delta \pi =0$ immediately in the case of LNS.
  We obtain the same conclusion in the case of PSA, but here we must take a bit more care. Consider a sequence of times $s_k < T, \lim_{k \to \infty }s_k = T$. Then for every $k$ there exists a set of full measure $\Omega_k$ on which
  \begin{equation*}
    \langle \eta_{s_k} , \vf{A} \pi_{s_k} \rangle = 0 , \quad \forall A \in \mf{g}_{\mathrm{PSA}} \qquad \Rightarrow \qquad \eta_{s_k} =0,
  \end{equation*}
  since for any $k \in \NN$ we have $\eta_{s_k} \in H^5$ by
  Lemma~\ref{lem:adjoint_smoothing} below, so it satisfies the regularity requirement
  of the theorem. In addition we know that $\eta_{s_k} \to \delta \pi$ as $k \to
  \infty$ weakly in $\mH$, again by Lemma~\ref{lem:adjoint_smoothing}. This
  implies that on the set $\cap_{k \in \NN} \Omega_k$, which has still full measure,
  we have $\delta \pi =0$.

  In both cases $\delta \pi = 0$ is a contradiction to the statement $\| P_N \eta \| \geq \alpha$, so the proof is complete.
\end{proof}

Before we conclude the section, we record a small parabolic smoothing result
that is used in the proof above. 

  \begin{lemma}\label{lem:adjoint_smoothing}
    Fix $T > 0$ and $z_0 \in \mX$. Almost surely, for every $0 < s < T$ the adjoint $(J^{\pi\pi}_{s,T})^*$ maps $\pi_T^\perp$ boundedly into $H^5 \cap \pi_s^\perp$. Moreover, for every $\delta\pi \in \pi_T^\perp$, writing $\eta_s = (J^{\pi\pi}_{s,T})^* \delta\pi$, we have $\eta_s \rightharpoonup \delta\pi$ weakly in $\mH$ as $s \uparrow T$.
  \end{lemma}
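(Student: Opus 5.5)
The plan is to realize $\eta_r \eqdef (J^{\pi\pi}_{r,T})^*\delta\pi$ as the solution of a backward linear parabolic equation and then apply standard parabolic smoothing. Differentiating the cocycle identity gives $\partial_r \eta_r = -(L^{\pi\pi}_{z_r})^*\eta_r$ with terminal datum $\eta_T = \delta\pi$. From \eqref{eq:fiber_operator}, the adjoint operator is
\begin{equation*}
  (L^{\pi\pi}_{z})^*\phi = G_w^*\phi - \langle \pi,\phi\rangle G_w^*\pi - \langle \pi,\phi\rangle G_w\pi - \langle \pi, G_w\pi\rangle \phi .
\end{equation*}
Here $G_w^* = \Delta + (L^0_w)^*$. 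Because $K*w$ is divergence free, $(L^0_w)^*$ is again a first-order operator: in the PSA case it is $-B(w,\cdot)$, and in the LNS case it carries an additional zeroth-order nonlocal term. Its coefficients lie in $H^{s_\star-1}$ uniformly for $r \in [s/2, T]$, by Remark~\ref{rem:smoothing} and continuity of $w$ in $H^{s_\star}$ away from $0$. The remaining terms are rank-one perturbations built from $\pi_r, G_{w_r}\pi_r \in H^{\beta_\star-2}$. After the time reversal $\tau = T-r$, the equation is a forward heat equation with a first-order drift of regularity $H^{s_\star-1}$, which is well above $H^5$ since $\alpha_\star>10$.

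The key steps are as follows.

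\textbf{First,} I would record a basic energy estimate. Pairing with $\eta_r$ and using $\langle -\Delta\phi,\phi\rangle$ together with \eqref{eq:l4_toolkit}, one gets $\sup_{r\in[s,T]}\|\eta_r\|^2+\int_s^T\|\eta_r\|_{H^1}^2\,\ud r \lesssim_\omega \|\delta\pi\|^2$. The constant depends on $\sup_{r}\|w_r\|_{H^{s_\star}}+\|\pi_r\|_{H^{\beta_\star}}$, which is almost surely finite on $[s/2,T]$.

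\textbf{Second,} I would bootstrap regularity by the usual weighted estimates. For $m=1,\dots,5$, multiply by the weight $(T-r)^m$ and estimate $\|\eta_r\|_{H^m}$. Equivalently, apply Duhamel with the heat semigroup $e^{(T-r)\Delta}$ and gain one derivative at a time via $\|e^{t\Delta}\partial f\|_{H^{m}}\lesssim t^{-1/2}\|f\|_{H^{m}}$, iterating over finitely many steps. This gives $\|\eta_s\|_{H^5}\lesssim_{\omega,s}\|\delta\pi\|$, which is the claimed boundedness. The fact that $\eta_s \in \pi_s^\perp$ is automatic: $J^{\pi\pi}_{s,T}$ maps $T_{\pi_s}\Sph$ to $T_{\pi_T}\Sph$, and the adjoint is taken between these tangent spaces. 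Alternatively, one checks that $\langle \eta_r,\pi_r\rangle$ solves a closed linear ODE with zero terminal value.

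\textbf{Third,} for weak convergence, it suffices to test against $\phi\in\mS$, since $\|\eta_s\|$ is uniformly bounded for $s$ near $T$ by the first step. I would write $\langle \eta_s - \delta\pi,\phi\rangle = \int_s^T\langle \eta_r,L^{\pi\pi}_{z_r}\phi\rangle\,\ud r$. This satisfies $|\cdot|\le (T-s)\sup_r\|\eta_r\|\,\|L^{\pi\pi}_{z_r}\phi\|\to0$, because $L^{\pi\pi}_{z_r}\phi\in\mH$ is bounded for smooth $\phi$.

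The main obstacle is the regularity bookkeeping in the bootstrap. The nonlocal rank-one terms involve $G_{w_r}\pi_r$, and their continuity in $r$ near the endpoint must be handled carefully. The bootstrap must also not exceed the Sobolev index available for $w$ and $\pi$, since only $H^{\beta_\star-2}$ control is available for $G_w\pi$. Since $\beta_\star-2>7$, stopping at $H^5$ leaves enough room.
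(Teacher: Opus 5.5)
Your proposal is correct and follows the paper's route. The paper likewise writes $\eta_r$ as the solution of $\partial_r\eta_r = -(L^{\pi\pi}_{z_r})^*\eta_r$ with $\eta_T=\delta\pi$, reverses time, and invokes standard parabolic estimates for this perturbation of the heat equation; your orthogonality ODE for $\langle\eta_r,\pi_r\rangle$ and your Duhamel argument for weak convergence against smooth test functions spell out steps the paper leaves implicit.

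Two harmless slips, neither affecting the conclusion. First, $G_w^* = \Delta - (L^0_w)^*$ rather than $\Delta + (L^0_w)^*$, so in the PSA case $G_w^*=\Delta+u\cdot\nabla$. Second, the bound $\|e^{t\Delta}\partial f\|_{H^m}\lesssim t^{-1/2}\|f\|_{H^m}$ gains no regularity on its own. The bootstrap should instead rest on your weighted energy estimates, or on a fractional gain of order $t^{-(1+\theta)/2}$ with $\theta<1$ per step.
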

We state the result without complete proof, because it
is the consequence of usual Schauder estimates. The map $r \mapsto \eta_r$ solves the backward equation
    \begin{equation}\label{eq:adjoint_backward}
      \partial_r \eta_r = -\bigl(L^{\pi\pi}_{z_r}\bigr)^* \eta_r ,
      \qquad \eta_T = \delta\pi ,
    \end{equation}
which in the reversed time $\tau = T-r$, with $\tilde\eta_\tau \eqdef \eta_{T-\tau}$,
is the forward equation
    \begin{equation}\label{eq:adjoint_reversed}
      \partial_\tau \tilde\eta_\tau = \bigl(L^{\pi\pi}_{z_{T-\tau}}\bigr)^* \tilde\eta_\tau ,
      \qquad \tilde\eta_0 = \delta\pi .
    \end{equation}
Here we note that the adjoint operator is given by
    \begin{equation}\label{eq:fiber_operator_adjoint}
      \bigl(L^{\pi\pi}_z\bigr)^*\eta
      = G_w^*\eta
      - \langle \pi, \eta \rangle \bigl( G_w^*\pi + G_w\pi \bigr)
      - \langle \pi, G_w\pi \rangle \eta ,
    \end{equation}
with $G_w^* = \Delta - (L^0_w)^*$ and $B(w,\cdot)^* = -B(w,\cdot)$
because $u = K*w$ is divergence free, while for the stretching term
of~\eqref{eq:two_operators_intro_NS} the identity $K = \nabla^\perp(-\Delta)^{-1}$ and one
integration by parts give
    \begin{equation}\label{eq:stretching_adjoint}
      \bigl\langle B(\zeta, w), \eta \bigr\rangle
      = \bigl\langle (-\Delta)^{-1}\zeta, \curl (\eta \nabla w) \bigr\rangle ,
      \qquad\text{so that}\qquad
      B(\cdot, w)^* \eta = (-\Delta)^{-1} \curl (\eta \nabla w) .
    \end{equation}
Collecting the terms, \eqref{eq:adjoint_reversed} reads
    \begin{equation}\label{eq:adjoint_reversed_explicit}
      \partial_\tau \tilde\eta
      = \Delta \tilde\eta
      + u \cdot \nabla \tilde\eta
      - (-\Delta)^{-1}\curl\bigl( \tilde\eta \nabla w \bigr)
      - \langle \pi, \tilde\eta \rangle \bigl( G_w^*\pi + G_w\pi \bigr)
      - \langle \pi, G_w\pi \rangle \tilde\eta ,
    \end{equation}
  which is a perturbation of the heat equation and standard tools apply.

\subsection{Uniform tail estimate via compactness}
\label{sec:uniform_tail}

The almost sure non-degeneracy established in the preceding subsection is a
pointwise result for each initial condition $z_0$. Here we boost it to a uniform
estimate over initial data in  $\mC_R$,
following~\cite{DongPeng24}*{Appendix~B}, where the argument is attributed to~\cite{PZZ24}*{Proposition~3.5}.

\begin{lemma}\label{lem:base_difference}
  Let $w^1, w^2 \in C([0,T]; \mH) \cap L^2([0,T]; H^1)$ be such that $\tilde w \eqdef
  w^1 - w^2$ solves weakly
  \begin{equation}\label{eq:base_diff_eq}
    \partial_t \tilde w = \Delta \tilde w - B(\tilde w, w^1) - B(w^2, \tilde w) + g ,
    \qquad g \in L^2\big([0,T]; H^{-1}\big) .
  \end{equation}
  Then
  \begin{equation}\label{eq:base_diff_energy}
    \frac{\ud}{\ud t} \|\tilde w_t\|^2 + \|\nabla \tilde w_t\|^2
    \ \le\ C_0 \|w^1_t\| \|w^1_t\|_{H^1} \|\tilde w_t\|^2 + 8 \|g_t\|_{H^{-1}}^2
  \end{equation}
  and, with $\Lambda_T \eqdef C_0 \int_0^T \|w^1_r\| \|w^1_r\|_{H^1} \ud r$,
  \begin{equation}\label{eq:base_diff_gronwall}
    \sup_{t \le T} \|\tilde w_t\|^2 + \int_0^T \|\nabla \tilde w_r\|^2 \ud r
    \ \le\ 2 e^{\Lambda_T} \Big( \|\tilde w_0\|^2 + 8 \int_0^T \|g_r\|_{H^{-1}}^2 \ud r \Big) .
  \end{equation}
\end{lemma}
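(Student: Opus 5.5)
The plan is a standard energy estimate: test \eqref{eq:base_diff_eq} against $\tilde w$, handle the two transport terms using incompressibility and the $L^4$ toolkit \eqref{eq:l4_toolkit}, and close with Gr\"onwall. Since $\tilde w$ is only a weak solution with $\tilde w \in L^2 H^1$ and $\partial_t \tilde w \in L^2 H^{-1}$, we use the Lions--Magenes lemma. It gives that $t \mapsto \|\tilde w_t\|^2$ is absolutely continuous, with $\frac{\ud}{\ud t}\|\tilde w\|^2 = 2\langle \partial_t \tilde w, \tilde w\rangle$. Alternatively one could argue on Galerkin approximations and pass to the limit. The trilinear terms are well defined in this regularity by the bounds below.

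Pairing with $\tilde w$ gives
\[
\tfrac12 \tfrac{\ud}{\ud t}\|\tilde w\|^2 + \|\nabla \tilde w\|^2 = -\langle B(\tilde w, w^1), \tilde w\rangle - \langle B(w^2,\tilde w),\tilde w\rangle + \langle g, \tilde w\rangle .
\]
The second term on the right vanishes, because $K*w^2$ is divergence free. For the first term we integrate by parts, again using $\div(K*\tilde w) = 0$, and write $\langle B(\tilde w, w^1),\tilde w\rangle = -\langle (K*\tilde w)\, w^1, \nabla \tilde w\rangle$. H\"older's inequality and \eqref{eq:l4_toolkit} then give
\[
|\langle B(\tilde w, w^1),\tilde w\rangle| \le \|K*\tilde w\|_{L^4}\|w^1\|_{L^4}\|\nabla\tilde w\| \lesssim \|\tilde w\| \,\|w^1\|^{1/2}\|w^1\|_{H^1}^{1/2}\|\nabla \tilde w\| .
\]
By Young's inequality this is at most $\tfrac14\|\nabla\tilde w\|^2 + C\|w^1\|\|w^1\|_{H^1}\|\tilde w\|^2$. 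On mean-zero functions $\|\tilde w\|_{H^1} = \|\nabla \tilde w\|$, so the forcing term satisfies $|\langle g,\tilde w\rangle| \le \|g\|_{H^{-1}}\|\nabla\tilde w\| \le \tfrac14\|\nabla \tilde w\|^2 + \|g\|_{H^{-1}}^2$.

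Collecting terms gives $\tfrac12\tfrac{\ud}{\ud t}\|\tilde w\|^2 + \tfrac12\|\nabla\tilde w\|^2 \le C\|w^1\|\|w^1\|_{H^1}\|\tilde w\|^2 + \|g\|_{H^{-1}}^2$. Multiplying by two yields \eqref{eq:base_diff_energy} with $C_0 = 2C$ and the constant $2 \le 8$, so there is room to spare in the stated constant.

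For \eqref{eq:base_diff_gronwall}, set $a(t) = C_0\|w^1_t\|\|w^1_t\|_{H^1}$. This is integrable on $[0,T]$ by Cauchy--Schwarz, since $w^1 \in C\mH \cap L^2H^1$. Dropping the gradient term and applying Gr\"onwall's inequality gives
\[
\sup_{t\le T}\|\tilde w_t\|^2 \le e^{\Lambda_T}\Big(\|\tilde w_0\|^2 + 8\int_0^T\|g\|_{H^{-1}}^2\Big).
\]
We then integrate \eqref{eq:base_diff_energy} over $[0,T]$ and bound $\int_0^T a\|\tilde w\|^2 \le \Lambda_T \sup_t\|\tilde w_t\|^2$. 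Using $1+\Lambda_T e^{\Lambda_T} \le e^{\Lambda_T}(1+\Lambda_T)$, this only yields the constant $(1+\Lambda_T)e^{\Lambda_T}$ in front. To get $2e^{\Lambda_T}$ instead, multiply \eqref{eq:base_diff_energy} by the integrating factor $e^{-\int_0^t a}$ and integrate. This gives
\[
\int_0^T e^{-\int_0^r a}\|\nabla\tilde w_r\|^2\,\ud r \le \|\tilde w_0\|^2 + 8\int_0^T\|g\|^2_{H^{-1}},
\]
so the dissipation integral is also bounded by $e^{\Lambda_T}(\cdots)$. Adding the two bounds gives the factor $2e^{\Lambda_T}$.

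The only genuinely delicate point is justifying the energy identity at this weak regularity. The estimates are otherwise routine, and the trilinear bound above also shows that $B(\tilde w,w^1), B(w^2,\tilde w) \in L^2H^{-1}$, which is what Lions--Magenes requires.
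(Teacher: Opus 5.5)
Your proof is correct and is exactly the standard energy estimate the paper has in mind (the paper omits its proof): you test against $\tilde w$, remove $B(w^2,\tilde w)$ by incompressibility, bound the stretching term via the $L^4$ toolkit and Young, and close with the integrating factor $e^{-\int_0^t a}$, which does yield the stated constant $2e^{\Lambda_T}$. Your Lions--Magenes justification of the energy identity at this weak regularity, including the check that both bilinear terms lie in $L^2([0,T];H^{-1})$, is a detail the paper leaves implicit.
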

The proof is a standard energy estimate for 2D vorticity and we omit it. The projective process satisfies a similar estimate.

\begin{lemma}\label{lem:fibre_difference}
  Let $w^1, w^2 \in C([0,T]; \mH) \cap L^2([0,T]; H^1)$ be two base paths and let $\pi^1, \pi^2 \in C([0,T]; \Sph) \cap L^2([0,T]; H^1)$ solve the projective equation~\eqref{eq:proj-spde} over $w^1$ and over $w^2$ respectively. Then:
  \begin{enumerate}[label=(\roman*)]
    \item For $i \in \{1,2\}$ the Dirichlet quotient of $\pi^i$ obeys
      \begin{equation}\label{eq:fd_dirichlet}
        \frac{\ud}{\ud t} \|\pi^i_t\|_{H^1}^2
        \ \le\ C \|w^i_t\|_{H^1}^2  \|\pi^i_t\|_{H^1}^2   ,
      \end{equation}
      and consequently
      \begin{equation}\label{eq:fd_h1}
        \sup_{t \le T} \|\pi^i_t\|_{H^1}^2
        \ \le\  \|\pi^i_0\|_{H^1}^2 
        \exp\Bigl( C \int_0^T \|w^i_r\|_{H^1}^2 \ud r \Bigr) .
      \end{equation}
    \item Write $\tilde w \eqdef w^1 - w^2$ and $\tilde\pi \eqdef \pi^1 - \pi^2$. Then
      \begin{equation}\label{eq:fd_energy}
        \frac{\ud}{\ud t} \|\tilde\pi_t\|^2 + \|\nabla \tilde\pi_t\|^2
        \ \le\ C\bigl( \Gamma^1_t + \Gamma^2_t \bigr) \|\tilde\pi_t\|^2
        + C \|\pi^2_t\|_{H^1}^2 \|\tilde w_t\|^2 + C \|\tilde w_t\|_{H^1}^2 ,
      \end{equation}
      with $\Gamma^i$ the quantity~\eqref{eq:gamma_r} computed on $(w^i, \pi^i)$, and, with $A_T \eqdef C\int_0^T \bigl( \Gamma^1_r + \Gamma^2_r \bigr) \ud r$,
      \begin{equation}\label{eq:fd_gronwall}
        \sup_{t \le T} \|\tilde\pi_t\|^2 + \int_0^T \|\nabla \tilde\pi_r\|^2 \ud r
        \ \le\ 2 e^{A_T} \Bigl( \|\tilde\pi_0\|^2
          + C \int_0^T \bigl( \|\pi^2_r\|_{H^1}^2 \|\tilde w_r\|^2
        + \|\tilde w_r\|_{H^1}^2 \bigr) \ud r \Bigr) .
      \end{equation}
  \end{enumerate}
\end{lemma}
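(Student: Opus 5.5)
Both parts are energy estimates on the projective equation~\eqref{eq:proj-spde}, written as $\partial_t \pi^i = G_{w^i}\pi^i - \langle \pi^i, G_{w^i}\pi^i\rangle \pi^i$. I will work with smooth solutions and conclude by density. The common toolkit is:
\begin{itemize}
\item the antisymmetry $\langle B(w,\zeta),\zeta\rangle = 0$;
\item the bounds~\eqref{eq:l4_toolkit};
\item the fact that $\|\pi^i\|=1$, so $\langle \pi^i, G_{w^i}\pi^i\rangle = -\|\nabla\pi^i\|^2 - \langle \pi^i, L^0_{w^i}\pi^i\rangle$.
\end{itemize}
For PSA the last term vanishes. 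For LNS it equals $-\langle \pi, B(\pi,w)\rangle$, which is bounded by $\|K*\pi\|_{L^4}\|\nabla w\|\,\|\pi\|_{L^4} \lesssim \|w\|_{H^1}\|\pi\|_{H^1}^{1/2}$.

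\emph{Part (i).} I compute $\tfrac{\ud}{\ud t}\|\nabla\pi\|^2 = 2\langle -\Delta\pi, G_w\pi\rangle + 2\lambda\|\nabla\pi\|^2$, where $\lambda = -\langle\pi,G_w\pi\rangle$; note the sign, since $\pi$ is normalized.
\begin{itemize}
\item The term $2\langle-\Delta\pi,\Delta\pi\rangle = -2\|\Delta\pi\|^2$ is the dissipation.
\item Combined with $2\lambda\|\nabla\pi\|^2 = -2\|\nabla\pi\|^4 - 2\langle\pi, L^0_w \pi\rangle\|\nabla\pi\|^2$, this uses $\|\nabla\pi\|^4 \le \|\Delta\pi\|^2$ (Cauchy--Schwarz with $\|\pi\|=1$). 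Hence $-2\|\Delta\pi\|^2 + 2\|\nabla\pi\|^4 \le 0$, and the Dirichlet quotient is nonincreasing under pure heat flow.
\item The transport term gives $\langle \Delta\pi, B(w,\pi)\rangle$. By antisymmetry it reduces to $-\sum_i\langle \partial_i\pi, (\partial_i u)\cdot\nabla\pi\rangle$, which is bounded by $\|\nabla u\|_{L^2}\|\nabla\pi\|_{L^4}^2 \lesssim \|w\|\,\|\nabla\pi\|\,\|\Delta\pi\|$.
\item For LNS the stretching term $\langle \Delta\pi, B(\pi,w)\rangle$ is bounded by $\|\Delta\pi\|\,\|K*\pi\|_{L^\infty}\|\nabla w\|$. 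Here I use $\|K*\pi\|_{L^\infty} \lesssim \|\pi\|_{H^{1/2}} \le \|\pi\|_{H^1}$.
\item The leftover $\langle\pi,L^0_w\pi\rangle\|\nabla\pi\|^2$ term is bounded as above.
\end{itemize}
Young's inequality absorbs every $\|\Delta\pi\|$ factor into the dissipation. One must keep $-\|\Delta\pi\|^2 + \|\nabla\pi\|^4 \le 0$ intact, so I absorb only half of $\|\Delta\pi\|^2$ and split $\|\Delta\pi\|^2 = \tfrac12 + \tfrac12$ carefully. This leaves $C\|w\|_{H^1}^2\|\pi\|_{H^1}^2$, which gives~\eqref{eq:fd_dirichlet}. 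Gronwall then gives~\eqref{eq:fd_h1}.

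\emph{Part (ii).} I subtract the two equations. Writing $\lambda^i = \langle\pi^i,G_{w^i}\pi^i\rangle$, I get
\[
\partial_t\tilde\pi = \Delta\tilde\pi - L^0_{w^1}\tilde\pi - L^0_{\tilde w}\pi^2 - \lambda^1\tilde\pi - (\lambda^1-\lambda^2)\pi^2 .
\]
Testing with $\tilde\pi$:
\begin{itemize}
\item $\Delta\tilde\pi$ yields $-\|\nabla\tilde\pi\|^2$.
\item $B(w^1,\tilde\pi)$ vanishes by antisymmetry.
\item For LNS, $\langle B(\tilde\pi,w^1),\tilde\pi\rangle \le \|\tilde\pi\|\,\|\nabla w^1\|\,\|\tilde\pi\|_{L^4} \lesssim \|\tilde\pi\|^{3/2}\|w^1\|_{H^1}\|\tilde\pi\|_{H^1}^{1/2}$. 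Young's inequality turns this into $\tfrac18\|\nabla\tilde\pi\|^2 + C\Gamma^1\|\tilde\pi\|^2$.
\item $\langle L^0_{\tilde w}\pi^2,\tilde\pi\rangle$: the term $B(\tilde w,\pi^2)$, after moving the derivative onto $\tilde\pi$, is bounded by $\|K*\tilde w\|_{L^4}\|\pi^2\|_{L^4}\|\nabla\tilde\pi\| \lesssim \|\tilde w\|\,\|\pi^2\|_{H^1}\|\nabla\tilde\pi\|$. This is the source of $C\|\pi^2\|_{H^1}^2\|\tilde w\|^2$. The LNS stretching piece $B(\pi^2,\tilde w)$ is bounded by $\|K*\pi^2\|_{L^4}\|\nabla\tilde w\|\,\|\tilde\pi\|_{L^4}$, which produces $C\|\tilde w\|_{H^1}^2$ plus absorbable terms.
\item $|\lambda^1|\,\|\tilde\pi\|^2 \lesssim \Gamma^1\|\tilde\pi\|^2$.
\end{itemize}

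The most delicate term is $(\lambda^1-\lambda^2)\langle\pi^2,\tilde\pi\rangle$. First, $\langle\pi^2,\tilde\pi\rangle = -\tfrac12\|\tilde\pi\|^2$ because both vectors are unit, so this is $\tfrac12|\lambda^1-\lambda^2|\,\|\tilde\pi\|^2$. I bound it crudely by $|\lambda^1|+|\lambda^2| \lesssim \Gamma^1+\Gamma^2$. This avoids estimating the difference of the Rayleigh quotients at all, which I expect to be the main obstacle otherwise.

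Collecting the terms gives~\eqref{eq:fd_energy}. Gronwall with $A_T$ gives~\eqref{eq:fd_gronwall}; the factor 2 comes from keeping half of the dissipation.
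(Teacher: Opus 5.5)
Part (i) has a genuine gap in the absorption step. Write $\mu \eqdef \|\nabla\pi\|^2$. The only coercive quantity your computation produces is $2\|\Delta\pi\|^2 - 2\mu^2 = 2\|(\Delta+\mu)\pi\|^2$. (Your display for $2\lambda\|\nabla\pi\|^2$ also has the wrong sign: it equals $+2\mu^2 + 2\langle\pi, L^0_w\pi\rangle\mu$.) This coercive quantity vanishes whenever $\pi$ is an eigenfunction of $-\Delta$, however large $\|\Delta\pi\|$ is. For example, with $w \equiv 0$ such a $\pi$ is stationary and $\frac{\ud}{\ud t}\|\nabla\pi\|^2 = 0$. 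So there is no spare half of $\|\Delta\pi\|^2$ to spend: keeping one copy of $-\|\Delta\pi\|^2+\mu^2$ leaves another copy of the same thing, not $-\|\Delta\pi\|^2$.

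With the bounds you chose, the estimate therefore does not close. Your transport bound $\|w\|\,\|\nabla\pi\|\,\|\Delta\pi\|$ and stretching bound $\|w\|_{H^1}\|\pi\|_{H^1}\|\Delta\pi\|$, combined with $\|\Delta\pi\| \le \|(\Delta+\mu)\pi\| + \mu$, leave unabsorbable errors of size $\|w\|_{H^1}\|\pi\|_{H^1}^3$. Your separate bound $\|w\|_{H^1}\|\pi\|_{H^1}^{5/2}$ on the term $\langle\pi,L^0_w\pi\rangle\mu$ has no dissipative factor at all. These errors are superquadratic in $\|\pi\|_{H^1}$, so you end up with a Riccati-type inequality rather than \eqref{eq:fd_dirichlet}.

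The repair is to use $\partial_t\pi \perp \pi$ from the start. This gives $\frac{\ud}{\ud t}\|\nabla\pi\|^2 = -2\langle(\Delta+\mu)\pi, \partial_t\pi\rangle$ with $\partial_t\pi = (\Delta+\mu)\pi - L^0_w\pi + \langle\pi, L^0_w\pi\rangle\pi$, hence
\[
  \frac{\ud}{\ud t}\|\nabla\pi\|^2 = -2\|(\Delta+\mu)\pi\|^2 + 2\bigl\langle(\Delta+\mu)\pi, L^0_w\pi\bigr\rangle .
\]
This automatically groups the stretching term with the Rayleigh-quotient correction; these two must not be estimated separately. Then bound $L^0_w\pi$ directly in $L^2$ instead of integrating by parts. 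By \eqref{eq:l4_toolkit},
\[
  \|L^0_w\pi\| \le \|K*w\|_{L^\infty}\|\nabla\pi\| + \|K*\pi\|_{L^\infty}\|\nabla w\| \lesssim \|w\|_{H^1}\|\pi\|_{H^1} .
\]
Young's inequality then gives $\frac{\ud}{\ud t}\|\nabla\pi\|^2 \le -\|(\Delta+\mu)\pi\|^2 + C\|w\|_{H^1}^2\|\pi\|_{H^1}^2$, which yields \eqref{eq:fd_dirichlet} and \eqref{eq:fd_h1}.

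Your part (ii) is sound as written. The difference equation is correct, and each pairing is estimated legitimately. The identity $\langle\pi^2,\tilde\pi\rangle = -\tfrac12\|\tilde\pi\|^2$ cleanly reduces the Rayleigh-quotient difference to $\tfrac12|\lambda^1-\lambda^2|\,\|\tilde\pi\|^2 \lesssim (\Gamma^1+\Gamma^2)\|\tilde\pi\|^2$. This supplies the ``standard energy estimate'' that the paper leaves unwritten; the paper defers (i) to \cite{HPRY24}.
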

Part~(i) is the pathwise fiber Gr\"onwall of \cite{HPRY24}*{Lemma~6.4}, and
part~(ii) is a standard energy estimate as in Lemma~\ref{lem:base_difference}.
Next we study the continuity of the Jacobian in the initial data.

\begin{lemma}\label{lem:continuity_in_data}
  Let $\{z_0^{(n)}\}_{n \geq 1} \subset \mC_R$ be a sequence of initial
  conditions, and suppose $z_0^{(n)} \to z_0^{(0)}$ in $\mathcal{X}$. Denote by
  $z^{(n)}_t = (w_t^{(n)}, \pi_t^{(n)})$ the solution of the joint system
  starting from $z_0^{(n)}$, and by $J^{(n)}_{s,t}$ the associated Jacobian.
  Then, for any $0 \leq s \leq t < \infty$ we have that $z_t^{(n)} \to
  z_t^{(0)}$ in $\mX$ and, for any $\eta = (\varphi, 0)$ with $\varphi \in \mH$,
  $J_{s,t}^{(n)} \eta \to J_{s,t}^{(0)} \eta$ in $\mH \times \mH$.
\end{lemma}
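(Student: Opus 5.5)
The plan is to establish pathwise convergence, for a fixed realization of the noise, in two stages. First I show convergence of the base and fiber trajectories using Lemmas~\ref{lem:base_difference} and~\ref{lem:fibre_difference}. Then I show convergence of the Jacobian applied to $\eta = (\varphi,0)$ via the linear equation~\eqref{eq:grand_jacobian_system}, using a Gr\"onwall argument on differences.

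For the base, $w^{(n)}$ and $w^{(0)}$ are driven by the same additive noise. So $\tilde w = w^{(n)} - w^{(0)}$ solves~\eqref{eq:base_diff_eq} with $g=0$, and~\eqref{eq:base_diff_gronwall} gives
\[
\sup_{t\le T}\|\tilde w_t\|^2 + \int_0^T \|\nabla\tilde w\|^2 \lesssim e^{\Lambda_T}\|\tilde w_0\|^2 \to 0 .
\]
Here $\Lambda_T$ depends only on the limit path $w^{(0)}$, which is almost surely in $L^2H^1$. For the fiber I apply~\eqref{eq:fd_gronwall}. The factor $A_T$ involves $\Gamma^{(n)}$, and I need it bounded uniformly in $n$. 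This is where $z_0^{(n)}\in\mC_R$ enters: $\|\pi^{(n)}_0\|_{H^1}\le R$, so~\eqref{eq:fd_h1} bounds $\sup_t\|\pi^{(n)}_t\|_{H^1}^2$ by $R^2\exp(C\int_0^T\|w^{(n)}\|_{H^1}^2)$. The base energy estimate makes $\int_0^T\|w^{(n)}\|_{H^1}^2$ uniformly bounded, since it is close to the limit's integral by the previous step. The source terms $\int(\|\pi^{(0)}\|_{H^1}^2\|\tilde w\|^2+\|\tilde w\|_{H^1}^2)$ tend to zero, and $\|\tilde\pi_0\|\to0$. Hence $z^{(n)}_t\to z^{(0)}_t$ in $\mX$, uniformly on $[0,T]$, and also in $L^2_tH^1$.

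For the Jacobian, write $(\xi^{(n)}_t,\rho^{(n)}_t) = J^{(n)}_{s,t}(\varphi,0)$. The first component solves the linearized SNS $\partial_t\xi = \Delta\xi - B(w^{(n)},\xi)-B(\xi,w^{(n)})$ with $\xi_s=\varphi$. The difference $\xi^{(n)}-\xi^{(0)}$ satisfies an equation of the form~\eqref{eq:base_diff_eq} with forcing $g = -B(\tilde w,\xi^{(0)}) - B(\xi^{(0)},\tilde w)$. This forcing is small in $L^2H^{-1}$ by~\eqref{eq:l4_toolkit}, since $\xi^{(0)}\in L^\infty L^2\cap L^2H^1$ and $\tilde w\to0$ in $L^\infty L^2\cap L^2H^1$. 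The Gr\"onwall factor uses the uniform $L^2H^1$ bounds on $w^{(n)}$ (the stretching term $B(\xi,w^{(n)})$ is handled similarly, with the roles adjusted), so $\xi^{(n)}\to\xi^{(0)}$.

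The second component solves
\[
\partial_t\rho = L^{\pi\pi}_{z^{(n)}}\rho + L^{\pi w}_{z^{(n)}}\xi^{(n)}, \qquad \rho_s = 0 .
\]
I do an $L^2$ energy estimate for $\rho^{(n)}-\rho^{(0)}$. The differences of coefficients, $(L^{\pi\pi}_{z^{(n)}}-L^{\pi\pi}_{z^{(0)}})\rho^{(0)}$ and $L^{\pi w}_{z^{(n)}}\xi^{(n)}-L^{\pi w}_{z^{(0)}}\xi^{(0)}$, are bilinear or trilinear in $(w,\pi,\xi,\rho)$. I estimate them in $H^{-1}$ via~\eqref{eq:l4_toolkit}, in terms of $\|\tilde w\|_{H^1},\|\tilde\pi\|_{H^1},\|\xi^{(n)}-\xi^{(0)}\|_{H^1}$, multiplied by quantities bounded through $\Gamma^{(n)},\Gamma^{(0)}$ and the energy of $\rho^{(0)}$.

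The main obstacle is the nonlocal scalar terms $\langle\pi,G_w\rho\rangle\pi$, $\langle\rho,G_w\pi\rangle\pi$ and $\langle\pi,G_w\pi\rangle\rho$ in~\eqref{eq:fiber_operator}. They involve $\Delta$, so they have to be bounded by $\|\pi\|_{H^1}\|\rho\|_{H^1}$ after integration by parts, which costs a factor $\Gamma$ in the Gr\"onwall exponent. This is integrable precisely because of the uniform localization from $\mC_R$ and~\eqref{eq:fd_h1}. I would first check that the same energy estimate gives an a priori bound on $\rho^{(0)}$ in $L^\infty L^2\cap L^2H^1$. Then the difference estimate closes, yielding $\rho^{(n)}_t\to\rho^{(0)}_t$ in $\mH$.
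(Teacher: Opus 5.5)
Your architecture matches the paper's: a pathwise Gr\"onwall argument on differences, with the uniform bound on $\int_0^T\Gamma^{(n)}$ supplied by $\mC_R$ and \eqref{eq:fd_h1}. The only organizational difference is that you treat the Jacobian triangularly. You handle the base component first, as a forced instance of Lemma~\ref{lem:base_difference}, and then the fiber, with the coupling $L^{\pi w}_{z^{(n)}}\xi^{(n)}-L^{\pi w}_{z^{(0)}}\xi^{(0)}$ as a known source. The paper instead runs one energy estimate on the full vector $J^{(n)}_{s,t}\eta-J^{(0)}_{s,t}\eta$. That reorganization is fine, and so is your choice $w^1=w^{(0)}$, which makes $\Lambda_T$ independent of $n$.

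The step that does not close as written is the fiber source. Write $\tilde\pi=\pi^{(n)}-\pi^{(0)}$ and $\delta\rho=\rho^{(n)}-\rho^{(0)}$. The Laplacian parts of $\langle\pi,G_w\rho\rangle\pi$ and $\langle\rho,G_w\pi\rangle\pi$ in $(L^{\pi\pi}_{z^{(n)}}-L^{\pi\pi}_{z^{(0)}})\rho^{(0)}$ produce the term $2\langle\nabla\tilde\pi,\nabla\rho^{(0)}\rangle\,\pi^{(n)}$. Its $H^{-1}$ norm is only bounded by $\|\tilde\pi\|_{H^1}\|\rho^{(0)}\|_{H^1}$. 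Here $\|\rho^{(0)}\|_{H^1}$ is only in $L^2_t$, and $\|\tilde\pi\|_{H^1}$ is small only in $L^2_t$; in $L^\infty_t$ it is merely bounded. So your bound makes this source small in $L^1_t$, not in $L^2_t$. The $H^{-1}$--$H^1$ duality followed by Young's inequality, which absorbs $\|\delta\rho\|_{H^1}^2$ into the dissipation, needs $\int_s^T\|S_r\|_{H^{-1}}^2\ud r\to0$, and you do not have that.

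The paper's fix uses that this term points along the $L^2$-unit vector $\pi^{(n)}$. Pairing in $L^2$ gives a contribution $F_t\|\delta\rho_t\|$ with $F_t=\|\nabla\tilde\pi_t\|\,\|\rho^{(0)}_t\|_{H^1}$, and $\int_s^T F_t\ud t\to0$ by Cauchy--Schwarz. Then $2F\|\delta\rho\|\le F+F\|\delta\rho\|^2$ moves $F$ into the Gr\"onwall exponent; this is the role of $F^{(n)}$ in \eqref{eq:cd_Fdef}.

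Alternatively, you can keep your $H^{-1}$ scheme. Combine $\sup_{n,t}\|\tilde\pi_t\|_{H^1}<\infty$ (which you have from \eqref{eq:fd_h1}) with $\tilde\pi\to0$ in $L^2_tH^1$, and apply dominated convergence along subsequences to get $\int_s^T\|\tilde\pi\|_{H^1}^2\|\rho^{(0)}\|_{H^1}^2\ud r\to0$. One of these arguments has to be supplied. By contrast, the nonlocal terms acting on the unknown, which you flag as the main obstacle, are routine: they are exactly Lemma~\ref{lem:fibre_dissipation}(ii).
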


\begin{proof}
  Throughout we fix a horizon $T>0$, work on $[0,T]$ and argue $\PP$-almost
  surely. The constants below may depend on $\omega$, $R$ and $T$ but never on
  $n$. Write $v^{(n)}\eqdef w^{(n)}-w^{(0)}$ and
  $\rho^{(n)}\eqdef\pi^{(n)}-\pi^{(0)}$ for the base and fiber differences.
  We also write
  $\Gamma^{(n)}_t\eqdef\|w_t^{(n)}\|_{H^1}^2+\|\pi_t^{(n)}\|_{H^1}^2$ for the
  weight~\eqref{eq:gamma_r} along the $n$-th trajectory, and we note that
  $\Gamma^{(n)}_t\ge\|\pi^{(n)}_t\|^2=1$. Since the embeddings $H^{s_\star}\hookrightarrow\mH$ and
  $H^1\hookrightarrow\mH$ are continuous, the hypothesis $z_0^{(n)}\to
  z_0^{(0)}$ in $\mX$ gives in particular
  $\|w_0^{(n)}-w_0^{(0)}\|\to0$ and $
  \|\pi_0^{(n)}-\pi_0^{(0)}\|\to0 $.
  Because the data lie in $\mC_R$, so that $\|w_0^{(n)}\|\le R$ and $\|\pi_0^{(n)}\|_{H^1}\le R$, there exists a finite random constant $M=M(\omega,R,T)$, independent of $n$, such that
  \begin{equation}\label{eq:cd_uniform}
    \begin{aligned}
      \sup_{n\ge0}\Big(\sup_{t\in[0,T]}\|w_t^{(n)}\|^2
      & +\int_0^T\Gamma^{(n)}_t\ud t\Big)\le M.
    \end{aligned}
  \end{equation}
  For $w$ this follows from an energy estimate and $\|w_0^{(n)}\| \le R$. For $\pi$
  from the bound on $w$ and $\|\pi_0^{(n)}\|_{H^1} \le R$.

  The convergence $z^{(n)}_t \to z_t^{(0)}$ follows from the continuous dependence
  of the two-dimensional vorticity equation on its
  data~\cite{ConstantinFoias88}, \cite{KuksinShirikyan12}*{Chapter~2}, in the
  form of Lemmas~\ref{lem:base_difference} and~\ref{lem:fibre_difference}(ii).
  So $w_t^{(n)}\to w_t^{(0)}$ and $\pi_t^{(n)}\to\pi_t^{(0)}$ in $\mH$,
  uniformly on $[0,T]$, which is the first assertion, and both differences also tend
  to zero in $L^2([0,T];H^1)$.

  Finally, we prove the convergence of the Jacobian. Fix a base vector $\eta =
  (\varphi, 0)$, $\varphi \in \mH$, and set $\eta_t\eqdef J_{s,t}^{(0)}\eta$,
  the solution of the limiting linear system $\partial_t\eta_t=L_{z_t^{(0)}}\eta_t$
  on $[s,T]$ with $\eta_s=\eta$. The difference $\zeta_t^{(n)} =
  J_{s,t}^{(n)}\eta-J_{s,t}^{(0)}\eta$ solves
  \begin{equation}\label{eq:cd_jdiff}
    \partial_t\zeta^{(n)}=L_{z_t^{(n)}}\zeta^{(n)}+\big(L_{z_t^{(n)}}-L_{z_t^{(0)}}\big)\eta_t ,
    \qquad\zeta_s^{(n)}=0 .
  \end{equation}
  Therefore, we find
  \begin{equation*}
    \frac12\frac{\ud}{\ud t}\|\zeta_t^{(n)}\|^2
    = \langle L_{z_t^{(n)}}\zeta_t^{(n)},\zeta_t^{(n)}\rangle
    + \big\langle \big(L_{z_t^{(n)}}-L_{z_t^{(0)}}\big)\eta_t,\zeta_t^{(n)}\big\rangle .
  \end{equation*}
  We estimate the two terms on the right separately. For the first term we
  must bound $\langle L_{z_t^{(n)}}\zeta_t^{(n)},\zeta_t^{(n)}\rangle$, and we
  split $\zeta_t^{(n)}=(\zeta^{w},\zeta^{\pi})$ into base and fiber components.
  The pairing then has three blocks, which we estimate one by one.

  The base block $L^{ww}_{w^{(n)}}\zeta^w = \Delta\zeta^w - B(w^{(n)},\zeta^w) -
  B(\zeta^w,w^{(n)})$ satisfies
  \begin{equation*}
    \langle L^{ww}_{w^{(n)}}\zeta^{w},\zeta^{w}\rangle
    \le - \frac14\|\nabla\zeta^{w}\|^2  + C\,\Gamma^{(n)}_t\,\|\zeta^{w}\|^2 .
  \end{equation*}
  The fiber block is estimated by Lemma~\ref{lem:fibre_dissipation}(ii) below:
  \begin{equation*}
    \bigl\langle L^{\pi\pi}_{z_t^{(n)}}\zeta^{\pi},\zeta^{\pi}\bigr\rangle
    \le -\frac14\|\nabla\zeta^{\pi}\|^2
    + C\,\Gamma^{(n)}_t\,\|\zeta^{\pi}\|^2 .
  \end{equation*}
  The coupling block~\eqref{eq:coupling_operator} is estimated through the same
  splitting as in~\eqref{eq:coupling_split} to obtain:
  \begin{equation*}
    \bigl| \bigl\langle \zeta^{\pi}, L^{\pi w}_{z_t^{(n)}}\zeta^{w} \bigr\rangle \bigr|
    \ \le\ \frac18\|\nabla\zeta^{w}\|^2 + C\,\Gamma^{(n)}_t\,\|\zeta_t^{(n)}\|^2 .
  \end{equation*}
  Together the three blocks give
  \begin{equation*}
    \langle L_{z_t^{(n)}}\zeta_t^{(n)},\zeta_t^{(n)}\rangle
    \le -\frac18\|\nabla\zeta_t^{(n)}\|^2
    + C\,\Gamma^{(n)}_t\,\|\zeta_t^{(n)}\|^2 .
  \end{equation*}

  The second term is the source $(L_{z_t^{(n)}}-L_{z_t^{(0)}})\eta_t$. We write
  $P^{\perp}_{\pi}\psi \eqdef \psi - \langle\pi,\psi\rangle\pi$, so
  that~\eqref{eq:coupling_operator} reads
  $L^{\pi w}_{\pi}[\delta w] = -P^{\perp}_{\pi}\bigl((DL^0)[\delta w]\pi\bigr)$
  and~\eqref{eq:fiber_operator} reads
  $L^{\pi\pi}_{z}\delta\pi = P^{\perp}_{\pi}(G_w\delta\pi)
  - \langle\delta\pi,G_w\pi\rangle\pi - \langle\pi,G_w\pi\rangle\delta\pi$. Two
  identities drive the three differences,
  \begin{equation}\label{eq:cd_diff_identities}
    \bigl(P^{\perp}_{\pi^{(n)}_t}-P^{\perp}_{\pi^{(0)}_t}\bigr)\psi
    = -\langle\pi^{(0)}_t,\psi\rangle\,\rho^{(n)}_t
      -\langle\rho^{(n)}_t,\psi\rangle\,\pi^{(n)}_t ,
    \qquad
    G_{w^{(n)}_t}-G_{w^{(0)}_t} = -L^0_{v^{(n)}_t} ,
  \end{equation}
  where the second holds because $w \mapsto L^0_w$ is linear. In the base,
  \begin{equation*}
    \bigl( L^{ww}_{w^{(n)}} - L^{ww}_{w^{(0)}} \bigr)\eta^{w}_t
    = - B\bigl(v_t^{(n)},\eta^{w}_t\bigr) - B\bigl(\eta^{w}_t,v_t^{(n)}\bigr) .
  \end{equation*}
  In the coupling, where we write $D_t \eqdef (DL^0)[\eta^{w}_t]$,
  \begin{equation*}
    \bigl( L^{\pi w}_{\pi^{(n)}_t} - L^{\pi w}_{\pi^{(0)}_t} \bigr)\eta^{w}_t
    = -\bigl(P^{\perp}_{\pi^{(n)}_t}-P^{\perp}_{\pi^{(0)}_t}\bigr)
        \bigl(D_t\pi^{(n)}_t\bigr)
      - P^{\perp}_{\pi^{(0)}_t}\bigl(D_t\rho^{(n)}_t\bigr) .
  \end{equation*}
  In the fiber, where $\varkappa_t \eqdef G_{w^{(n)}_t}\rho^{(n)}_t
  - L^0_{v^{(n)}_t}\pi^{(0)}_t$ is the difference of $G_{w}\pi$,
  \begin{equation*}
    \begin{aligned}
      \bigl( L^{\pi\pi}_{z^{(n)}_t} - L^{\pi\pi}_{z^{(0)}_t} \bigr)\eta^{\pi}_t
      = \ & \bigl(P^{\perp}_{\pi^{(n)}_t}-P^{\perp}_{\pi^{(0)}_t}\bigr)
            \bigl(G_{w^{(n)}_t}\eta^{\pi}_t\bigr)
          - P^{\perp}_{\pi^{(0)}_t}\bigl(L^0_{v^{(n)}_t}\eta^{\pi}_t\bigr)
      \\
      & - \bigl\langle \eta^{\pi}_t, \varkappa_t \bigr\rangle\,\pi^{(n)}_t
        - \bigl\langle \eta^{\pi}_t, G_{w^{(0)}_t}\pi^{(0)}_t \bigr\rangle\,
          \rho^{(n)}_t
      \\
      & - \Bigl( \bigl\langle \rho^{(n)}_t, G_{w^{(n)}_t}\pi^{(n)}_t \bigr\rangle
        + \bigl\langle \pi^{(0)}_t, \varkappa_t \bigr\rangle \Bigr)\,\eta^{\pi}_t .
    \end{aligned}
  \end{equation*}
 We estimate these quantities as follows, where
  $\Gamma_t \eqdef \Gamma^{(n)}_t + \Gamma^{(0)}_t \ge 1$. For the base,
  \begin{align*}
    \Bigl| \bigl\langle \bigl(L^{ww}_{w^{(n)}}-L^{ww}_{w^{(0)}}\bigr)\eta^{w}_t,
      \zeta^{w} \bigr\rangle \Bigr|
    &= \bigl| \langle (K*v^{(n)}_t)\cdot\nabla\zeta^{w},\eta^{w}_t \rangle
      + \langle (K*\eta^{w}_t)\cdot\nabla v^{(n)}_t,\zeta^{w} \rangle \bigr|
    \\
    &\lesssim \|v^{(n)}_t\|\,\|\nabla\zeta^{w}\|\,
        \|\eta^{w}_t\|^{1/2}\|\eta^{w}_t\|_{H^1}^{1/2}
      + \|\eta^{w}_t\|\,\|\nabla v^{(n)}_t\|\,
        \|\zeta^{w}\|^{1/2}\|\nabla\zeta^{w}\|^{1/2}
    \\
    &\le \frac18\|\nabla\zeta^{w}\|^2 + C\Gamma_t\|\zeta_t^{(n)}\|^2
      + C\|v^{(n)}_t\|^2\|\eta_t\|_{H^1}^2
      + C\|\nabla v^{(n)}_t\|^2\|\eta_t\|^2 .
  \end{align*}
   For the coupling, writing
  $\psi \eqdef P^{\perp}_{\pi^{(0)}_t}\zeta^{\pi}$,
  \begin{align*}
    \Bigl| \bigl\langle \bigl(L^{\pi w}_{\pi^{(n)}_t}-L^{\pi w}_{\pi^{(0)}_t}\bigr)
      \eta^{w}_t, \zeta^{\pi} \bigr\rangle \Bigr|
    &\le \bigl| \langle D_t\rho^{(n)}_t, \psi \rangle \bigr|
      + \bigl| \langle \pi^{(0)}_t, D_t\pi^{(n)}_t \rangle \bigr|\,\|\rho^{(n)}_t\|\,\|\zeta^{\pi}\|
      + \bigl| \langle \rho^{(n)}_t, D_t\pi^{(n)}_t \rangle \bigr|\,\|\zeta^{\pi}\|
    \\
    &\lesssim \bigl( \|\eta_t\|\,\|\nabla\rho^{(n)}_t\|
        + \|\rho^{(n)}_t\|\,\|\eta_t\|_{H^1} \bigr) \|\psi\|_{L^4}
      + \Gamma_t\,\|\eta_t\|\,\|\rho^{(n)}_t\|\,\|\zeta^{\pi}\|
    \\
    &\qquad + \Gamma_t^{1/2}\,\|\eta_t\|\,\|\rho^{(n)}_t\|_{H^1}\,\|\zeta^{\pi}\|
    \\
    &\le \frac18\|\nabla\zeta^{\pi}\|^2 + C\Gamma_t\|\zeta_t^{(n)}\|^2
      + C\|\rho^{(n)}_t\|^2\|\eta_t\|_{H^1}^2
      + C\|\nabla\rho^{(n)}_t\|^2\|\eta_t\|^2
    \\
    &\qquad + C\Gamma_t\|\rho^{(n)}_t\|^2\|\eta_t\|^2 ,
  \end{align*}
  For the fiber,
  \begin{align*}
    \Bigl| \bigl\langle \bigl(L^{\pi\pi}_{z^{(n)}_t}-L^{\pi\pi}_{z^{(0)}_t}\bigr)
      \eta^{\pi}_t, \zeta^{\pi} \bigr\rangle \Bigr|
    &\lesssim \bigl( \|\eta_t\|\,\|\nabla v^{(n)}_t\|
        + \|v^{(n)}_t\|\,\|\eta_t\|_{H^1} \bigr)\|\psi\|_{L^4}
      + \Gamma_t \bigl( \|v^{(n)}_t\| + \|\rho^{(n)}_t\| \bigr)
        \|\eta_t\|\,\|\zeta^{\pi}\|
    \\
    &\qquad + \|\nabla\rho^{(n)}_t\|\,\|\eta_t\|_{H^1}\,\|\zeta^{\pi}\|
    \\
    &\le \frac18\|\nabla\zeta^{\pi}\|^2 + C\Gamma_t\|\zeta_t^{(n)}\|^2
      + C\|v^{(n)}_t\|^2\|\eta_t\|_{H^1}^2
      + C\|\nabla v^{(n)}_t\|^2\|\eta_t\|^2
    \\
    &\qquad + C\Gamma_t\bigl(\|v^{(n)}_t\|^2+\|\rho^{(n)}_t\|^2\bigr)\|\eta_t\|^2
      + F^{(n)}_t\,\|\zeta_t^{(n)}\| ,
  \end{align*}
  where 
  \begin{equation}\label{eq:cd_Fdef}
    F^{(n)}_t \eqdef \|\nabla\rho^{(n)}_t\|\,\|\eta_t\|_{H^1} ,
    \qquad
    \int_s^T F^{(n)}_t \ud t
    \ \le\ C \Bigl( \int_s^T \|\nabla\rho^{(n)}_t\|^2 \ud t \Bigr)^{1/2}
      \Bigl( \int_s^T \|\eta_t\|_{H^1}^2 \ud t \Bigr)^{1/2} ,
  \end{equation}
  by Cauchy--Schwarz. Now we use $2ab \le ab^2 + a$ at $a = F^{(n)}_t$ and
  $b = \|\zeta^{(n)}_t\|$ to obtain

  \begin{equation}\label{eq:cd_jineq}
    \frac{\ud}{\ud t}\|\zeta_t^{(n)}\|^2\le \bigl(C\,\Gamma^{(n)}_t
    + F^{(n)}_t\bigr)\|\zeta_t^{(n)}\|^2+g_t^{(n)} ,
    \qquad\zeta_s^{(n)}=0 ,
  \end{equation}
  with
  \begin{equation*}
    \begin{aligned}
      g_t^{(n)} = & C\big(\|v_t^{(n)}\|^2+\|\rho_t^{(n)}\|^2\big)\|\eta_t\|_{H^1}^2
      +C\big(\|\nabla v_t^{(n)}\|^2+\|\nabla\rho_t^{(n)}\|^2\big)\|\eta_t\|^2
      \\
      & +C\,\Gamma^{(n)}_t\big(\|v_t^{(n)}\|^2+\|\rho_t^{(n)}\|^2\big)\|\eta_t\|^2
      + F^{(n)}_t .
    \end{aligned}
  \end{equation*}
  By the parabolic energy estimate for $\partial_t\eta_t=L_{z^{(0)}}\eta_t$ and
  Lemma~\ref{lem:fibre_dissipation}(iii) below, the limiting orbit satisfies
  $\sup_{t\in[s,T]}\|\eta_t\|\le e^{CM}\|\eta\|$ and
  $\int_s^T\|\eta_t\|_{H^1}^2\ud t<\infty$ almost surely, while $\int_s^T
  \Gamma^{(n)}_t\ud t\le M$ by~\eqref{eq:cd_uniform}, uniformly in $n$. The
  three parts of $g_t^{(n)}$ vanish in the limit when integrated, so that
  $\smallint_s^T g_t^{(n)}\ud t\to 0 .$ 
  Gr\"onwall's inequality applied to~\eqref{eq:cd_jineq} therefore gives
  \begin{equation}\label{eq:cd_jgronwall}
    \sup_{t\in[s,T]}\|\zeta_t^{(n)}\|^2
    \le e^{\int_s^T ( C\Gamma^{(n)}_t + F^{(n)}_t )\ud t}
      \int_s^T g_t^{(n)}\ud t\to 0 ,
  \end{equation}
  that is $J_{s,t}^{(n)}\eta\to J_{s,t}^{(0)}\eta$ in $\mH \times \mH$, uniformly on $[s,T]$. This completes the proof.
\end{proof}
Note that we only require pointwise continuity in the data, as opposed to the
operator convergence in \cite{HairerMattingly08}*{Section~5.3, Theorem~5.10} and
\cite{BBPS-AOP-22}*{Proposition~5.5}. We are now ready to prove the main result.

\begin{proof}[Proof of Proposition~\ref{prop:uniform_tail}]
  We argue by contradiction. The map $\ve\mapsto r(\ve,\alpha,T,R,N)$ is
  nondecreasing in $\ve$, so if it does not tend to $0$, there exist $c>0$ and a
  sequence $\ve_n\downarrow 0$ with $r(\ve_n, \alpha, T, R, N) \ge 2c$. Then let
  us choose
  $z_0^{(n)} \in \mC_R$ such that for every $n \in
  \NN_*$ we have
  \begin{equation}\label{eq:ut_hyp}
    \PP\Big( \inf_{\eta\in S_{\alpha,N,T}} \langle M_T(z_0^{(n)})\eta,\eta\rangle < \ve_n \Big) \ge c .
  \end{equation}
  The central region $\mC_R$ is a compact subset of $\mX$, so we may assume that there exists $z_0$ with
  \begin{equation*}
    z_0^{(n)} \to z_0^{(0)} \quad\text{in } \mX, \qquad z_0^{(0)}\in\mC_R .
  \end{equation*}
  For fixed initial data $z\in\mX$ and a fixed realization we set $m(z) \eqdef \inf_{\eta\in S_{\alpha,N,T}} \langle M_T(z)\eta,\eta\rangle \ge 0$,
  and abbreviate $m_n\eqdef m(z_0^{(n)})$ and $m_0\eqdef m(z_0^{(0)})$. Each
  $m_n$ is a nonnegative random variable by the measurability argument in the
  proof of Proposition~\ref{prop:as_nondegeneracy}, and we claim that
  \begin{equation}\label{eq:ut_lsc}
    m_0 \le \alpha^{-2}\liminf_{n\to\infty} m_n .
  \end{equation}
  To prove \eqref{eq:ut_lsc} we fix a subsequence along which $m_n$ converges to $\liminf_n m_n$ (however, to keep the notation clean we will not write the subsequence explicitly and instead work still with $n\in \NN$), and for each $n$ select a near-minimizer $\eta_n\in S_{\alpha,N,T}$ with
  \begin{equation*}
    \langle M_T(z_0^{(n)})\eta_n,\eta_n\rangle \le m_n + \frac1n .
  \end{equation*}
  Hence, along a further subsequence $\eta_n\rightharpoonup\eta_*$, and the
  extraction in the proof of Proposition~\ref{prop:as_nondegeneracy} gives
  $\|\eta_*\| \le 1$ and $\|P_N\eta_*\| \ge \alpha$, so that $\eta_* \ne 0$ and
  also $\eta_* \in T_{z_T}\mX$.
  Hence the normalized vector $\hat\eta_*\eqdef\eta_*/\|\eta_*\|$ belongs again
  to $S_{\alpha,N,T}$, with $\|\eta_*\|\ge\|P_N\eta_*\|\ge\alpha$.

  It remains to pass to the limit in the quadratic form along the near-minimizers. Fix $s \leq T$ and a base test function $\zeta = (f, 0)$ with $f \in \mH$. By Lemma~\ref{lem:continuity_in_data} we have that $J^{(n)}_{s,T}\zeta$ converge strongly and by assumption $\eta_n$ converge weakly, so that from
  \begin{equation*}
    \big\langle (J^{(n)}_{s,T})^*\eta_n, \zeta \big\rangle
    = \big\langle \eta_n, J^{(n)}_{s,T}\zeta \big\rangle ,
  \end{equation*}
  we deduce
  \begin{equation*}
    \Pi_w (J^{(n)}_{s,T})^*\eta_n \rightharpoonup \Pi_w (J^{(0)}_{s,T})^*\eta_*
    \quad \text{in } \mH
    \qquad \forall s\in[0,T].
  \end{equation*}
  Next, the functional $\xi\mapsto\|Q^{1/2}\xi\|^2$ is continuous and convex on $\mH$, hence weakly lower semicontinuous at each $s$, so that with Fatou's lemma
  \begin{align*}
    \langle M_T(z_0^{(0)})\eta_*,\eta_*\rangle
    & = \int_0^T \|Q^{1/2}\Pi_w (J^{(0)}_{s,T})^*\eta_*\|^2 \ud s                        \\
    & \le \liminf_{n\to\infty} \int_0^T \|Q^{1/2}\Pi_w (J^{(n)}_{s,T})^*\eta_n\|^2 \ud s
    = \liminf_{n\to\infty} \langle M_T(z_0^{(n)})\eta_n,\eta_n\rangle .
  \end{align*}
  Along the chosen subsequence the near-minimizing property forces $\langle
  M_T(z_0^{(n)})\eta_n,\eta_n\rangle\le m_n+\frac1n$, whose limit is $\liminf_n
  m_n$. Using $\hat\eta_*\in S_{\alpha,N,T}$ together with the bound
  $\|\eta_*\|\ge\alpha$, we conclude \eqref{eq:ut_lsc} from
  \begin{equation*}
    m_0 \le \langle M_T(z_0^{(0)})\hat\eta_*,\hat\eta_*\rangle
    = \frac{\langle M_T(z_0^{(0)})\eta_*,\eta_*\rangle}{\|\eta_*\|^2}
    \le \alpha^{-2}\liminf_{n\to\infty} m_n .
  \end{equation*}

  By \eqref{eq:ut_hyp} the events $A_n\eqdef\{m_n<\ve_n\}$ satisfy $\PP(A_n)\ge
  c$, so the reverse Fatou lemma gives $\PP(\limsup_n A_n) \ge c$. On $\limsup_n
  A_n$ the bound $m_n < \ve_n$ holds for infinitely many $n$, which forces
  $\liminf_n m_n = 0$ and hence $m_0 = 0$ by~\eqref{eq:ut_lsc}. This
  contradicts  Proposition~\ref{prop:as_nondegeneracy}.
\end{proof}

\subsection{Tikhonov contraction}
\label{sec:jacobian_moments}
The aim of this section is to establish certain estimates for a Tikhonov
regularization of the Malliavin matrix. Because we don't have any polynomial
moment on the Jacobian, such estimates are not quantitative (as would be the case in classical
approaches to asymptotic coupling) probabilistically, and rely on bounds on the
Jacobian that are available only when the process belongs to a central region
and require some form of localization. For this reason we define (the dependence on $T$ is not important so we omit it in the notation):
\begin{equation}\label{eq:good_event_EK}
  E_K \eqdef \left\{ \left( \int_0^T  \|w_r\|_{H^1}^4 + \|\pi_r\|_{H^1}^4  \ud r \right)^{\frac{1}{4}} \leq K \right\} , \qquad K, T > 0.
\end{equation}
Recall the definition of $P_N, Q_N$, which we use also for their
componentwise action on $\mH \times \mH$.

\begin{lemma}\label{lem:fibre_dissipation}
  Fix $T > 0$. Almost surely:
  \begin{enumerate}[label=(\roman*)]
    \item It holds that
      \begin{equation}\label{eq:rayleigh_bound}
        -\langle\pi_r,G_{w_r}\pi_r\rangle
        = \|\nabla\pi_r\|^2 + \langle\pi_r,B(\pi_r,w_r)\rangle
        \le C\Gamma_r .
      \end{equation}
    \item For every $\delta\pi \in \mH$ (not necessarily in $\pi_r^\perp$)
      \begin{equation}\label{eq:fiber_dissipation}
        \langle\delta\pi,L^{\pi\pi}_{z_r}\delta\pi\rangle
        \le -\frac14\|\nabla\delta\pi\|^2 + C\Gamma_r\|\delta\pi\|^2 .
      \end{equation}
    \item For all $0 \le s \le t \le T$ and every
      $\eta = (\delta w, \delta\pi) \in \mH \times \mH$, with $C = C(T)$
      \begin{equation}\label{eq:jacobian_pathwise}
        \sup_{r\in[s,t]}\|J_{s,r}\eta\|^2
        + \int_s^t \|J_{s,r}\eta\|_{H^1}^2 \ud r
        \le C \exp\Bigl( C\int_s^t \Gamma_r \ud r \Bigr) \|\eta\|^2 .
      \end{equation}
  \end{enumerate}
\end{lemma}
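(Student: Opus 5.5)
The three claims will be proved in order: (i) is a direct pointwise estimate, (ii) uses (i) together with the energy structure of $G_w$, and (iii) follows from (ii) and an analogous base estimate via Gr\"onwall.

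\emph{Part (i).} The identity follows from $G_w = \Delta - L^0_w$ together with antisymmetry: $\langle \pi, B(w,\pi)\rangle = 0$ because $K*w$ is divergence free. Hence only the stretching term $\langle \pi, B(\pi,w)\rangle$ survives, and only in the LNS case. This term is bounded by $\|K*\pi\|_{L^4}\|\nabla w\|\,\|\pi\|_{L^4}$. Using \eqref{eq:l4_toolkit} and $\|\pi\|=1$, we get
\[
|\langle \pi, B(\pi,w)\rangle| \lesssim \|w\|_{H^1}\|\pi\|_{H^1}^{1/2} \le \Gamma_r .
\]
Since $\|\nabla\pi\|^2 \le \Gamma_r$ as well, this gives \eqref{eq:rayleigh_bound}.

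\emph{Part (ii).} I expand \eqref{eq:fiber_operator} against $\delta\pi$, term by term.
\begin{itemize}
\item The term $\langle \delta\pi, G_w\delta\pi\rangle$ equals $-\|\nabla\delta\pi\|^2 - \langle\delta\pi, B(\delta\pi,w)\rangle$, since the transport part again vanishes. The stretching part is at most $\|\delta\pi\|\,\|\nabla w\|\,\|\delta\pi\|^{1/2}\|\delta\pi\|_{H^1}^{1/2}$. By Young's inequality this is bounded by $\tfrac18\|\nabla\delta\pi\|^2 + C\|w\|_{H^1}^{4/3}\|\delta\pi\|^2$, and $\|w\|_{H^1}^{4/3}\le\Gamma$ since $\Gamma\ge1$.
\item The two rank-one terms are $-\langle\pi,G_w\delta\pi\rangle\langle\pi,\delta\pi\rangle - \langle\delta\pi,G_w\pi\rangle\langle\pi,\delta\pi\rangle$. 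I move the derivatives onto $\pi$. The Laplacian contributes $2\langle\nabla\pi,\nabla\delta\pi\rangle\langle\pi,\delta\pi\rangle \le \tfrac18\|\nabla\delta\pi\|^2 + C\|\pi\|_{H^1}^2\|\delta\pi\|^2$. The advection contributions cancel by antisymmetry of $B(w,\cdot)$. The stretching contributions are bounded by $C\|w\|_{H^1}\|\pi\|_{H^1}\|\delta\pi\|^2$, and for the adjoint stretching term this uses the $L^4$ bound on $K*\delta\pi$.
\item The last term is $-\langle\pi,G_w\pi\rangle\|\delta\pi\|^2 \le C\Gamma\|\delta\pi\|^2$ by part (i).
\end{itemize}
Summing the three items gives \eqref{eq:fiber_dissipation} with the constant $\tfrac14$.

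\emph{Part (iii).} Write $\eta_r = J_{s,r}\eta = (\delta w_r, \delta\pi_r)$. Then $\tfrac12\tfrac{\ud}{\ud r}\|\eta_r\|^2 = \langle L_{z_r}\eta_r,\eta_r\rangle$, which I split into three blocks.
\begin{itemize}
\item The base block satisfies $\langle L^{ww}_w\delta w,\delta w\rangle \le -\tfrac14\|\nabla\delta w\|^2 + C\Gamma\|\delta w\|^2$, by the same stretching estimate as in part (ii).
\item The fiber block is controlled by part (ii). This part applies because it holds for all $\delta\pi\in\mH$, not only for $\delta\pi\in\pi_r^\perp$, which matters since the statement allows arbitrary $\eta\in\mH\times\mH$.
\item The coupling block is $\langle\delta\pi, L^{\pi w}_z\delta w\rangle$. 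The pieces $\langle\delta\pi, B(\delta w,\pi)\rangle$ and $\langle\delta\pi, B(\pi,\delta w)\rangle$ are bounded by
\[
\|\delta w\|\,\|\nabla\pi\|\,\|\delta\pi\|_{L^4} + \|\pi\|_{L^4}\|\nabla\delta w\|\,\|\delta\pi\|_{L^4}.
\]
The rank-one projection piece is handled similarly. By Young's inequality the coupling block is at most $\tfrac18(\|\nabla\delta w\|^2+\|\nabla\delta\pi\|^2) + C\Gamma\|\eta\|^2$.
\end{itemize}
Combining the blocks gives
\[
\frac{\ud}{\ud r}\|\eta_r\|^2 + \tfrac18\|\nabla\eta_r\|^2 \le C\Gamma_r\|\eta_r\|^2 .
\]
Gr\"onwall's inequality, integrated in time, then yields \eqref{eq:jacobian_pathwise}. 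The $H^1$ norm in the integral is controlled because $\|\eta_r\|_{H^1}\simeq\|\nabla\eta_r\|$ on mean-zero functions. All these computations are first carried out on the smooth core and then extended by density.

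\emph{Main obstacle.} The main difficulty I expect is the coupling term in the LNS case, in particular $B(\pi,\delta w)$. It carries a full derivative of $\delta w$, and absorbing it into the base dissipation requires exactly the $L^4$ interpolation above, with the weight $\|\pi\|_{L^4}^2\lesssim\|\pi\|_{H^1}\le\Gamma$.
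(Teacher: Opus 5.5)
Your proposal is correct and is essentially the paper's argument: parts (i) and (ii) are the same term-by-term energy estimates (the paper bounds the rank-one pairings directly rather than noting that their advection parts cancel, which changes nothing). The only difference is cosmetic and occurs in (iii): you run a single energy estimate for $\|\eta_r\|^2$ and absorb the coupling's $\|\nabla \delta w\|$ into the base dissipation, whereas the paper uses the triangular structure, first closing the estimate for $J^{ww}$ and then treating the coupling as a forcing $C\|\delta w_r\|_{H^1}^2$ in the fiber inequality before applying Gr\"onwall.
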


\begin{lemma}\label{lem:jacobian_moments}
  For any $T > 0$ and any $\delta, K, \tau_0 > 0$, there exists $N_0 = N_0(\delta,\tau_0, K, T)$, such that for all $N \geq N_0$, $s \geq \tau_0/2$, $t-s \geq \tau_0$, and any $\eta \in T_{z_s} \mathcal{X}$:
  \begin{equation}\label{eq:high_mode_decay_fiber}
    \|J_{s,t} Q_N \eta \| \leq \delta \| \eta \| \quad \text{ on } \quad E_K .
  \end{equation}
\end{lemma}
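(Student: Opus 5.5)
The plan is a two-step argument: parabolic smoothing of the high modes over a short initial window, followed by the pathwise Gr\"onwall bound of Lemma~\ref{lem:fibre_dissipation}(iii) for the remaining time. Fix $\eta \in T_{z_s}\mX$ and set $\xi_r \eqdef J_{s,r} Q_N \eta$ for $r \in [s,t]$. By the cocycle identity, $J_{s,t} Q_N\eta = J_{s+\tau_0, t}\, \xi_{s+\tau_0}$. Lemma~\ref{lem:fibre_dissipation}(iii) on $[s+\tau_0, t] \subseteq [0,T]$ gives
\begin{equation*}
\|J_{s,t}Q_N\eta\| \le C^{1/2} \exp\Bigl( \tfrac C2 \int_0^T \Gamma_r \ud r\Bigr) \|\xi_{s+\tau_0}\| \le C e^{C T^{3/4} K^{2}} \|\xi_{s+\tau_0}\|
\end{equation*}
on $E_K$. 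Here we used H\"older: $\int_0^T \Gamma_r \ud r \le T^{3/4}\bigl(\int_0^T \Gamma_r^2\ud r\bigr)^{1/2} \lesssim T^{3/4} K^2$. It therefore suffices to show $\|\xi_{s+\tau_0}\| \le \delta' \|\eta\|$ on $E_K$, with $\delta' = \delta e^{-C T^{3/4}K^2}/C$, once $N$ is large in terms of $(\delta,\tau_0,K,T)$.

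\textbf{Short-time smoothing step.} Write $\xi_r = e^{(r-s)\Delta} Q_N \eta + \int_s^r e^{(r-q)\Delta}(L_{z_q} - \Delta)\xi_q \ud q$ by Duhamel, with $\Delta$ acting componentwise. The free part satisfies $\|e^{\tau_0\Delta}Q_N\eta\| \le e^{-\tau_0 N^2}\|\eta\|$, which is small. For the perturbation I would split time. On a short window $q \in [s+\tau_0-h, s+\tau_0]$ I use the smoothing $\|e^{\tau\Delta}\|_{H^{-1}\to \mH} \lesssim \tau^{-1/2}$. On $q \le s+\tau_0-h$ I first apply $P_M$ and $Q_M$ at a cutoff $M$, and this is where the argument needs care. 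The zeroth-order lower-triangular terms of $L_z - \Delta$ (the advection $B(w,\cdot)$, the stretching $B(\cdot,w)$, the coupling $L^{\pi w}$ and the projective rank-one corrections) are bounded $H^1 \to H^{-1}$ with norm $\lesssim \Gamma_q^{1/2}$. This follows from~\eqref{eq:l4_toolkit} and the splitting used in Lemma~\ref{lem:continuity_in_data}. Combined with $\int_s^r \|\xi_q\|_{H^1}^2 \ud q \lesssim e^{CK^2}\|\eta\|^2$ from Lemma~\ref{lem:fibre_dissipation}(iii), this gives an estimate of the form
\begin{equation*}
\|\xi_{s+\tau_0}\| \le e^{-\tau_0N^2}\|\eta\| + C_K \Bigl(\int_{s}^{s+\tau_0} \|\xi_q\|_{H^1}^4\wedge\cdots\Bigr)
\end{equation*}
This bound is not small by itself, because the nonlinear terms can feed energy from high to low modes.

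\textbf{Main obstacle.} The difficulty is exactly this transfer back to low modes: the low-mode part $P_M \xi$ is driven by $\Gamma$-sized coefficients acting on the high-mode part. I would handle it with an $H^{-1}$ energy estimate. Test the equation for $\xi$ against $(-\Delta)^{-1}\xi$. On $E_K$, the transport and stretching terms give
\begin{equation*}
\tfrac{\ud}{\ud r}\|\xi_r\|_{H^{-1}}^2 \le -\|\xi_r\|^2 + C\Gamma_r^{2}\|\xi_r\|_{H^{-1}}^2,
\end{equation*}
where for the $B(w,\xi)$ and $B(\xi,w)$ terms I interpolate $\|\xi\|_{H^{-1/2}}$ and use $\|w\|_{H^1}$. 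The rank-one projective terms are bounded by $\Gamma_r \|\xi\|\,\|\xi\|_{H^{-1}}$ times $\|\pi\|_{H^1}$, which is controlled by the same estimate. Gr\"onwall then gives $\|\xi_r\|_{H^{-1}} \le e^{C\int\Gamma^2} \|Q_N\eta\|_{H^{-1}} \le e^{CK^4} N^{-1}\|\eta\|$. Finally I interpolate between the $H^{-1}$ bound and the $H^1$ bound at a time $r_*\in[s+\tau_0/2, s+\tau_0]$, chosen by pigeonhole so that $\|\xi_{r_*}\|_{H^1}^2 \le 2\tau_0^{-1}\int_s^{s+\tau_0}\|\xi_q\|_{H^1}^2 \ud q$. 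This yields $\|\xi_{r_*}\| \le \|\xi_{r_*}\|_{H^{-1}}^{1/2}\|\xi_{r_*}\|_{H^1}^{1/2} \lesssim C(K,\tau_0,T) N^{-1/2}\|\eta\|$. Propagating from $r_*$ to $t$ with Lemma~\ref{lem:fibre_dissipation}(iii) then gives~\eqref{eq:high_mode_decay_fiber} for $N \ge N_0$. The condition $s \ge \tau_0/2$ ensures $z_s \in \mX_\star$, so all pairings are justified.
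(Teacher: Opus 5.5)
Your route differs from the paper's. You gain $N^{-1}$ from $\|Q_N\eta\|_{H^{-1}}\le N^{-1}\|\eta\|$ through an $H^{-1}$ energy estimate, interpolate against the $L^2_tH^1_x$ bound of Lemma~\ref{lem:fibre_dissipation}(iii) at a pigeonholed time, and propagate with the cocycle. The paper instead splits each of the three blocks into $Q_N$ and $P_N$ parts via Lemma~\ref{lem:heat_decay}. Your estimate does close for the base block, the transport and stretching terms, and the coupling block. The step that fails is your bound on the rank-one projective terms.

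Write $\xi=(\xi_w,\xi_\pi)$. Pairing $-\bigl(\langle\pi,G_w\xi_\pi\rangle+\langle\xi_\pi,G_w\pi\rangle\bigr)\pi$ with $(-\Delta)^{-1}\xi_\pi$ produces the scalar $\langle\pi,G_w\xi_\pi\rangle+\langle\xi_\pi,G_w\pi\rangle$, whose Laplacian part is $2\langle\Delta\pi,\xi_\pi\rangle$. This is not bounded by $\Gamma\|\pi\|_{H^1}\|\xi_\pi\|$; it is bounded only by $\|\pi\|_{H^2}\|\xi_\pi\|$ or by $\|\pi\|_{H^1}\|\xi_\pi\|_{H^1}$. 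In the $L^2$ estimate of Lemma~\ref{lem:fibre_dissipation}(ii) the factor $\|\xi_\pi\|_{H^1}$ is absorbed by $\|\nabla\xi_\pi\|^2$, but at the $H^{-1}$ level your dissipation is only $\|\xi\|^2$. If you treat the term as a source instead, it adds $\int_s^t\Gamma_r^{1/2}\|\xi_r\|_{H^1}\ud r\lesssim C_K\|\eta\|$ to the bound for $\|\xi_t\|_{H^{-1}}$, which destroys the $N^{-1}$ gain.

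This is exactly the rank-one remainder $R_r$ that the paper isolates in the $J^{\pi\pi}$ block. There the smallness comes from the closed scalar equation $\frac{\ud}{\ud r}\langle\pi_r,\phi_r\rangle=-2\langle\pi_r,G_{w_r}\pi_r\rangle\langle\pi_r,\phi_r\rangle$, whose initial value is $\langle Q_N\pi_s,\phi_s\rangle=O(N^{-1}\|\pi_s\|_{H^1}\|\phi_s\|)$. That is where $s\ge\tau_0/2$ is used quantitatively, through Lemma~\ref{lem:fibre_smoothing}, and not merely to justify pairings as you suggest.

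Your approach can be repaired, but it needs an ingredient you did not supply: a pathwise bound on $\int_s^t\|\Delta\pi_r\|^2\ud r$ on $E_K$. Keep the dissipation in the $H^1$ identity for the projective equation, $\frac{\ud}{\ud r}\|\nabla\pi\|^2=-2\bigl(\|\Delta\pi\|^2-\|\nabla\pi\|^4\bigr)+2\langle\Delta\pi,L^0_w\pi\rangle+2\langle\pi,L^0_w\pi\rangle\|\nabla\pi\|^2$, and use $\|L^0_w\pi\|\lesssim\Gamma$. This gives $\int_s^t\|\Delta\pi_r\|^2\ud r\lesssim\|\pi_s\|_{H^1}^2+\int_s^t\Gamma_r^2\ud r\le C(K,\tau_0,T)$, again by Lemma~\ref{lem:fibre_smoothing} at $s\ge\tau_0/2$. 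The offending term is then at most $\frac14\|\xi_\pi\|^2+4\|\Delta\pi\|^2\|\xi_\pi\|_{H^{-1}}^2$, and your Gr\"onwall closes with weight $\Gamma^2+\|\Delta\pi\|^2$.

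With this addition your argument treats the full Jacobian in one estimate and bypasses Lemma~\ref{lem:heat_decay}. The cost is second-derivative control on $\pi$, which the paper's argument avoids by exploiting the rank-one structure directly. As a minor point, H\"older gives $\int_0^T\Gamma\le T^{1/2}\bigl(\int_0^T\Gamma^2\bigr)^{1/2}$ rather than $T^{3/4}$; this is harmless.
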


The proofs of all three of these statements are given in Appendix~\ref{app:jacobian_moments}.
We now combine the moment bounds on the Jacobian with the non-degeneracy of the
Malliavin matrix.

\begin{lemma}\label{lem:tikhonov_contraction}
  For any $\delta \in (0,1]$, $K,R, T > 0$, $p \geq 1$, and $N \in \NN$, there exists $\beta = \beta(\delta, p, N, K, R, T) > 0$ such that for all $z_0 \in \mC_R$ and $\eta \in T_{z_0} \mathcal{X}$ with $\|\eta\| \leq 1$ we have:
  \begin{equation}\label{eq:tikhonov_bound}
    \EE\left[ \| P_N \beta(\beta I + M_T )^{-1}  J_{0,T} \eta\|^p \cdot \mathbf{1}_{E_K} \right] \leq \delta^p .
  \end{equation}
\end{lemma}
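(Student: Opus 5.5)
Set $\xi \eqdef J_{0,T}\eta \in T_{z_T}\mX$ and $R_\beta \eqdef \beta(\beta I + M_T)^{-1}$. Since $M_T$ is self-adjoint and nonnegative, $0 \le R_\beta \le I$, so $\|R_\beta\|\le 1$. We work pointwise in $\omega$ first and only then take expectations. The key spectral estimate is the following. If $\zeta \eqdef R_\beta \xi$, then
\begin{equation*}
  \langle M_T \zeta, \zeta\rangle \le \beta \|\xi\|\,\|\zeta\|,
\end{equation*}
because $M_T R_\beta = \beta(I - R_\beta)$ and $\langle (I-R_\beta)\zeta,\zeta\rangle \le \langle (I-R_\beta)\xi, \zeta\rangle$ up to the trivial bound $\|I-R_\beta\|\le 1$.

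Now fix $\alpha \eqdef \delta/2$ (after normalising). Either $\|P_N \zeta\| < \alpha \|\zeta\|$, or $\zeta/\|\zeta\| \in S_{\alpha,N,T}$. In the first case, $\|P_N\zeta\| \le \alpha\|\zeta\|\le \alpha\|\xi\|$. In the second case, on the event $\{\inf_{S_{\alpha,N,T}}\langle M_T\hat\eta,\hat\eta\rangle \ge \ve\}$ we get $\ve\|\zeta\|^2 \le \beta\|\xi\|\|\zeta\|$, hence $\|P_N\zeta\|\le\|\zeta\| \le (\beta/\ve)\|\xi\|$.

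On $E_K$, Lemma~\ref{lem:fibre_dissipation}(iii) gives $\|\xi\| \le C e^{CK^2 T^{1/2}}\eqdef C_K$ deterministically. Indeed, $\int_0^T\Gamma_r\ud r \le T^{1/2}(\int_0^T \Gamma_r^2\ud r)^{1/2} \lesssim T^{1/2} K^2$ by Cauchy--Schwarz and the definition~\eqref{eq:good_event_EK}. Combining the cases on $E_K$ yields the pointwise bound
\begin{equation*}
  \|P_N R_\beta\xi\|\,\mathbf 1_{E_K} \le C_K\bigl(\alpha + \beta/\ve\bigr) + C_K\,\mathbf 1_{\{\inf_{S_{\alpha,N,T}} \langle M_T \cdot,\cdot\rangle<\ve\}}.
\end{equation*}
Taking $p$-th moments, the last term contributes at most $C_K^p\, r(\ve,\alpha,T,R,N)$, uniformly over $z_0\in\mC_R$, by the definition of $r$. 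Choose $\alpha$ with $C_K\alpha \le \delta/3$. Then choose $\ve$ by Proposition~\ref{prop:uniform_tail} so that $C_K^p\, r(\ve,\alpha,T,R,N) \le (\delta/3)^p$. Finally choose $\beta$ with $C_K\beta/\ve \le \delta/3$. Using $(a+b+c)^p \le 3^{p-1}(a^p+b^p+c^p)$, and adjusting the constants by a factor depending only on $p$, this gives~\eqref{eq:tikhonov_bound}. Note that $\beta$ depends only on $(\delta,p,N,K,R,T)$, as required.

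The main obstacle is the spectral estimate in the first paragraph. It must be done carefully, because the naive bound on $\langle M_T\zeta,\zeta\rangle$ used there is sloppy. The clean version works in the spectral representation of $M_T$. Split $\xi$ by the spectral projection of $M_T$ onto $[0,\sqrt\beta)$ and its complement. On the complement, $R_\beta$ has norm at most $\beta/(\beta+\sqrt\beta)\le\sqrt\beta$. The component $\xi_{<}$ in the low part satisfies $\langle M_T\xi_<,\xi_<\rangle\le \sqrt\beta\|\xi\|^2$, so the cone dichotomy above is applied to $\xi_<$ instead of $\zeta$. This makes the estimate $\|P_N\xi_<\| \le \alpha\|\xi\| + \sqrt{\sqrt\beta/\ve}\,\|\xi\|$ rigorous, and $\beta$ is then chosen accordingly. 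A secondary point is that the minimisation over the random slice $S_{\alpha,N,T}$ is measurable; we handle it exactly as in the proof of Proposition~\ref{prop:as_nondegeneracy}.
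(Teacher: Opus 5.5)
Your argument is correct and is essentially the paper's proof. It uses the same cone dichotomy for $\zeta = R_\beta J_{0,T}\eta$ on the event $\{\inf_{S_{\alpha,N,T}}\langle M_T\cdot,\cdot\rangle\ge\ve\}$, the pathwise bound $\|J_{0,T}\eta\|\le C_K$ on $E_K$ from Lemma~\ref{lem:fibre_dissipation}(iii), Proposition~\ref{prop:uniform_tail} for the complementary event, and the parameter order $\alpha\to\ve\to\beta$. Your bound $\|\zeta\|\le(\beta/\ve)\|\xi\|$ is even slightly sharper than the paper's $\sqrt{\beta/\ve}$.

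Your first-paragraph estimate is not sloppy. The identity $M_T\zeta=\beta(I-R_\beta)\xi$ together with $\|I-R_\beta\|\le 1$ gives $\langle M_T\zeta,\zeta\rangle\le\beta\|\xi\|\,\|\zeta\|$ directly; the paper writes the same step as $\langle M_T\zeta,\zeta\rangle\le\langle(\beta I+M_T)\zeta,\zeta\rangle=\beta\langle\xi,\zeta\rangle$. You should therefore drop the spectral-splitting detour in your last paragraph. As written, that detour bounds $P_N\xi_<$ rather than $P_NR_\beta\xi_<$, and it would only be valid if the dichotomy were applied to $R_\beta\xi_<$ instead.
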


\begin{proof}
  The proof follows \cite{HairerMattingly11}*{Lemma~5.14 and Corollary~5.15}, see also
  \cite{DongPeng24}*{Lemmas~4.1 and~4.2}, where the corresponding bound contains
  a Lyapunov weight in place of the localization on $E_K$. Let us write
  \begin{equation}\label{eq:resolvent_blue}
    R^\beta\eqdef\beta(\beta I+M_T)^{-1}.
  \end{equation}
  Since $M_T$ is self-adjoint and nonnegative, the operator $R^\beta$ is
  self-adjoint with $0\le R^\beta\le I$, and it satisfies the resolvent identity
  $(\beta I+M_T)R^\beta=\beta I$. For any $\alpha \in (0,1)$ recall from~\eqref{eq:cone_SaN} that
  $S_{\alpha,N,T}$ is the unit slice of the cone
  \begin{equation}\label{eq:cone_lambda}
    \Lambda_{\alpha, N, T}\eqdef\big\{\eta\in T_{z_T}\mathcal X:\ \|P_N\eta\|\ge\alpha\|\eta\|\big\}.
  \end{equation}
  Now, for $\ve>0$ consider the event
  \begin{equation}\label{eq:good_floor_event}
    A_\ve\eqdef\Big\{\inf_{\eta\in S_{\alpha,N,T}}\langle M_T(z_0)\eta,\eta\rangle\ge\ve\Big\},
  \end{equation}
  on which the Malliavin matrix is non-degenerate on the cone with floor $\ve$. We claim that on $A_\ve$ the operator norm of $P_NR^\beta$ satisfies
  \begin{equation}\label{eq:hm514}
    \|P_N R^\beta\|_{\mathrm{op}}\le\alpha\vee\sqrt{\beta/\ve}.
  \end{equation}
  Indeed, fix $\xi\in T_{z_T}\mathcal X$ with $\xi\neq0$ and write $\zeta\eqdef
  R^\beta\xi$, which is nonzero since $R^\beta$ is injective, and obeys
  $\|\zeta\|\le\|\xi\|$. If
  $\zeta\notin\Lambda_{\alpha,N,T}$ the cone condition fails for $\zeta$ and
  hence $\| P_N R^\beta \xi \| \leq \alpha \| \zeta \| \leq \alpha \| \xi \|$. If instead
  $\zeta\in\Lambda_{\alpha,N,T}$ then $\zeta/\|\zeta\|\in S_{\alpha,N,T}$, so on
  $A_\ve$ and by $(\beta I+M_T)R^\beta=\beta I$  we obtain as desired
  \begin{equation}\label{eq:hm514_cases}
    \begin{aligned}
      \ve\|\zeta\|^2
      & \le\langle M_T\zeta,\zeta\rangle\le\langle(\beta I+M_T)\zeta,\zeta\rangle=\beta\langle\xi,\zeta\rangle\le\beta\|\xi\|^2 .
    \end{aligned}
  \end{equation}
  Next, by Proposition~\ref{prop:uniform_tail} the probability of the degenerate event is small uniformly in the initial data,
  \begin{equation}\label{eq:tail_recall}
    \sup_{z_0 \in \mC_R}\PP(A_\ve^c)\le r(\ve,\alpha,T,R,N),\qquad r(\ve,\alpha,T,R,N)\to0\ \text{ as }\ve\to0.
  \end{equation}
  Combining~\eqref{eq:hm514} on $A_\ve$ with the trivial bound $\|P_N R^\beta\|_{\mathrm{op}}\le1$ on $A_\ve^c$, we obtain
  \begin{equation}\label{eq:hm515}
    \begin{aligned}
      \EE\big[\|P_N R^\beta J_{0,T} \eta\|^p\mathbf 1_{E_K}\big] & \leq C_K^p \EE\big[\|P_N R^\beta\|_{\mathrm{op}}^p\mathbf 1_{E_K}\big]  \le C_K^p \big(\alpha\vee\sqrt{\beta/\ve}\big)^p+ C_K^p \PP(A_\ve^c)         \\
      & \le C_K^p \big(\alpha\vee\sqrt{\beta/\ve}\big)^p+ C_K^p r(\ve,\alpha,T,R,N).
    \end{aligned}
  \end{equation}
  It remains to fix the parameters. We first choose $\alpha =
  \alpha(K,\delta,T)$ small enough that $C_K^p\alpha^p\le\delta^p/2$. We then choose
  $\ve$ small enough that
  $C_K^pr(\ve,\alpha,T,R,N)\le\delta^p/2$, which is possible because
  $r(\ve,\alpha,T,R,N)\to0$ as $\ve\to0$. Finally we set
  $\beta\eqdef\alpha^2\ve$, so that $\sqrt{\beta/\ve}=\alpha$. With these
  choices~\eqref{eq:hm515} becomes \[ \EE\big[\|P_N R^\beta
    J_{0,T}\eta\|^p\mathbf 1_{E_K}\big] \le C_K^p\alpha^p+C_K^pr(\ve,\alpha,T,R,N)
  \le\frac{\delta^p}{2}+\frac{\delta^p}{2}=\delta^p, \] which
  is~\eqref{eq:tikhonov_bound}.
\end{proof}

\section{Lie algebra transitivity}
\label{sec:transitivity}

In this section we prove the algebraic aspects of the invertibility of the
Malliavin matrix studied in Section~\ref{sec:quantitative_nondegeneracy}.
Let us introduce the
vector spaces of operators $\mf{g}_{\mathrm{LNS}}, \mf{g}_{\mathrm{PSA}}
\subseteq \Opbeta$ (throughout this section the space of operators $\Opbeta$ is
considered complexified) defined by:
\begin{equation}\label{eq:lie_algebras}
  \begin{aligned}
    \mf{g}_{\mathrm{LNS}} & = \mathrm{Lie}\left( H^k \colon k \in \Zstar \right) ,\footnotemark\\
    \mf{g}_{\mathrm{PSA}}& =\mathrm{Span}\Bigl(
      H^k, \ [\Delta, H^k], \  [[\Delta, H^k], H^{\ell}], \  \bigl[[[\Delta, H^k], H^{-k}], H^\ell\bigr] : k, \ell \in \Zstar
    \Bigr) .
  \end{aligned}
\end{equation}
\footnotetext{$\mathrm{Lie}(S)$ denotes the smallest linear
subspace containing $S$ and closed under commutation (Lie bracket) $[A, B] = AB -
BA$ .}
with $H^k$ from \eqref{eq:generator_modes}.
We note that in the case of PSA the natural Lie algebra to consider would be
\begin{equation*}
  \mathrm{Lie} \left(H^k, \ [\Delta, H^k], \ k \in \Zstar \right),
\end{equation*}
however, this is not a subset of $\Opbeta$, because it contains operators that
are essentially differential operators of arbitrary order, whereas in $\Opbeta$
we only allow for a fixed loss of regularity. For a proof that $
\mf{g}_{\mathrm{LNS}}, \mf{g}_{\mathrm{PSA}} \subseteq \Opbeta$ we refer to
Lemma~\ref{lem:lns_algebra} and Lemma~\ref{lem:psa_algebra} in the upcoming
sections. Given these two results it is immediate to verify the following lemma,
which is used for our applications in Section~\ref{sec:quantitative_nondegeneracy}.
\begin{lemma}\label{lem:reachable}
  Both $\mf{g}_{\mathrm{LNS}}$ and $\mf{g}_{\mathrm{PSA}}$ are bracket-generated in the sense of Definition~\ref{def:bracket_regular}.
\end{lemma}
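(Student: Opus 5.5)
We need to exhibit, for each model, an increasing chain $\mf{g}_j$ with countable bracket sets $\mathcal{B}_j$ whose elements, together with the generators $H^k$, are bracket-regular in the sense of Definition~\ref{def:bracket_regular}. By that definition, bracket-regular operators form a linear space, so it suffices to check regularity on the generators and on the countably many bracket elements themselves. The chain is dictated by the definitions in~\eqref{eq:lie_algebras}.

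\emph{Choice of chain.} For PSA we take four levels. The first is $\mf{g}_0=\mathrm{Span}(H^k)$. Next $\mathcal{B}_0=\{[\Delta,H^k]\}$. Then $\mathcal{B}_1=\{[H^\ell,[\Delta,H^k]]\}$, which equals $-[[\Delta,H^k],H^\ell]$ and so has the same span. Finally $\mathcal{B}_2=\{[H^\ell,[H^{-k},[\Delta,H^k]]]\}$. The union of these levels is exactly $\mf{g}_{\mathrm{PSA}}$, and each $\mathcal{B}_j$ is indexed by $(\Zstar)^2$, hence countable. For LNS we let $\mathcal{B}_j$ be all brackets $[H^k,A]$ with $A$ ranging over iterated brackets of length $j+1$ of the generators. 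This set is countable, and the union is $\mathrm{Lie}(H^k)$ because iterated brackets of the form $[H^{k_1},[H^{k_2},\dots]]$ span the generated Lie algebra (Jacobi identity). No $\Delta$-brackets are needed for LNS.

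\emph{Regularity checks.} Each candidate element is a differential operator whose coefficients are trigonometric polynomials. For LNS, $H^k$ has order one, with the stretching part of order $-1$, and with coefficients of size $O(|k|^{0})$ in $C^m$ norms up to polynomial factors. Brackets of such operators again have order one, because the symbol-level commutator of first-order operators is first order. Their coefficient sizes grow polynomially in the indices, with the growth in the new index $k$ controlled by $|k|$. For PSA, $[\Delta,H^k]$ has order two. Bracketing it with $H^\ell$ keeps order two, and so does the further bracket. Conditions (i-a) and (i-b) then reduce to the following: for a fixed element $A$ of order $d\le 2$, the commutator $[H^k,A]$ has order at most $d$ with coefficients bounded by $C_A|k|^{2}$ in the relevant $C^m$ norms. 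This gives boundedness $H^{\beta_\star}\to\mH$, and also $H^{\beta_\star-2}\to H^2$ because $\beta_\star-2-d\ge 2$ when $\alpha_\star>10$. The $|k|^4$ weight in (i-b) absorbs the extra two derivatives falling on $e_k$. Condition (iii) holds because $w\mapsto[L^0_w,A]$ is linear. Its coefficients involve at most $d+1$ derivatives of $K*w$, and these are controlled by $\|w\|_{H^{s_\star-2}}$ since $s_\star-2>d+2$. Condition (ii) asks about $[G_{w_s},A]=[\Delta,A]-[L^0_{w_s},A]$. The first term has order $d+1\le 3$, so it maps $H^{\beta_\star-2}\to H^2$. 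The second is continuous in $s$ in that operator norm because $w\in C((0,T];H^{s_\star})$ by Remark~\ref{rem:smoothing}, and it is controlled by a slightly stronger Sobolev norm of $w$, which is still below $s_\star$.

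\emph{Main obstacle.} The delicate point is the uniformity in $k$ demanded in (i): we need $\sup_k|k|^{-2}\|[H^k,A]\|$ to be finite even when $A$ itself carries indices. The way around this is that $A$ is fixed with finitely many indices, so its constants may depend on those indices. Only the new index $k$ must be controlled. A first-order operator $H^k$ has a coefficient $e_k$ together with a Biot--Savart factor $|k|^{-1}$. Each derivative landing on $e_k$ produces at most a factor $|k|$, and at most $d+1\le 3$ such derivatives occur. Combined with the $|k|^{-1}$ factor this gives the $|k|^2$ bound, and the same counting with two more derivatives gives the $|k|^4$ bound in (i-b). The remaining step is to verify via Lemma~\ref{lem:lns_algebra} and Lemma~\ref{lem:psa_algebra} that the elements lie in $\Opbeta$, with explicit coefficient bounds uniform in $k$.
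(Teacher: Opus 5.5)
Your proposal is correct and follows the paper's route. The paper's proof notes that the chain structure is immediate from \eqref{eq:lie_algebras}, which your explicit choice of the sets $\mathcal{B}_j$ makes precise, and that bracket-regularity of every element is the content of Lemma~\ref{lem:lns_algebra} and Lemma~\ref{lem:psa_algebra}; you also invoke these two lemmas at the end.

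One caution: your derivative count should not be read as an independent proof in the LNS case. There $H^k$ contains the nonlocal stretching term $B(\cdot,e_k)=(K*\cdot)\cdot\nabla e_k$, whose coefficient has size $|k|$ rather than $|k|^{-1}$. Moreover, brackets such as $[H^k,H^{-k}]$ are nonlocal Fourier multipliers, not differential operators with trigonometric-polynomial coefficients. What actually delivers your (correct) $|k|^2$ and $|k|^4$ weights is the band calculus of Lemma~\ref{lem:band_calculus}.
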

\begin{proof}
  The only substantial aspect is to prove that the vector spaces are bracket
  regular. This follows from Lemma~\ref{lem:lns_algebra} in the case of LNS
  and Lemma~\ref{lem:psa_algebra} in the case of PSA. The remaining properties
  follow from the definition~\eqref{eq:lie_algebras}.
\end{proof}

We note that overall the discussion for SNS is substantially easier
than the PSA case.

\begin{theorem}\label{thm:transitivity}
  Fix any $T>0$ and $z_0 \in \mX$. Then
  \begin{itemize}
    \item In the case of LNS \eqref{eq:two_operators_intro_NS}, for any $\delta
      \pi \in T_{\pi_T} \mathbb{S}$ the following holds:
      \begin{equation*}
        \langle \delta\pi, \vf{A}\pi_T \rangle = 0
        \qquad \forall A \in \mf{g}_{\mathrm{LNS}} \quad \Rightarrow \quad \delta \pi = 0 .
      \end{equation*}
    \item In the case of PSA~\eqref{eq:two_operators_intro_PS}, there exists a set $\Omega_0 \subseteq \Omega$ (depending
      on the initial condition $z_0$ and the terminal time $T$) with $\PP(\Omega_0)=1$, such that the following holds for all events $\omega \in \Omega_0$. For any $\delta \pi \in T_{\pi_T} \mathbb{S}\cap H^{5}$ the following holds:
      \begin{equation*}
        \langle \delta\pi, \vf{A}\pi_T \rangle = 0
        \qquad \forall A \in \mf{g}_{\mathrm{PSA}} \quad \Rightarrow \quad \delta \pi = 0 .
      \end{equation*}
  \end{itemize}
\end{theorem}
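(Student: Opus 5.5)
The plan is to reduce both statements to a density property of the orbit $\{A\pi_T : A \in \mf{g}\}$ in $\mH$. Since $\delta\pi \perp \pi_T$, the condition $\langle \delta\pi, \vf{A}\pi_T\rangle = 0$ is the same as $\langle \delta\pi, A\pi_T\rangle = 0$. It therefore suffices to show that the linear span of $\{A\pi_T : A \in \mf{g}\} \cup \{\pi_T\}$ is dense in $\mH$ (for LNS), or dense with respect to a topology dual to $H^5$ (for PSA).

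\emph{LNS.} I would work in Fourier variables. The operators $H^k$ are Fourier multipliers that shift modes: $H^k e_j = c(k,j)\, e_{j+k}$, with interaction coefficients of the form $\langle k^\perp, j\rangle(|k|^{-2}-|j|^{-2})$. This coefficient is nonzero whenever $k$ and $j$ are non-parallel and $|k|\neq|j|$. Because $\pi_T$ is a general vector, I would consider words $H^{k_1}\cdots H^{k_m}$ inside $\mathrm{Lie}(H^k)$. Following \cite{BedrossianPunshon-Smith-Chaos-2024y}, I would show that brackets $[H^k,H^{-k}]$, and longer chains of such brackets, produce diagonal operators $D$ acting as multiplication by explicit symbols $d(j)$.

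With enough distinct diagonal symbols available, a Vandermonde or polynomial-separation argument lets us isolate individual Fourier coefficients. Concretely, suppose we can find diagonal elements whose symbols separate any finite set of frequencies. Then, from the relations $\langle \delta\pi, D^n \pi_T\rangle=0$ (obtained using closure under brackets together with the shift operators), we would deduce $\widehat{\delta\pi}(j)\overline{\hat\pi_T(j)}=0$ mode by mode. Combining this with the shifts, which move any mode $j$ where $\hat\pi_T(j)\neq0$ to every other mode, would give $\delta\pi=0$.

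The algebraic core is to verify that the relevant eigenvalue symbols are generically distinct. I expect to import this essentially verbatim from \cite{BedrossianPunshon-Smith-Chaos-2024y}, checking only that the finite-regularity truncation is harmless. This is the case because brackets of the $H^k$ stay first order, so the whole Lie algebra lies in $\Opbeta$.

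\emph{PSA.} Here $H^k=B(e_k,\cdot)=u_k\cdot\nabla$ is transport, with $u_k = \iota k^\perp |k|^{-2}e_k$. Pure brackets of the $H^k$ preserve Casimirs, so the Laplacian brackets are essential.

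I would compute the following explicitly. The bracket $[\Delta,H^k]$ is a second-order operator, namely $2\nabla u_k:\nabla^2$ plus first-order terms. Bracketing with $H^\ell$ gives second-order operators with coefficient matrices built from $u_k$ and $u_\ell$. The triple bracket $[[[\Delta,H^k],H^{-k}],H^\ell]$ is needed to produce genuinely new second-order symbols. Applied to $\pi=\pi_T$, these give the functions $a(x):\nabla^2\pi(x)+b(x)\cdot\nabla\pi(x)$ for a family of trigonometric coefficient fields $a,b$. I would aim to show that, at any point $x$ where the Hessian of $\pi$ is not of rank-one type aligned with $\nabla\pi$, the available coefficients span all of $\mathrm{Sym}_2\times\RR^2$ locally.

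Granting that, the orthogonality relations say $\delta\pi\,\partial_{ij}\pi$ and $\delta\pi\,\partial_i\pi$ are orthogonal to a dense class of trigonometric polynomials. Hence $\delta\pi\,\nabla\pi = 0$ and $\delta\pi\,\nabla^2\pi=0$ almost everywhere. This uses $\delta\pi\in H^5$, so that the products are regular enough to test against distributions. It follows that $\delta\pi$ vanishes wherever $\nabla\pi_T\neq0$.

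The main obstacle is the degenerate set where the second-order information collapses to one direction, i.e.\ where $\pi_T$ is locally of the form $f(\ell\cdot x)$. I would handle it with the genericity statement announced in the introduction: almost surely $\pi_T$ is nowhere one-dimensional. To prove this I would use Cameron--Martin analyticity. The map $h\mapsto\pi_T(W+h)$ is real-analytic in finitely many Cameron--Martin directions. A local one-dimensional structure on a rational patch is an analytic constraint, so it holds either identically in $h$ or only on a null set. I would rule out the identical case by exhibiting a control $h$ producing a non-one-dimensional $\pi_T$, combining approximate controllability of the base with the parabolic smoothing of $\pi$.

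Once the degenerate set is shown to have empty interior, $\{\nabla\pi_T=0\}$ and the degenerate set have empty interior as well. By continuity of $\delta\pi$, we then get $\delta\pi=0$ on a dense open set, hence everywhere. This yields the full-measure set $\Omega_0$.
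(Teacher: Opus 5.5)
Both parts have genuine gaps. The PSA gap is the serious one.

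\emph{PSA.} Your step ``$\delta\pi\,\partial_{ij}\pi_T$ and $\delta\pi\,\partial_i\pi_T$ are orthogonal to a dense class of trigonometric polynomials, hence $\delta\pi\,\nabla\pi_T=0$ and $\delta\pi\,\nabla^2\pi_T=0$'' is false.

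The coefficient fields of operators in $\mf{g}_{\mathrm{PSA}}$ are not free. For instance, $H^k$ has the divergence-free coefficient $\iota|k|^{-2}k^\perp e_k$. So $\langle\delta\pi,H^k\pi_T\rangle=0$ for all $k$ only yields $\curl(\delta\pi\nabla\pi_T)=-\nabla^\perp\delta\pi\cdot\nabla\pi_T=0$. Pointwise spanning of coefficient values gives no $L^2$-density.

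A counterexample makes this concrete. Every $A\in\mf{g}_{\mathrm{PSA}}$ is a differential operator with $\int_{\TT^2}A\varphi\ud x=0$: this holds for $H^k$ and $\Delta$ and is preserved by commutators. Take $\pi_T$ supported in a small ball $B$. Take $\delta\pi$ smooth with $\delta\pi\equiv1$ near $B$, corrected far from $B$ to have mean zero; then $\langle\delta\pi,\pi_T\rangle=0$. By locality, $\langle\delta\pi,\vf{A}\pi_T\rangle=\int_{\TT^2}A\pi_T\ud x=0$ for every $A$. Yet $\delta\pi=1$ at points of $B$ where $\pi_T$ is as nondegenerate as you like.

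This does not contradict the theorem, since such a $\pi_T$ is constant on an open set. It does show that no pointwise criterion on $\pi_T$ at $x$ can force $\delta\pi(x)=0$. The relations only constrain $\nabla\delta\pi$, so the conclusion has to be global.

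The paper proceeds as follows:
\begin{itemize}
  \item Lemma~\ref{lem:psa_duality} integrates by parts. It uses the relations for $H^k$, for $\partial_a\partial_b$ (spanned by $[[\Delta,H^k],H^{-k}]=2|k|^{-2}(k^\perp\cdot\nabla)^2$), and for $[\partial_a\partial_b,H^\ell]$. These become $\nabla^\perp\delta\pi\cdot\nabla\pi_T=0$ and $\nabla^2\pi_T\,\nabla^\perp\delta\pi=0$.
  \item Lemma~\ref{lem:psa_rigidity} shows that if $\delta\pi$ is nonconstant, then $\pi_T$ is one-dimensional on some open set.
  \item Hence nowhere one-dimensionality forces $\delta\pi$ to be constant, and mean zero gives $\delta\pi=0$.
\end{itemize}
Your genericity argument (Cameron--Martin analyticity, a zero--one law, a control plus a support theorem) is broadly in line with Proposition~\ref{prop:psa_star_fixedtime}. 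However, genericity enters through this rigidity lemma, not through a ``degenerate set'' of a pointwise argument.

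\emph{LNS.} The relations $\langle\delta\pi,D^n\pi_T\rangle=0$ are not available. $\mathrm{Lie}(H^k)$ is closed under commutators, not products, so $D^n$ and $D^nH^k$ do not lie in it. What you actually have is $\mathfrak{p}(\ad\mathbb{D})H^k$.

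Since every diagonal symbol is odd, $\ad(\mathbb{D})$ assigns the same eigenvalue to $E^{p,q}$ and $E^{-q,-p}$. You can therefore isolate only the basis pairs $M^{p,q}$. Separating the two entries of a pair needs the brackets $[M^{p,k},M^{k,q}]$ together with the determinant condition \eqref{eq:bps_det}.

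Moreover, in infinite dimensions no polynomial isolates a single term of an infinite band. The paper instead works in the operator-norm closure:
\begin{itemize}
  \item it applies Stone--Weierstrass on the set of eigenvalue differences, which accumulates only at $0$, with a summable envelope;
  \item it shows this closure is all of $\mathcal{K}(\mH)$;
  \item it uses that $A\mapsto\langle\delta\pi,A\pi_T\rangle$ is operator-norm continuous, and tests against rank-one operators.
\end{itemize}
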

Theorem~\ref{thm:transitivity} is proved for LNS at the end of Section~\ref{sec:tran-NS}, and for PSA in Section~\ref{sec:tran-PSA}, after Proposition~\ref{prop:transitivity_PSA} and Proposition~\ref{prop:psa_star_fixedtime}.

\begin{remark}
  The two cases are algebraically different. In the LNS case the $H^k$ alone are insufficient, and it is the iterated brackets between them that generate the missing directions. In the PSA case the $H^k$ instead span the exact, mean-zero Hamiltonian vector fields on $\TT^2$, and $[H^j, H^k]$ is proportional to $H^{j+k}$, so they close under bracketing and produce no new direction. This bracket relation is the classical Fourier presentation of the Poisson algebra of area-preserving vector fields \cites{ArnoldKhesin98, Zeitlin91}, and it reflects the conservation of the Casimirs
  \[
    \int_{\TT^2} f(\zeta_T) \ud x = \int_{\TT^2} f(\zeta_0) \ud x , \qquad f \in C(\RR) ,
  \]
  by the undiffused passive scalar. One must therefore bracket again with the drift, which reduces to the Laplacian by Corollary~\ref{cor:propagation} and gives rise to the truncated linear space $\mf{g}_{\mathrm{PSA}}$, which is notably not a Lie algebra.

  The proof for LNS follows much of the machinery laid out in~\cite{BedrossianPunshon-Smith-Chaos-2024y}, while the PSA case is new and rests on a rigidity property of the scalar.
\end{remark}

\subsection{Transitivity for the linearized Navier--Stokes dynamics}
\label{sec:tran-NS}

The transitivity of $\mf{g}_{\mathrm{LNS}}$ proved in this subsection is the infinite-dimensional counterpart of the Lie algebra analysis of \cite{BedrossianPunshon-Smith-Chaos-2024y}, which established the same spanning property for the Galerkin truncations of the stochastic Navier--Stokes equations. Notably much of the machinery developed there is applicable in the infinite dimensional setting and the argument below follows many of its steps, in particular the band decomposition, the diagonal operators obtained from $[H^k, H^{-k}]$, and the passage from basis pairs to elementary matrices.

Recall that in the case of linearized Navier--Stokes we have
$ H^k = B(e_k, \cdot) + B(\cdot, e_k)$, so that
\begin{equation}\label{eq:Hk_symbol}
  H^k = e_k\, h_k(D) ,
  \qquad
  h_k(l) = \langle l^\perp, k\rangle\Big(\frac{1}{|k|^2} - \frac{1}{|l|^2}\Big) ,
  \qquad
  |h_k(l)| \le \frac{|l|}{|k|} + \frac{|k|}{|l|} ,
\end{equation}
with the convention that every symbol vanishes at $l = 0$, so that its action in Fourier coordinates reads
\begin{equation*}
  (\widehat{H^k \psi})_l = h_k(l-k) \hat\psi_{l-k}, \qquad \forall l \in \Zstar .
\end{equation*}
In particular, $H^k$ is supported on band $k$, according to the definition that an operator $A$ is supported on band $k \in \Zstar$ if $A e_j = c_j e_{j+k}$ for all $j \in \Zstar$ (or equivalently if $A = \sum_{j \in \Zstar} c_j E^{j+k, j}$ with $E^{j+k, j}$ the rank one operator that maps $e_j \mapsto e_{j+k}$).

We start by observing that $\mf{g}_{\mathrm{LNS}}$ satisfies the assumption of Lemma~\ref{lem:bracket_extraction} and in particular that $\mf{g}_{\mathrm{LNS}} \subseteq \Opbeta$. To this aim, for $m \in \ZZ$, we write $a \in S^m$ (we say that $a$ is a symbol of order $m$) if $a \colon \Zstar \to \CC$ and it agrees, outside a bounded set, with a smooth function $\alpha$ on $\RR^2$ satisfying
\begin{equation*}
  |\nabla^i \alpha(l)| \le C_i\, |l|^{m - i} , \qquad i \ge 0 .
\end{equation*}
Orders add under products of symbols, and by the mean value theorem a
shift-difference $a(\cdot + l) - a$ lowers the order by one. Furthermore, we say that an
operator $A$ is supported on finitely many bands if it is of the form
\begin{equation*}
  A = \sum_{m \in F_A} e_m\, p_{A,m}(D),
\end{equation*}
for some finite subset $F_A \subseteq \Zstar$ and a collection of symbols $\{p_{A,m}\}_{m \in F_A}$.
  For brackets of such operators we find the following property.

  \begin{lemma}\label{lem:band_calculus}
    Let $A$ and $B$ be supported on finitely many bands, with coefficients in $S^r$ and in $S^s$ respectively. Then $[A, B]$ is supported on finitely many bands, with coefficients in $S^{r+s-1}$.
  \end{lemma}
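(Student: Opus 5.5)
By bilinearity of the commutator it suffices to treat a single band of each operator. I therefore take $A = e_m\, a(D)$ and $B = e_n\, b(D)$ with $a \in S^r$ and $b \in S^s$. Then $[A,B]$ is a finite sum of such single-band brackets, and each one is supported on band $m+n$. The band set of $[A,B]$ is thus contained in $F_A + F_B$, which is finite.

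The core step is a composition formula for single-band operators. For any $j \in \Zstar$,
\begin{equation*}
  A B e_j = a(j+n)\, b(j)\, e_{j+n+m} ,
  \qquad
  B A e_j = b(j+m)\, a(j)\, e_{j+m+n} .
\end{equation*}
Hence
\begin{equation*}
  [A,B] = e_{m+n}\, c(D) ,
  \qquad
  c(l) \eqdef a(l+n)\, b(l) - b(l+m)\, a(l) .
\end{equation*}
I rewrite $c$ by adding and subtracting $a(l)b(l)$:
\begin{equation*}
  c(l) = \bigl(a(l+n) - a(l)\bigr) b(l) - \bigl(b(l+m) - b(l)\bigr) a(l) .
\end{equation*}
The paper's remark that a shift-difference lowers the order by one gives $a(\cdot+n) - a \in S^{r-1}$ and $b(\cdot+m) - b \in S^{s-1}$. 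Since orders add under products, both terms lie in $S^{r+s-1}$, and so does their difference. When $m+n = 0$ the operator acts on the zero mode, but the convention that symbols vanish at $0$, together with mean-zero functions, makes this harmless: the coefficient is in $S^{r+s-1}$ as a function on $\Zstar$.

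The only point needing care is the shift-difference claim, which I regard as the main (mild) obstacle. Outside a large ball $|l| \ge 2|n| + C$, let $\alpha$ be the smooth extension of $a$. Then
\begin{equation*}
  \alpha(l+n) - \alpha(l) = \int_0^1 \nabla\alpha(l+tn)\cdot n \ud t .
\end{equation*}
For $|l| \geq 2|n|$ we have $|l+tn| \simeq |l|$, so derivatives of order $i$ are bounded by $C_i |n|\, |l|^{r-1-i}$. On the remaining bounded set the symbol condition is vacuous, since membership in $S^{r-1}$ only constrains the symbol outside a bounded set. The product rule handles the derivatives of products of symbols.
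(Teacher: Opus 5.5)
Your proof is correct and follows the paper's argument: the single-band composition $e_m a(D)\, e_n b(D) = e_{m+n} a(D+n) b(D)$, adding and subtracting $ab$ so that the commutator coefficient becomes $(a(\cdot+n)-a)\,b - (b(\cdot+m)-b)\,a$, and the mean-value-theorem observation that a shift-difference lowers the order by one. (A minor correction: the zero-mode caveat is about the intermediate modes $j+n$ and $j+m$, which may equal $0$, not about the case $m+n=0$, where the bracket is simply diagonal; the convention that symbols vanish at $0$ does handle it, as you say.)
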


  \begin{proof}
    Since $p(D)(e_n \psi) = e_n\, p(D+n)\psi$, two bands compose as $e_m\, p(D)\, e_n\, q(D) = e_{m+n}\, p(D+n)\, q(D)$, and therefore
    \begin{equation}\label{eq:band_bracket}
      [\, e_m\, p(D),\ e_n\, q(D) \,]
      \ =\ e_{m+n}\, \Big( \big(p(\cdot+n) - p\big)\, q \ -\ \big(q(\cdot+m) - q\big)\, p \Big)(D) ,
    \end{equation}
    and the product $p q$ terms cancel. Frequency supports add, so the bands
    stay finite, and the shift-difference lowers the order by one, so the first
    coefficient lies in $S^{r-1} S^{s}$ and the second in $S^{s-1} S^{r}$:
    both in $S^{r+s-1}$.
\end{proof}

\begin{lemma}\label{lem:lns_algebra}
  The symbols $h_k$ from~\eqref{eq:Hk_symbol} satisfy $h_k \in S^1$. Therefore each $A \in \mf{g}_{\mathrm{LNS}}$ is a finite sum of the form $A = \sum_{m \in F_A} e_m\, p_{A,m}(D)$ for some $p_{A,m} \in S^1$. In particular $|p_{A,m}(l)| \lesssim_A 1 + |l|$, so $A$ is a first-order operator, $A \in \Opbeta$, and $A$ is bracket-regular in the sense of Definition~\ref{def:bracket_regular}.
\end{lemma}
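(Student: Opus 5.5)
The plan is to establish the symbol bound first, then propagate the band structure through iterated brackets by induction, and finally read off the three bracket-regularity conditions from the explicit form.

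\emph{Step 1: $h_k \in S^1$.} For fixed $k$, away from $l=0$ the function $\alpha(l) = \langle l^\perp, k\rangle(|k|^{-2} - |l|^{-2})$ is smooth. It splits as $\langle l^\perp,k\rangle |k|^{-2}$, which is linear in $l$ and hence in $S^1$, minus $\langle l^\perp,k\rangle |l|^{-2}$, which is homogeneous of degree $-1$ and hence in $S^{-1}\subseteq S^1$. Outside the unit ball the derivative bounds $|\nabla^i\alpha|\lesssim_k |l|^{1-i}$ follow directly. For the later weights we keep track of the $k$-dependence: $|\nabla^i h_k(l)| \lesssim |k|^{-1}|l|^{1-i}\mathbf 1_{i\le 1} + |k|\,|l|^{-1-i}$.

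\emph{Step 2: band structure of $\mf{g}_{\mathrm{LNS}}$.} Every element of $\mathrm{Lie}(H^k)$ is a finite linear combination of iterated brackets of finitely many generators, so I induct on bracket depth using Lemma~\ref{lem:band_calculus}. The lemma only gives order $r+s-1 = 1$ when $r=s=1$, and this is exactly what closes the induction: brackets of $S^1$-band operators remain $S^1$-band operators with finitely many bands. Hence $A=\sum_{m\in F_A} e_m p_{A,m}(D)$ with $p_{A,m}\in S^1$. This gives $|p_{A,m}(l)|\lesssim_A 1+|l|$, so $\|A\psi\|\lesssim_A \|\psi\|_{H^1}$, and therefore $A\in\mL(H^{\beta_\star};\mH)$, $A\in\Opbeta$. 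Preservation of $\mS$ is clear, since multiplying by $e_m$ and applying a polynomially bounded Fourier multiplier both preserve $\mS$.

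\emph{Step 3: bracket-regularity.} For condition (i), $[H^k,A]$ is again a finite band operator. Using \eqref{eq:band_bracket} with $p=h_k$ and $q=p_{A,m}$, the coefficients are $(h_k(\cdot+m)-h_k)p_{A,m}-(p_{A,m}(\cdot+k)-p_{A,m})h_k$. The mean value theorem bounds the first term by $|m|\sup|\nabla h_k|(1+|l|)\lesssim_A |k|(1+|l|)$, using the derivative bounds from Step~1. The second term is bounded by $|k|\,|\nabla p_{A,m}|\,|h_k| \lesssim_A |k|(|l|/|k|+|k|/|l|)\lesssim |k|^2(1+|l|)$. Near $l=0$, or when $|l+k|$ is small, the crude bound $|h_k(l)|\le |l|/|k|+|k|/|l|$ suffices. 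Thus $\|[H^k,A]\|_{\mL(H^1;\mH)}\lesssim_A |k|^2$, which gives (i-a). For (i-b), the same operator maps $H^{3}\to H^2$ with norm $\lesssim |k|^2\langle k\rangle^2$, because multiplication by $e_{m+k}$ costs $\langle m+k\rangle^2$ on $H^2$. Since $\beta_\star-2\ge 3$, this gives the $|k|^4$ weight.

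For (iii), $L^0_w=\sum_l \hat w(l) H^l$, so $[L^0_w,A]=\sum_l \hat w(l)[H^l,A]$. The bound from (i-a) and Cauchy--Schwarz give $\lesssim \sum_l |\hat w(l)||l|^2 \lesssim \|w\|_{H^{s_\star-2}}$, since $s_\star-2-2>1$. For (ii), $[G_{w_s},A]=[\Delta,A]-[L^0_{w_s},A]$. The term $[\Delta,A]$ is a fixed band operator of order $2$, because the shift-difference of $|l|^2$ has order $1$, and therefore bounded $H^{\beta_\star-2}\to H^2$ as $\beta_\star-2\ge 5$. The second term is continuous in $s$ in $\mL(H^{\beta_\star-2};H^2)$ by the argument for (iii) combined with (i-b), together with the continuity of $s\mapsto w_s$ in $H^{s_\star}$ on $(0,T]$.

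I expect the main obstacle to be the uniform-in-$k$ weight bookkeeping in (i). Lemma~\ref{lem:band_calculus} is only qualitative, so the $k$-dependence of the $S^1$ seminorms of $h_k$, which are of order $|k|$ in the $|k|/|l|$ regime, must be tracked by hand, including the low-frequency region where the symbol calculus is not asymptotic.
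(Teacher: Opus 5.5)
Your proposal is correct and follows the same route as the paper: Lemma~\ref{lem:band_calculus} at $r=s=1$ plus induction for the band structure, the explicit bracket formula \eqref{eq:band_bracket} for condition (i), Cauchy--Schwarz with $s_\star>5$ for (iii), and the splitting $[G_{w_s},A]=[\Delta,A]-[L^0_{w_s},A]$ for (ii). The only real difference is in condition (i). You use the shift-difference cancellation to get an order-one symbol bound $|k|^2(1+|l|)$ for $[H^k,A]$, which forces the $k$-dependent seminorm bookkeeping you flag as the main obstacle. The paper instead bounds the two uncancelled products $h_k(\cdot+m)p_{A,m}$ and $p_{A,m}(\cdot+k)h_k$ by $(1+|l|)^2+|k|^2$, using only $|h_k(l)|\le |l|/|k|+|k|/|l|$, and this suffices because $\beta_\star\ge 2$ (you also check (i-b) explicitly, which the paper only invokes inside (ii)).
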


\begin{proof}
  It is straightforward to check that $h_k \in S^1$, and moreover finite sums of
  band-limited multipliers with symbols in $S^1$ form a Lie algebra, by
  Lemma~\ref{lem:band_calculus} at $r = s = 1$. Now, the representation of a
  general $A \in \mf{g}_{\mathrm{LNS}}$ follows by induction from the
  generators, and the fact that $A \in \Opbeta$ follows from
  \begin{equation*}
    \|e_m\, p(D)\varphi\|_{\mH}
    \ =\ \|p(D)\varphi\|_{\mH}
    \ \le\ \sup_{l} \frac{|p(l)|}{(1+|l|)^{\beta_\star}}\ \|\varphi\|_{H^{\beta_\star}} \lesssim  \|\varphi\|_{H^{\beta_\star}},
  \end{equation*}
  since $\beta_\star > 1$.

  We now fix $A$ and verify the three conditions of Definition~\ref{def:bracket_regular}. By~\eqref{eq:band_bracket} we have
  \begin{equation*}
    [H^k, A]
    \ =\ \sum_{m \in F_A} e_{k+m}\,
    \big(\, h_k(\cdot+m)\, p_{A,m} \ -\ p_{A,m}(\cdot+k)\, h_k \,\big)(D) ,
  \end{equation*}
  and on the lattice the two symbol products are bounded, up to a constant
  depending on $A$, by $(1+|l|)^2 + |k|^2$, since $|h_k(l)| \le |l|/|k| +
  |k|/|l|$ by~\eqref{eq:Hk_symbol} and $p_{A,m} \in S^1$. Dividing by
  $(1+|l|)^{\beta_\star}$ and taking the supremum over $l$ gives the desired
  \begin{equation*}
    \sup_{k \in \Zstar}\ |k|^{-2}\, \big\| [H^k, A] \big\|_{\Opbeta} \ <\ \infty.
  \end{equation*}
  The third hypothesis follows since $L^0_w =  \sum_j \hat w(j)\, H^j$ and
  \begin{equation*}
    \|[L^0_w, A]\|_{\mL(H^{\beta_\star};\mH)}
    \ \le\ \sum_j |\hat w(j)|\, \|[H^j, A]\|_{\Opbeta}
    \ \lesssim_A\ \sum_j |\hat w(j)|\, |j|^2
    \ \lesssim\ \|w\|_{H^{s_\star-2}}
  \end{equation*}
  by Cauchy--Schwarz, using $s_\star - 2 > 3$. For the second hypothesis write $[G_{w_s}, A] = [\Delta, A] - [L^0_{w_s}, A]$. Applying~\eqref{eq:band_bracket} with the symbol $-|l|^2$ of $\Delta$,
  \begin{equation*}
    [\Delta,\ e_m\, p(D)] \ =\ e_m\, r(D) ,
    \qquad
    r(l) \ =\ -\big( 2\, m \cdot l + |m|^2 \big)\, p(l) ,
  \end{equation*}
  a band-limited multiplier of order two, hence bounded from $H^{\beta_\star-2}$
  to $H^2$ since $\beta_\star \ge 6$, and constant in $s$. For the other term,
  we use as before $[L^0_{w_s}, A] = \sum_j \hat w_s(j)\, [H^j, A]$ together
  with $\|[H^j, A]\|_{\mL(H^{\beta_\star-2};\, H^2)} \lesssim_A |j|^4$, and the regularity of $ w$ (since $s_\star > 5$).
  The required continuity of $s \mapsto [G_{w_s}, A]$ in
  $\mL(H^{\beta_\star-2}; H^2)$ then follows, since $w \mapsto [L^0_w, A]$ is
  linear with the displayed bound and $s \mapsto w_s \in H^{s_\star}$ is almost
  surely continuous. This completes the proof.
\end{proof}

Now we move toward the proof of Theorem~\ref{thm:transitivity} in the case of
LNS. The parts of \cite{BedrossianPunshon-Smith-Chaos-2024y} that concern the Lie
  algebra transitivity adapt immediately to infinite dimensions, and in fact the
proofs simplify without the effect of boundary conditions. As in~\cite{BedrossianPunshon-Smith-Chaos-2024y} we start
by observing that the commutator
of two operators on bands $k_1$ and $k_2$ is supported on band $k_1 + k_2$.
Therefore, the commutator of $H^k$ and $H^{-k}$ is supported on band zero,
meaning that it is diagonal. We define the diagonal operators thus
obtained
\begin{equation}
  \mathbb{D}^k \eqdef [H^k, H^{-k}],\quad k \in \Zstar .
\end{equation}
Next we set the diagonal subalgebra $\h$ as the linear span of these diagonal generators:
\begin{equation*}
  \h \eqdef \mathrm{span} \{\mathbb{D}^k : k \in \Zstar\} \subseteq \mf{g}_{\mathrm{LNS}} .
\end{equation*}
While $H^k$ and $H^{-k}$ are unbounded, their commutator is compact due to a
cancellation. Indeed in Fourier coordinates
$(\widehat{\mathbb{D}^k \psi})_l = \mathbb{D}^k_l \hat{\psi}_l$ with eigenvalues
\begin{equation}\label{eq:Dkl-def}
  \begin{aligned}
    \mathbb{D}^k_l
    = \langle l^\perp, k \rangle^2
    &\left( \frac{1}{|k|^2} - \frac{1}{|l|^2} \right)
    \left( \frac{1}{|l-k|^2} - \frac{1}{|l+k|^2} \right) ,
  \end{aligned}
\end{equation}
see \cite{BedrossianPunshon-Smith-Chaos-2024y}*{Remark~4.4}. Since $\mathbb{D}^k_l = O(|l|^{-1})$ as $|l| \to \infty$, each $\mathbb{D}^k$ is a compact operator on $\mH$.

Now, let $\mathcal{K}(\mH)$ denote the set of compact operators on $\mH$ (and recall that in this section $\mH$ is the \emph{complex} Hilbert space of complex-valued mean zero $L^2$ functions), and let
\begin{equation*}
  \mathcal{I} \eqdef \mathrm{Ideal}_{\mf{g}_{\mathrm{LNS}}}(\h)
\end{equation*}
be the Lie algebra ideal generated by $\h$ within $\mf{g}_{\mathrm{LNS}}$. Namely, the smallest subspace of $\mf{g}_{\mathrm{LNS}}$ containing $\h$ and stable under bracketing $[\mf{g}_{\mathrm{LNS}}, \cdot\,]$. Its elements are finite linear combinations of iterated brackets $\ad(A_1)\cdots\ad(A_r)\, \mathbb{D}$ with $A_i \in \mf{g}_{\mathrm{LNS}}$ and $\mathbb{D} \in \h$, and where $\mathrm{ad}(A)B = [A, B]$.

\begin{proposition} \label{prop:lie_algebra_dense}
  For $\mathcal{I}$ as above it holds that $\overline{\mathcal{I}}^{\|\cdot\|_{\mathrm{op}}} = \mathcal{K}(\mH)$.
\end{proposition}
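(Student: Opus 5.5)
We prove the two inclusions separately.

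\emph{First inclusion: $\overline{\mathcal I}^{\|\cdot\|_{\mathrm{op}}} \subseteq \mathcal{K}(\mH)$.} It suffices to show that every element of $\mathcal I$ is compact. Each $\mathbb{D}^k$ is compact, since $\mathbb{D}^k_l = O(|l|^{-1})$. Moreover $[A, \mathbb{D}]$ is compact whenever $A$ is a first-order band operator and $\mathbb{D}$ is a diagonal operator of order $-1$. Indeed, Lemma~\ref{lem:band_calculus} gives coefficients of order $1 + (-1) - 1 = -1$, supported on finitely many bands. A finite-band multiplier $e_m p(D)$ with $p \in S^{-1}$ is compact on $\mH$. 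By induction, every iterated bracket $\ad(A_1)\cdots\ad(A_r)\mathbb{D}$ is a finite-band operator of order $\le -1$, hence compact. This first requires checking that $\mathbb{D}^k_l$, extended off the lattice, is a symbol in $S^{-1}$. That holds from~\eqref{eq:Dkl-def}, since the factor $|l-k|^{-2} - |l+k|^{-2}$ is $O(|l|^{-3})$ and is multiplied by $\langle l^\perp,k\rangle^2 = O(|l|^2)$.

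\emph{Second inclusion: $\mathcal K(\mH) \subseteq \overline{\mathcal I}$.} Finite-rank operators are dense in $\mathcal K(\mH)$, so it suffices to show that every elementary matrix $E^{a,b}$ ($e_b \mapsto e_a$, for $a,b \in \Zstar$) lies in $\overline{\mathcal I}$. We follow the steps of \cite{BedrossianPunshon-Smith-Chaos-2024y}.

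\emph{Step 1: separate diagonal projections.} Show that the closure of $\h$ contains each rank-one projection $E^{l,l}$. The sequences $(\mathbb{D}^k_l)_l$ are $c_0$-sequences, because they decay. The plan is to show that the family $\{\mathbb{D}^k\}_k$ separates points of $\Zstar$ up to a controlled degeneracy. Concretely, for $l \ne l'$ we need some $k$ with $\mathbb{D}^k_l \ne \mathbb{D}^k_{l'}$, and we need that no $l$ is annihilated by all $\mathbb{D}^k$. Commutative diagonal subalgebras of $c_0$ are not closed under products, so Stone--Weierstrass is not available directly. Instead we use brackets with off-diagonal band elements and argue as follows. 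For $B$ on band $m$ with coefficient $c_j$ and $\mathbb{D}$ diagonal,
\begin{equation*}
[\mathbb{D}, B] = \sum_j (\mathbb{D}_{j+m} - \mathbb{D}_j)\, c_j\, E^{j+m,j}.
\end{equation*}
Iterated brackets $\ad(\mathbb{D})^n B$ therefore multiply the entries by $(\mathbb{D}_{j+m}-\mathbb{D}_j)^n$. Taking polynomial (Vandermonde-type) combinations of these iterated brackets, and using that the differences accumulate only at $0$, isolates a single entry $E^{j+m,j}$ in the operator-norm limit. This works provided $c_j \ne 0$ and the value $\mathbb{D}_{j+m} - \mathbb{D}_j$ is distinct from the other differences for some choice of $\mathbb{D} \in \h$.

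\emph{Step 2: reach every elementary matrix.} Apply Step 1 with $B = [H^m, \mathbb{D}^k] \in \mathcal I$ to obtain single entries $E^{j+m,j} \in \overline{\mathcal I}$ whenever $h_m(j) \ne 0$ and the relevant entry of $\mathbb{D}^k$ differences is nonzero. Then use $[E^{a,b}, E^{b,c}] = E^{a,c}$ for $a \ne c$ to connect any pair $(a,b)$ through a chain of admissible steps. Here $h_m(j) \ne 0$ requires $j$ not parallel to $m$ and $|j| \ne |m|$, and every lattice point can be connected through such moves. Finally, $E^{a,a} - E^{b,b} = [E^{a,b}, E^{b,a}]$ handles the diagonal. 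One also needs the traceless issue absent: by Step 1, or by decay at infinity, single $E^{a,a}$ are limits of $E^{a,a} - E^{b_n,b_n}$ with $|b_n| \to \infty$. These do not converge in norm, so we instead obtain $E^{a,a}$ directly from the isolation argument applied to $\h$ itself.

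\emph{Main obstacle.} The hard step is the genericity claim in Step 1: exhibiting, for each pair of modes, an element of $\h$ whose eigenvalue differences are simple and nonzero at that pair. This reduces to an algebraic analysis of the explicit rational functions~\eqref{eq:Dkl-def}, in the spirit of the eigenvalue study of diagonal matrices in \cite{BedrossianPunshon-Smith-Chaos-2024y}. We would first try choosing $k$ with $|k|$ large and expanding $\mathbb{D}^k_l$ asymptotically, so as to separate the finitely many relevant modes.
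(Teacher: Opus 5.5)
Your first inclusion is correct and is the paper's argument (Lemma~\ref{lem:symbol_ideal}). The second inclusion has a genuine gap at the isolation step.

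\textbf{The obstruction.} Every $\mathbb{D}\in\h$ is odd, $\mathbb{D}_{-l}=-\mathbb{D}_l$. This is visible from \eqref{eq:Dkl-def}, since the factor $|l-k|^{-2}-|l+k|^{-2}$ changes sign under $l\mapsto -l$. On band $m$ the two entries $E^{j+m,j}$ and $E^{-j,-j-m}$ therefore always carry the same $\ad(\mathbb{D})$-eigenvalue:
\begin{equation*}
\mathbb{D}_{-j}-\mathbb{D}_{-j-m}=\mathbb{D}_{j+m}-\mathbb{D}_j .
\end{equation*}
Your proviso, that $\mathbb{D}_{j+m}-\mathbb{D}_j$ be distinct from all other differences, thus fails for every $\mathbb{D}\in\h$ and every $j$ with $2j+m\neq 0$. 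No polynomial in the commuting operators $\ad(\mathbb{D})$, $\mathbb{D}\in\h$, can separate these two entries. What your peeling produces is at best the pair $c_jE^{j+m,j}+c_{-j-m}E^{-j,-j-m}$, i.e.\ the basis pairs $M^{p,q}=c_{q,p-q}E^{p,q}-c_{p,p-q}E^{-q,-p}$. Distinctness can only be asked over a set $R_k$ of pair representatives, as in Lemmas~\ref{lem:distinct} and~\ref{lem:elementary}. For the same reason your Step~1 claim that $\overline{\h}$ contains $E^{l,l}$ is false, because every element of $\overline{\h}$ is an odd diagonal operator. So $E^{a,a}$ cannot be obtained ``directly from the isolation argument applied to $\h$''.

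\textbf{The missing idea: splitting a pair.} The paper brackets two pairs through an intermediate mode,
\begin{equation*}
[M^{p,k},M^{k,q}]=c_{k,p-k}c_{q,k-q}E^{p,q}+c_{p,p-k}c_{k,q-k}E^{-q,-p} .
\end{equation*}
It then uses two intermediate modes $k,k'$ whose $2\times2$ determinant \eqref{eq:bps_det} is non-zero \cite{BedrossianPunshon-Smith-Chaos-2024y}*{Proposition~4.11}, and solves for $E^{p,q}$ and $E^{-q,-p}$ separately. The entry $E^{p,-p}$ then comes from $[E^{p,s},E^{s,-p}]$. Without this non-degeneracy input, your Step~2 chain argument has no single elementary matrices to start from.

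\textbf{Two further points.}
\begin{itemize}
\item The diagonal is handled by averaging rather than by a single sequence. One has $E^{p,p}-\frac1n\sum_{i\in\Lambda_n}E^{i,i}\to E^{p,p}$ in norm, because the averaged term has norm $1/n$.
\item For the genericity you flag, you need one $\mathbb{D}$ that separates all infinitely many representatives on a band at once. The paper gets this from a fixed finite $G$ and a countable-union-of-hyperplanes argument. Your large-$|k|$ asymptotics only give $W_{j,k}\not\equiv0$. The leading harmonic $-\langle k,\omega\rangle$ cancels in $W_{j_1,k}-W_{j_2,k}$, which is exactly where \cite{BedrossianPunshon-Smith-Chaos-2024y}*{Proposition~5.1} is needed.
\end{itemize}
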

We start with the inclusion $\overline{\mathcal{I}}^{\|\cdot\|_{\mathrm{op}}} \subseteq \mathcal{K}(\mH)$, which is the consequence of the following lemma.

\begin{lemma}\label{lem:symbol_ideal}
  Every element of $\mathcal{I}$ is supported on finitely many bands, with coefficients in $S^{-1}$, and is compact. In particular $\overline{\mathcal{I}}^{\|\cdot\|_{\mathrm{op}}} \subseteq \mathcal{K}(\mH)$.
\end{lemma}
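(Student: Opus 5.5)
The plan is an induction on the length of the iterated brackets generating $\mathcal{I}$, using the band calculus of Lemma~\ref{lem:band_calculus} together with Lemma~\ref{lem:lns_algebra}. Let $\mathcal{J}$ be the set of finite sums $\sum_{m \in F} e_m\, p_m(D)$ with $F \subseteq \Zstar$ finite and every $p_m \in S^{-1}$. It suffices to show three things: (a) $\h \subseteq \mathcal{J}$; (b) $[A, \mathcal{J}] \subseteq \mathcal{J}$ for every $A \in \mf{g}_{\mathrm{LNS}}$; (c) every element of $\mathcal{J}$ is compact on $\mH$. Since $\mathcal{J}$ is a linear space, (a) and (b) give $\mathcal{I} \subseteq \mathcal{J}$, because $\mathcal{I}$ is spanned by the iterated brackets $\ad(A_1)\cdots\ad(A_r)\mathbb{D}$. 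Then (c) gives $\mathcal{I} \subseteq \mathcal{K}(\mH)$, and since $\mathcal{K}(\mH)$ is closed in operator norm, the closure inclusion follows.

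For (b), Lemma~\ref{lem:lns_algebra} writes $A = \sum_{m} e_m\, p_{A,m}(D)$ with $p_{A,m} \in S^1$. Lemma~\ref{lem:band_calculus} with $r = 1$, $s = -1$ then shows that $[A, B]$ is supported on finitely many bands with coefficients in $S^{1 - 1 - 1} = S^{-1}$. The key point is the loss of one order in~\eqref{eq:band_bracket}, which exactly compensates the first-order nature of $A$.

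For (c), each summand $e_m\, p(D)$ is the composition of the unitary multiplication by $e_m$ with the Fourier multiplier $p(D)$. Since $p \in S^{-1}$, we have $|p(l)| \lesssim |l|^{-1} \to 0$ as $|l| \to \infty$, so $p(D)$ is a norm limit of finite-rank diagonal truncations and is therefore compact. A finite sum of compact operators is compact.

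The main obstacle is (a): showing $\mathbb{D}^k \in \mathcal{J}$. A direct application of Lemma~\ref{lem:band_calculus} to $[H^k, H^{-k}]$ only gives a band-zero symbol in $S^{1}$, so an extra cancellation must be exhibited by hand, as in \cite{BedrossianPunshon-Smith-Chaos-2024y}. I would use the explicit formula~\eqref{eq:Dkl-def}. For fixed $k$ the factor $\langle l^\perp,k\rangle^2(|k|^{-2} - |l|^{-2})$ is a smooth symbol of order $2$ away from the origin. The factor $|l-k|^{-2} - |l+k|^{-2}$ is a shift-difference of the order $-2$ symbol $|l|^{-2}$ over a shift of length $2|k|$, so by the mean value theorem (applied to all derivatives) it lies in $S^{-3}$. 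The product is therefore in $S^{-1}$, outside the bounded set where $|l| \le 2|k|$ and smoothness could fail. This matches the stated decay $\mathbb{D}^k_l = O(|l|^{-1})$, and finite linear combinations of the $\mathbb{D}^k$ stay in $\mathcal{J}$.
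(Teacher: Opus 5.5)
Your argument is correct and matches the paper's proof. The paper also introduces the space $\mathcal{J}$ of finitely-banded operators with $S^{-1}$ coefficients, places $\h$ inside it, obtains stability under $\ad(\mf{g}_{\mathrm{LNS}})$ from Lemma~\ref{lem:band_calculus} with orders $1$ and $-1$, and concludes by closedness of $\mathcal{K}(\mH)$. Your factorization of $\mathbb{D}^k_l$, an $S^2$ factor times the shift-difference $|l-k|^{-2}-|l+k|^{-2}\in S^{-3}$, is a welcome refinement: the paper only records the decay $\mathbb{D}^k_l = O(|l|^{-1})$, and that alone does not give the derivative bounds needed for membership in $S^{-1}$.
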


\begin{proof}
  Write $\mathcal{J}$ for the space of (compact) operators supported on finitely many bands whose coefficients lie in $S^{-1}$. As mentioned above, $D^k_l = O(|l|^{-1})$ as $|l| \to \infty$ and therefore $\h \subseteq \mathcal{J}$. On the other hand, the coefficients of any $B \in \mathcal{J}$ lie in $S^{-1}$ and those of any $A \in \mf{g}_{\mathrm{LNS}}$ in $S^1$ by Lemma~\ref{lem:lns_algebra}, so Lemma~\ref{lem:band_calculus} gives $[A, B] \in \mathcal{J}$. Thus $\mathcal{J}$ contains $\h$ and is stable under bracketing with $\mf{g}_{\mathrm{LNS}}$, and therefore $\mathcal{I} \subseteq \mathcal{J}$. Since the elements of $\mathcal{J}$ are compact operators, and compact operators are closed in the operator norm, we obtain the last inclusion.
\end{proof}

Now we move to proving the reverse inclusion of
Proposition~\ref{prop:lie_algebra_dense}. For simplicity, in the remainder of
this discussion we recall
from~\eqref{eq:interaction_coefficients} the interaction coefficients and write
$$c_{j,k} = \langle j^\perp, k \rangle ( |k|^{-2} - |j|^{-2} ) = h_k(j).$$ Our
aim is to isolate, inside $\overline{\mathcal{I}}^{\|\cdot\|}$, a sufficiently
large set of rank-one and rank-two operators. For $p, q \in \Zstar$, recall that the
elementary matrix $E^{p,q}$ acts by $E^{p,q} e_j = \delta_{q,j}\, e_p$.
Moreover, we define the basis pair as the trace-zero operator
\begin{equation*}
  M^{p,q} \eqdef c_{q,p-q}\, E^{p,q} - c_{p,p-q}\, E^{-q,-p} .
\end{equation*}
Oddness of $c_{j,k}$ in $j$ gives $M^{-q,-p} = M^{p,q}$, and $M^{p,-p} = 0$ since $c_{\pm p, 2p} = 0$. The basis pairs are eigen-operators of $\ad(\mathbb{D})$ for any $\mathbb{D} \in \h$. Since every $\mathbb{D} \in \h$ satisfies $\mathbb{D}_{-l} = -\mathbb{D}_l$, a direct calculation gives
\begin{equation}\label{eq:pair_eigen}
  [\mathbb{D}, M^{p,q}] = (\mathbb{D}_p - \mathbb{D}_q)\, M^{p,q} ,
\end{equation}
since $\mathbb{D}_{-q} - \mathbb{D}_{-p} = \mathbb{D}_p - \mathbb{D}_q$. Each generator is a sum of basis pairs along its band. Writing
\begin{equation} \label{eq:Rk_def}
  R_k \eqdef \{ j \in \Zstar \setminus \{-k\} \,:\, 2j + k \in \Zstarp \} ,
\end{equation}
which contains exactly one of $j$ and $-j-k$ for every such pair, because $j \mapsto -j-k$ replaces $2j+k$ by $-(2j+k)$ and $\Zstar = \Zstarp \sqcup (-\Zstarp)$, the case $2j + k = 0$ being omitted since its basis pair vanishes, the identity $c_{-j-k,k} = -c_{j+k,k}$ yields
\begin{equation}\label{eq:Hk_pairs}
  H^k = \sum_{j \in R_k} M^{j+k, j} .
\end{equation}
To isolate a single basis pair from~\eqref{eq:Hk_pairs} we use diagonal
operators whose adjoint action separates the pairs. This approach is that of \cite{BedrossianPunshon-Smith-Chaos-2024y}, applied to the truncated setting. While no single generator $\mathbb{D}^m$ is sufficient to separate them, a generic finite linear combination of them is.

\begin{lemma}\label{lem:distinct}
  There is a finite set $G \subset \Zstar$ such that for every $k \in
  \Zstar$ some $\mathbb{D} = \sum_{m \in G} \alpha_m \mathbb{D}^m$ is distinct
  for band $k$, that is, its eigenvalue differences $\lambda^k_j =
  \mathbb{D}_{j+k} - \mathbb{D}_j$ satisfy $\lambda^k_j \neq 0$ for all $j \in
  R_k$ and $\lambda^k_{j_1} \neq \lambda^k_{j_2}$ whenever $j_1 \neq j_2$ in
  $R_k$.
\end{lemma}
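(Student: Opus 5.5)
The plan is a genericity argument with linear algebra, and the main work is a rigidity statement for the eigenvalues $\mathbb{D}^m_l$ viewed as functions of $m$.

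\emph{Reduction to injectivity.} Fix $k$. For a finite $G$ and $\alpha \in \RR^G$, each quantity $\lambda^k_j(\alpha) = \sum_{m\in G}\alpha_m(\mathbb{D}^m_{j+k}-\mathbb{D}^m_j)$ is a linear functional of $\alpha$, namely $\alpha\mapsto\langle \alpha, v^k_j\rangle$ with $v^k_j \eqdef (\mathbb{D}^m_{j+k}-\mathbb{D}^m_j)_{m\in G}$.

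\begin{itemize}
  \item $\mathbb{D}$ fails to be distinct for band $k$ exactly when $\alpha$ lies in the countable union of the hyperplanes $\{\langle\alpha,v^k_j\rangle=0\}$ for $j\in R_k$, and $\{\langle\alpha,v^k_{j_1}-v^k_{j_2}\rangle=0\}$ for $j_1\ne j_2$ in $R_k$.
  \item If every $v^k_j$ is nonzero and $j\mapsto v^k_j$ is injective on $R_k$, these hyperplanes are all proper.
  \item A countable union of proper hyperplanes has Lebesgue measure zero in $\RR^G$, so a generic $\alpha$ works, and $\alpha$ may depend on $k$.
\end{itemize}

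It therefore suffices to find one finite $G$ such that, for every $k$, the map $j\mapsto v^k_j$ is injective and nonvanishing on $R_k$.

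\emph{Finiteness via Noetherianity.} Write $\mathbb{D}^m_l$ from~\eqref{eq:Dkl-def} as a rational function $F(m,l)$ of $(m,l)\in\RR^2\times\RR^2$. For each $m$, consider the real algebraic sets
\begin{align*}
  V_m &\eqdef \bigl\{(k,j_1,j_2) : F(m,j_1+k)-F(m,j_1)=F(m,j_2+k)-F(m,j_2)\bigr\},\\
  W_m &\eqdef \bigl\{(k,j) : F(m,j+k)=F(m,j)\bigr\}.
\end{align*}
These are taken after clearing denominators, restricted to the complement of the pole set, which contains no relevant lattice points since $l\neq\pm m$ may be arranged or handled separately. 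By Hilbert's basis theorem the decreasing intersections $\bigcap_{m\in G}V_m$ and $\bigcap_{m\in G}W_m$, over finite $G\subset\Zstar$ exhausting $\Zstar$, stabilize. Hence there is a finite $G$ with $\bigcap_{m\in G}V_m=\bigcap_{m\in\Zstar}V_m$, and likewise for $W$. The problem is thereby reduced to the full infinite family: I must show that if $v^k_{j_1}=v^k_{j_2}$ for \emph{all} $m\in\Zstar$, then $j_1=j_2$ or $j_2=-j_1-k$, and that $v^k_j=0$ for all $m$ forces $2j+k=0$. Both exceptional cases are excluded by the definition~\eqref{eq:Rk_def} of $R_k$.

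\emph{Rigidity of the full family.} For fixed $l$, the map $m\mapsto F(m,l)$ is a rational function, and equality on the Zariski-dense lattice $\Zstar$ upgrades to equality as functions of $m\in\RR^2$.

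\begin{itemize}
  \item The function $m\mapsto F(m,l)$ is smooth away from $m\in\{\pm l\}$ and has a genuine singularity at $m=\pm l$. The prefactor $\langle l^\perp,m\rangle^2$ vanishes at $m=l$ to second order, so the pole of $|l-m|^{-2}$ is not cancelled outright. Instead it leaves the direction-dependent limit $\langle l^\perp,\theta\rangle^2/|\theta|^2$ times $|l|^{-2}$ (up to nonvanishing factors), which is a discontinuity at $m=l$.
  \item The singular set of $m\mapsto F(m,l)$ is therefore exactly $\{l,-l\}$, and its local discontinuity profile determines $l$ up to the sign symmetry $F(m,-l)=-F(m,l)$.
  \item Comparing the singular sets of $F(\cdot,j_1+k)-F(\cdot,j_1)$ and $F(\cdot,j_2+k)-F(\cdot,j_2)$, together with the signs of the profiles, forces $\{j_1+k,-j_1\}$ and $\{j_2+k,-j_2\}$ to coincide with matching signs. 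This gives $j_1=j_2$ or $j_2=-j_1-k$.
  \item When singular points coincide and could cancel, as in $j+k=\pm j$, one gets $k=0$ or $2j+k=0$, both excluded.
  \item The same argument applied to $F(\cdot,j+k)-F(\cdot,j)\equiv0$ gives nonvanishing of $v^k_j$.
\end{itemize}

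\emph{Main obstacle.} I expect the main difficulty to be this last singularity analysis. The degenerate prefactor $\langle l^\perp,m\rangle^2$ weakens the pole to a bounded discontinuity, so I have to check carefully that two such discontinuities at the same point with opposite signs cannot cancel except in the excluded configurations. The first check I would make is the explicit leading term of $F(m,l)$ along rays $m=l+t\theta$ as $t\to0$. A secondary technical point is handling lattice points lying on the pole sets in the Noetherian step, which I would treat by working with the polynomial identities obtained after multiplying through by the denominators.
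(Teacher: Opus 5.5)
Your strategy is sound and differs from the paper's in two places. However, the local expansion on which your rigidity step rests is wrong and has to be redone.

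\emph{Comparison.} The paper uses the same hyperplane reduction, and then proceeds in three steps:
\begin{itemize}
  \item it shows $W_{j,k}\not\equiv0$ from the asymptotics of $\mathbb{D}^{t\omega}_l$ as $t\to\infty$;
  \item it imports the non-vanishing of $W_{j_1,k}-W_{j_2,k}$ from \cite{BedrossianPunshon-Smith-Chaos-2024y}*{Proposition~5.1};
  \item it obtains a band-uniform $G$ explicitly: after clearing denominators the numerators have partial degrees at most $17$ regardless of $j,k$, so any grid $A\times B$ with $|A|,|B|\ge22$ contains a non-pole point where the numerator is non-zero.
\end{itemize}
You replace the degree count by the descending chain condition on real algebraic sets in the variables $(k,j_1,j_2)$. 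This is shorter but gives no explicit $G$. You also replace both the ray asymptotics and the citation by a single local analysis at $m=\pm l$. Once corrected, that analysis is a self-contained proof of exactly the exclusion conditions $k\ne0$, $j_1\ne j_2$, $j_1+j_2+k\ne0$ that the paper takes from the literature.

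\emph{The error.} The factor $|m|^{-2}-|l|^{-2}$ in~\eqref{eq:Dkl-def} vanishes at $m=\pm l$, so it is not a non-vanishing factor. With $m=l+h$,
\[
  \mathbb{D}^{l+h}_l = P_l(h)+O(|h|^2),
  \qquad
  P_l(h)\eqdef-\frac{2\langle l^\perp,h\rangle^2\langle l,h\rangle}{|l|^4|h|^2}.
\]
So $m\mapsto\mathbb{D}^m_l$ is continuous at $m=l$ and vanishes there to first order: there is no discontinuity to compare. Also, the non-smooth set is $\{0,l,-l\}$, not $\{l,-l\}$. The kink at $m=0$ is shared by all terms and is harmless, provided you only compare at points $p\ne0$.

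The argument survives with the correct profile:
\begin{itemize}
  \item $P_l$ is $1$-homogeneous but not linear (for $l=(1,0)$ it equals $-2h_1h_2^2/|h|^2$). Hence $cP_l$ can match the first-order Taylor polynomial of a smooth function up to $O(|h|^2)$ only if $c=0$.
  \item Since $\mathbb{D}^m_{-l}=-\mathbb{D}^m_l$, the profile of $m\mapsto\mathbb{D}^m_a$ at $p\ne0$ is $+P_p$ if $a=p$, $-P_p$ if $a=-p$, and zero otherwise.
  \item Write $W_{j_1,k}-W_{j_2,k}=\sum_{i=1}^4\mathbb{D}^m_{a_i}$ with $(a_i)=(j_1+k,-j_1,-j_2-k,j_2)$. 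Identical vanishing forces the multiset $\{a_i\}$ to be invariant under $a\mapsto-a$. Pairing $a_1$ with its negative gives $k=0$, $j_1=j_2$, or $j_2=-j_1-k$.
  \item The same count for $\{j+k,-j\}$ gives only $k=0$. When $j+k=-j$ the two kinks add rather than cancel, contrary to your remark.
\end{itemize}

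\emph{Poles.} Restricting to the complement of the pole set does not give closed subsets of a fixed Noetherian space, because that complement moves with $m$. Instead take $V_m=\{D_mP_m=0\}$, where $D_m$ is the product of $|l|^2|l-m|^2|l+m|^2$ over the four relevant $l$.
\begin{itemize}
  \item A valid lattice configuration lying in $\bigcap_{m\in\Zstar}V_m$ then satisfies the identity for all but at most eight $m$, hence identically, which the corrected rigidity excludes.
  \item At an $m\in G$ with $D_mP_m\neq0$ you obtain a genuine inequality between eigenvalue differences.
\end{itemize}
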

  \begin{proof}
    Let $G \subset \Zstar$ be finite, chosen below, and let $\mathbb{D} = \sum_{m \in G} \alpha_m \mathbb{D}^m$ with $\alpha \in \RR^{G}$. By the antisymmetry $\mathbb{D}^m_{-l} = -\mathbb{D}^m_l$, the eigenvalue differences and their pairwise differences are linear in $\alpha$,
    \begin{align}
      \lambda^k_j
      &= \sum_{m \in G} \alpha_m W_{j,k}(m), \label{eq:lin_nondeg}\\
      \lambda^k_{j_1} - \lambda^k_{j_2} 
      &= \sum_{m \in G} \alpha_m \bigl( W_{j_1,k} - W_{j_2,k} \bigr)(m), \label{eq:lin_sep}
    \end{align}
    where
    \begin{equation}\label{eq:W_def}
      W_{j,k}(m) \eqdef \mathbb{D}^m_{j+k} - \mathbb{D}^m_{j}
    \end{equation}
    is a rational function of $m$, and all the frequencies involved lie in $\Zstar$ since $R_k$ excludes $-k$.

    Reading $W_{j,k}$ as a vector of $\RR^G$ through $m \mapsto W_{j,k}(m)$, the right sides of~\eqref{eq:lin_nondeg} and~\eqref{eq:lin_sep} are the pairings $\langle \alpha, W_{j,k}\rangle$ and $\langle \alpha, W_{j_1,k} - W_{j_2,k}\rangle$, so that $\mathbb{D}$ fails to be distinct for band $k$ exactly when $\alpha$ lies in
    \begin{equation*}
      N_k \eqdef
      \bigcup_{j \in R_k} \bigl( W_{j,k} \bigr)^\perp
      \ \cup
      \bigcup_{\substack{j_1, j_2 \in R_k \\ j_1 \neq j_2}} \bigl( W_{j_1,k} - W_{j_2,k} \bigr)^\perp ,
      \qquad
      W^\perp \eqdef \bigl\{ \alpha \in \RR^G : \langle \alpha, W \rangle = 0 \bigr\} .
    \end{equation*}
    Each $W^\perp$ is a hyperplane as soon as $W$ is not identically zero on $G$, and $R_k$ is countable, so in that case $N_k$ is a countable union of hyperplanes and hence has zero Lebesgue measure. Every $\alpha \in \RR^G \setminus N_k$ then gives a $\mathbb{D}$ that is distinct for band $k$. It remains to find a $G$ on which none of $W_{j,k}$ and $W_{j_1,k} - W_{j_2,k}$ vanishes identically.

    For~\eqref{eq:lin_nondeg}, we evaluate $\mathbb{D}^m_l$ along rays $m = t\omega$, $|\omega| = 1$
    \begin{equation}\label{eq:D_asymptotic}
      \mathbb{D}^{t\omega}_l
      = -\frac{4}{t}\, \frac{\langle l^\perp, \omega\rangle^2 (l \cdot \omega)}{|l|^2}
      + O(t^{-3})
      = -\frac{|l|}{t}\, \big( \cos\phi_l - \cos 3\phi_l \big) + O(t^{-3}) ,
      \qquad \phi_l \eqdef \angle(l, \omega) ,
    \end{equation}
    so that, using $|l| \cos\phi_l = \langle l, \omega \rangle$,
    \begin{equation*}
      \lim_{t \to \infty}\, t W_{j,k}(t\omega)
      = -\langle k, \omega \rangle
      + |j+k| \cos 3\phi_{j+k} + |j| \cos 3\phi_{-j}
      \ \not\equiv\ 0 ,
    \end{equation*}
    the right side not vanishing identically because the first harmonic $-\langle k, \omega \rangle$ is non-zero. Therefore $W_{j,k}(m)$ is a non-zero rational function of $m$.

    That first harmonic does not depend on $j$, so it cancels in $W_{j_1,k} - W_{j_2,k}$ and the ray asymptotics give nothing in this case. We appeal instead to \cite{BedrossianPunshon-Smith-Chaos-2024y}*{Proposition~5.1}, in the untruncated form proved in \cite{BedrossianPunshon-Smith-Chaos-2024y}*{Section~5.1}, which concludes that (using the anti-symmetry $D^m_{-i}=-D^m_i$)
    \[
    W_{j_1,k}(m)   - W_{j_2,k}(m) = \mathbb{D}^m_{j_1+k} + \mathbb{D}^m_{-j_1} + \mathbb{D}^m_{-j_2-k} + \mathbb{D}^m_{j_2}
    \] 
    does not vanish identically in $m$ whenever
    \begin{equation*}
      (j_1 + k) + (-j_1) = k \neq 0 , \quad
      (j_1 + k) + (-j_2 - k) = j_1 - j_2 \neq 0 , \quad
      (j_1 + k) + j_2 \neq 0,
    \end{equation*}
    which hold since $k \neq 0$, since $j_1 \neq j_2$, and since $R_k$ contains only one of $j_1$ and $-j_1-k$, so that $j_2 \neq -j_1-k$.

    Finally, we choose $G$. Writing $W_{j_1,k} - W_{j_2,k}$ over a common denominator, we have
    \begin{equation*}
      \bigl( W_{j_1,k} - W_{j_2,k} \bigr)(m) = \frac{P(m)}{|m|^2 \prod_{i=1,2} |m - j_i|^2\, |m + j_i|^2\, |m - j_i - k|^2\, |m + j_i + k|^2}
    \end{equation*}
    with $\deg_{m_1} P \vee \deg_{m_2} P \le 17$. We then take $G$ of product form, $G = A \times B \subset \Zstar$ with $A, B \subset \ZZ$ finite and $0 \notin G$. On such a set a polynomial with partial degrees $\le d_1, d_2$ that is not identically zero is non-zero on at least $(|A| - d_1)(|B| - d_2)$ points. So for $|A|, |B| \ge 22$,
    \begin{equation*}
      | m \in G : P(m) \neq 0 | \ \ge\ (|A| - 17)(|B| - 17) \ \ge\ 25
      \ >\ 8 \ \ge\ | G \cap \{ \pm j_i,\, \pm(j_i + k) : i = 1, 2 \} |,
    \end{equation*}
    the excluded points being those at which the denominator vanishes, $0 \notin G$ aside. Some $m \in G$ therefore has $P(m) \neq 0$ and is not excluded. At such an $m$ the difference $W_{j_1,k} - W_{j_2,k}$ is non-zero. The same argument applies to $W_{j,k}$, which has two terms instead of four and so partial degrees $\le 9$ and at most four excluded points. In neither case do $j_1, j_2, j$ or $k$ enter as exponents, so the degree bounds and the number of excluded points are the same for every band, and the same $G$ works across all bands.
  \end{proof}

\begin{lemma}\label{lem:elementary}
  $\overline{\mathcal{I}}^{\|\cdot\|}$ contains $E^{p,q}$ for all $p \neq q \in \Zstar$, and $E^{p,p} - E^{q,q}$ for all $p, q \in \Zstar$.
\end{lemma}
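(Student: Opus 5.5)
The plan is to extract single basis pairs from the generators using the adjoint action of a distinct diagonal operator, and then to pass from basis pairs to elementary matrices by further brackets.

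\emph{Step 1: a diagonalised copy of $H^k$ in the ideal.} Fix a band $k$ and let $\mathbb{D}\in\h$ be distinct for band $k$, as given by Lemma~\ref{lem:distinct}. By~\eqref{eq:Hk_pairs} and~\eqref{eq:pair_eigen}, for every polynomial $p$ we have
\begin{equation*}
  \ad(\mathbb{D})^2\, p(\ad \mathbb{D})\, H^k \;=\; \sum_{j \in R_k} (\lambda^k_j)^2\, p(\lambda^k_j)\, M^{j+k,j} .
\end{equation*}
This operator lies in $\mathcal{I}$, because $\mathcal{I}$ is an ideal containing $\h$ and $[\mathbb{D},H^k]\in\mathcal{I}$. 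Since it is supported on band $k$, its operator norm is comparable to
\begin{equation*}
  \sup_{j} |\lambda^k_j|^2\, |p(\lambda^k_j)|\, \bigl(|c_{j,k}|+|c_{j+k,k}|\bigr) .
\end{equation*}
We have $\mathbb{D}_l=O(|l|^{-1})$, hence $\lambda^k_j=O(|j|^{-1})$, while $|c_{j,k}|\lesssim_k |j|$. The squared factor therefore leaves a weight of order $|p(\lambda^k_j)|\,|j|^{-1}$, which is summable in the sup sense. The eigenvalue set $\{\lambda^k_j\}$ accumulates only at $0$, and all its elements are distinct and nonzero.

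\emph{Step 2: isolating one pair; this is the main obstacle.} Fix $j_0\in R_k$. We want polynomials $p_n$ with $(\lambda^k_{j_0})^2 p_n(\lambda^k_{j_0})=1$ and
\begin{equation*}
  \sup_{j\ne j_0} |\lambda^k_j|^2\,|p_n(\lambda^k_j)|\,|c_{j,k}| \to 0 .
\end{equation*}
The first attempt is Lagrange-type: take
\begin{equation*}
  p_n(x) = a_n \prod_{j\in F_n\setminus\{j_0\}} (x-\lambda^k_j),
\end{equation*}
with $F_n$ the indices $|j|\le n$, which kills the finitely many large eigenvalues. The tail is then controlled by $|a_n|\prod|\lambda^k_j|\cdot\sup_{|j|>n}|\lambda^k_j|^2|c_{j,k}|$, and the danger is that the normalising constant $a_n$ blows up with $n$. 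To avoid this I would instead use the compactness of the tail. Let $V$ be the operator-norm closure of the band-$k$ operators $\sum_j \mu_j M^{j+k,j}$ with $\mu_j |c_{j,k}|\to 0$; this is identified with $c_0(R_k)$ under the weight $|c_{j,k}|$. The map $\ad(\mathbb{D})$ acts on $V$ as multiplication by $(\lambda^k_j)$, a compact diagonal operator with simple nonzero eigenvalues. The Riesz spectral projection onto the eigenvalue $\lambda^k_{j_0}$ is a contour integral of the resolvent, and is therefore a norm limit of polynomials in $\ad(\mathbb{D})$ applied to $V$. Applying this to $\ad(\mathbb{D})^2H^k\in V\cap\mathcal{I}$ gives $M^{j_0+k,j_0}\in\overline{\mathcal{I}}$, after dividing by $(\lambda^k_{j_0})^2\neq0$. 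This requires checking that polynomial approximation of the resolvent is legitimate, which holds by Runge's theorem, since the spectrum $\{0\}\cup\{\lambda^k_j\}$ does not separate the plane.

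\emph{Step 3: from basis pairs to elementary matrices.} From Step 2, $M^{p,q}\in\overline{\mathcal{I}}$ whenever $c_{q,p-q}\ne0$ or $c_{p,p-q}\neq 0$. Note that $c_{q,p-q}=0$ exactly when $q\parallel p$ or $|p|=|q|$, so these degenerate cases must be handled separately at the end. Next compute $[M^{p,q},M^{q,r}]$. Its main term is $c\,E^{p,r}$ plus the conjugate $E^{-r,-p}$, with cross terms vanishing unless indices coincide; this yields new pairs along chains. To split a pair $M^{p,q}$ into its two elementary pieces, bracket with a diagonal operator: $E^{p,q}$ and $E^{-q,-p}$ have the same $\ad(\mathbb{D})$ eigenvalue, so $\h$ alone cannot split them. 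Instead I would bracket $M^{p,q}$ with a pair $M^{q,r}$ whose conjugate part does not compose, choosing $r\neq -p,-q$ generically so that only $E^{p,r}$ survives. This gives $E^{p,r}$ for generic triples, and then all $E^{p,q}$ with $p\ne q$ by chaining $[E^{p,m},E^{m,q}]=E^{p,q}$ through an intermediate $m$ avoiding the degenerate configurations. Finally $E^{p,p}-E^{q,q}=[E^{p,q},E^{q,p}]$. Closedness of $\overline{\mathcal{I}}$ under brackets with elements of $\overline{\mathcal{I}}$ follows by continuity of the commutator for bounded operators.
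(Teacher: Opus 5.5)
Your Steps~1 and~2 are fine and agree in substance with the paper. The paper approximates $\lambda\mapsto\lambda_q^{-2}\mathbf 1_{\{\lambda_q\}}$ uniformly on the compact set $\{\lambda_j\}\cup\{0\}\subset\RR$ by polynomials (Weierstrass) and passes to the limit by dominated convergence. Your Riesz projection via Runge is the same idea; since $\ad(\mathbb{D})$ acts diagonally on your weighted $c_0$ space, uniform approximation on the spectrum already suffices. (A small slip: $c_{q,p-q}=0$ iff $p\parallel q$ or $|p-q|=|q|$, not $|p|=|q|$.)

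\textbf{The gap is in Step~3.} It is not true that for generic $r$ only $E^{p,r}$ survives in $[M^{p,q},M^{q,r}]$. The conjugate parts $-c_{q,q-r}E^{-r,-q}$ of $M^{q,r}$ and $-c_{p,p-q}E^{-q,-p}$ of $M^{p,q}$ share the inner index $-q$, so they compose for \emph{every} $r$. For $q\neq\pm p$ and $r\notin\{\pm p,\pm q\}$ one gets
\[
  [M^{p,q},M^{q,r}] = c_{q,p-q}\,c_{r,q-r}\,E^{p,r} + c_{p,p-q}\,c_{q,r-q}\,E^{-r,-p} .
\]
Once $c_{p,p-q}\neq 0$, the second coefficient vanishes only for non-generic $r$. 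So brackets of pairs return pair-type combinations $\alpha E^{p,r}+\beta E^{-r,-p}$, and genericity alone cannot split them.

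Indeed, take generators antisymmetric for the bilinear pairing $\int fg$, such as the transport operators $B(e_k,\cdot)$ of the passive scalar. Writing $A=\sum A_{p,q}E^{p,q}$, every operator they generate satisfies $A_{p,q}=-A_{-q,-p}$, so no single $E^{p,q}$ is ever reached. Some nondegeneracy specific to LNS is therefore indispensable. Without it, your chaining through $[E^{p,m},E^{m,q}]$ has no single elementary matrix to start from.

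The paper supplies this input by bracketing through two different intermediate modes $k,k'$. It then invokes \cite{BedrossianPunshon-Smith-Chaos-2024y}*{Proposition~4.11}, the non-vanishing of the determinant~\eqref{eq:bps_det}. This lets one solve the two brackets for $E^{p,q}$ and $E^{-q,-p}$ separately.

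Alternatively, the configurations you set aside as degenerate are the useful ones. If $p\nparallel q$ and $|p-q|=|p|$ (e.g.\ $p=(1,0)$, $q=(1,1)$), then $c_{p,p-q}=0\neq c_{q,p-q}$. Your Step~2 then already yields a nonzero multiple of the single matrix $E^{p,q}$. The identities
\[
  [E^{a,b},M^{b,c}]=c_{c,b-c}E^{a,c}, \qquad [M^{e,a},E^{a,d}]=c_{a,e-a}E^{e,d}
\]
hold outside finitely many index coincidences and can be used to propagate such single matrices. After that your chaining argument goes through. As written, however, Step~3 produces no isolated $E^{p,r}$, and the proof is incomplete.
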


\begin{proof}
  We first show that $\overline{\mathcal{I}}^{\|\cdot\|}$ contains $M^{p,q}$ for all $p, q \in \Zstar$ with $p + q \neq 0$ and $p \neq q$. Since $M^{-q,-p} = M^{p,q}$ we may assume $q \in R_k$, $k \eqdef p - q$. Let $\mathbb{D} \in \h$ be distinct for band $k$ (Lemma~\ref{lem:distinct}) and abbreviate $\lambda_j = \lambda^k_j$. Since the coefficients of each $\mathbb{D}^m$ lie in $S^{-1}$ (Lemma~\ref{lem:symbol_ideal}), the mean value theorem gives $\sup_j |j|^2 |\lambda_j| < \infty$. For a polynomial $\mathfrak{p}$, the identities \eqref{eq:pair_eigen} and~\eqref{eq:Hk_pairs} give
  \begin{equation}\label{eq:peel_family}
    \mathfrak{p}\big(\ad(\mathbb{D})\big)\, \ad(\mathbb{D})^2\, H^k
    \ =\ \sum_{j \in R_k} \mathfrak{p}(\lambda_j)\, \lambda_j^{2}\, M^{j+k,j}
    \ \eqdef\ T_{\mathfrak{p}} \ \in\ \mathcal{I} ,
  \end{equation}
  the membership because $\mathbb{D} \in \h$ and $\mathcal{I}$ is an ideal containing $\h$. The sum converges absolutely in operator norm, since
  \begin{equation}\label{eq:peel_envelope}
    \bigl\| \mathfrak{p}(\lambda_j)\, \lambda_j^2\, M^{j+k,j} \bigr\|_{\mathrm{op}}
    \ \lesssim_{\mathbb{D}, \mathfrak{p}, k}\ |j|^{-3} ,
    \qquad
    \|M^{j+k,j}\|_{\mathrm{op}} \le \max\bigl( |c_{j,k}|, |c_{j+k,k}| \bigr) \lesssim_k |j| ,
    \quad
    |\lambda_j| \lesssim_{\mathbb{D}} |j|^{-2} ,
  \end{equation}
  which is summable over $\ZZ^2$.

  The set $K \eqdef \{\lambda_j : j \in R_k\} \cup \{0\}$ is compact, with $0$ its only accumulation point, and by distinctness the values $\lambda_j$, $j \in R_k$, are pairwise distinct and non-zero. Hence $\lambda_{q}$ is an isolated point of $K$, so $\lambda \mapsto \lambda_q^{-2}\, \mathbf{1}_{\{\lambda_{q}\}}(\lambda)$ is continuous on $K$. By Stone--Weierstrass choose polynomials $\mathfrak{p}_N$ converging to it uniformly on $K$. Then $\mathfrak{p}_N(\lambda_j)\, \lambda_j^2 \to \delta_{jq}$ for each $j$, while the terms of $T_{\mathfrak{p}_N}$ are dominated, uniformly in $N$, by the summable envelope above, so $T_{\mathfrak{p}_N} \to M^{p,q}$ in operator norm and $M^{p,q} \in \overline{\mathcal{I}}^{\|\cdot\|}$.

  We next pass from basis pairs to elementary matrices. By \cite{BedrossianPunshon-Smith-Chaos-2024y}*{Proposition~4.11}, for $p \neq q \in \Zstar$ with $p + q \neq 0$ there are $k, k' \in \Zstar$ with
  \begin{equation}\label{eq:bps_det}
    \det
    \begin{pmatrix}
      c_{k,p-k}\, c_{q,k-q} & c_{p,p-k}\, c_{k,q-k} \\
      c_{k',p-k'}\, c_{q,k'-q} & c_{p,p-k'}\, c_{k',q-k'}
    \end{pmatrix} \neq 0 .
  \end{equation}

  Now let $p + q \neq 0$. For $k \notin \{\pm p, \pm q\}$ a direct computation with $E^{a,b} E^{c,d} = \delta_{b,c} E^{a,d}$ gives
  \begin{equation}\label{eq:pair_bracket}
    [M^{p,k}, M^{k,q}]
    = c_{k,p-k}\, c_{q,k-q}\, E^{p,q}
    +\ c_{p,p-k}\, c_{k,q-k}\, E^{-q,-p} ,
  \end{equation}
    the sign of the second term coming from $c_{k,k-q} = -c_{k,q-k}$. Any $k, k'$ satisfying~\eqref{eq:bps_det} must lie outside $\{\pm p, \pm q\}$, since at $k \in \{p, q\}$ the determinant has a pole and at $k \in \{-p, -q\}$ we have $c_{\pm p, 2p} = 0$ or $c_{\pm q, 2q} = 0$. The first step gives $M^{a,b} \in \overline{\mathcal{I}}$ whenever $a \neq b$ and $a + b \neq 0$, which for the four pairs $M^{p,k}, M^{k,q}, M^{p,k'}, M^{k',q}$ asks exactly that $k, k' \notin \{\pm p, \pm q\}$. All four therefore lie in $\overline{\mathcal{I}}$, and~\eqref{eq:bps_det} allows us to solve the $2\times 2$ system that the two brackets give for $E^{p,q}$ and $E^{-q,-p}$.

    For $q = -p$, pick any $s \notin \{0, p, -p\}$. Then $E^{p,s}, E^{s,-p} \in \overline{\mathcal{I}}$ by the previous step, and
    \begin{equation*}
      E^{p,-p} = [E^{p,s},\, E^{s,-p}] \in \overline{\mathcal{I}} ,
    \end{equation*}
  since $\overline{\mathcal{I}}$ is closed under brackets of bounded operators. Finally $[E^{p,q}, E^{q,p}] = E^{p,p} - E^{q,q}$.
\end{proof}

\begin{proof}[Proof of Proposition~\ref{prop:lie_algebra_dense}]
  The inclusion $\overline{\mathcal{I}} \subseteq \mathcal{K}(\mH)$ is proven in Lemma~\ref{lem:symbol_ideal}. For the converse, by Lemma~\ref{lem:elementary} and
  \begin{equation*}
    E^{p,p} = \lim_{n \to \infty}
    \Big( E^{p,p} - \frac{1}{n} \sum_{i \in \Lambda_n} E^{i,i} \Big) ,
    \qquad \Lambda_n \subset \Zstar \setminus \{p\},\ |\Lambda_n| = n ,
  \end{equation*}
  the closure $\overline{\mathcal{I}}$ contains every elementary matrix $E^{p,q}$. Their span contains all operators with finitely many non-zero Fourier entries, whose closure is $\mathcal{K}(\mH)$. Here, as everywhere in this section, spans and the ideal $\mathcal{I}$ are taken over $\CC$, matching the complex Hilbert space $\mH$, so the span of the elementary matrices is dense in the whole of $\mathcal{K}(\mH)$ and not merely in a real form of it.
\end{proof}

With the density theorem in hand, the transitivity of the linearized Navier--Stokes dynamics follows.

\begin{proof}[Proof of Theorem~\ref{thm:transitivity}, the case of LNS]
  The map
  \begin{equation*}
    A  \mapsto \langle \delta\pi, \vf{A}\pi_T \rangle
    = \langle \delta\pi, A \pi_T \rangle ,
  \end{equation*}
  where the projection orthogonal to $\pi$ was dropped since $\delta \pi \perp \pi_T$, is linear in $A$ and continuous in the operator norm of $\mL(\mH; \mH)$. By hypothesis the quantity vanishes on $\mf{g}_{\mathrm{LNS}} \supseteq \mathcal{I}$. Hence it vanishes also for all $A \in \overline{\mathcal{I}}^{\|\cdot\|_{\mathrm{op}}} = \mathcal{K}(\mH)$ by Proposition~\ref{prop:lie_algebra_dense}. Applying this to the rank-one operators $h \mapsto \langle \pi_T, h \rangle\, u$ with $u \in \mH$ gives $\langle \delta\pi, u \rangle = 0$ for every $u$, so $\delta\pi = 0$.
\end{proof}

\subsection{Transitivity of the passive scalar dynamics}\label{sec:tran-PSA}

The aim of this section is the proof of Theorem~\ref{thm:transitivity} in the case of passive scalar advection. Therefore, in this section we consider
\begin{equation}\label{eq:Lk_def}
  H^{k} = -[\partial_{z_k}, G_w]
  = B(e_k, \cdot) .
\end{equation}
Since the velocity field with vorticity $e_k$ is the shear $K * e_k = i k^\perp
e_k / |k|^2$, we rewrite $H^k$ as
\begin{equation}\label{eq:psa_shear}
  H^k = \frac{i}{|k|^2}\, e_k\, (k^\perp \cdot \nabla) .
\end{equation}
In terms of Fourier coefficients we have:
\begin{equation}\label{eq:psa_generators}
  \Delta e_j = -|j|^2 e_j ,
  \qquad
  H^k e_j = c^{\mathrm{ps}}_{j,k} e_{j+k} ,
  \qquad
  c^{\mathrm{ps}}_{j,k} \eqdef \frac{\langle j^\perp, k\rangle}{|k|^2} ,
\end{equation}
and a direct calculation from $|j|^2 - |j+k|^2 = -2 j\cdot k - |k|^2$ gives
\begin{equation}\label{eq:DeltaL_bracket}
  [\Delta, H^k] e_j = P_k(j) c^{\mathrm{ps}}_{j,k} e_{j+k},
  \qquad
  P_k(j) \eqdef -2 j\cdot k - |k|^2 .
\end{equation}
Moreover, we note that $\mathrm{Lie}(H^k \colon k \in \Zstar) = \mathrm{Span}(
H^k \colon k \in \Zstar)$, since
\begin{equation}\label{eq:psa_bracket_closed}
  [H^{k}, H^{j}]
  = \frac{\langle j^\perp, k \rangle |k+j|^2}{|k|^2 |j|^2}
  H^{{k+j}} ,
  \qquad\text{and}\qquad
  [H^{k}, H^{-k}] = 0 ,
\end{equation}
which is the main difficulty of PSA over LNS.
We proceed by obtaining the analogue of Lemma~\ref{lem:lns_algebra}.

\begin{lemma}\label{lem:psa_algebra}
  In the case of PSA the generators are band-limited Fourier multipliers,
  \begin{equation}\label{eq:psa_Hk_symbol}
    H^k = e_k\, g_k(D) ,
    \qquad
    g_k(l) = \frac{\langle l^\perp, k\rangle}{|k|^2} ,
    \qquad
    |g_k(l)| \le \frac{|l|}{|k|} ,
  \end{equation}
  so that $g_k \in S^1$. Moreover
  \begin{equation}\label{eq:psa_A_symbols}
    \begin{aligned}
      [\Delta, H^k] & = e_k\, q_k(D) ,
      \quad &q_k(l) && = P_k(l)\, g_k(l) , \\
      \bigl[[\Delta, H^k], H^{-k}\bigr] & = d_k(D) ,
      \quad &d_k(l) && = -\frac{2\langle l^\perp, k\rangle^2}{|k|^2} ,
    \end{aligned}
  \end{equation}
  with $P_k$ as in~\eqref{eq:DeltaL_bracket}, and $q_k, d_k \in S^2$. More generally every $A \in \mf{g}_{\mathrm{PSA}}$ is a finite sum $A = \sum_{m \in F_A} e_m\, p_{A,m}(D)$ with $p_{A,m} \in S^2$. Any such $A$ belongs to $\Opbeta$ and is bracket-regular in the sense of Definition~\ref{def:bracket_regular}.
\end{lemma}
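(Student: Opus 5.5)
The plan is to mirror the proof of Lemma~\ref{lem:lns_algebra}, using the band calculus of Lemma~\ref{lem:band_calculus} together with the explicit formula~\eqref{eq:band_bracket}. In order: compute the symbols of the generators and low brackets, then show every element of $\mf{g}_{\mathrm{PSA}}$ is band-limited with symbols in $S^2$, then deduce $\Opbeta$ membership and check the three conditions of Definition~\ref{def:bracket_regular}.

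\emph{Symbols.} Formula~\eqref{eq:psa_Hk_symbol} is read off from~\eqref{eq:psa_generators}: $H^k e_j = e_k\, g_k(j) e_j$. Here $g_k$ is linear in $l$, so it lies in $S^1$, and Cauchy--Schwarz gives $|g_k(l)|\le |l|/|k|$. For $[\Delta, H^k]$, I apply~\eqref{eq:band_bracket} with $m=0$, $p(l)=-|l|^2$, $n=k$, $q=g_k$. This gives the symbol $(p(\cdot+k)-p)\,g_k = P_k\, g_k = q_k$, which is a polynomial of degree two and hence in $S^2$.

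Next I bracket $e_k q_k(D)$ with $e_{-k}g_{-k}(D)$ using~\eqref{eq:band_bracket}. This gives $d_k(l)=(q_k(l-k)-q_k(l))\,g_{-k}(l) - (g_{-k}(l+k)-g_{-k}(l))\,q_k(l)$. The second difference vanishes, since $\langle k^\perp,k\rangle=0$. We also have $g_{-k}=-g_k$ and $g_k(l-k)=g_k(l)$. Hence $d_k = -(P_k(l-k)-P_k(l))\,g_k^2$. Since $P_k(l-k)-P_k(l)=2|k|^2$, this equals $-2|k|^2 g_k^2 = -2\langle l^\perp,k\rangle^2/|k|^2$, as claimed.

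\emph{The span is in $S^2$.} The remaining generators of $\mf{g}_{\mathrm{PSA}}$ are of the form $[[\Delta,H^k],H^\ell]$ and $[d_k(D),H^\ell]$. By Lemma~\ref{lem:band_calculus} with $(r,s)=(2,1)$, each of these is supported on finitely many bands with coefficients in $S^{2}$. Finite linear combinations preserve this structure. This is precisely why the family was truncated: one further bracket with $\Delta$ would raise the order to three.

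\emph{Regularity.} Membership in $\Opbeta$ follows from $\|e_m p(D)\varphi\|\le \sup_l |p(l)|(1+|l|)^{-\beta_\star}\|\varphi\|_{H^{\beta_\star}}$, since $\beta_\star>2$. I then check the three bracket-regularity conditions as in Lemma~\ref{lem:lns_algebra}.

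For (i), expand $[H^k,A]=\sum_{m} e_{k+m}\big(g_k(\cdot+m)p_{A,m}-p_{A,m}(\cdot+k)g_k\big)(D)$ and use $|g_k(l+m)|\lesssim_m 1+|l|$ together with $|p_{A,m}(l+k)|\lesssim (1+|l|+|k|)^2$. This bounds the symbol by $(1+|l|)^3(1+|k|^2)$. The power $|k|^2$ is the only real constraint, and it is matched by the weights in (i-a). For (i-b), measuring $H^{\beta_\star-2}\to H^2$ costs an extra factor $(1+|l|+|k|)^2$ from the shifted output frequency. The resulting bound $|k|^4(1+|l|)^5$ is fine because $\beta_\star-2\ge 5$.

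Condition (iii) then follows by summing over $\hat w(j)$ exactly as in the LNS case, using $s_\star-2>3$. Condition (ii) follows by writing $[G_{w_s},A]=[\Delta,A]-[L^0_{w_s},A]$. Here $[\Delta,e_mp(D)]=e_m\,(-(2m\cdot l+|m|^2))\,p(D)$ is of order three, hence bounded $H^{\beta_\star-2}\to H^2$ since $\beta_\star\ge 7$. Continuity in $s$ comes from the linear bound in $w$ together with continuity of $w_s$ in $H^{s_\star}$.

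The main obstacle is bookkeeping in (i). The $|k|$-dependence of $g_k(\cdot+m)$, with its factor $|k|^{-1}$, and of $p_{A,m}(\cdot+k)$, which can grow like $|k|^2$, must be tracked so that the weights $|k|^{-2}$ and $|k|^{-4}$ genuinely suffice. I would verify this first on $A=d_\ell(D)$, where $p(l+k)-p(l)$ can be computed explicitly.
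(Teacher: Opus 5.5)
Your proposal is correct and follows essentially the paper's route: the symbols are read off from \eqref{eq:psa_generators} and \eqref{eq:band_bracket}, the $S^2$ band structure of $\mf{g}_{\mathrm{PSA}}$ comes from Lemma~\ref{lem:band_calculus} at $(r,s)=(2,1)$, and bracket-regularity is checked as in Lemma~\ref{lem:lns_algebra}. The differences are cosmetic. You compute $d_k$ directly from \eqref{eq:band_bracket} instead of citing Lemma~\ref{lem:E_eigenvalue}, and you bound $[H^k,A]$ without the difference-form cancellation; this gives the cruder symbol bound $(1+|k|)^2(1+|l|)^3$ rather than the paper's $(1+|k|)(1+|l|)^2$, which still meets the $|k|^{-2}$ and $|k|^{-4}$ weights because $\beta_\star > 9$, and along the way you spell out conditions (i-b), (ii) and (iii), which the paper omits.
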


\begin{proof}
  The identity~\eqref{eq:psa_Hk_symbol} is~\eqref{eq:psa_generators} read as a multiplier, and $g_k$ is linear in $l$, hence in $S^1$. The first identity in~\eqref{eq:psa_A_symbols} is~\eqref{eq:DeltaL_bracket} and the second is the content of Lemma~\ref{lem:E_eigenvalue}, which also shows that $[[\Delta, H^k], H^{-k}]$ is diagonal. Both $q_k$ and $d_k$ are quadratic polynomials in $l$, so they lie in $S^2$. That a general $A \in \mf{g}_{\mathrm{PSA}}$ has this form follows from Lemma~\ref{lem:band_calculus} at $r = 2$ and $s = 1$. Explicitly, bracketing a band-limited $S^2$ multiplier with $H^\ell = e_\ell\, g_\ell(D)$ produces
  \begin{equation*}
    [\, e_m\, p(D),\ e_\ell\, g_\ell(D) \,]
    = e_{m+\ell}\Bigl( \bigl(p(\cdot+\ell) - p\bigr) g_\ell
    - \bigl(g_\ell(\cdot+m) - g_\ell\bigr) p \Bigr)(D) ,
  \end{equation*}
  where $g_\ell(\cdot+m) - g_\ell$ is in fact constant, since $g_\ell$ is linear. That such an operator belongs to $\Opbeta$ follows as in Lemma~\ref{lem:lns_algebra}.

  We now fix $A = \sum_{m \in F_A} e_m\, p_{A,m}(D)$ and verify the three conditions of Definition~\ref{def:bracket_regular}. For the first, \eqref{eq:band_bracket} gives
  \begin{equation*}
    [H^k, A] = \sum_{m \in F_A} e_{k+m}
    \Bigl( \bigl(g_k(\cdot+m) - g_k\bigr) p_{A,m}
    - \bigl(p_{A,m}(\cdot+k) - p_{A,m}\bigr) g_k \Bigr)(D) .
  \end{equation*}
  Since $g_k$ is linear, $g_k(l+m) - g_k(l) = \langle m^\perp, k\rangle/|k|^2$ is a constant of modulus at most $|m|/|k|$, so the first product is bounded by $C_A (1+|l|)^2$, uniformly in $k$. For the second, $p_{A,m}$ is quadratic, so $|p_{A,m}(l+k) - p_{A,m}(l)| \lesssim_A |k|(1+|l|) + |k|^2$, and multiplying by $|g_k(l)| \le |l|/|k|$ gives an upper bound of order $ (1+|l|)^2 + |k|(1+|l|)$. Altogether the symbol of $[H^k, A]$ is bounded by $C_A(1+|k|)(1+|l|)^2$, whence
  \begin{equation}\label{eq:psa_bracket_growth}
    \|[H^k, A]\|_{\Opbeta} \lesssim_A |k| ,
  \end{equation}
  using $\beta_\star \ge 2$ once more. In particular $\sup_k |k|^{-2}\|[H^k,
  A]\|_{\Opbeta} < \infty$. The remaining two hypotheses are verified exactly as
  in Lemma~\ref{lem:lns_algebra}: only the exponents change. We omit the
  proof
  for the sake of brevity.
\end{proof}

We now move to the main aim of this section, which is to prove that $\{ A
\zeta \colon A \in \mf{g}_{\mathrm{PSA}}\}$ is dense in $L^2$, provided that
$\zeta$ is sufficiently generic. The genericity that we impose is that $\zeta$
is not one-dimensional on any open patch of the domain.
A function $\zeta \colon \TT^2 \to \RR$ is
\emph{one-dimensional} on an open set $W \subseteq \TT^2$ if $\zeta(x) =
f(\ell \cdot x)$ for all $x \in W$, for some $f \colon \RR \to \RR$ and some
$\ell \in \RR^2 \setminus \{0\}$. We say that $\zeta$ is
nowhere one-dimensional if it is one-dimensional on no open set.
We also say that $\zeta$ is globally one-dimensional if $\zeta(x) = f(\ell \cdot
x)$ on all of $\TT^2$ for some $\ell \in \Zstar$ and some $2\pi$-periodic
function $f$. We note that $\zeta$ is globally one-dimensional if and only if $\mathrm{supp}(\hat\zeta)$ lies on a
single line $\RR\ell$ through the origin (the local notion has no such
characterization).

In this setting the main results of the present section are two. First we prove the algebraic and analytic fact that $\mf{g}_{\mathrm{PSA}}$ acts transitively on states that are nowhere one-dimensional. Then we prove that at any fixed time, the solution to the passive scalar advection equation driven by our stochastic vorticity is nowhere one-dimensional, $\PP$--almost surely.

\begin{proposition}
  \label{prop:transitivity_PSA} Let $s > 4$ and let $\zeta \in H^s(\TT^2)$ be nowhere one-dimensional. If $v \in H^s(\TT^2)$ satisfies $\langle v, A\zeta\rangle = 0$ for all $A \in \mf{g}_{\mathrm{PSA}}$, then $v = 0$.
\end{proposition}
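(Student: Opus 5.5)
The plan is to turn the orthogonality conditions into pointwise differential identities linking $v$ and $\zeta$, and to climb the generators of $\mf{g}_{\mathrm{PSA}}$ in the order listed in~\eqref{eq:lie_algebras}. Throughout, $v,\zeta$ are real and lie in $H^s\subseteq C^2$ since $s>4$. The pairings with $A\in\mf{g}_{\mathrm{PSA}}$ are well defined by Lemma~\ref{lem:psa_algebra}, since every such $A$ has order at most two.

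\emph{Step 1: the generators $H^k$.} By~\eqref{eq:psa_shear}, $\langle v, H^k\zeta\rangle$ is, up to a nonzero factor, $k^\perp\cdot\widehat{(v\nabla\zeta)}(-k)$. Its vanishing for all $k\in\Zstar$ says that the vector field $v\nabla\zeta$ is a gradient plus a constant. Taking the curl gives the pointwise identity $\nabla^\perp v\cdot\nabla\zeta=0$, so $v$ and $\zeta$ Poisson-commute.

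Next consider $U\eqdef\{\nabla\zeta\neq0\}$. If $\nabla\zeta$ vanished on an open set, $\zeta$ would be constant, hence one-dimensional, there. So $U$ is open and dense. On small discs $W\subseteq U$ the level sets of $\zeta$ are $C^1$ curves, and the Poisson-commuting identity yields $v=g(\zeta)$ on $W$ for some $C^1$ function $g$, in the same way as for functionally dependent functions in 2D.

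\emph{Step 2: the remaining generators.} I would then convert the other three families of generators in the same way.
\begin{itemize}
\item $[\Delta,H^k]$: by~\eqref{eq:psa_A_symbols} this is $e_k q_k(D)$. Subtracting the $|k|^2$ part, which is already killed by Step 1, the condition reads $\sum_{a,b}k_ak^\perp_b\widehat{T_{ab}}(-k)=0$ for all $k$, with $T\eqdef v\nabla^2\zeta$. This is a second-order linear constraint on $T$ in divergence form.
\item The diagonal operators $d_k(D)=2(k^\perp\cdot\nabla)^2/|k|^2$: these give $\int\partial_av\,\partial_b\zeta=0$ in enough directions to recover the full symmetric matrix.
\item The brackets $[[\Delta,H^k],H^\ell]$ and $[d_k(D),H^\ell]$: these are order-two band multipliers computed with~\eqref{eq:band_bracket}. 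Letting $\ell$ range over $\Zstar$ again turns the integral identities into pointwise ones, exactly as the curl did in Step 1.
\end{itemize}
Substituting $v=g(\zeta)$ on $W$ and passing to local coordinates adapted to $\zeta$ (arclength along level curves, and $\zeta$ itself), the family of identities should reduce to algebraic relations among $g(\zeta)$, $g'(\zeta)$, $|\nabla\zeta|$, and the curvature $\kappa$ of the level curves and its tangential derivative. Concretely, I expect the output to be a relation of the form $g'(\zeta)\,|\nabla\zeta|^2\,\kappa=0$, possibly accompanied by a companion relation involving $g$ and the normal variation of $|\nabla\zeta|$.

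\emph{Step 3: conclusion via nowhere one-dimensionality.} Suppose $\kappa\equiv0$ and the normal direction $\nabla\zeta/|\nabla\zeta|$ is constant on some disc. Then $\zeta=f(\ell\cdot x)$ there, which is excluded by hypothesis. Hence the set where the geometric coefficient is nonzero is dense in $U$, so $g'=0$ on a dense set of levels, and $v$ is locally constant on $U$.

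The companion relations, or simply Step 1 applied across overlapping discs, should then make $v$ constant on each component of $U$. Since $U$ is dense and $v$ is continuous, $v$ is globally constant, and the mean-zero condition gives $v=0$. If the curvature relation alone turns out to be insufficient, I would additionally use the global integral identity $\int\nabla v\otimes\nabla\zeta=0$ from $d_k$ to rule out a nonzero $g$.

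\emph{Main obstacle.} The hardest part is Step 2: performing the bracket algebra for $[[\Delta,H^k],H^\ell]$ and $[d_k(D),H^\ell]$ and recognizing the resulting Fourier-side conditions as a usable pointwise identity with a geometric coefficient. That coefficient must vanish identically only for locally one-dimensional $\zeta$, and the reduction must use only the truncated family $\mf{g}_{\mathrm{PSA}}$, not the full parabolic Lie algebra. My first attempt would be to compute the symbols explicitly with~\eqref{eq:band_bracket} and~\eqref{eq:psa_A_symbols}, pair them against $e_\ell$, and read off the pointwise identity after substituting $v=g(\zeta)$.
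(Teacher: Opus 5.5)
Your Step 1 matches the paper, but Step 2, which is the heart of the proof, is not carried out. You never obtain a pointwise identity beyond $\nabla^\perp v\cdot\nabla\zeta=0$. The claim that letting $\ell$ range over $\Zstar$ ``turns the integral identities into pointwise ones'' is only asserted, and its output is guessed.

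The missing computation is short. Linear combinations of $d_k(D)=\frac{2}{|k|^2}(k^\perp\cdot\nabla)^2$ give every $\partial_a\partial_b$, so $[\partial_a\partial_b,H^\ell]\in\mf{g}_{\mathrm{PSA}}$. By~\eqref{eq:psa_shift},
\[
  [\partial_a\partial_b, H^\ell] = -\ell_a\ell_b H^\ell + i\ell_a H^\ell\partial_b + i\ell_b H^\ell\partial_a .
\]
Since $\langle v,H^\ell\zeta\rangle=0$, the numbers $u_c\eqdef\langle v,H^\ell\partial_c\zeta\rangle$ satisfy $\ell_au_b+\ell_bu_a=0$ for all $a,b$, which forces $u_1=u_2=0$. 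Your Step 1 curl argument, run with $\partial_c\zeta$ in place of $\zeta$, then gives the pointwise identity $\nabla^2\zeta\,\nabla^\perp v=0$ on $\TT^2$. Only $H^k$ and the brackets $[d_k(D),H^\ell]$ are needed. The global identities $\int\partial_av\,\partial_b\zeta=0$ cannot detect local non-constancy of $v$, so your proposed fallback would not help.

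Step 3 also fails as written, because a curvature relation $g'(\zeta)|\nabla\zeta|^2\kappa=0$ is not enough. Functions with straight but non-parallel level lines, e.g.\ $\zeta=x_2/x_1$ near a point with $x_1\neq0$, have $\kappa\equiv0$ on an open set without being one-dimensional there. So you cannot conclude that $\{\kappa\neq0\}$ is dense.

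The identity above supplies the missing piece. Write $\nabla\zeta=\rho n$ with $n=(\cos\omega,\sin\omega)$. The two components of $\nabla^2\zeta\,n^\perp$ are $\rho\,\partial_{n^\perp}\omega$ ($\rho$ times the curvature of the level lines) and $\partial_{n^\perp}\rho$. The latter is the \emph{tangential}, not the normal, variation of $|\nabla\zeta|$, and by symmetry of the Hessian it equals $\rho\,\partial_n\omega$ up to sign. The two components together make $n$ constant.

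This also lets you drop $v=g(\zeta)$ and the density-of-levels argument, as the paper does. Take a ball where $\nabla v\neq0$, shrunk so that also $\nabla\zeta\neq0$ (the alternative is that $\zeta$ is locally constant). There $\nabla v\parallel\nabla\zeta$, which turns the identity into $\nabla^2\zeta\,\nabla^\perp\zeta=0$, so $\zeta$ is one-dimensional on that ball. Nowhere one-dimensionality therefore forces $\nabla v\equiv0$.

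Relatedly, your final step needs continuity of $\nabla v$, not of $v$. A continuous function that is constant on each component of a dense open set need not be constant, as a devil's staircase shows.
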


\begin{proposition}\label{prop:psa_star_fixedtime}
  Fix $T > 0$ and let $(\zeta_t)_{t \geq 0}$ be the solution to \eqref{eq:linearized_abstract} in the case of PSA \eqref{eq:two_operators_intro_PS} with $\zeta_0 \neq 0, \zeta_0 \in \mH$. Then $$\PP\bigl(\zeta_T \text{ is one-dimensional on some open set} \bigr) = 0 .$$
\end{proposition}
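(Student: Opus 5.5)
The plan is a Cameron--Martin quasi-invariance argument combined with real-analyticity in finitely many control parameters. The first step is to reduce the event to countably many scalar conditions. By Remark~\ref{rem:smoothing}, $\zeta_T \in C^1(\TT^2)$. For $\zeta \in C^1$, one-dimensionality on an open set contains a rational ball $B$ on which it holds. On such a $B$ it is equivalent to the vanishing of
\begin{equation*}
  \Phi_{x,y}(\zeta) \eqdef \det\bigl(\nabla\zeta(x), \nabla\zeta(y)\bigr)
\end{equation*}
for all $x, y$ in a countable dense subset of $B$. For the converse direction I still have to exclude the degenerate case in which $\nabla\zeta$ vanishes on a subset of $B$. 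There I would shrink $B$ to a smaller rational ball on which $\nabla\zeta \neq 0$, so that the gradient has a constant direction $\ell$ and $\zeta$ is constant along $\ell^\perp$. So it suffices to show, for each fixed rational ball $B$, that almost surely $\Phi_{x,y}(\zeta_T) \neq 0$ for some rational $x, y \in B$.

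The second step handles the excluded case where $\nabla\zeta_T \equiv 0$ on $B$, i.e.\ $\zeta_T$ is locally constant there. I would use parabolic unique continuation for the advection--diffusion equation, whose drift $u = K*w$ is smooth enough by Remark~\ref{rem:smoothing}. Together with the mean-zero constraint and backward uniqueness from $\zeta_0 \neq 0$, this should rule out $\zeta_T$ being constant on an open set, almost surely. I regard this step as standard.

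The third step is the analyticity argument. Fix finitely many smooth space-time controls $h_1, \dots, h_m \in \CM$, supported in modes $|k| \le N$ and in a time window $[T-\tau, T]$. Let $\zeta_T^\lambda$ be the solution driven by the shifted noise $W + \sum_i \lambda_i \int_0^\cdot h_i$. The solution map is polynomial in $w$ and linear in $\zeta$, and $w^\lambda$ depends real-analytically on $\lambda \in \RR^m$ (a standard fixed-point-in-analytic-functions argument for 2D Navier--Stokes with smooth finite-dimensional forcing perturbation). Hence $\lambda \mapsto \Phi_{x,y}(\zeta^\lambda_T)$ is real-analytic for every $\omega$. If, for a.e.\ $\omega$, some $\Phi_{x,y}$ is not identically zero in $\lambda$, then the set of $\lambda$ where all $\Phi_{x,y}$ vanish is Lebesgue-null. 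Girsanov makes the laws of the shifted noises equivalent. Averaging over $\lambda$ in a Gaussian on $\RR^m$ and applying Fubini then gives $\PP(\zeta_T \text{ one-dimensional on } B) = 0$.

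The main obstacle is the non-triviality claim: for a.e.\ $\omega$ at which $\zeta_T$ is one-dimensional on $B$ with direction $\ell$ and profile $f$, $f' \not\equiv 0$, some derivative $\partial_\lambda^\gamma \Phi_{x,y}(\zeta^\lambda_T)|_{\lambda = 0}$ must be non-zero. At first order, a control acting on a short window of length $\tau$ just before $T$ perturbs the vorticity by approximately a prescribed smooth $v$. The variation is then
\begin{equation*}
  \delta\zeta_T \approx -\tau\,(K*v)\cdot\nabla\zeta_T + O(\tau^2).
\end{equation*}
With $\nabla\zeta_T = f'(\ell\cdot x)\,\ell$ one computes
\begin{equation*}
  \nabla\bigl((K*v)\cdot\nabla\zeta_T\bigr) = f'\,(D(K*v))^{\!\top}\ell + f''\,\bigl((K*v)\cdot\ell\bigr)\,\ell .
\end{equation*}
Choose $v$ among finitely many low modes so that $(D(K*v))^{\!\top}\ell$ has a non-zero $\ell^\perp$-component at a point $x$ with $f'(\ell\cdot x) \neq 0$; a rotational shear suffices. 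Then
\begin{equation*}
  \partial_\lambda \Phi_{x,y} = \det\bigl(\nabla\delta\zeta_T(x), \nabla\zeta_T(y)\bigr) \neq 0
\end{equation*}
for suitable $y$ with $f'(\ell\cdot y) \neq 0$. Two issues remain. First, the set $\{h_i\}$ must be chosen independently of $\omega$: I would take all modes $|k|\le 2$ and use that $\ell$ ranges over a compact set, so finitely many shears cover every direction. Second, the $O(\tau^2)$ error must be controlled, which I expect needs the regularity bounds of Proposition~\ref{prop:energy_enstrophy} and a small but fixed $\tau$. I expect this last error estimate to be the most delicate point.
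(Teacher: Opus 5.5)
Your route is genuinely different from the paper's. It can be made to work, but the key non-degeneracy step needs two repairs, and it leans on a deeper external result than you suggest.

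\textbf{Comparison with the paper.} The paper never analyses the bad event pathwise. It works with $D_B(\zeta_T)=\det\int_B\nabla\zeta_T\otimes\nabla\zeta_T$, which vanishes exactly when $\zeta_T$ is one-dimensional (constants included) on the convex set $B$. It shows this functional is Cameron--Martin analytic (Lemma~\ref{lem:analytic}) and gets $\PP(D_B(\zeta_T)=0)\in\{0,1\}$ from the Cameron--Martin zero--one law (Lemma~\ref{lem:dichotomy}). It then excludes the value $1$ via the support theorem (Lemma~\ref{lem:tube}) and one explicit skeleton. That skeleton steers the vorticity to $0$, so the scalar then follows the heat flow and is real-analytic; a small transverse shear is added if the scalar happens to be globally one-dimensional. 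Non-degeneracy is thus checked at a single deterministic path, where it is elementary, and no unique continuation is needed.

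Your Fubini--Girsanov argument over finitely many deterministic shifts replaces the zero--one law and the support theorem. In exchange, you must prove non-degeneracy at almost every bad $\omega$. For the constant case you invoke unique continuation from an open set at a single time slice, for a drift that is time-dependent and only finitely smooth. That is a deep theorem, not a standard energy argument.

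\textbf{First repair: the derivative of $\Phi_{x,y}$ has two terms.} At a state with $\nabla\zeta_T=f'(\ell\cdot x)\,\ell$ on $B$, write $\dot\zeta\approx-\tau\,(K*v)\cdot\nabla\zeta_T$ and $g\eqdef\partial_{\ell^\perp}\bigl((K*v)\cdot\ell\bigr)$. Then $\partial_{\ell^\perp}\dot\zeta(x)\approx-\tau f'(\ell\cdot x)\,g(x)$ on $B$, and
\[
\partial_\lambda\Phi_{x,y}=f'(\ell\cdot y)\,\partial_{\ell^\perp}\dot\zeta(x)-f'(\ell\cdot x)\,\partial_{\ell^\perp}\dot\zeta(y)\approx-\tau\,f'(\ell\cdot x)\,f'(\ell\cdot y)\,\bigl(g(x)-g(y)\bigr).
\]
This vanishes for $x=y$. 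It also vanishes whenever $g$ is constant, since a uniform infinitesimal rotation keeps the state one-dimensional. Your condition $g(x)\neq0$ is therefore not enough. You need $g$ to be non-constant on the open set $\{x\in B: f'(\ell\cdot x)\neq0\}$. This does hold for $v=e_k+e_{-k}$ with $k\nparallel\ell$, since then $g=2|k|^{-2}(k^\perp\cdot\ell)^2\cos(k\cdot x)$.

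\textbf{Second repair: a single fixed $\tau$ cannot work.} On the bad event the leading coefficient $\sup_{x,y}|f'(\ell\cdot x)f'(\ell\cdot y)(g(x)-g(y))|$ has no lower bound; for instance $\zeta_T$ may be nearly constant on $B$. Meanwhile the $O(\tau^2)$ remainder has a random constant depending on $C^k$ norms of $(w,\zeta)$ near time $T$. Moment bounds such as Proposition~\ref{prop:energy_enstrophy} control this only off an event of small probability, never almost surely.

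The fix is to use windows $\tau_j\downarrow0$, each with its finite family of controls. Let $G_j$ be the set of $\omega$ at which some $\lambda\mapsto\Phi_{x,y}(\zeta^{\lambda,(j)}_T)$ is not identically zero. $G_j$ is invariant under the $j$-th shifts, because shifting $\omega$ only translates the analytic function in $\lambda$. Your Fubini--Girsanov argument then gives $\PP(\mathrm{bad}\cap G_j)=0$ for each $j$. The pathwise first-order expansion shows that every bad $\omega$ at which $\zeta_T$ is non-constant on $B$ lies in some $G_j$. Summing over $j$ finishes the argument, given the unique-continuation input for the constant case.
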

The proof of Proposition~\ref{prop:transitivity_PSA} can be found at the end of Section~\ref{sec:reduc_noncoll}. The proof of Proposition~\ref{prop:psa_star_fixedtime} can be found at the end of Section~\ref{sec:as_fixed_t}. Before we proceed with those proofs however, let us complete the proof of Theorem~\ref{thm:transitivity} in the case of PSA.

\begin{proof}[Proof of Theorem~\ref{thm:transitivity}, in the case of PSA]
  We fix $\Omega_0$ the set of full probability on which $\pi_T$ is nowhere
  one-dimensional, which exists by Proposition~\ref{prop:psa_star_fixedtime}.
  Then the claim follows by
  Proposition~\ref{prop:transitivity_PSA}, given the regularity of $\pi_T \in
  H^{\beta_\star}$ and the assumption $\delta \pi \in H^5$.
\end{proof}

\subsubsection{Transitivity for nowhere one-dimensional states}
\label{sec:reduc_noncoll}
We first record the result of bracketing with the Laplacian,
which leads to explicit degenerate elliptic operators.  Here it is convenient
to note that for every polynomial
$p$, since $\nabla (e_k u) = e_k (\nabla + ik) u$
\begin{equation}\label{eq:psa_shift}
  p(\nabla)\, e_k = e_k\, p(\nabla + ik).
\end{equation}

\begin{lemma}\label{lem:E_eigenvalue}
  The following hold:
  \begin{enumerate}[label=(\roman*)]
    \item For every $k \in \Zstar$,
      \begin{align}
        [\Delta, H^k]
        &= -|k|^2 H^k - \frac{2}{|k|^2}\, e_k\, (k \cdot \nabla)(k^\perp \cdot \nabla) ,
        \label{eq:psa_first_bracket}\\
        \bigl[[\Delta, H^k], H^{-k}\bigr]
        &= \frac{2}{|k|^2}\, (k^\perp \cdot \nabla)^2 .
        \label{eq:psa_double_bracket}
      \end{align}
    \item Hence $\mf{g}_{\mathrm{PSA}}$ contains the
      operators $ \partial_a \partial_b$, and every bracket $[\partial_a
      \partial_b, H^\ell]$, $\ell \in \Zstar$ for all $a,b \in \{1,2\}$.
  \end{enumerate}
\end{lemma}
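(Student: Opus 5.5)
The plan is to compute both brackets directly from the shear representation~\eqref{eq:psa_shear}, $H^k = \frac{\iota}{|k|^2} e_k\,(k^\perp\cdot\nabla)$, using the shift rule~\eqref{eq:psa_shift} and the orthogonality $k\cdot k^\perp = 0$. Part~(ii) then follows by polarization.

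\emph{First bracket.} By~\eqref{eq:psa_shift},
\[
\Delta e_k = e_k(\nabla+\iota k)\cdot(\nabla+\iota k) = e_k\bigl(\Delta + 2\iota\, k\cdot\nabla - |k|^2\bigr).
\]
Constant-coefficient operators commute with each other, so $[\Delta, e_k X] = e_k(2\iota\, k\cdot\nabla - |k|^2)X$ for $X = k^\perp\cdot\nabla$. Multiplying by $\iota/|k|^2$ gives $-|k|^2 H^k$ plus the term $\frac{\iota}{|k|^2}\cdot 2\iota\, e_k (k\cdot\nabla)(k^\perp\cdot\nabla)$. Since $\iota^2 = -1$, this is exactly~\eqref{eq:psa_first_bracket}. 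One can cross-check against the Fourier formula~\eqref{eq:DeltaL_bracket}.

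\emph{Double bracket.} We bracket~\eqref{eq:psa_first_bracket} with
\[
H^{-k} = -\frac{\iota}{|k|^2}\, e_{-k}\,(k^\perp\cdot\nabla).
\]
The term $-|k|^2[H^k,H^{-k}]$ vanishes by~\eqref{eq:psa_bracket_closed}. For the remaining term, set $A=(k\cdot\nabla)(k^\perp\cdot\nabla)$ and $B=k^\perp\cdot\nabla$. By the shift rule and $k^\perp\cdot k=0$,
\[
e_k A e_{-k} B = (k\cdot\nabla - \iota|k|^2)(k^\perp\cdot\nabla)^2,
\]
while $e_{-k} B e_k A = BA = (k\cdot\nabla)(k^\perp\cdot\nabla)^2$. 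Hence
\[
[e_k A, e_{-k}B] = -\iota|k|^2 (k^\perp\cdot\nabla)^2.
\]
Multiplying by the prefactors $\bigl(-\tfrac{2}{|k|^2}\bigr)\bigl(-\tfrac{\iota}{|k|^2}\bigr)$ gives $\frac{2}{|k|^2}(k^\perp\cdot\nabla)^2$, which is~\eqref{eq:psa_double_bracket}. This also yields the diagonal symbol $d_k$ used in Lemma~\ref{lem:psa_algebra}. The only real care needed in the whole proof is this sign and phase bookkeeping.

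\emph{Part (ii).} The operators $[[\Delta,H^k],H^{-k}]$ lie in $\mf{g}_{\mathrm{PSA}}$, which is a (complex) linear span. Taking $k=(0,1)$, $(1,0)$ and $(1,1)$, the perpendiculars are $(1,0)$, $(0,-1)$ and $(1,-1)$, so we obtain multiples of $\partial_1^2$, $\partial_2^2$ and $(\partial_1-\partial_2)^2$. Polarization then yields $\partial_1\partial_2$, so all $\partial_a\partial_b$ belong to $\mf{g}_{\mathrm{PSA}}$. Finally, each $[\partial_a\partial_b, H^\ell]$ is the same linear combination of the generators $\bigl[[[\Delta,H^k],H^{-k}],H^\ell\bigr]$, all of which are in $\mf{g}_{\mathrm{PSA}}$ by~\eqref{eq:lie_algebras}. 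Since the bracket is bilinear, this combination lies in the span.
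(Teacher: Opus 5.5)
Your proof is correct and follows essentially the same route as the paper. Both compute the two brackets with the shift rule \eqref{eq:psa_shift}, both kill the $-|k|^2[H^k,H^{-k}]$ term with \eqref{eq:psa_bracket_closed}, and both obtain part~(ii) from explicit choices of $k$ plus polarization.

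Your signs are consistent with \eqref{eq:psa_shear} throughout. The paper's written proof instead uses $-\tfrac{\iota}{|k|^2}(k^\perp\cdot\nabla)$ as the multiplier of $H^k$. That slip would flip the sign of the second term of \eqref{eq:psa_first_bracket}, but it cancels in the double bracket, which is bilinear in $H^k, H^{-k}$.
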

\begin{proof}
  By~\eqref{eq:psa_shift} we find that  $[\Delta, e_k\, p(\nabla)] = e_k\,
  (2ik\cdot\nabla - |k|^2)\, p(\nabla)$. Together with $p(\nabla) = -\tfrac{i}{|k|^2}
  (k^\perp\cdot\nabla)$ this is~\eqref{eq:psa_first_bracket}. In the second
  bracket the $H^k$ term of~\eqref{eq:psa_first_bracket} drops
  by~\eqref{eq:psa_bracket_closed}, and in the remaining bracket only
  $(k\cdot\nabla)$ sees the modulation $e_{-k}$. Again by~\eqref{eq:psa_shift},
  with $H^{-k} = \tfrac{i}{|k|^2} e_{-k}(k^\perp\cdot\nabla)$, we have
  \begin{equation*}
    \Bigl[ \frac{2}{|k|^2} e_k (k\cdot\nabla)(k^\perp\cdot\nabla),\
    \frac{i}{|k|^2} e_{-k} (k^\perp\cdot\nabla) \Bigr]
    = \frac{2i}{|k|^4}
    \bigl( (k\cdot\nabla - i|k|^2) - (k\cdot\nabla) \bigr) (k^\perp\cdot\nabla)^2
    = \frac{2}{|k|^2}\, (k^\perp\cdot\nabla)^2 .
  \end{equation*}
  For the second assertion, we note that
  \begin{equation}\label{eq:square_span}
    \partial_1^2 = (k^\perp\cdot\nabla)^2\big|_{k = (0,-1)} , \qquad
    \partial_2^2 = (k^\perp\cdot\nabla)^2\big|_{k = (1,0)} , \qquad
    2\,\partial_1\partial_2 = (\partial_1 + \partial_2)^2 - \partial_1^2 - \partial_2^2 ,
  \end{equation}
  with $(\partial_1 + \partial_2)^2 = (k^\perp\cdot\nabla)^2|_{k = (1,-1)}$,
  which concludes the proof.
\end{proof}
Now we can recast the entire problem in terms of partial differential operators.

\begin{lemma}\label{lem:psa_duality} The following hold:
  \begin{enumerate}[label=(\roman*)]
    \item Let $v, \varphi \colon \TT^2 \to \RR$ be $C^1$. Then, for every $k \in \Zstar$,
      \begin{equation}\label{eq:curl_coeff}
        \langle v, H^k \varphi\rangle
        = -\frac{1}{|k|^2}\, \widehat{\curl (v \nabla \varphi)}(-k) ,
        \qquad
        \curl (v\nabla \varphi) = -\nabla^\perp v \cdot \nabla \varphi .
      \end{equation}
    \item Let $\zeta, v \in H^s(\TT^2)$ for $s > 4$. Under the
      hypothesis of Proposition~\ref{prop:transitivity_PSA} it holds that
      \begin{equation}\label{eq:psa_hessian}
        \nabla^\perp v \cdot \nabla \zeta = 0
        \qquad\text{and}\qquad
        \nabla^2 \zeta\ \nabla^\perp v = 0
        \qquad \text{on } \TT^2 .
      \end{equation}
  \end{enumerate}
\end{lemma}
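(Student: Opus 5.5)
For part (i) the plan is a direct computation from the shear form \eqref{eq:psa_shear}. Write $H^k\varphi = \frac{\iota}{|k|^2} e_k\, (k^\perp\cdot\nabla)\varphi$, so that
\begin{equation*}
  \langle v, H^k\varphi\rangle = \frac{\iota}{|k|^2}\int_{\TT^2} v\, e_k\, k^\perp\cdot\nabla\varphi \ud x .
\end{equation*}
The key observation is that $\iota k^\perp e_k = \nabla^\perp$-type derivative of $e_k$ up to sign: indeed $\nabla e_k = \iota k e_k$, hence $\iota k^\perp e_k = (\nabla e_k)^\perp$. Thus the integrand is $v\,(\nabla e_k)^\perp\cdot\nabla\varphi$, and integrating by parts (moving the derivative off $e_k$, using that $\nabla^\perp\cdot\nabla\varphi=0$) produces $-\int e_k\, \nabla^\perp v\cdot \nabla\varphi$ up to the sign convention of $\perp$. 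Identifying $\int e_k f = (2\pi)^2\hat f(-k)$, and checking the pointwise identity $\curl(v\nabla\varphi) = \partial_1(v\partial_2\varphi)-\partial_2(v\partial_1\varphi) = -\nabla^\perp v\cdot\nabla\varphi$ (the second-derivative terms of $\varphi$ cancel), gives \eqref{eq:curl_coeff}, modulo the normalisation constant of the Fourier transform, which I would track carefully against the paper's convention for $\hat\varphi$.

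For part (ii), first use $\langle v, H^k\zeta\rangle=0$ for all $k\in\Zstar$. By (i) every nonzero Fourier coefficient of the function $\nabla^\perp v\cdot\nabla\zeta$ vanishes. Its mean is also zero, since it is a curl (equivalently $\int \nabla^\perp v\cdot\nabla\zeta = 0$ by integration by parts). As $s>4$ this function is continuous, so it vanishes identically on $\TT^2$. Next, by Lemma~\ref{lem:E_eigenvalue}(ii) the operators $[\partial_a\partial_b, H^\ell]$ lie in $\mf{g}_{\mathrm{PSA}}$, so $\langle v, [\partial_a\partial_b,H^\ell]\zeta\rangle=0$ for all $\ell$. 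I will expand $\langle v,\partial_a\partial_b H^\ell\zeta\rangle - \langle v, H^\ell\partial_a\partial_b\zeta\rangle$. The first term equals $\langle \partial_a\partial_b v, H^\ell\zeta\rangle$, and by (i) it is a Fourier coefficient of $\nabla^\perp(\partial_a\partial_b v)\cdot\nabla\zeta$. The second term is a coefficient of $\nabla^\perp v\cdot\nabla\partial_a\partial_b\zeta$. Alternatively, and more cleanly, $\partial_a\partial_b$ applied to the identity $\nabla^\perp v\cdot\nabla\zeta=0$ can be used to simplify. Concretely, the bracket condition says that the function
\begin{equation*}
  F_{ab}\eqdef \nabla^\perp(\partial_a\partial_b v)\cdot\nabla\zeta - \nabla^\perp v\cdot\nabla(\partial_a\partial_b\zeta)
\end{equation*}
has vanishing Fourier modes (its mean also vanishes, being a difference of curls). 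Hence $F_{ab}\equiv 0$, using $s>4$ for continuity.

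Then I subtract from $F_{ab}=0$ the identity $\partial_a\partial_b(\nabla^\perp v\cdot\nabla\zeta)=0$, expanded by Leibniz. The terms with all derivatives on $v$ partly cancel, and one is left with combinations of the form
\begin{equation*}
  \nabla^\perp\partial_a v\cdot\nabla\partial_b\zeta+\nabla^\perp\partial_b v\cdot\nabla\partial_a\zeta + 2\nabla^\perp v\cdot\nabla\partial_a\partial_b\zeta = 0 ,
\end{equation*}
or the analogous combination with the roles of $v$ and $\zeta$ swapped. The main obstacle is to extract from these three relations ($ab=11,12,22$), together with the first-order relation, the clean statement $\nabla^2\zeta\,\nabla^\perp v=0$. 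To do this I would differentiate $\nabla^\perp v\cdot\nabla\zeta=0$ once in $x_a$, which gives $\nabla^\perp\partial_a v\cdot\nabla\zeta + (\nabla^2\zeta\,\nabla^\perp v)_a=0$. I would then look for the linear combination of the second-order relations and of $\partial_c$ of these first-order ones that isolates $(\nabla^2\zeta\nabla^\perp v)_a$. If the plain combination does not close, I would instead test against the diagonal operators $(k^\perp\cdot\nabla)^2$ themselves, which also lie in $\mf{g}_{\mathrm{PSA}}$. These give $\langle v,\partial_a\partial_b\zeta\rangle$-type identities and supply the missing relation by polarisation in $k$.
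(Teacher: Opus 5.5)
Your part (i), the first identity in (ii), and the reduction of the bracket conditions to $F_{ab}\equiv 0$ are correct. Moreover $F_{ab}\equiv 0$ encodes exactly the information the paper uses. The gap is the final step.

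Put $g_c \eqdef \nabla^\perp v\cdot\nabla\partial_c\zeta = (\nabla^2\zeta\,\nabla^\perp v)_c$; this is the quantity you must show vanishes. By Leibniz, $F_{ab} = \partial_a\partial_b(\nabla^\perp v\cdot\nabla\zeta) - \partial_a g_b - \partial_b g_a$. So the relation you obtain after subtracting is literally $\partial_a g_b + \partial_b g_a = 0$. Differentiating $\nabla^\perp v\cdot\nabla\zeta = 0$ only gives $g_c = -\nabla^\perp\partial_c v\cdot\nabla\zeta$, which expresses $g_c$ without making it vanish.

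So all you have is that $g$ is a Killing field, and this does not force $g=0$ pointwise. Take $v = \zeta = |x|^2/2$ near a point. Then $\nabla^\perp v\cdot\nabla\zeta = 0$, and $F_{ab} = 0$ for all $a,b$ because all second derivatives are constant. Yet $\nabla^2\zeta\,\nabla^\perp v = (x_2,-x_1)\neq 0$. Hence no linear combination of your relations and their derivatives can isolate $(\nabla^2\zeta\,\nabla^\perp v)_a$.

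Your fallback does not help either. The operators $(k^\perp\cdot\nabla)^2$ only produce the three scalar identities $\langle v,\partial_a\partial_b\zeta\rangle = 0$, which cannot yield a pointwise statement.

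What is missing is a second, global use of periodicity. In Fourier the Killing system reads $\ell_a\hat g_b(\ell)+\ell_b\hat g_a(\ell) = 0$ for every $\ell\in\Zstar$. Taking $(a,b) = (1,1),(2,2),(1,2)$ gives $\ell_1\hat g_1(\ell) = \ell_2\hat g_2(\ell) = 0$ and $\ell_1\hat g_2(\ell)+\ell_2\hat g_1(\ell) = 0$. Since $\ell\neq 0$, this forces $\hat g(\ell) = 0$. Finally $\hat g(0) = 0$ because each $g_c$ is a curl.

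Equivalently, a Killing field on the flat torus is affine with antisymmetric gradient. Periodicity makes it constant and the zero mean kills it; periodicity is exactly what excludes the rotation in the example above.

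This is the paper's argument in disguise, carried out directly at the operator level. The paper expands $[\partial_a\partial_b, H^\ell] = -\ell_a\ell_b H^\ell + \iota\ell_a H^\ell\partial_b + \iota\ell_b H^\ell\partial_a$ and uses $\langle v, H^\ell\zeta\rangle = 0$. It then solves $\ell_a u_b + \ell_b u_a = 0$ for $u_c \eqdef \langle v, H^\ell\partial_c\zeta\rangle$, which by (i) is a multiple of $\hat g_c(-\ell)$. Then $g_c\equiv 0$ is the $c$-th component of $\nabla^2\zeta\,\nabla^\perp v = 0$.
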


\begin{proof}
  For (i), with $F = v \nabla \varphi$, and by using~\eqref{eq:psa_shear},
  we obtain
  \begin{equation*}
    \langle v, H^k \varphi\rangle
    = -\frac{i}{|k|^2} \int_{\TT^2} e_k\, v\, (k^\perp \cdot \nabla \varphi)\, \ud x
    = -\frac{i}{|k|^2}\, k^\perp \cdot \widehat{F}(-k)
    = -\frac{1}{|k|^2}\, \widehat{\curl F}(-k) ,
  \end{equation*}
  using $\widehat{\curl F}(m) = -i\, m^\perp \cdot \widehat F(m)$ in the
  last step. The Leibniz expansion of $\curl(v
  \nabla \varphi)$ leaves $-\nabla^\perp v \cdot \nabla \varphi$, since
  $\curl \nabla \varphi = 0$.

  For (ii), the first claim follows directly from part (i) with $\varphi =
  \zeta$, since vanishing of the coefficients~\eqref{eq:curl_coeff} over all
  $k \in \Zstar$ is equivalent to $\curl F \equiv 0$.
  For the
  second claim, constant-coefficient operators commute with
  $(\ell^\perp\cdot\nabla)$, so in the bracket $[\partial_a\partial_b,
  H^\ell]$ we obtain
  \begin{equation}\label{eq:dd_bracket}
    [\partial_a \partial_b,\, H^\ell]
    = -\ell_a \ell_b\, H^\ell
    + i \ell_a\, H^\ell \partial_b + i \ell_b\, H^\ell \partial_a ,
    \qquad
    H^\ell \partial_c = \frac{i}{|\ell|^2}\, e_\ell\, \partial_c\,
    (\ell^\perp \cdot \nabla) .
  \end{equation}
  The left side lies in $\mf{g}_{\mathrm{PSA}}$ by
  Lemma~\ref{lem:E_eigenvalue}, so $\langle v, [\partial_a\partial_b,
  H^\ell]\zeta\rangle = 0$. Combined with $\langle v, H^\ell \zeta\rangle =
  0$, we deduce that the numbers $u_c \eqdef \langle v, H^\ell \partial_c \zeta\rangle$
  satisfy
  \begin{equation*}
    \ell_a u_b + \ell_b u_a = 0 , \qquad a, b \in \{1, 2\} ,
  \end{equation*}
  that is $2\ell_1 u_1 = 0$, $2\ell_2 u_2 = 0$ and $\ell_1 u_2 + \ell_2 u_1 = 0$. Since $\ell \neq 0$ these force $u_1 = u_2 = 0$. As $H^\ell \partial_c \zeta = H^\ell (\partial_c \zeta)$ and $\partial_c \zeta \in C^2$, part (i) applied with $\varphi = \partial_c \zeta$ turns this into $\nabla^\perp v \cdot \nabla (\partial_c \zeta) = 0$ on $\TT^2$, which is the $c$-th component of the second equation in~\eqref{eq:psa_hessian}.
\end{proof}

\begin{lemma}\label{lem:psa_rigidity}
  Let $v, \zeta \in C^2(\TT^2)$ satisfy $\nabla^\perp v \cdot \nabla\zeta = 0$ and $\nabla^2\zeta\ \nabla^\perp v = 0$ on $\TT^2$. If $v$ is not constant, then $\zeta$ is one-dimensional on some open set.
\end{lemma}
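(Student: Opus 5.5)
Since $v$ is not constant and $C^2$, the open set $U_0 \eqdef \{x : \nabla v(x) \neq 0\}$ is nonempty. On $U_0$ we set $\tau \eqdef \nabla^\perp v/|\nabla^\perp v|$, a $C^1$ unit vector field. The hypotheses say exactly that $\tau \cdot \nabla \zeta = 0$ and that $\tau$ lies in the kernel of the symmetric matrix $\nabla^2 \zeta$ on $U_0$. Writing $\nu \eqdef \tau^\perp$, the unit normal, symmetry of $\nabla^2\zeta$ forces
\[
\nabla^2\zeta = \lambda\, \nu \otimes \nu \quad \text{on } U_0,
\]
for the scalar function $\lambda = \nu\cdot \nabla^2\zeta\, \nu$. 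The plan is to split into two cases, according to whether $\nabla\zeta$ vanishes on an open subset of $U_0$ or not.

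\textbf{Case 1.} If $\nabla\zeta \equiv 0$ on some open ball $W \subseteq U_0$, then $\zeta$ is constant on $W$. It is therefore trivially one-dimensional there, e.g.\ $\zeta(x) = f(\ell\cdot x)$ with $f$ constant.

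\textbf{Case 2.} Otherwise $\{\nabla\zeta \neq 0\} \cap U_0$ is open, dense in $U_0$, and in particular nonempty. We pick a small ball $W$ in it. On $W$, $\nabla\zeta \perp \tau$, so $\nabla\zeta = g\,\nu$ with $g = \nu\cdot\nabla\zeta$ nonvanishing. Differentiating along $\tau$ gives
\[
\partial_\tau \nabla\zeta = \nabla^2\zeta\,\tau = 0 ,
\]
so $\nabla\zeta$ is constant along integral curves of $\tau$. These are precisely the level curves of $\zeta$, since $\tau\cdot\nabla\zeta=0$ and $\nabla\zeta\neq0$. Hence the normal $\nu = \pm\nabla\zeta/|\nabla\zeta|$ is constant along each level curve. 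A $C^1$ curve whose unit tangent $\tau = -\nu^\perp$ (up to sign) is constant must be a straight segment. Thus, locally, the level sets of $\zeta$ are parallel-in-themselves line segments, with $\nabla\zeta$ constant along each one.

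The remaining and main step is to upgrade ``each level set is a straight segment'' to ``all segments are parallel''. I would do this using the rank-one structure $\nabla^2\zeta = \lambda\,\nu\otimes\nu$, i.e.\ $\det\nabla^2\zeta = 0$. Along a level segment $\{x_0 + s\tau_0\}$, the gradient is the constant vector $\nabla\zeta(x_0)$. Moving transversally, $\partial_\nu \nabla\zeta = \nabla^2\zeta\,\nu = \lambda\nu$, so the direction of $\nabla\zeta$ changes only where $\partial_\nu\nu$ is nonzero.

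Concretely, write $\nabla\zeta = g\nu$ and differentiate: $\nabla^2\zeta = \nu\otimes\nabla g + g\,\nabla\nu$. Since $|\nu|=1$, the vector $\partial_i\nu$ is a multiple of $\tau$, so $\nabla\nu = \tau\otimes a$ for some vector $a$. Comparing with $\lambda\,\nu\otimes\nu$, the $\tau$-row gives $g\,a = 0$. As $g\neq0$ we get $a=0$, so $\nabla\nu \equiv 0$ on $W$. Hence $\nu \equiv \nu_0$ is constant on $W$, and therefore $\nabla\zeta \parallel \nu_0$ on $W$.

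With $\nu_0$ fixed, $\zeta$ is constant on each segment in direction $\nu_0^\perp$ inside the (convex) ball $W$. We conclude $\zeta(x) = f(\nu_0\cdot x)$ on $W$, which is the required one-dimensionality on an open set.

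The expected obstacle is the regularity bookkeeping. Differentiating $\nu$ requires $\nabla\zeta \in C^1$ (which holds, since $\zeta\in C^2$) together with $g \neq 0$, and this is why we pass to the open set avoiding critical points. Note that $\tau$ from $v$ is only needed through the algebraic relation; once $\nabla\zeta\neq0$ we may use $\nu = \nabla\zeta/|\nabla\zeta|$ directly, which is $C^1$. So the argument only needs care in choosing $W$ inside $U_0 \cap \{\nabla\zeta\neq0\}$ when that set is nonempty, and in falling back on Case 1 otherwise.
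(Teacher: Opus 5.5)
Your proof is correct and follows essentially the same route as the paper. Both arguments use the same case split on whether $\nabla\zeta$ vanishes on an open subset of $\{\nabla v \neq 0\}$. Both then write $\nabla\zeta = g\nu$ with $g \neq 0$ and $\nu$ a $C^1$ unit field, and use $\nabla^2\zeta\,\tau = 0$ together with the symmetry of $\nabla^2\zeta$ to force $\nabla\nu = 0$.

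The only difference is cosmetic. The paper parametrizes the direction by an angle $\omega$ and gets $\partial_{n^\perp}\omega = 0$ first, then $\partial_n\omega = 0$ from symmetry. Your $\tau$-row computation gives $g\,a = 0$ in a single step. Your level-curve paragraph is motivation only and is not needed for the argument.
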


\begin{proof}
  Pick a ball on which $\nabla v \neq 0$. If $\nabla\zeta$ vanishes identically on one of its open subsets, then $\zeta$ is constant and hence also one-dimensional on that open set. Otherwise shrink to a ball $U$ of radius smaller than $1$ on which both $\nabla v \neq 0$ and $\nabla \zeta \neq 0$. Such a ball is convex and simply connected, because we are on the torus.

    On $U$ the first hypothesis makes $\nabla\zeta$ parallel to $\nabla v$, and therefore $\nabla^\perp\zeta$ parallel to $\nabla^\perp v$, so that the second hypothesis becomes a condition on $\zeta$ alone,
    \begin{equation*}
      \nabla^2\zeta\, \nabla^\perp\zeta = 0 \qquad \text{on } U .
    \end{equation*}
    We write $\nabla\zeta = \rho\, n$ with $\rho \eqdef |\nabla\zeta| > 0$ and $n \eqdef (\cos\omega, \sin\omega)$, so that $\nabla^\perp\zeta = \rho\, n^\perp$ and the condition above reads $\nabla^2\zeta\, n^\perp = 0$. The angle $\omega$ has a $C^1$ branch on $U$, because $\nabla\zeta$ does not vanish there. Then $\partial_j n = n^\perp \partial_j \omega$, so that
    \begin{equation*}
      \nabla^2\zeta = n \otimes \nabla\rho + \rho\, n^\perp \otimes \nabla\omega.
    \end{equation*}
    Since $n$ and $n^\perp$ are independent, the vanishing of $\nabla^2\zeta\, n^\perp$ is equivalent to
    \begin{equation}\label{eq:psa_rigidity_eq1}
      \partial_{n^\perp}\rho = 0 , \qquad \partial_{n^\perp}\omega = 0
      \qquad \text{on } U.
    \end{equation}

    Next, since $\nabla^2\zeta$ is symmetric, \eqref{eq:psa_rigidity_eq1} gives
    \[
      0 = \partial_{n^\perp}\rho
      = \langle n, \nabla^2\zeta\, n^\perp\rangle
      = \langle n^\perp, \nabla^2\zeta\, n\rangle
      = \rho\, \partial_n\omega.
    \]
  Hence $\partial_n\omega = 0$ as well and therefore $\nabla\omega$ vanishes entirely on $U$. This means that $\omega$ is constant and the direction $n$ is a constant unit vector on $U$. Since $\nabla\zeta = \rho\, n$, the scalar is constant along $n^\perp$, and because $U$ is convex this gives $\zeta(x) = f(n \cdot x)$ on $U$, so that $\zeta$ is one-dimensional there.
\end{proof}

\begin{remark}\label{rem:steady_euler}
  Lemma~\ref{lem:psa_rigidity} has a natural geometric interpretation. On any open set where $\nabla v, \nabla \zeta \neq 0$, the Poisson bracket condition $\nabla^\perp v \cdot \nabla \zeta = 0$ gives $\nabla v \parallel \nabla \zeta$, which reduces the second hypothesis to
  \begin{equation*}
    \nabla^2\zeta \, \nabla^\perp\zeta = 0 .
  \end{equation*}
  This states that the directional derivative of $\nabla\zeta$ along its own level curves vanishes. Geometrically, this forces the level curves to be straight lines along which $|\nabla\zeta|$ is constant. By symmetry of the Hessian $\nabla^2\zeta$, this direction cannot rotate across neighboring level curves either, forcing $\zeta$ to be a one-dimensional profile
  \begin{equation*}
    \zeta(x) = f(n \cdot x)
  \end{equation*}
  locally for a constant unit vector $n \in \mathbb{S}^1$.

  In contrast to the Hartman--Nirenberg cylinder theorem \cite{HartmanNirenberg59}, where $\det(\nabla^2\zeta) = 0$ requires global completeness to deduce a cylindrical graph, the explicit alignment of $\ker(\nabla^2\zeta)$ with the level curves yields local 1D rigidity on any convex set without global assumptions. Analogous rigidity mechanisms appear in 2D hydrodynamics, where the steady Euler relation $\{\psi, \omega\} = 0$ (the setting $\omega = \Delta\psi$) paired with geometric or monotonicity constraints forces steady flows to reduce to 1D shear flows \cites{HamelNadirashvili19, ConstantinDrivasGinsberg21, ElgindiHuangSaidXie, CotiZelatiElgindiWidmayer23}.
\end{remark}

\begin{proof}[Proof of Proposition~\ref{prop:transitivity_PSA}]
  We apply Lemma~\ref{lem:psa_duality}(ii), which in turn allows us to apply
  Lemma~\ref{lem:psa_rigidity}. Since $\zeta$ is nowhere one-dimensional,
  that lemma forces $v$ to be constant, and since its mean is zero we deduce
  that $v \equiv 0$.
\end{proof}
\subsubsection{Almost sure absence of one-dimensional states}
\label{sec:as_fixed_t}

This section proves Proposition~\ref{prop:psa_star_fixedtime}. The first step
is to rewrite the condition of being one-dimensional as a polynomial
functional of the scalar. Since the scalar is driven by some noise that is
Gaussian at small times, we want to harness this to prove that
one-dimensionality cannot happen with probability one.
For an open box $B \subset \TT^2$ and $\zeta \in H^1(\TT^2)$ set
\begin{equation}\label{eq:structure_tensor}
  \mathcal{S}_B(\zeta) \eqdef \int_B \nabla\zeta(x)\otimes\nabla\zeta(x)\ud x ,
  \qquad
  D_B(\zeta) \eqdef \det \mathcal{S}_B(\zeta) \geq 0 .
\end{equation}

\begin{lemma}\label{lem:tensor}
  Let $B \subset \TT^2$ be open and convex, $\zeta \in C^1(\TT^2)$. Then $D_B(\zeta) = 0$ if and only if $\zeta$ is one-dimensional on $B$.
\end{lemma}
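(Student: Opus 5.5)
The plan is to read $D_B(\zeta)$ as the Gram determinant of the two partial derivatives in $L^2(B)$ and to use the equality case of Cauchy--Schwarz. Write $a_i \eqdef \partial_i \zeta|_B \in L^2(B)$. Then $\mathcal{S}_B(\zeta)$ is the Gram matrix $(\langle a_i, a_j\rangle_{L^2(B)})_{i,j}$. Hence
\[
D_B(\zeta) = \|a_1\|^2\|a_2\|^2 - \langle a_1,a_2\rangle^2 \ge 0,
\]
with equality if and only if $a_1$ and $a_2$ are linearly dependent in $L^2(B)$. Because $\zeta \in C^1$, $L^2(B)$-dependence is the same as pointwise dependence on $B$, by continuity and since $B$ is open. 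Equivalently, $D_B(\zeta)=0$ if and only if there is a unit vector $m \in \RR^2$ with $m\cdot\nabla\zeta \equiv 0$ on $B$. This holds because $\mathcal{S}_B$ is positive semidefinite with $\langle m,\mathcal{S}_B m\rangle = \int_B (m\cdot\nabla\zeta)^2$, so $\det \mathcal{S}_B = 0$ exactly when $\mathcal{S}_B$ has a kernel vector $m$.

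For the direction ``one-dimensional $\Rightarrow D_B=0$'', suppose $\zeta(x)=f(\ell\cdot x)$ on $B$. Then take $m=\ell^\perp/|\ell|$. A small care point is that $f$ need not be differentiable a priori. However, $\zeta$ is $C^1$ and constant along lines in direction $\ell^\perp$ within $B$, and the derivative of a $C^1$ function along a segment on which it is constant vanishes. So $m\cdot\nabla\zeta = 0$ on $B$, and therefore $\langle m,\mathcal{S}_Bm\rangle=0$.

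For the converse, take $m$ with $m\cdot\nabla\zeta\equiv0$ on $B$ and set $\ell \eqdef m^\perp$. For $x,y\in B$ with $\ell\cdot x=\ell\cdot y$, the segment $[x,y]$ lies in $B$ by convexity and is parallel to $m$. Its derivative along it is $m\cdot\nabla\zeta=0$, so $\zeta(x)=\zeta(y)$. Thus $\zeta$ is constant on each nonempty slice $B\cap\{\ell\cdot x=c\}$, and defining $f(c)$ as this common value (arbitrary off the image of $B$) gives $\zeta=f(\ell\cdot x)$ on $B$.

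The only mild subtleties are the following. First, $B$ is a subset of the torus, so convexity should be understood for a lift to $\RR^2$ of diameter less than $2\pi$, with $\zeta$ viewed as periodic; the argument is local. Second, the degenerate case $\nabla\zeta\equiv0$ on $B$ is covered automatically. I do not expect a real obstacle; the main point is just to state the kernel characterization of the semidefinite matrix cleanly.
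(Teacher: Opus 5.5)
Your argument is correct and essentially matches the paper's proof. Both reduce $D_B(\zeta)=0$ to the existence of a unit vector $m$ with $\int_B (m\cdot\nabla\zeta)^2\,\mathrm{d}x = \langle m, \mathcal{S}_B(\zeta)\, m\rangle = 0$, hence $m\cdot\nabla\zeta\equiv 0$ on $B$ by continuity, and then use convexity of $B$ to compare values along segments in direction $m$; your one real difference is the forward direction, where you use the directional derivative of $\zeta$ itself rather than the paper's $\nabla\zeta = f'(\ell\cdot x)\,\ell$, which avoids assuming $f$ is differentiable (harmless either way, since $f$ is locally $C^1$ whenever $\zeta$ is).
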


\begin{proof}
  For a unit vector $v$ one has \[ \langle v, \mathcal{S}_B(\zeta)\, v \rangle = \int_B \langle v, \nabla\zeta\rangle^2 \ud x \] and $\mathcal{S}_B(\zeta)$ is symmetric positive semidefinite, so $D_B(\zeta) = 0$ if and only if $\langle v, \nabla\zeta\rangle$ vanishes identically on $B$ for some unit $v$, the integrand being continuous and nonnegative. If $\zeta$ is one-dimensional on $B$ this holds with $v = \ell^\perp/|\ell|$, since $\nabla\zeta = f'(\ell\cdot x)\,\ell$. Conversely a vanishing derivative in the direction $v$ throughout the convex set $B$ makes $\zeta$ a function of the orthogonal coordinate alone, $\zeta(x) = f(\ell \cdot x)$ on $B$ with $\ell = v^\perp$.
\end{proof}

Let $\mB$ be the countable family of open balls in $\TT^2$ with rational
centers and rational radius smaller than $1$, so that every $B \in \mB$ is
convex (because we are on the torus). We have that $\zeta_T$ is one-dimensional on some open set if and
only if $D_B(\zeta_T) = 0$ for some $B \in \mB$, by Lemma~\ref{lem:tensor}. To
prove our main result, it suffices therefore to prove that for any $T>0$
\begin{equation}\label{eq:target_star}
  \PP\bigl(D_B(\zeta_T) = 0\bigr) = 0,
  \qquad \forall B \in \mB .
\end{equation}
To do so, we proceed in two steps. First we use a Cameron--Martin zero-one law to conclude that
\begin{equation*}
  \PP(D_B(\zeta_T) = 0 ) \in \{0, 1\}.
\end{equation*}
This result will rely on the analyticity of the solution map with respect to Cameron--Martin controls. The second step is then to solve a control problem to prove that $\PP(D_B(\zeta_T) > 0) > 0$, from which we immediately deduce \eqref{eq:target_star}.

Throughout the remainder of this section we fix, without loss of generality, the underlying probability space $(\Omega, \mF, \PP)$ to be the Wiener space, which we now make precise. Our driving noise $W = \sum_{k\in\Zstar} e_k W_k$ is space-time white. Therefore, fixing an arbitrary $r_\star > 1$ and time horizon $T>0$, it takes values in
\begin{equation*}
  \Omega = C\bigl([0,T]; H^{-r_\star}(\TT^2)\bigr).
\end{equation*}
We then let $\PP$ be the law of $W$ on $\Omega$ and $\mF$ be the Borel sigma algebra. The Cameron--Martin space of $(\Omega, \PP)$ is then the space of controls
\begin{equation*}
  \mf{H} = \Bigl\{ h \in \Omega : h(0) = 0 ,\ \dot h \in L^2\bigl([0,T]; \mH\bigr) \Bigr\} ,
  \qquad \| h \|_{\mf{H}}^2 = \int_0^T \| \dot h(s) \|_{\mH}^2\, \ud s .
\end{equation*}
The space $\mf{H}$ is not the space $\CM = L^2([0,\infty); \mH)$ of the other
sections of this work (the latter is the space of all time derivatives of the former), for convenience in the calculations below. Let us fix an orthonormal basis $\{h_n\}_{n\geq1}$ of smooth
functions for $\mf{H}$ and write $\mf{H}^n =
\operatorname{span}(h_1,\dots,h_n)$ and $\mf{H}^\infty = \bigcup_n \mf{H}^n$,
a dense linear subspace.
\begin{definition}\label{def:cm_analytic}
  A measurable functional $F \colon \Omega \to \RR$ is Cameron--Martin analytic if there exists an $\Omega_0$ with $\PP(\Omega_0)=1$ such that for every $\omega \in \Omega_0$ and every $n \in \NN_*$ and $h_i$ the basis elements of $\mf{H}$ as above, the map $(x_1,\dots,x_n) \mapsto F(\omega + \sum_i x_i h_i)$ is analytic on $\R^n$.
\end{definition}
\begin{lemma}\label{lem:analytic}
  Fix $T > 0$ and any open $B \subseteq \TT^2$. The map $\Omega \to \RR$
  defined by $\omega \mapsto D_B( \zeta_T (\omega))$, is Cameron--Martin
  analytic (recall that $\Omega$ depends on $T$).
\end{lemma}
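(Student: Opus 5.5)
Since $D_B$ is a polynomial (degree four) in $\nabla\zeta_T$, and $\zeta\mapsto\int_B\nabla\zeta\otimes\nabla\zeta\,\ud x$ is a continuous quadratic form on $H^1$, it suffices to show that, for almost every $\omega$ and each $n$, the map $x=(x_1,\dots,x_n)\mapsto \zeta_T(\omega+\sum_i x_i h_i)\in H^1$ is real analytic on $\RR^n$. Composing with the bounded quadratic form and then the determinant preserves analyticity. The plan is to prove this analyticity pathwise by extending to complex parameters. We show that the shifted system extends holomorphically to $x\in\CC^n$ and use that holomorphic maps between Banach spaces restrict to real-analytic maps on $\RR^n$.

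First, fix $\omega$ in the full-measure set on which the pathwise solution theory holds, so that the stochastic convolution $Z(\omega)=\int_0^\cdot e^{(\cdot-s)\Delta}Q^{1/2}\ud W$ lies in $C([0,T];H^{s})$ for suitable $s$. Shifting $\omega$ by $h=\sum_i x_ih_i$ replaces $Z$ by $Z+\sum_i x_i Z^{h_i}$. Here $Z^{h_i}=\int_0^\cdot e^{(\cdot-s)\Delta}Q^{1/2}\dot h_i(s)\ud s$ is a smooth deterministic path. Write $w=Z+\sum_i x_iZ^{h_i}+v$, where $v$ solves the random PDE $\partial_t v=\Delta v-B(w,w)$, and let $\zeta$ solve the linear equation $\partial_t\zeta=\Delta\zeta-B(w,\zeta)$ with fixed data $\zeta_0$. (Normalization to $\pi$ is irrelevant here because $D_B$ is scale-covariant: $D_B(c\zeta)=c^4D_B(\zeta)$, and the statement concerns $\zeta_T$ anyway.) Complexify: for $x\in\CC^n$ consider the same equations for complex-valued $(v,\zeta)$. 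The map $(x,v,\zeta)$ enters only through polynomial (bilinear) nonlinearities. Therefore, on a short time interval, the mild formulation is a contraction in $C([0,\tau];\mH)\cap L^2([0,\tau];H^1)$ whose fixed point depends holomorphically on $x$, by the holomorphic implicit function theorem. Equivalently, it is a locally uniform limit of Picard iterates, each of which is polynomial in $x$.

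The main obstacle is globality in $x$. For complex $x$ the real energy estimate for $v$ fails: $\langle B(w,w),\bar w\rangle$ need not vanish, so a priori blow-up could occur before $T$ and the holomorphic extension would exist only near $\RR^n$. We avoid this by never needing global complex existence. Real analyticity on $\RR^n$ is a local property. For each real $x^0\in\RR^n$ the real solution exists on $[0,T]$ with bounds coming from the real energy estimates (Lemma~\ref{lem:base_difference}, Lemma~\ref{lem:fibre_difference}). By continuity of the mild formulation in the complex parameter, together with a Gr\"onwall argument applied to the complex difference from the real solution, the solution persists on all of $[0,T]$ for $x$ in a small complex polydisc around $x^0$, whose radius depends on the real solution's norms. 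On that polydisc the time-$T$ map is a uniform limit of holomorphic iterates on finitely many subintervals, so it is holomorphic. Finally, parabolic smoothing upgrades the $\mH$-valued holomorphy to $H^1$-valued holomorphy at time $T$, using bounded holomorphic dependence in $L^2([0,T];H^1)$ plus one more step of the mild formula. The remaining points are measurability and the fact that $\Omega_0$ is independent of $n$. These follow by taking $\Omega_0$ to be the set where $Z(\omega)$ has the required regularity, which is invariant under Cameron--Martin shifts by smooth $h$.
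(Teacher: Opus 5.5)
Your proposal is correct and follows essentially the paper's route. You complexify the shift parameters and treat the shifted vorticity as a perturbation of the global real solution on a small polydisc around each real $x_0$, so the missing energy cancellation for complex data never matters; you close a contraction on finitely many short subintervals to get holomorphy there, hence real analyticity on $\RR^n$, with $\Omega_0$ the shift-invariant set where the stochastic convolution is regular.

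The differences are technical. You fold the scalar into the same fixed-point problem, whereas the paper expands $\zeta^z_T$ as an iterated Duhamel series in $u^z$ whose terms decay like $1/\Gamma(1+\frac m2)$ and applies the Weierstrass convergence theorem. The paper also uses a two-regime norm: weighted up to a warm-up time $\eta$, then $H^s$, so that $u^z$ is bounded in $C^1$ on $[\eta,T]$. That is what makes your closing ``parabolic smoothing'' step to $H^1$ precise, because bounds in $C_tL^2\cap L^2_tH^1$ alone do not put $u^z$ in $L^\infty_{t,x}$.
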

The proof passes through the analytic dependence of the vorticity equation on
its controls, and it can be found in Appendix~\ref{app:analyticity}. Note that
this result is roughly \cite[Theorem~2.1]{Kuksin82} (or in any case a somewhat
expected result), only that there the authors
establish analyticity of Navier--Stokes, and we have to push this forward to
analyticity of the passive scalar.

Next, the zero set of Cameron--Martin analytic functions satisfies a zero-one
law. This is the analytic analogue of the zero-one law
of~\cite[Proposition~5.10.10]{Bogachev1998} for polynomial functions and it
rests on the
Cameron--Martin zero--one law
\cite[Theorem~2.5.2]{Bogachev1998}. The proof is identical after replacing polynomials by analytic functions.
\begin{lemma}\label{lem:dichotomy}
  If $F$ is Cameron--Martin analytic, then $\PP(F = 0) \in \{0,1\}$.
\end{lemma}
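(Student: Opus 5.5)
We argue by reduction to the Cameron--Martin zero--one law \cite[Theorem~2.5.2]{Bogachev1998}, which states that any measurable set $A\subseteq\Omega$ invariant under translations by a dense subspace of $\mf{H}$ (or, more precisely, under all translations by elements of $\mf{H}$ modulo null sets) has probability $0$ or $1$. Let $Z\eqdef\{F=0\}$ and let $\Omega_0$ be the full-measure set of Definition~\ref{def:cm_analytic}. The plan is to show that $\PP(Z)>0$ forces $\PP(Z)=1$, by first showing that $Z$ coincides, up to a null set, with a set that is invariant under every shift $h\in\mf{H}^\infty$, and then applying the zero--one law.

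\textbf{Step 1: a finite-dimensional disintegration.} Fix $n$ and write $\omega=\omega'+\sum_{i\le n}x_ih_i$, where $x_i=\langle\omega,h_i\rangle^\sim$ is the measurable linear (Paley--Wiener) functional associated to $h_i$. Under $\PP$ the vector $x=(x_1,\dots,x_n)$ is standard Gaussian on $\RR^n$ and independent of $\omega'$. Moreover, since $\Omega_0$ has full measure, for $\PP$-a.e.\ $\omega'$ the section $\{x:\omega'+\sum x_ih_i\in\Omega_0\}$ has full Lebesgue measure. One mild point is that Definition~\ref{def:cm_analytic} requires analyticity for $\omega\in\Omega_0$, not for $\omega'$. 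We therefore pick any base point $\omega'+\sum y_ih_i\in\Omega_0$ in the section and translate. The resulting map $g_{\omega'}(x)\eqdef F(\omega'+\sum x_ih_i)$ is real analytic on $\RR^n$. Its zero set is then either all of $\RR^n$ (if $g_{\omega'}\equiv0$) or Lebesgue null (since the zero set of a nonzero real-analytic function is null). Conditionally on $\omega'$, then, $\PP(Z\mid\omega')\in\{0,1\}$, and it equals $1$ exactly when $g_{\omega'}\equiv0$.

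\textbf{Step 2: invariance.} Define $Z_n\eqdef\{\omega: x\mapsto F(\omega+\sum_{i\le n}x_ih_i)\text{ vanishes identically on }\RR^n\}$. By Step 1, $\PP(Z\,\triangle\,Z_n)=0$, and $Z_n$ is exactly invariant under translations by $\mf{H}^n$. Since $Z_{n+1}\subseteq Z_n$ and all $Z_n$ agree with $Z$ up to null sets, $Z_\infty\eqdef\bigcap_nZ_n$ also satisfies $\PP(Z\triangle Z_\infty)=0$, and $Z_\infty$ is invariant under every $h\in\mf{H}^\infty$. Measurability of $Z_n$ follows by restricting to rational $x$, using continuity of analytic functions on the full-measure set where analyticity holds; off that set we modify by a null set.

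\textbf{Step 3: conclusion.} Apply the Cameron--Martin zero--one law to $Z_\infty$. In the form of \cite[Theorem~2.5.2]{Bogachev1998}, invariance under a dense subspace (or a countable dense set) of $\mf{H}$ suffices, because the law $\PP$ is quasi-invariant and the shifted measures depend continuously on $h$ in total variation. This gives $\PP(Z_\infty)\in\{0,1\}$, and hence $\PP(F=0)\in\{0,1\}$.

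The main obstacle is bookkeeping of null sets in Step 1. Analyticity is only guaranteed on $\Omega_0$, so one must check that the affine slices through a.e.\ $\omega'$ meet $\Omega_0$, and that the identity-vanishing dichotomy is measurable in $\omega'$. Both follow from Fubini on the product decomposition $\omega'\oplus x$.
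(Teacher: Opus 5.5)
Your proof is correct and follows essentially the same route as the paper. Both arguments disintegrate $\PP=\gamma_n\otimes\PP^\perp$ along $h_1,\dots,h_n$, use the analytic dichotomy to identify $\{F=0\}$ up to null sets with the $\mf{H}^n$-invariant set of slices on which $F$ vanishes identically, and intersect over $n$ to obtain an $\mf{H}^\infty$-invariant set. They then extend the invariance to all of $\mf{H}$ by continuity of shifts, which the paper justifies via \cite[Theorem~2.4.8]{Bogachev1998}, and conclude with the Cameron--Martin zero--one law. Your treatment of the base point in $\Omega_0$ and of measurability is, if anything, slightly more careful than the paper's.
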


\begin{proof}
  Write $N = \{F = 0\}$ and fix $n \in \NN$. We can decompose
  an event $\omega \in \Omega$ as
  \begin{equation*}
    \omega = \sum_{i \le n} \xi_i h_i + \omega_n^\perp ,
    \qquad
    \xi_i \eqdef \int_0^T \langle \dot h_i(r), \ud W_r \rangle ,
  \end{equation*}
  which factorizes the Wiener measure as $\PP = \gamma_n \otimes \PP^\perp$,
  with $\xi \sim \gamma_n$ a standard Gaussian vector independent of the
  law $\PP^\perp$ of the remainder $\omega_n^\perp$. By Cameron--Martin analyticity,
  for $\PP^\perp$-almost every $\omega_n^\perp$ the map $ y \mapsto
  F(\sum_{i \le n} y_i h_i + \omega_n^\perp)$ is analytic on $\RR^n$. A real
  analytic function on $\RR^n$ either vanishes identically or its zero set
  has zero Lebesgue measure, so $\PP(N \mid
  \omega_n^\perp) = \mathbf 1_{A^{(n)}}$, where $A^{(n)}$ is the event that
  the map vanishes identically. Since $\PP(N) = \PP(A^{(n)})$ for every $n$,
  and the events $A^{(n)}$ decrease to
  \begin{equation*}
    A^* \eqdef \bigl\{ F(\omega + h) = 0 \ \text{for all } h \in \mf{H}^\infty \bigr\} ,
  \end{equation*}
  we obtain $\PP(N) = \PP(A^*)$. The set of $h \in \mf{H}$ for which $\mathbf
  1_{A^* + h} = \mathbf 1_{A^*}$ is almost surely closed, because the map $h
  \mapsto \mathbf 1_{A^* + h}$ is continuous from $\mf{H}$ to $L^1(\PP)$ by
  \cite[Theorem~2.4.8]{Bogachev1998}, and it contains $\mf{H}^\infty$, since
  $A^* + h = A^*$ exactly for $h \in \mf{H}^\infty$. It is therefore all of
  $\mf{H}$, and the Cameron--Martin zero--one law
  \cite[Theorem~2.5.2]{Bogachev1998} gives $\PP(N) = \PP(A^*) \in \{0,1\}$.
  This concludes the proof.
\end{proof}

\begin{proof}[Proof of Proposition~\ref{prop:psa_star_fixedtime}]
  In view of Lemma~\ref{lem:dichotomy}, applied to $D_B(\zeta_T)$ (which is
  analytic by Lemma~\ref{lem:analytic}),  and
  Lemma~\ref{lem:tensor}, the proposition follows if we can prove that
  \begin{equation*}
    \PP(D_B(\zeta_T) > 0 ) > 0
  \end{equation*}
  for any open convex $B$. This is a control problem, and by the upcoming Lemma~\ref{lem:tube} it is enough
  to exhibit a control $g \in \CM$ whose skeleton satisfies
  $D_B(\zeta^g_T) > 0$.

  To construct such a control we proceed through somewhat standard steps.
  First, we fix a $\delta \in (0, T/2)$ and steer the vorticity from $w_0$ to
  $0$ over $[0,\delta]$. Note that the initial data is only in
  $\mH$. Let $w^{\mathrm{f}}$ denote the unforced solution started from $w_0$,
  which is smooth at any positive time. On $[0, \delta/2]$ take $g = 0$, so
  the skeleton vorticity is $w^{\mathrm{f}}$. On $[\delta/2, \delta]$
  prescribe the path $\bar w_t \eqdef \psi(t)\, w^{\mathrm{f}}_t$, where
  $\psi$ is smooth with $\psi(\delta/2) = 1$ and $\psi(\delta) = 0$. The
  control this requires is
  \begin{equation}\label{eq:steer_control}
    Q^{1/2} g_t = \psi'(t)\, w^{\mathrm{f}}_t
    + \bigl(\psi(t)^2 - \psi(t)\bigr) B(w^{\mathrm{f}}_t, w^{\mathrm{f}}_t) ,
  \end{equation}
  whose right side is smooth in space, so that $g \in \CM$: the symbol
  $\sigma_k^{-1}$ of $Q^{-1/2}$ grows polynomially by
  Assumption~\ref{ass:noise}, while the Fourier coefficients of a smooth field
  decay faster than any polynomial. Write $\zeta^g_\delta$ for the scalar at
  the end of that window: it is non-zero by backward uniqueness, and it lies in
  $H^1$ by parabolic smoothing.

  {Now let us first suppose that $\zeta^g_\delta$ is not globally
    one-dimensional. Recall from the discussion preceding
    Proposition~\ref{prop:transitivity_PSA} that this says precisely that
    $\mathrm{supp}(\hat\zeta^g_\delta)$ lies on no line $\RR\ell$ with $\ell \in
    \Zstar$. In this case, we extend $g$ by zero on $[\delta,T]$, so that the skeleton vorticity
    stays at $0$ and the scalar evolves by the heat semigroup. The evolution
    $\overline{\zeta}_t = P_{t-\delta}\zeta^g_\delta$ is also not globally
  one-dimensional} for any $t \geq 0$, since Fourier modes are preserved. In
  addition, for any $t > 0 $, the function $x \mapsto \overline{\zeta}_t(x)$
  is analytic on $\TT^2$. Suppose that $D_B(\overline{\zeta}_T) =0$, then by
  Lemma~\ref{lem:tensor}, we would have that $\overline{\zeta}_T$ is
  one-dimensional on $B$, meaning that for some unit vector $v
  \in \RR^2$ we have $\langle v,  \nabla
  \overline{\zeta}_T(x) \rangle = 0$ for all $x \in B$. However, the latter is
  an analytic function on $\TT^2$. Therefore, if it vanishes on an open set it
  would vanish on the entire torus, contradicting the fact that
  $\overline{\zeta}_T$ is not globally one-dimensional. Therefore
  $D_B(\overline{\zeta}_T) = D_B(\zeta^g_T) > 0$ for any open $B$, as desired.

  It remains to treat the case in which $\zeta^g_\delta$ is
  globally one-dimensional, that is $\supp(\hat\zeta^g_\delta)
  \subseteq \RR\ell$ for some $\ell \in \Zstar$. For $k \in \Zstar$ define
  $w_k = e_k + e_{-k}$ and $u_k = K * w_k$, and fix a parameter $\ve
  \in (0,1)$. We extend $g$ over $[\delta,T]$ so that the skeleton vorticity
  becomes the fixed shear $\ve w_k$. Fix some $\chi$ smooth and non-decreasing, $\chi = 0$ near
  $\delta$ and $\chi = 1$ on $[\delta', T]$ for some $\delta' \in (\delta,
  T)$, we prescribe the path $\bar w_s \eqdef \ve\, \chi(s)\, w_k$. Shears are
  steady solutions of the Euler equations, $B(w_k, w_k) = 0$, so the control
  this requires is $Q^{1/2} g_s = \ve\bigl(\chi'(s) + \chi(s)|k|^2\bigr) w_k$,
  which indeed leads to a Cameron--Martin control $g$. Under this control the
  scalar solves
  \begin{equation*}
    \partial_s \overline{\zeta} + \ve\, \chi(s)\,u_k\cdot\nabla\overline{\zeta}
    = \Delta\overline{\zeta}, \qquad \overline{\zeta}_\delta = \zeta^g_\delta.
  \end{equation*}
  Now to conclude,
  we claim that there exist a time $t \in (\delta', T)$, an $\ve \in (0, 1)$ and two modes $m_1 \in \RR \ell, m_2 \nparallel \RR \ell$ such that
  \begin{equation} \label{eq:aim-noncoll}
    \hat{\overline{\zeta}}_t (m_1), \ \hat{\overline{\zeta}}_t(m_2) \neq 0,
  \end{equation}
  which would immediately imply that $\overline{\zeta}_t$ is not
  one-dimensional, so we could conclude as above.

  To prove~\eqref{eq:aim-noncoll} fix $t \in (\delta', T)$. For $u_k = K*(e_k + e_{-k}) = \frac{\iota k^\perp}{|k|^2}(e_k - e_{-k})$ we have
  \begin{equation*}
    u_k\cdot\nabla e_m = - \frac{\langle k^\perp, m\rangle}{|k|^2} \bigl(e_{m+k} - e_{m-k}\bigr),
  \end{equation*}
  so $u_k\cdot\nabla$ couples $m$ to $m\pm k$ with coefficient $\mp
  {\langle k^\perp, m\rangle/|k|^2}$. {Here
  $\langle k^\perp, m\rangle \neq 0$ if and only if $k\nparallel m$.} Now pick
  a mode $m_1$ such that {$\hat\zeta^g_\delta(m_1)\neq0$} (which
    is possible since {$\zeta^g_\delta\neq0$}, and necessarily
  satisfies $m_1\in\R\ell$). Choose any $k\nparallel \ell \RR$ and set
  $m_2\eqdef m_1+k$. At first order in $\ve$ the only source feeding $m_2$ is
  $m_1$, as the other potential source $m_2+k = m_1+2k$ also lies off
  $\R\ell$. Therefore, if we define
  $\Phi(\ve)\eqdef\hat{\overline{\zeta}}_t(m_2)$, then $\Phi(0)=e^{-|m_2|^2
  {(t - \delta)}}{\hat\zeta^g_\delta(m_2)}=0$. Moreover, $\Phi$ is differentiable
  at $0$ with
  \begin{equation*}
    \Phi'(0)={\frac{\langle k^\perp, m_1\rangle}{|k|^2}}\,{\hat\zeta^g_\delta(m_1)}
    \int_{\delta}^{t} {\chi(s)}\, e^{-|m_2|^2(t-s)} e^{-|m_1|^2 {(s - \delta)}}\ud s \neq 0 ,
  \end{equation*}
  {where the integral is positive because the integrand is non-negative and $\chi = 1$ on $[\delta', t]$}. Hence, there exists a $\ve_0>0$ with $\hat{\overline{\zeta}}_t(m_2)\neq0$ for all $0<\ve\leq\ve_0$. The same holds for $m_1$, where $\hat{\overline{\zeta}}_t(m_1)=e^{-|m_1|^2 {(t - \delta)}}{\hat\zeta^g_\delta(m_1)}+O(\ve)$. This completes the proof.

\end{proof}

\section{Weak irreducibility}
\label{sec:bundle_tracking_main}
In this section we prove that the Markov process $z= (w, \pi)$ is weakly
irreducible, in the sense that it reaches any neighborhood of a selected target
space with some positive probability. Let us define
\begin{equation}\label{eq:shell_spaces}
  V_1 \eqdef \Ran P_1 = \mH \cap \operatorname{span}_\CC\{e_k : |k| = 1\},
  \quad
  V_2 \eqdef \mH \cap \operatorname{span}_\CC\{e_k : |k|^2 = 2\},
\end{equation}
where $P_1$ is the orthogonal projection of $\mH$ onto the first Fourier shell. The eigenvalues of $-\Delta$ on $V_1$ and on $V_2$ are respectively
\begin{equation}\label{eq:eigenvalues}
  \lambda_{V_1} = 1, \qquad \lambda_{V_2} = 2, \qquad\text{so}\qquad \lambda_{V_2} - \lambda_{V_1} = 1 .
\end{equation}
We write $\Sph(V_1) \eqdef \Sph \cap V_1$, and for $\pi \in \Sph$ with $P_1\pi \ne 0$ we set
\begin{equation}\label{eq:shell_direction}
  \pi^\le \eqdef P_1\pi, \qquad \pi^> \eqdef Q_1\pi,
\end{equation}
where $Q_1 \eqdef 1 - P_1$. For $\ve > 0$ the low-mode cone is
\begin{equation}\label{eq:cone_def}
  \mK_\ve \eqdef \big\{\phi \in \mH \setminus\{0\} : \|Q_1\phi\| \le \ve\|P_1\phi\|\big\} ,
\end{equation}
so that a unit vector $\pi \in \mK_\ve$ satisfies $\|\pi^>\| \le \ve$. Fix the target direction
\begin{equation}\label{eq:target_point}
  \pi_\star \eqdef \frac{1}{\sqrt2\pi}\cos(x_1) \in \Sph(V_1) .
\end{equation}
Then the main result of this section is the following.

\begin{proposition}
\label{prop:accessibility} There exists a constant $R_\star > 0$ such that the
following holds. For any $z_0 \in \mX$ and $\delta_\star > 0$, there exists a
deterministic time $T_0 = T_0(z_0, \delta_\star) > 0$ (which can be chosen to be
a measurable function of $z_0 \in \mX$) such that, for every $T
\ge T_0$ and every $R \ge R_\star$,
  \begin{equation}\label{eq:accessibility_conclusion}
     \PP\left( \|z_{T} - (0,\pi_\star)\| < \delta_\star \text{ and } z_{T} \in \mC_R \right) > 0 .
  \end{equation}
\end{proposition}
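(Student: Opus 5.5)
The plan is to build an explicit deterministic control $g \in \CM$ whose skeleton $z^g_t = (w^g_t, \pi^g_t)$ ends at time $T$ close to $(0,\pi_\star)$ in the stronger topology $H^{s_\star}\times H^1$. I would then transfer this to positive probability through the support/tube lemma (Lemma~\ref{lem:tube}), exactly as in the proof of Proposition~\ref{prop:psa_star_fixedtime}. Closeness in $H^{s_\star}\times H^1$ gives both conclusions at once: $\|z_T-(0,\pi_\star)\|<\delta_\star$, and $z_T\in\mC_R$ for every $R\ge R_\star \eqdef \|\pi_\star\|_{H^1}+1$. If the tube lemma only gives closeness in $\mX$, I would upgrade it by parabolic smoothing over a final unit time window. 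On that window the skeleton vorticity is kept at $0$, and the solution map is continuous from $\mX$ at time $T-1$ into $\mX_\star$ at time $T$.

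The control is built in four stages.

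\emph{Stage 1.} Over $[0,\delta]$, steer $w$ from $w_0$ to $0$ with the cutoff control \eqref{eq:steer_control}. The scalar/fiber at time $\delta$ is then nonzero and smooth.

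\emph{Stage 2.} Make sure $P_1\pi \neq 0$. If $P_1\pi^g_\delta = 0$, turn on a small shear $\ve\chi(s)w_k$ (a steady Euler state, so the control is explicit) for a suitable $k$. By the first-order-in-$\ve$ computation used to prove \eqref{eq:aim-noncoll}, such a shear feeds some mode $m$ with $|m|=1$ from a nonzero mode. For PSA the coupling coefficient is $\langle k^\perp,m_1\rangle/|k|^2$. For LNS the analogous coefficient is $c_{\cdot,\cdot}$ from \eqref{eq:interaction_coefficients}, and one chooses $k$ so that it is nonzero. Choosing a chain of such shears if necessary, we obtain $P_1\pi\ne0$ at some time.

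\emph{Stage 3.} Set the vorticity to $0$, so that the fiber evolves under the projectivized heat flow. Since $\lambda_{V_1}=1$ is the bottom of the spectrum, $\pi$ enters the cone $\mK_\ve$ exponentially fast, with $\|\pi^>\|\lesssim e^{-t}$. Its direction converges to $\pi^\le/\|\pi^\le\|\in\Sph(V_1)$, but this direction is frozen, because the heat flow acts as a scalar on $V_1$.

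\emph{Stage 4 (the main obstacle).} Rotate the direction inside $\Sph(V_1)$ to $\pi_\star$ while keeping the high modes small. For PSA, the shear $w_k$ with $|k|=1$ does not couple $V_1$ to itself. This is where $V_2$ and the gap \eqref{eq:eigenvalues} enter. I would use vorticities supported in $V_1$ (and, for LNS, in $V_1\oplus V_2$), whose action maps $V_1$ into $V_2\oplus\{0\}\oplus\ldots$, and compose two such transfers, $V_1\to V_2\to V_1$. The first thing to try is a second-order averaging argument: apply a fast, small oscillating vorticity $\ve\,a(t/\ve^{\gamma})\,w$, chosen so that the effective generator on $V_1$ after averaging is a nontrivial rotation. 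That effective generator is the commutator-type term $P_1 L^0_{w'}Q_1(\ldots)Q_1L^0_{w}P_1$, damped by the gap $\lambda_{V_2}-\lambda_{V_1}=1$. The rotations in different planes of $V_1$ obtained this way generate $SO(V_1)$, and $V_1$ is four-dimensional (real). Controllability on $\Sph(V_1)$ then follows by a finite-dimensional Chow/Lie-rank argument, combined with Gronwall stability of the full projective equation (Lemma~\ref{lem:fibre_difference}) to absorb leakage to high modes. The leaked high-mode mass is finally removed by a last heat-flow window of length $\gtrsim\log(1/\delta_\star)$, during which the vorticity is zero. This final window preserves the direction in $V_1$ and shrinks $\|\pi^>\|$.

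The time $T_0$ is the sum of the stage lengths. These lengths depend only on quantities such as $\|w_0\|$, and the stage-2 choices can be taken from a countable family, which gives measurability in $z_0$. For $T\ge T_0$, any extra time is spent at $w=0$ in stage 3. I expect stage 4 to be the technically delicate step, in particular producing a genuine rotation rather than mere dissipation for both LNS and PSA.
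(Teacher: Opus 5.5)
Your Stages 1--3 essentially reproduce the paper's coasting, injection and heat-flow phases. For Stage 2 the paper uses Lemma~\ref{lem:coupling_nondeg}: a constant field $c e_j + \bar c e_{-j}$, with the phase $c \in \{1,\iota\}$ chosen to rule out cancellation between the two sources feeding a first-shell mode. That is cleaner than your ``chain of shears''.

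The genuine gap is Stage 4, which is the heart of the proposition. You never establish that the direction on $\Sph(V_1)$ can be steered. The averaging scheme is only sketched and its effective generators are never computed. You leave open whether they produce rotation or dissipation. The claim that they generate $SO(V_1)$ is unsupported, and Chow's theorem does not directly apply to a system with drift. The difficulty rests on a false premise. From the correct fact that a shear with $|k|=1$ does not couple $V_1$ to itself, you concluded that the fiber must make a second-order excursion $V_1 \to V_2 \to V_1$. But it is the \emph{vorticity}, not the fiber, that should live in $V_2$. Since $(1,1)+(0,-1) = (1,0)$, $(1,-1)+(0,1) = (1,0)$, and so on, a vorticity supported on $\pm(1,1), \pm(1,-1)$ couples $V_1$ to itself at first order, with coefficients of modulus $\tfrac12$. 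These modes are directly forced, because the noise is non-degenerate. The same holds for LNS, where you assert that $V_2$ vorticity maps $V_1$ out of $V_1$; in fact $P_1 L^0_w P_1 \neq 0$. For PSA, your second-order scheme would at best reproduce the bracket $[H^{(1,0)},H^{(0,1)}]$, which is a multiple of the $V_2$ shear $H^{(1,1)}$ that the control can simply apply directly.

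The paper makes this explicit in Proposition~\ref{cor:control}. Work in the coordinates $(a,b) = \mF_1\varpi$. For such $w$, the compression $P_1 L^0_w P_1$ is a skew-symmetric linear map on $V_1 \cong \CC^2$. For PSA this follows from incompressibility. For LNS it follows from the identity $P_1 B(\varpi,w) = -2P_1B(w,\varpi)$ on $V_1$, which makes the LNS compression minus the PSA one. Consequently the projective correction and the Laplacian drop out, and the reduced flow is linear. Activating only $q = \hat w(1,-1)$, with $q$ real or purely imaginary, gives the two generators $M_1, M_2 \in \mathfrak{su}(2)$ of~\eqref{eq:su2_generators}. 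By Euler angles, every element of $SU(2)$ is a product $\pm e^{t_3M_1}e^{t_2M_2}e^{t_1M_1}$, and $SU(2)$ acts transitively on the unit sphere of $\CC^2$.

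Leakage to high modes is controlled by slowing the control down. One runs $w^\ve_{\mathrm c}(t) = \ve w_{\mathrm c}(\ve t)$ for time $\ve^{-1}T_\star$. The coupling is then $O(\ve)$, while the gap $\lambda_{V_2}-\lambda_{V_1} = 1$ damps $Q_1\pi$ at rate $O(1)$. Hence $\|Q_1\pi\| \lesssim \ve$ throughout, and by Gr\"onwall $P_1\pi/\|P_1\pi\|$ tracks the reduced $SU(2)$ flow up to $O(\ve)$ (Lemma~\ref{lem:shell_approximation}). Lemma~\ref{lem:h1_tail} then supplies the $H^1$ bound needed for membership in $\mC_R$. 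Your final heat-flow window is harmless as a clean-up step, but without a first-order rotation mechanism of this kind there is nothing for it to clean up.
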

The proof follows from solving a control problem and can be found in Section~\ref{sec:proof_accessibility}. Therefore we start the section
by writing the typical Cameron--Martin control problem in our setting.

\subsection{A Stroock--Varadhan control problem}
\label{sec:tube_controls}

In this section we briefly address a classical control problem, that is used
throughout the work. In short, we prove that if we fix any Cameron--Martin
control, then with some probability our dynamics will stay close to that
controlled path. This is the classical Stroock--Varadhan support theorem
\cite{StroockVaradhan72} adapted to the infinite dimensional setting.

For any $g \in \CM$ write $w^g$ for the solution of
\eqref{eq:sns_vorticity_intro} with $\curl \xi$ replaced by $Q^{1/2}g$, and $\zeta^g$
for the scalar~\eqref{eq:linearized_abstract}  transported by it. An energy
estimate delivers immediately
  \begin{equation}\label{eq:skeleton_energy}
    \sup_{t \le T} \|w^g_t\|^2 + \int_0^T \|\nabla w^g_t\|^2 \ud t
    \ \le\ C \bigl( \|w_0\|^2 + \|g\|_{\CM}^2 \bigr) .
  \end{equation}
In this setting, we obtain the following result.
  \begin{lemma}\label{lem:tube}
Fix $T > 0$, data $w_0, \zeta_0 \in \mH, g \in \CM$, and $2 < s < \theta <
\alpha_\star - 1,\tau \in (0,T]$. Then write
    \begin{equation}\label{eq:tube_event}
      V \eqdef \sum_{k \in \Zstar} \sigma_k e_k W_k ,
      \qquad
      H \eqdef \int_0^\cdot Q^{1/2}g_r \ud r ,
      \qquad
      E \eqdef  C([0,T]; H^\theta)  .
    \end{equation}
For every $\rho > 0$ there is an $r > 0$ with $\PP(\|V - H\|_E < r) > 0$ and
    \begin{equation}\label{eq:tube}
      \bigl\{ \|V - H\|_E < r \bigr\} \ \subseteq\ \Bigl\{
        \sup_{t \le T} \bigl( \|w_t - w^g_t\|
              +  \|\zeta_t - \zeta^g_t\| \bigr)
        + \sup_{t \in [\tau,T]} \bigl( \|w_t - w^g_t\|_{H^s}+ \|\zeta_t - \zeta^g_t\|_{H^1} \bigr)
      < \rho \Bigr\} .
    \end{equation}
  \end{lemma}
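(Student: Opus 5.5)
The plan is the standard Stroock--Varadhan argument: subtract the Ornstein--Uhlenbeck part so that, for a fixed noise path, the equations become random PDEs depending continuously on the noise. Write $w = \Psi + v$, where $\Psi$ solves $\partial_t \Psi = \Delta \Psi + \curl \xi$ with $\Psi_0 = 0$. By the mild formulation, $\Psi_t = V_t + \int_0^t \Delta e^{(t-r)\Delta} V_r \ud r$, using integration by parts in time; this is the stochastic convolution written as a continuous map of $V$. Similarly write $w^g = \Psi^g + v^g$, with $\Psi^g$ given by the same continuous map applied to $H$. Then the map $V \mapsto \Psi$ is continuous from $E = C([0,T];H^\theta)$ into $C([0,T]; H^{\theta-\epsilon})$. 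Indeed, $\|\int_0^t \Delta e^{(t-r)\Delta}(V_r - V_t)\ud r\|$ needs some time regularity, so I would rather use the identity $\Psi_t = V_t - \int_0^t \Delta e^{(t-r)\Delta}\dots$. To avoid this subtlety, I will simply bound $\|\Psi - \Psi^g\|_{C([0,T];H^{\theta-2})}\lesssim \|V-H\|_E$ using $\|\Delta e^{(t-r)\Delta}\|_{H^\theta\to H^{\theta-2}}\lesssim 1$, and since $\theta>s$ this is stronger than needed whenever $\theta - 2 \ge s-1$. Otherwise I interpolate with the uniform bound. It is simplest to take $\theta$ close to $\alpha_\star-1$, which is large, so that both $\Psi$ and $\Psi^g$ lie in $C([0,T]; H^{s+1})$ with difference controlled by $\|V-H\|_E$.

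Next, the remainders $v$ and $v^g$ solve $\partial_t v = \Delta v - B(v+\Psi, v+\Psi)$ with $v_0 = w_0$, and similarly for $v^g$. An energy estimate on $\tilde v = v - v^g$, in the form of Lemma~\ref{lem:base_difference} with forcing $g$ collecting the terms $B(\Psi-\Psi^g,\cdot)$ and $B(\cdot,\Psi-\Psi^g)$, gives $\sup_t\|\tilde v_t\|^2 + \int_0^T \|\nabla \tilde v\|^2 \le C(\|w_0\|,\|g\|_{\CM}) \|\Psi - \Psi^g\|^2_{C H^{2}}$. This holds provided $\|V-H\|_E \le 1$, which keeps $\Psi$ uniformly bounded, and it uses \eqref{eq:skeleton_energy}. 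The point is that the constant in the Gronwall exponent depends only on the skeleton and on the bound on $\Psi$, so it is deterministic on the tube.

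For the higher norms on $[\tau,T]$, I would run the standard parabolic bootstrap for the difference: use the mild formulation from time $\tau/2$ and Schauder or $H^s$ smoothing estimates. Here the skeleton $v^g$ is bounded in $C([\tau/2,T];H^{s+1})$ deterministically, and on the tube $v$ satisfies the same bound by a continuity argument. This yields $\sup_{[\tau,T]}\|w-w^g\|_{H^s}\lesssim \|V-H\|_E^{\kappa}$. For the scalar, $\tilde\zeta = \zeta - \zeta^g$ solves a linear equation with source $B(w-w^g, \zeta^g)$, since $\zeta$ is advected by $w$. The energy estimate (which is linear, with $\|\zeta_0\|$ fixed), together with $\|K*(w-w^g)\|_{L^\infty}\lesssim \|w-w^g\|_{H^1}$, integrated in time, gives $\sup\|\tilde\zeta\|$ small. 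A further $H^1$ estimate on $[\tau,T]$, using smoothing from $\tau/2$, gives $\|\tilde\zeta\|_{H^1}$ small. Choosing $r$ small makes the total less than $\rho$.

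Finally, $\PP(\|V-H\|_E<r)>0$. Here $V$ is a Gaussian process in $E$ because $\sigma_k \sim |k|^{-\alpha_\star}$ and $\theta<\alpha_\star-1$. In addition, $H$ lies in the Cameron--Martin space of $V$, since $H = \int Q^{1/2}g$ with $g\in L^2$. So by Cameron--Martin the law of $V-H$ is equivalent to that of $V$, and centered Gaussian measures charge every ball around $0$ (Anderson's inequality together with full support of the topological support at the origin). I expect the main obstacle to be the bootstrap to $H^s$ on $[\tau,T]$ with constants uniform on the tube; the nonlinearity has to be handled by a continuity or stopping-time argument in which the rough initial data $w_0\in\mH$ only matter at times before $\tau$.
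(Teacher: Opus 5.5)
Your decomposition is the paper's. Its $z_t = (V_t - H_t) + \int_0^t \Delta e^{(t-r)\Delta}(V_r - H_r)\,\ud r$ is your $\Psi - \Psi^g$, and its remainder $v$ is your $v - v^g$. It is followed, as in your plan, by an energy estimate with a skeleton-only Gr\"onwall constant, smoothing on $[\tau,T]$, and Cameron--Martin for positivity.

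\textbf{The gap is in how you control $\Psi - \Psi^g$.} You drop the correct claim that $V \mapsto \Psi$ is continuous from $E$ into $C([0,T];H^{\theta-\epsilon})$. You fall back on $\|\Delta e^{\varrho\Delta}\|_{H^\theta\to H^{\theta-2}}\lesssim 1$ and then propose to take $\theta$ close to $\alpha_\star - 1$. This step fails for three reasons.
\begin{itemize}
\item $\theta$ is fixed in the statement and may be arbitrarily close to $s$, or to $2$.
\item Enlarging $\theta$ strengthens the norm of $E$ and so shrinks the event $\{\|V-H\|_E<r\}$. An inclusion proved for a larger $\theta$ therefore says nothing about the given one. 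The paper goes the other way and assumes $\theta \le s+2$, which only enlarges the event.
\item There is no ``uniform bound'' to interpolate against. On the event only $\sup_t\|V_t - H_t\|_{H^\theta}$ is controlled, and $\|\Delta e^{\varrho\Delta}\|_{H^\theta\to H^\theta}\simeq\varrho^{-1}$ is not integrable.
\end{itemize}
As written you control $\Psi-\Psi^g$ only in $H^{\theta-2}$. For $\theta<s+2$ this misses the $H^s$ part of \eqref{eq:tube}. For $\theta<4$ it does not even give the $C([0,T];H^2)$ bound on $\Psi-\Psi^g$ that your own $L^2$ energy estimate uses.

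\textbf{The repair is the estimate you first had in mind:}
\[
\|\Delta e^{\varrho\Delta}\|_{H^\theta\to H^{s}}\lesssim\varrho^{-1+(\theta-s)/2},
\]
which is integrable on $(0,T]$ precisely because $s<\theta$. Hence $\sup_{t\le T}\|\Psi_t-\Psi^g_t\|_{H^s}\lesssim\|V-H\|_E$. No time regularity of $V$ is needed: nothing has to be subtracted, and the integral converges absolutely in $H^s$ using only $\sup_r\|V_r-H_r\|_{H^\theta}$. Since $s>2$, this puts $\Psi-\Psi^g$ in $C^1$, which is what the paper uses.

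\textbf{A smaller caution on the energy step.} If you apply Lemma~\ref{lem:base_difference} literally with $w^1 = w$, the exponent $\int\|w\|\,\|w\|_{H^1}$ involves the random solution. To get a deterministic constant on the tube, substitute $w = w^g + (v-v^g) + (\Psi-\Psi^g)$ and use $\langle a, B(b,a)\rangle = 0$ to remove the quadratic term in $v - v^g$, as the paper does.
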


  \begin{proof}
That $\PP\bigl( \|V - H\|_E < r \bigr) > 0$ for any $r>0$ follows
immediately from the fact that $g$ is a Cameron--Martin shift.

For the remainder of the proof, let us  assume $\theta \le s + 2$, since this
only weakens the norm of $E$ and so enlarges the event on the left
of~\eqref{eq:tube}. Let us write $d \eqdef w - w^g$ and split $d = z + v$,
where 
    \begin{equation}\label{eq:tube_z}
      z_t \eqdef (V_t - H_t) + \int_0^t \Delta e^{(t-r)\Delta}(V_r - H_r)\ud r .
    \end{equation}
In the range $0 < \theta - s \le 2$ we have
    \begin{equation}\label{eq:tube_semigroup}
      \bigl\|\Delta e^{\varrho\Delta}\bigr\|_{H^\theta \to H^{s}}
      \lesssim \varrho^{-1 + (\theta-s)/2} ,
      \qquad \varrho \in (0, T] ,
    \end{equation}
which is integrable on $[0, T]$ because $s < \theta$ and so
    \begin{equation}\label{eq:tube_zbound}
      \sup_{t \le T} \|z_t\|_{H^{s}} \lesssim \|V - H\|_E.
    \end{equation}
Now $v$ solves
    \begin{equation}\label{eq:tube_v}
      \partial_t v = \Delta v - B(d, w) - B(w^g, d).
    \end{equation}
Substituting $w = w^g + d$ and $d = v + z$ in~\eqref{eq:tube_v} and using
$\langle v, B(a, v)\rangle$ we  obtain an energy estimate that is independent of
$w$ and using~\eqref{eq:skeleton_energy} and that $z \in H^s \subseteq C^1$
(since $s >2$), and using Gr\"onwall, we obtain
    \begin{equation*}
      \sup_{t \le T} \|v_t\|^2 + \int_0^T \|\nabla v_t\|^2 \ud t
      \ \le\ C(w^g) \|V - H\|_E^2 .
    \end{equation*}
This proves the first estimate in~\eqref{eq:tube}. The other terms are handled
similarly, using that at a later time $\tau > 0$ the solutions have become smooth.
  \end{proof}

\subsection{The reduced first shell system}\label{sec:shell_reduced}
We start our work toward the main result of this section, by proving that the
dynamics reduced to the first Fourier shell is irreducible. To complete the full
proof of the result, we will then show that with some probability the full
dynamics will stay close to the one of the first shell, at least after some time.

More precisely, in this section we consider the evolution of the reduced processes
\begin{equation}\label{eq:shell-reduced}
  \partial_t \varpi = P_1 G_w \varpi - \langle P_1 G_w \varpi ,\varpi \rangle \varpi ,
  \qquad \varpi_0 \in \mathbb{S}(V_1) .
\end{equation}
An element $\zeta \in V_1$ is determined by the two complex Fourier coefficients $\hat\zeta(1,0)$ and $\hat\zeta(0,1)$. We therefore introduce the map $\mF_1$
  \begin{equation*}
    \mF_1 \colon V_1 \to \CC^2,
    \qquad \mF_1 \zeta \eqdef 2\sqrt2\pi\big(\hat\zeta(1,0), \hat\zeta(0,1)\big),
  \end{equation*}
which in particular maps $\mF_1 \pi_\star = (1,0) $. Rewrite \eqref{eq:shell-reduced} in terms of the Fourier coefficients of $w$. We define
\begin{equation}\label{eq:shell_controls}
  p \eqdef \hat{w} (1,1), \qquad q \eqdef \hat{w} (1,-1) .
\end{equation}

\begin{proposition}\label{cor:control}
  The following two hold with $T_\star = 8 \pi$:
  \begin{enumerate}[label=(\roman*)]
    \item Let $\varpi$ be as in \eqref{eq:shell-reduced}, and set $(a, b) = \mF_1
\varpi$. Then \eqref{eq:shell-reduced} for $\varpi$ reduces to
      \begin{equation}\label{eq:reduced_projective}
        \text{(PSA)}\quad
        \left\{
        \begin{aligned}
          \dot a &= - \frac12 q b + \frac12 p \bar b,\\
          \dot b &= \frac12 \bar q a - \frac12 p \bar a,
        \end{aligned}
        \right.
        \qquad
        \text{(LNS)}\quad
        \left\{
        \begin{aligned}
          \dot a &= \frac12 q b - \frac12 p \bar b,\\
          \dot b &= - \frac12 \bar q a + \frac12 p \bar a .
        \end{aligned}
        \right.
      \end{equation}
depending on whether the evolution is governed by \eqref{eq:two_operators_intro_PS} or \eqref{eq:two_operators_intro_NS}.
    \item With $N_{p,q}$ the generator of that system,
defined in~\eqref{eq:reduced_undamped} below, for any $\delta \in (0,1)$ and any $(a_0, b_0), (a_{\star}, b_{\star}) \in \{ \psi \in \CC^2  \colon  |\psi|=1\}$ there exists a control $q = q_\delta \in C^{\infty}_c ((0, T_\star); \CC)$ (with compact support, and $\|q\|_{L^\infty} \le 2$) such that the solution to
      \begin{equation*}
        \partial_t (a, b) = N_{0, q} \cdot (a, b),
      \end{equation*}
with initial condition $(a_0, b_0)$, satisfies $|(a_{T_\star}, b_{T_\star}) - (a_{\star}, b_{\star})| \leq \delta$.
  \end{enumerate}
\end{proposition}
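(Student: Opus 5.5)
The plan is to treat the two parts separately. Part (i) is an explicit Fourier computation on the first shell. Part (ii) identifies the controlled system as a right-invariant system on $SU(2)$ and steers it with explicit piecewise-constant controls, which are then smoothed.

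\emph{Part (i).} For $\zeta \in V_1$, I write $P_1 G_w \zeta = -\zeta - P_1 L^0_w \zeta$, using that $-\Delta = 1$ on $V_1$. The term $-\zeta$ is cancelled by the projection $-\langle P_1 G_w\varpi,\varpi\rangle\varpi$ in \eqref{eq:shell-reduced}, since $\|\varpi\| = 1$.
\begin{itemize}
\item \emph{Which modes of $w$ act.} By \eqref{eq:psa_generators} (PSA) and \eqref{eq:Hk_symbol} (LNS), the mode $\hat w(k)$ maps $e_j$ with $|j|=1$ into $V_1$ only if $|j+k| = 1$. This forces $k \in \{\pm(1,1), \pm(1,-1), \pm(2,0), \pm(0,2)\}$.
\item \emph{The axial modes drop out.} For $k = \pm(2,0), \pm(0,2)$ and $j = -k/2$, the coefficient $\langle j^\perp, k\rangle$ vanishes. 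So only $p = \hat w(1,1)$, $q = \hat w(1,-1)$ and their conjugates $\hat w(-1,-1) = \bar p$, $\hat w(-1,1) = \bar q$ contribute.
\item \emph{The quadratic term vanishes.} For PSA, $L^0_w$ is antisymmetric because $u$ is divergence free, so $\langle \varpi, P_1 L^0_w \varpi\rangle = 0$. For LNS I will check directly on the four relevant $2\times 2$ blocks that the restricted operator is again antisymmetric. I expect this, because the coefficients $h_k(j)$ with $|j| = |j+k| = 1$ satisfy $h_k(j) = -h_{-k}(j+k)$.
\item \emph{Reading off the ODE.} I then evaluate the coefficients $c^{\mathrm{ps}}_{j,k}$ (respectively $h_k(j)$) on the pairs $(1,0)\leftrightarrow(0,\mp1)$ and $(0,1)\leftrightarrow(-1,0)$. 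I will use the reality relations $\hat\zeta(-j) = \overline{\hat\zeta(j)}$ to express everything through $a$, $b$, $\bar a$, $\bar b$, and the normalization of $\mF_1$ to fix the factor $\tfrac12$.
\item \emph{The LNS sign.} For the relevant pairs, $|k|^{-2} - |j|^{-2} = \tfrac12 - 1 = -\tfrac12$, while the PSA weight is $|k|^{-2} = \tfrac12$. So the LNS system is exactly minus the PSA one, as claimed in \eqref{eq:reduced_projective}.
\end{itemize}
This part is routine bookkeeping; the only risk is sign errors.

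\emph{Part (ii): the controlled system.} With $p=0$ the PSA system reads $\dot\psi = N_{0,q}\psi$, where $\psi = (a,b)$ and
\[
N_{0,q} = \tfrac12\begin{pmatrix} 0 & -q \\ \bar q & 0\end{pmatrix}.
\]
This matrix is traceless and anti-Hermitian, so it lies in $\mathfrak{su}(2)$. The LNS case is identical after replacing $q$ by $-q$.

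Write $X \eqdef N_{0,2}$ and $Y \eqdef N_{0,2\iota}$. These correspond to $q = 2$ and $q = 2\iota$, both admissible under $\|q\|_{L^\infty} \le 2$. Both are $\mathfrak{su}(2)$ elements of rotation speed one, so $e^{tX}$ and $e^{tY}$ are $2\pi$-periodic on $S^3 \subset \CC^2$ and generate rotations about two orthogonal axes.

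\emph{Part (ii): steering.}
\begin{enumerate}
\item Since $SU(2)$ acts transitively on the unit sphere of $\CC^2$, pick $U \in SU(2)$ with $U(a_0,b_0) = (a_\star,b_\star)$.
\item By the Euler-angle decomposition, $U = e^{t_3 X} e^{t_2 Y} e^{t_1 X}$ with $t_i \in [0, 2\pi)$; periodicity lets me take the times nonnegative.
\item Concatenate the constant controls $q = 2$, $2\iota$, $2$ on consecutive intervals of lengths $t_1, t_2, t_3$. Pad with $q = 0$ (identity flow) on a short initial buffer and on the remainder of $[0,T_\star]$, so that the total active time $\le 6\pi$ fits in $T_\star = 8\pi$ with room for the buffers.
\item This piecewise-constant $q$ reaches $(a_\star,b_\star)$ exactly. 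Mollify it into $q_\delta \in C_c^\infty((0,T_\star);\CC)$ with $|q_\delta| \le 2$ and $q_\delta \to q$ in $L^1$.
\item The flow of a linear ODE with bounded coefficients depends continuously on the coefficients in $L^1$, by Gr\"onwall. So the endpoint of the smoothed flow is within $\delta$ of $(a_\star,b_\star)$ for a fine enough mollification.
\end{enumerate}

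\emph{Main obstacle.} The main obstacle is really part (i): getting the exact coefficients and signs right, and confirming that the projection term and the axial modes vanish in the LNS case. Part (ii) is standard once the $\mathfrak{su}(2)$ structure is recognized.
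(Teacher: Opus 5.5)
Your proposal is correct and follows essentially the same route as the paper. For part (i) you do the first-shell Fourier computation showing that the axial modes and the quadratic term drop out, with LNS equal to minus PSA on $V_1$. You get this from $h_k(j)=-h_{-k}(j+k)$ when $|j|=|j+k|=1$, whereas the paper uses $P_1B(\varpi,w)=-2P_1B(w,\varpi)$. For part (ii) you use transitivity of $SU(2)$ on the unit sphere, an Euler-angle factorization, and mollification of the piecewise-constant control, as the paper does.

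The only cosmetic difference is the size of the controls. Yours have modulus $2$, so the exponentials are $2\pi$-periodic in $SU(2)$ and you can factor directly there within active time $6\pi$. The paper uses modulus-$1$ controls, factors in $SO(3)$ via $\mathrm{Ad}$, and fixes the sign with $e^{2\pi M_1}=-I$, which uses the full horizon $8\pi$.
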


\begin{proof}
\emph{Part~(i).} We prove the reduction by evaluating the first shell Fourier coefficients of the operators involved. The Laplacian does not contribute to the projective dynamics on a single shell, so we can rewrite the dynamics of $\varpi$ as:
\begin{equation*}
  \partial_t \varpi = C_w\varpi - \langle C_w\varpi, \varpi\rangle\varpi, \qquad C_w \eqdef -P_1 L^0_w P_1 .
\end{equation*}
Moreover, the quantity $\langle C_w\varpi, \varpi\rangle$ vanishes for $\varpi \in V_1$. For the passive scalar~\eqref{eq:two_operators_intro_PS} this is immediate since $K*w$ is divergence free. For linearized Navier--Stokes~\eqref{eq:two_operators_intro_NS} the second term of $L^0_w = B(w,\cdot) + B(\cdot,w)$ is not antisymmetric. In this case, the vanishing rests instead on the fact that on the first shell
\begin{equation}\label{eq:V1_swap}
  P_1 B(\varpi, w) = -2 P_1 B(w,\varpi) ,
  \qquad \varpi \in V_1 ,
\end{equation}
which we verify by the Fourier computation carried out below. Therefore, the dynamics further reduces to the linear equation
\begin{equation*}
  \partial_t \varpi = C_w\varpi .
\end{equation*}
It therefore suffices to compute $C_w$ on $V_1$.

From its definition, $B(e_j,e_k) = -\langle j^\perp,k\rangle|j|^{-2}e_{j+k}$. Moreover $C_w$ depends on $w$ solely through
\begin{equation*}
  p = \hat w(1,1), \qquad q = \hat w(1,-1), \qquad \hat w(-1,-1) = \bar p, \qquad \hat w(-1,1) = \bar q ,
\end{equation*}
as introduced in~\eqref{eq:shell_controls}. If we fix the output direction $n = j+k = (1,0)$ the two surviving pairs $(j,k)$ are $\big((1,1),(0,-1)\big)$ and $\big((1,-1),(0,1)\big)$, with $\langle j^\perp,k\rangle = 1$ and $-1$ and $|j|^2 = 2$, so
\begin{equation*}
  \widehat{B(w,\varpi)}(1,0)
    = -\frac12 p\hat{\varpi}(0,-1) + \frac12 q\hat{\varpi}(0,1)
    = \frac{1}{2\sqrt2\pi}\Big(\frac12 qb - \frac12 p\bar b\Big) .
\end{equation*}
The component $n = (0,1)$ is analogous, with the pairs $\big((-1,1),(1,0)\big)$ and $\big((1,1),(-1,0)\big)$. Multiplying by $2\sqrt2\pi$ we obtain, in the coordinates $(a,b)$,
\begin{equation}\label{eq:PB_components}
  \mF_1\big(P_1 B(w,\varpi)\big)
    = \Big(\frac12 qb - \frac12 p\bar b, \frac12 p\bar a - \frac12 \bar qa\Big) .
\end{equation}
Since for the passive scalar~\eqref{eq:two_operators_intro_PS} we have $L^0_w = B(w,\cdot)$ and hence $C_w = -P_1 B(w,\cdot)P_1$, we obtain from~\eqref{eq:PB_components}
\begin{equation*}
  \dot a = -\frac12 qb + \frac12 p\bar b,
  \qquad
  \dot b = \frac12 \bar qa - \frac12 p\bar a ,
\end{equation*}
as desired. For the linearized Navier--Stokes operator~\eqref{eq:two_operators_intro_NS} we have $L^0_w = B(w,\cdot) + B(\cdot,w)$. Repeating the computation with the two arguments exchanged, the same pairs contribute to $P_1 B(\varpi, w)$, now with the first shell mode in the denominator ($|j|^2 = 1$ in place of $2$) and with $\langle k^\perp, j\rangle = -\langle j^\perp, k\rangle$, which is precisely~\eqref{eq:V1_swap}. Therefore $P_1 L^0_w P_1 = P_1 B(w,\cdot)P_1 - 2P_1 B(w,\cdot)P_1 = -P_1 B(w,\cdot)P_1$, so that $C_w = P_1 B(w,\cdot)P_1$ is the negative of the passive scalar coupling, which proves part~(i).

We now define the generator $N_{p,q} \colon \CC^2 \to \CC^2$ by
\begin{equation}\label{eq:reduced_undamped}
  N_{p,q} (a,b) \eqdef
  \varsigma\frac12\begin{pmatrix} q b - p\bar b \\[2pt] -\bar q a + p \bar a\end{pmatrix},
  \qquad \varsigma = +1 \ \text{(LNS)}, \qquad \varsigma = -1 \ \text{(PSA)} .
\end{equation}
Since $\varsigma^2 = 1$ and $N_{p,q}$ is linear in $(p,q)$,
replacing the control $(p,q)$ by $(\varsigma p, \varsigma q)$ carries one model
into the other, so everything below is proved once for both. If we identify $\CC^2 \simeq \RR^4 $ with
a four dimensional real space with $a = a_1 + \iota a_2, b = b_1 + \iota b_2$,
then $N_{p,q}$ is a skew symmetric matrix. Indeed, if we consider the real inner
product \[ \langle (a,b),(a',b')\rangle = \Re(\bar a a' + \bar b b') = a_1 a_1'
+ a_2 a_2' + b_1 b_1' + b_2 b_2' , \] and write $N_{p,q}$ in its real
coordinates: \[ N_{p,q} = \varsigma\frac12\begin{pmatrix} 0 & 0 & q_1 - p_1 &
-(q_2 + p_2) \\
0 & 0 & q_2 - p_2 & q_1 + p_1 \\
p_1 - q_1 & p_2 - q_2 & 0 & 0 \\
q_2 + p_2 & -(q_1 + p_1) & 0 & 0
  \end{pmatrix} ,
\] then we find that $\langle (a,b), N_{p,q} (a, b) \rangle = 0$, or equivalently $N_{p,q}^\top = -N_{p,q}$.

{\emph{Part~(ii).}} We first steer the reduced system through a
piecewise constant control. Set $ v_1 = \varsigma (0, 1)$ and $v_2 = \varsigma
(0, \iota)$ and define
  \begin{equation}\label{eq:su2_generators}
    M_1 \eqdef N_{v_1} =  \frac12\begin{pmatrix} 0 & 1 \\ -1 & 0\end{pmatrix} ,
    \qquad
    M_2 \eqdef N_{v_2} =  \frac{\iota}{2}\begin{pmatrix} 0 & 1 \\ 1 & 0\end{pmatrix} .
  \end{equation}
Then for any $\psi_0, \psi_\star \in \{ \psi \in \CC^2  \colon  | \psi| =1 \}$ there exist $t_1,t_2,t_3 \ge 0$ with $t_1 + t_2 + t_3 \le T_\star$ such that
  \begin{equation}\label{eq:euler_decomposition}
    e^{t_3 M_1} e^{t_2 M_2} e^{t_1 M_1} \psi_0 = \psi_\star .
  \end{equation}

Indeed, $M_1, M_2$ are skew-Hermitian and traceless, and therefore lie in the real Lie algebra $\mf{su}(2)$ of the special unitary Lie group $SU(2) = \{ X  \colon X^*X = I, \ \mathrm{det}(X) = 1\}$ where $X$ runs over all $2 \times 2$ complex-valued matrices. In particular, $e^{t M_i} \in SU(2)$. The group $SU(2)$ is $3$-dimensional, indeed it can be represented as follows:
\begin{equation*}
  SU(2) = \left\{ \begin{pmatrix} \bar a_0 & \bar b_0 \\ -b_0 & a_0\end{pmatrix},  \ \ \  a_0, b_0 \in \CC, \ \ \ |a_0|^2+|b_0|^2=1 \right\}.
\end{equation*}
Moreover, $SU(2)$ acts transitively on $\Sph^3 = \{\psi \in \CC^2 \colon |\psi|=1\}$. For $\psi_0 = (a_0, b_0) \in \Sph^3$ the matrix
\begin{equation*}
  U_0 = \begin{pmatrix} \bar a_0 & \bar b_0 \\ -b_0 & a_0\end{pmatrix}\in SU(2)
\end{equation*}
satisfies $U_0\psi_0 = (1,0)$. Building $U_\star$ in the same way from a second element $\psi_\star$, the element $U \eqdef U_\star^{-1} U_0 \in SU(2)$ satisfies
\begin{equation*}
  U\psi_0 = U_\star^{-1}(1,0) = \psi_\star .
\end{equation*}
The question therefore becomes whether any element $U \in SU(2)$ admits a representation
\begin{equation}\label{eq:claim_su2}
  U= e^{t_3 M_1} e^{t_2 M_2} e^{t_1 M_1}, \qquad \text{ for some } \qquad t_1 + t_2 + t_3 \le T_\star.
\end{equation}
The proof of this claim reduces the problem to rotations of three
dimensional real space through the two-to-one homomorphism $\mathrm{Ad} \colon
SU(2) \to SO(3)$, $\mathrm{Ad}_g X = g X g^{-1}$, whose kernel is $\{\pm I\}$,
so that $SO(3) \cong SU(2)/\{\pm I\}$ (cf. \cite{Hall2015}*{Proposition~1.19}
and the surrounding discussion). It identifies $\mf{su}(2)$ with $(\RR^3,
\times)$ through $\Phi(\vec n) \eqdef -\frac{\iota}{2} \vec n \cdot \vec\sigma$,
with $\vec\sigma$ the Pauli matrices, and carries $\mathrm{Ad}_{e^{tM}}$ to the
rotation about the axis $\Phi^{-1}(M)$ of angle $t$. Our two generators are $M_i
= \Phi(\vec n_i)$ with $\vec n_1 = (0,-1,0)$ and $\vec n_2 = (-1,0,0)$. Now we use the Euler angle representation of
rotations of $\RR^3$ (in which any rotation can be represented as the
composition of three rotations around two orthogonal axes). We find that for any
$U \in SO(3)$ there exist $t_1, t_2, t_3 \in [0, 2\pi]$ such that
$$\mathrm{Ad}_{e^{t_3M_1}}\mathrm{Ad}_{e^{t_2M_2}}\mathrm{Ad}_{e^{t_1M_1}} =
\mathrm{Ad}_U .$$ Since $SO(3) \cong SU(2)/\{\pm I\}$, we find that necessarily 
\begin{equation*}
   e^{t_3 M_1} e^{t_2 M_2} e^{t_1 M_1} \in \{\pm U\}.
\end{equation*}
Since $e^{2 \pi M_1} = -I$, the claim follows up to allowing $t_3 \in [0, 4\pi]$.
Finally we pass to smooth compactly supported controls by mollifying.
\end{proof}

Now we use the controllability on the shell to obtain the weak irreducibility of the full projective process. In this subsection we assume the initial condition lives almost entirely in the first shell, meaning $\pi_0 \in \mK_{\ve} $ for some $\ve \in (0,1)$. Then we combine the spectral gap of the Laplacian between the first and second shells with the control problem solved above. Indeed, we take the control $q$ from Corollary~\ref{cor:control} and define the vorticity control $w_{\mathrm{c}}$ through its Fourier transform on $[0, T_\star]$:
\begin{equation*}
  \mF w_{\mathrm{c}} (t, k) \eqdef \begin{cases}
    0 & \text{ if } k \not\in \{ (1, -1), (-1, 1) \} ,\\
    q_t & \text{ if } k = (1, -1), \\
    \overline{q}_t & \text{ if } k = (-1, 1) .
  \end{cases}
\end{equation*}
This control is supported on the conjugate pair $\pm(1,-1)$, and for any such
$w$ we have
\begin{equation}\label{eq:shell_operator_bound}
  \|B(w, \cdot) P_1\|_{\mH \to \mH} + \|P_1 L^0_w\|_{\mH \to \mH}
  + \|B(\cdot, w)\|_{\mH \to \mH} \ \lesssim\ \|w\|_{C^1} .
\end{equation} Then define
the slow-down
\begin{equation}\label{eq:wespc}
  w^\ve_{\mathrm{c}}(t, x) = \ve w_{\mathrm{c}}(\ve t, x) .
\end{equation}
With this definition, by construction the solution to
\begin{equation}\label{eq:pi-eps-proj}
  \partial_t \pi = P_1 G_{w^\ve_\mathrm{c}} \pi -  \langle P_1 G_{w^\ve_\mathrm{c}} \pi, \pi  \rangle \pi, \qquad \pi_0 \in V_1, \qquad \mF_1 \pi_0 = (a_0, b_0),
\end{equation}
satisfies
\begin{equation*}
  \| \pi_{\ve^{-1} T_\star} - \mF_1^{-1} (a_{\star}, b_{\star}) \| \leq  \delta.
\end{equation*}
This is because the evolution above is independent of the Laplacian and simply a time rescaling of the evolution in Corollary~\ref{cor:control}.

It remains to show that the evolution of the true solution
\begin{equation} \label{eq:pi-eps}
  \partial_t \pi =  G_{w^\ve_\mathrm{c}} \pi -  \langle  G_{w^\ve_\mathrm{c}} \pi, \pi  \rangle \pi,
\end{equation}
stays close to \eqref{eq:pi-eps-proj}, if the initial condition is close to $\mF_1^{-1}(a_0, b_0)$
\begin{equation}\label{eq:ic-assu}
  \pi_0 \in \mK_\ve , \qquad \| P_1 \pi_0 -
  \mF_1^{-1} (a_0, b_0) \| \leq \ve.
\end{equation}

\begin{lemma}\label{lem:shell_approximation}
Consider $\ve \in (0, 1)$ and $\pi_0 \in \Sph, (a_0, b_0),
(a_\star, b_\star)\in\{ \psi \in \CC^2  \colon  |\psi|=1\}$ satisfying
\eqref{eq:ic-assu}. Then consider the control $w^\ve_\mathrm{c}$ from
\eqref{eq:wespc}, and the solution $(a_{t}, b_{t})$ from
Corollary~\ref{cor:control} and $\pi$ to \eqref{eq:pi-eps}. Then it holds that
uniformly over $\ve \in (0, 1)$:
  \begin{equation*}
    \| \pi_{\ve^{-1} T_\star} - \mF_1^{-1} (a_{T_\star}, b_{T_\star}) \| \lesssim \ve  . 
  \end{equation*}
\end{lemma}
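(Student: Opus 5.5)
The plan is to drop the projective normalization and work with the linear flow, which avoids the nonlinear term $\langle G_w\pi,\pi\rangle\pi$ entirely. Let $\zeta$ solve $\partial_t\zeta = G_{w^\ve_{\mathrm c}}\zeta$ with $\zeta_0=\pi_0$. Then $\pi_t=\zeta_t/\|\zeta_t\|$ solves \eqref{eq:pi-eps}.

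Since $-\Delta=1$ on $V_1$, I remove this common decay by setting $x_t\eqdef e^{t}P_1\zeta_t$ and $y_t\eqdef e^{t}Q_1\zeta_t$. These satisfy
\begin{equation*}
  \partial_t x = C_w x - P_1 L^0_w y ,
  \qquad
  \partial_t y = (\Delta+1) y - Q_1 L^0_w (x+y) ,
\end{equation*}
with $w=w^\ve_{\mathrm c}$ and $C_w=-P_1L^0_wP_1$ as in the proof of Proposition~\ref{cor:control}. By that proof, $C_w$ is skew on $V_1$, and in the coordinates $\mF_1$ it is exactly $N_{0,\ve q(\ve\cdot)}$. Hence its propagator $U_{s,t}$ is unitary, and $\mF_1 U_{0,\ve^{-1}T_\star}\mF_1^{-1}(a_0,b_0)=(a_{T_\star},b_{T_\star})$. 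The two inputs I will use are the spectral gap $(\Delta+1)\le -1$ on $\Ran Q_1$, coming from \eqref{eq:eigenvalues}, and the smallness $\sup_t\|w^\ve_{\mathrm c}(t)\|_{C^1}\lesssim\ve$, which holds because $\|q\|_{L^\infty}\le 2$ and only the modes $\pm(1,-1)$ are active. Together with \eqref{eq:shell_operator_bound}, the latter gives that all couplings are $O(\ve)$ in operator norm.

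\emph{Step 1: a priori bounds by bootstrap on $[0,\ve^{-1}T_\star]$.} For $y$ I take the energy estimate. The transport part drops out because $\langle y, Q_1B(w,y)\rangle=\langle y,B(w,y)\rangle=0$, since $y\in\Ran Q_1$ and $K*w$ is divergence free. The stretching part $B(y,w)$ (LNS only) and the coupling terms $Q_1L^0_w x$ are bounded by $C\ve$ via \eqref{eq:shell_operator_bound}. This yields
\begin{equation*}
  \tfrac{\ud}{\ud t}\|y\| \le -(1-C\ve)\|y\| + C\ve\|x\| .
\end{equation*}
For $x$, skewness of $C_w$ gives $\bigl|\tfrac{\ud}{\ud t}\|x\|\bigr|\le C\ve\|y\|$.

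Set $\tau\eqdef\sup\{t\le \ve^{-1}T_\star:\|x_s\|\le 2 \text{ for } s\le t\}$. Using $\|y_0\|\le\ve$ from \eqref{eq:ic-assu}, the first inequality gives $\|y_t\|\le e^{-t/2}\ve+C\ve\lesssim\ve$ on $[0,\tau]$. The second then gives $\bigl|\|x_t\|-\|x_0\|\bigr|\le C\ve^2 t\le C\ve T_\star$. For $\ve$ small this closes the bootstrap, so $\tau=\ve^{-1}T_\star$. For $\ve$ bounded away from zero the claim is trivial, since all quantities involved are bounded.

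\emph{Step 2: comparison with the reduced flow via Duhamel.} I write
\begin{equation*}
  x_t-U_{0,t}x_0=-\int_0^tU_{s,t}P_1L^0_w y_s\ud s .
\end{equation*}
By unitarity and Step 1 its norm is at most $\int_0^{\ve^{-1}T_\star}C\ve\cdot C\ve\ud s\lesssim\ve$. Moreover $\|x_0-\mF_1^{-1}(a_0,b_0)\|\le\ve$ by \eqref{eq:ic-assu}, so, since $U$ is unitary, $\|x_{\ve^{-1}T_\star}-\mF_1^{-1}(a_{T_\star},b_{T_\star})\|\lesssim\ve$.

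\emph{Step 3: renormalize.} Since $\pi=(x+y)/\|x+y\|$, and at the final time $\|y\|\lesssim\ve$ while $\mF_1^{-1}(a_{T_\star},b_{T_\star})$ has unit norm, the elementary bound
\begin{equation*}
  \Bigl\|\frac{u}{\|u\|}-v\Bigr\|\le 2\|u-v\| ,\qquad \|v\|=1 ,
\end{equation*}
gives the claim.

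The main obstacle is Step 1. The horizon has length $\ve^{-1}$, so any naive Gr\"onwall estimate loses a factor $e^{C}$ at best, and one has to check that the cross term $P_1L^0_wy$ is genuinely of order $\ve^2$. That in turn requires both the spectral gap, to keep $\|y\|\lesssim\ve$ uniformly in time, and the exact cancellation of the $\langle y,B(w,y)\rangle$ term. The second point is where I would be most careful in the LNS case, verifying that $B(y,w)$ is controlled using only $\|w\|_{C^1}$, as in \eqref{eq:shell_operator_bound}.
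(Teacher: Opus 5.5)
Your proof is correct. It rests on the same mechanism as the paper's proof, organized differently.

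\emph{The paper's route.} The paper rescales time, $\varpi(t)=\pi(\ve^{-1}t)$, so that the control is $O(1)$ and the gap acts at rate $\ve^{-1}$. It then works directly with the projective equation. Its tail estimate for $\pi^>=Q_1\varpi$ carries the nonlinear factor $(1-\|\pi^>\|^2)$, which is why it needs the continuity proviso $\|\pi^>\|\le\frac12$. The shell direction is handled through $\psi=\pi^{\le}/\rho$ and the tangential projection $\Pi^\perp$. This projection removes the radial terms and leaves $\partial_t\psi=-P_1L^0_w\psi+r$ with $\|r\|\lesssim\ve$ on $[0,T_\star]$, which is closed by Gr\"onwall.

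\emph{Your route.} You undo the projectivization. Multiplying the linear cocycle by $e^t$ cancels exactly the decay rate $\lambda_{V_1}=1$ of the first shell, so the problem becomes a linear system. In it the gap acts at unit rate and every coupling is $O(\ve)$. The comparison with the reduced dynamics is then a one-line Duhamel estimate with the orthogonal propagator of $C_w$: a term of size $O(\ve)\cdot O(\ve)$ integrated over a window of length $\ve^{-1}T_\star$. Normalization enters only once, at the final time, through $\|u/\|u\|-v\|\le 2\|u-v\|$.

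\emph{What each buys.} Both proofs use the same three inputs: the gap $\lambda_{V_2}-\lambda_{V_1}=1$, the cancellation $\langle y,B(w,y)\rangle=0$ together with \eqref{eq:shell_operator_bound}, and the skewness of $P_1L^0_wP_1$ on $V_1$. Your linear formulation avoids the radial/tangential bookkeeping and the nonlinear factor in the tail estimate. The price is a bootstrap on $\|x\|$ and the observation that $\pi=\zeta/\|\zeta\|$ solves \eqref{eq:pi-eps}. The paper's formulation stays on the sphere throughout, so its tail identity \eqref{eq:tail_energy} can be reused verbatim in Lemma~\ref{lem:h1_tail}. In your setup the corresponding $H^1$ bound follows similarly, by keeping the gradient term in the $y$-energy estimate and converting back to rescaled time.
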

\begin{proof}
Let us write $w=w_{\mathrm{c}}$ for simplicity and define $\varpi(t, x) = \pi(\ve^{-1} t, x)$. Then
  \begin{equation*}
    \partial_t \varpi = \ve^{-1} \Delta \varpi - L^0_{w} \varpi - \ve^{-1 } \langle \varpi, \Delta \varpi \rangle \varpi + \langle \varpi, L^0_{w} \varpi \rangle \varpi , 
  \end{equation*}
where $L^0_w$ is defined in \eqref{eq:two_operators_intro_NS} and \eqref{eq:two_operators_intro_PS} depending on whether we work with PSA or LNS. We start with $\varpi_0 \in \mK_{\ve}$, and our first objective is to prove that $\varpi_t$ remains close to the cone $\mK_{\ve}$ for all $t \in [0, T_\star]$. Define $\pi^{>} \eqdef Q_1 \varpi$. Then we find the following energy estimate for some $C>0$:
  \begin{equation}\label{eq:tail_ineq}
    \frac12\frac{\ud}{\ud t}\|\pi^>\|^2
    \le - \ve^{-1} (\lambda_{V_2} - \lambda_{V_1})\big(1 - \|\pi^>\|^2\big)\|\pi^>\|^2
    + C \|w\|_{C^1}\|\pi^>\| .
  \end{equation}
To see this,
  \begin{equation}\label{eq:tail_energy}
    \frac12\frac{\ud}{\ud t}\|\pi^>\|^2
      = \ve^{-1} \left( \langle \pi^>, \Delta\pi^>\rangle -  \langle \varpi, \Delta \varpi \rangle \| \pi^> \|^2 \right)
      - \langle \pi^>, L^0_w\varpi\rangle
      + \langle \varpi, L^0_{w} \varpi \rangle \|\pi^>\|^2 .
  \end{equation}
Now for the first term we use
  \begin{equation*}
    -  \langle \varpi, \Delta \varpi \rangle \leq \lambda_{V_1} \| P_1 \varpi \|^2 +  \| \nabla \pi^>\|^2 \leq \lambda_{V_1} (1 - \| \pi^> \|^2)+  \| \nabla \pi^>\|^2 ,
  \end{equation*} 
which bounds the bracket in~\eqref{eq:tail_energy} by $-\|\nabla\pi^>\|^2 (1 - \|\pi^>\|^2) + \lambda_{V_1}(1 - \|\pi^>\|^2)\|\pi^>\|^2$ and so explains the first term of~\eqref{eq:tail_ineq}. For the last one, split $\varpi = \pi^\le + \pi^>$ with $\pi^\le = P_1\varpi$. The transport pairings $\langle \pi^>, B(w,\pi^>)\rangle$ and $\langle \varpi, B(w,\varpi)\rangle$ vanish because $K*w$ is divergence free, and what is left is bounded through~\eqref{eq:shell_operator_bound},
\begin{equation*}
  \big|\langle \pi^>, L^0_w\varpi\rangle\big|
  + \big|\langle \varpi, L^0_w\varpi\rangle\big| \|\pi^>\|^2
  \ \lesssim\ \|w\|_{C^1}\|\pi^>\| ,
\end{equation*}
using $\|\varpi\| = 1$ and $\|\pi^>\| \le 1$. In the (PSA) case $L^0_w = B(w,\cdot)$ and the bound on $B(w,\cdot)P_1$ alone suffices.

We now use~\eqref{eq:tail_ineq}. The control obeys $\|w_{\mathrm
c}\|_{L^\infty([0,T_\star]; C^1)} \lesssim \|q\|_{L^\infty} \lesssim 1$. As long as $\|\pi^>\|
\le \frac12$ we therefore have $1 - \|\pi^>\|^2 \ge \frac34$, so that~\eqref{eq:tail_ineq}
reads
  \begin{equation*}
    \frac{\ud}{\ud t}\|\pi^>\| \le - \frac{3}{4\ve}\|\pi^>\| + C
  \end{equation*}
 It follows that for small enough $\ve$ we have:
  \begin{equation}\label{eq:tail_sup}
    \sup_{0 \le t \le T_\star} \|\pi^>_t\| \ \lesssim \ve .
  \end{equation}

We now show that the dynamics of $\varpi$ stay close to those of the reduced system. We write the evolution of $\pi^{\le}$ as
\begin{align}
  \partial_t\pi^{\le} &= \ve^{-1}\big(-\lambda_{V_1} - \alpha\big)\pi^{\le}
    - P_1 L^0_w\varpi + \beta\pi^{\le} , \label{eq:pileq_coupled}
\end{align}
with
\begin{equation*}
  \alpha \eqdef \langle\varpi, \Delta\varpi\rangle, \qquad
  \beta \eqdef \langle\varpi, L^0_w\varpi\rangle.
\end{equation*}
Now define $\rho \eqdef \|\pi^{\le}\| = \sqrt{1 - \|\pi^>\|^2}$, so that
\begin{equation}\label{eq:rho_close}
  \rho \ge \frac12 \qquad\text{and}\qquad 0 \le 1 - \rho \le \|\pi^>\|^2 \lesssim \ve^2 .
\end{equation}
Then we introduce the shell direction and the tangential projection
\begin{equation*}
  \psi \eqdef \frac{\pi^{\le}}{\rho} \in \Sph(V_1) , \qquad
  \Pi^\perp \eqdef \mathrm{Id}_{V_1} - \psi\otimes\psi, 
\end{equation*}
so that differentiating $\psi = \pi^{\le}/\rho$ and using $\dot\rho = \langle\psi, \partial_t\pi^{\le}\rangle$, the radial part cancels and
\begin{equation*}
  \partial_t\psi = \frac1\rho\Pi^\perp\partial_t\pi^{\le} .
\end{equation*}
In~\eqref{eq:pileq_coupled} the two terms proportional to $\pi^{\le}$, that is $\ve^{-1}(-\lambda_{V_1}-\alpha)\pi^{\le}$ and $\beta\pi^{\le}$, are parallel to $\psi$ and therefore they are killed by the projection $\Pi^\perp$.

The dynamics therefore reduces to
\begin{equation}\label{eq:psihat_eq}
  \partial_t\psi = -\Pi^\perp P_1 L^0_w\psi + r ,
  \qquad r \eqdef -\frac1\rho\Pi^\perp P_1 L^0_w\pi^> .
\end{equation}
By~\eqref{eq:reduced_undamped} and the discussion after it, the operator $P_1 L^0_w$ is skew-symmetric on $V_1$ (no matter whether we consider \eqref{eq:two_operators_intro_NS} or \eqref{eq:two_operators_intro_PS}), so $\langle\psi, P_1 L^0_w\psi\rangle = 0$. In particular, we have reduced the dynamics to
\begin{equation*}
    \partial_t\psi = - P_1 L^0_w\psi +  r ,
\end{equation*}
which is a perturbation of~\eqref{eq:shell-reduced}. By~\eqref{eq:shell_operator_bound}, $\rho \ge \frac12$ and~\eqref{eq:tail_sup}, the remainder obeys $\sup_{0\le t\le T_\star}\|r_t\| \lesssim \|w\|_{C^1}\|\pi^>\| \lesssim \ve$. The result now follows from Corollary~\ref{cor:control} and an application of Gr\"onwall.
\end{proof}
While the previous result guarantees that at the final time $\pi^>$ is small in
$L^2$, we need a slightly stronger version, which guarantees also smallness in
$H^1$. This is the content of the following lemma.

\begin{lemma}\label{lem:h1_tail}
In the setting of Lemma~\ref{lem:shell_approximation}
  \begin{equation*}
    \int_0^{T_\star} \|\nabla \pi^>_t\|^2 \ud t \lesssim \ve^2
    \qquad\text{and}\qquad
    \|\pi^>_{T_\star}\|_{H^1} \lesssim \ve .
  \end{equation*}
\end{lemma}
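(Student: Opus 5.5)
We work in the rescaled variables of the proof of Lemma~\ref{lem:shell_approximation}, with $\varpi(t) = \pi(\ve^{-1}t)$ and $\pi^> = Q_1\varpi$, so that $t \in [0,T_\star]$. The plan is to keep the dissipative term in the energy identity~\eqref{eq:tail_energy} instead of discarding it, and then to run a second, $H^1$-level estimate for $\pi^>$ that is damped at rate $\ve^{-1}$.

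\emph{Step 1: the integrated bound.} In~\eqref{eq:tail_energy}, the bracket multiplying $\ve^{-1}$ is bounded by
\[
-\|\nabla\pi^>\|^2(1-\|\pi^>\|^2) + \lambda_{V_1}(1-\|\pi^>\|^2)\|\pi^>\|^2 .
\]
Since $\|\pi^>\|\lesssim\ve$ by~\eqref{eq:tail_sup}, this is at most $-\tfrac12\|\nabla\pi^>\|^2+\|\pi^>\|^2$. The transport terms are bounded by $C\|w\|_{C^1}\|\pi^>\|\lesssim \ve$. Integrating over $[0,T_\star]$ and multiplying by $\ve$ gives
\[
\int_0^{T_\star}\|\nabla\pi^>\|^2\ud t \lesssim \|\pi^>_0\|^2 + \int_0^{T_\star}\|\pi^>\|^2 \ud t + \ve^2 \lesssim \ve^2 ,
\]
where we use $\pi_0\in\mK_\ve$ and~\eqref{eq:tail_sup}.

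\emph{Step 2: the pointwise $H^1$ bound at $T_\star$.} We test the equation for $\pi^>$ against $-\Delta\pi^>$. On the range of $Q_1$ we have $\|\Delta\pi^>\|^2\ge 2\|\nabla\pi^>\|^2$, and $\alpha=\langle\varpi,\Delta\varpi\rangle\ge -1-\|\nabla\pi^>\|^2$. This yields
\[
\tfrac12\tfrac{\ud}{\ud t}\|\nabla\pi^>\|^2 \le -\ve^{-1}\bigl(\|\Delta\pi^>\|^2-(1+\|\nabla\pi^>\|^2)\|\nabla\pi^>\|^2\bigr) + |\langle \Delta\pi^>, Q_1L^0_w\varpi\rangle| + |\beta|\,\|\nabla\pi^>\|^2 .
\]
The forcing term is controlled by $\|\Delta\pi^>\|\,\|L^0_w\varpi\|$, and $\|L^0_w\varpi\|\lesssim \|w\|_{C^1}(1+\|\nabla\pi^>\|)$ because $w$ is a smooth, finitely supported control. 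Young's inequality then absorbs half of $\ve^{-1}\|\Delta\pi^>\|^2$ at the cost of a source of size $\ve(1+\|\nabla\pi^>\|^2)$. As long as $\|\nabla\pi^>\|^2\le\tfrac14$, we obtain
\[
\tfrac{\ud}{\ud t}\|\nabla\pi^>\|^2\le -\ve^{-1}\|\nabla\pi^>\|^2 + C\ve .
\]

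\emph{Main obstacle: the initial datum.} The inequality above would give $\|\nabla\pi^>_{T_\star}\|^2\lesssim \ve^2 + e^{-T_\star/\ve}\|\nabla\pi^>_0\|^2$, but $\pi_0$ is only controlled in $L^2$, so $\|\nabla\pi^>_0\|$ need not be small (or even finite), and the bootstrap hypothesis $\|\nabla\pi^>\|^2\le\tfrac14$ may fail at $t=0$. To handle this I would first use Step 1 to select, by Chebyshev, a time $t_0\in[0,T_\star/2]$ with $\|\nabla\pi^>_{t_0}\|^2\lesssim\ve^2$. I would then run the damped estimate from $t_0$ to $T_\star$; the bootstrap closes by continuity, since its threshold is never approached for small $\ve$. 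This gives $\|\nabla\pi^>_{T_\star}\|^2\lesssim\ve^2$, and for $\ve$ bounded away from $0$ the bound holds trivially after adjusting the constant.
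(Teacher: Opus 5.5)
Your proposal is correct and is essentially the paper's argument. Step 1 is exactly its integrated tail energy bound. The paper likewise selects a good time $t_0\in[T_\star-1,T_\star]$ with $\|\nabla\pi^>_{t_0}\|\lesssim\ve$ and then propagates to $T_\star$ by an unspecified ``Schauder theory'' step; your damped $H^1$ estimate makes that step explicit, using the gap $\lambda_{V_2}-\lambda_{V_1}$ to survive the $O(\ve^{-1})$ physical time between $t_0$ and $T_\star$.

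One arithmetic slip: absorbing \emph{half} of $\ve^{-1}\|\Delta\pi^>\|^2$ via Young consumes the entire gap. Only $\tfrac12\|\Delta\pi^>\|^2\ge\|\nabla\pi^>\|^2$ is then left against $(1+\|\nabla\pi^>\|^2)\|\nabla\pi^>\|^2$. Absorb a smaller fraction, say $\tfrac18$, which still yields $\tfrac{\ud}{\ud t}\|\nabla\pi^>\|^2\le -c\,\ve^{-1}\|\nabla\pi^>\|^2 + C\ve$ while $\|\nabla\pi^>\|^2\le\tfrac14$.
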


\begin{proof}
We integrate the tail energy identity~\eqref{eq:tail_energy},
this time keeping the gradient term that the proof
discards. By~\eqref{eq:tail_sup} we have
$\sup_{t \le T_\star} \|\pi^>_t\| \le C\ve
\le \tfrac12$, so that
$1 - \|\pi^>\|^2 \ge \tfrac12$. Hence the first term of~\eqref{eq:tail_energy}
is bounded by
$-\frac12 \ve^{-1} \|\nabla\pi^>\|^2 +
\ve^{-1}\lambda_{V_1} \|\pi^>\|^2$,
and we obtain
  \begin{equation*}
    \frac12 \|\pi^>_{T_\star}\|^2
    + \frac{1}{2\ve} \int_0^{T_\star} \|\nabla\pi^>\|^2 \ud t
    \le \frac12 \|\pi^>_0\|^2
    + \frac{\lambda_{V_1}}{\ve} \int_0^{T_\star} \|\pi^>\|^2 \ud t
    + C \int_0^{T_\star} \|w\|_{C^1} \|\pi^>\| \ud t
    \lesssim \ve ,
  \end{equation*}
where we used $\|\pi^>_0\| \le \ve$, $\|\pi^>_t\| \le C\ve$ and
$\|w\|_{C^1} \le C_0$. Multiplying by $2\ve$ we obtain
$\int_0^{T_\star}
\|\nabla\pi^>\|^2 \ud t \lesssim \ve^2$. Moreover, by the
mean value theorem for integrals there is a $t_0 \in [T_\star - 1, T_\star]$
with $\|\nabla\pi^>_{t_0}\|^2 \lesssim \ve^2$.
This proves the first estimate. The second one follows from Schauder theory
applied to~\eqref{eq:proj-spde}.
\end{proof}

We conclude this subsection with a result guaranteeing that the dynamics enters $\mK_\ve$ with positive probability from every initial condition.

\begin{lemma}\label{lem:coupling_nondeg}
Fix any $ \zeta \in \mH \setminus \{0 \}$. Then there exists a $j \in \Zstar$ such that
  \begin{equation}\label{eq:coupling_nonzero}
    P_1 L^0_{\bar w}\zeta \ne 0 ,
  \end{equation}
  where $\bar w = c e_j + \bar c e_{-j}$ for some $c \in \{1, \iota\}$.
\end{lemma}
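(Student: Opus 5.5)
Fix $\zeta \neq 0$ and argue by contradiction: assume $P_1 L^0_{\bar w}\zeta = 0$ for every $j \in \Zstar$ and both choices $c \in \{1,\iota\}$. Since $w \mapsto L^0_w$ is real-linear, taking the combinations $\bar w = e_j + e_{-j}$ and $\bar w = \iota e_j - \iota e_{-j}$ gives $P_1 H^j \zeta = 0$ for every $j \in \Zstar$ (complexified). In Fourier coordinates $P_1 H^j\zeta$ is supported on the four modes $n$ with $|n|=1$. Its coefficient at $n$ is $h(n-j, j)\,\hat\zeta(n-j)$, where $h(m,j)$ is the interaction coefficient: $c^{\mathrm{ps}}_{m,j} = \langle m^\perp, j\rangle/|j|^2$ for PSA and $h_j(m)$ from~\eqref{eq:Hk_symbol} for LNS. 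The vanishing therefore says
\begin{equation*}
  h(n-j,j)\,\hat\zeta(n-j) = 0 \qquad \forall j \in \Zstar,\ |n| = 1 .
\end{equation*}

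The key step is to show that every $m \in \Zstar$ equals $n - j$ for some unit $n$ and some $j = n - m \neq 0$ with $h(m, n-m) \neq 0$. Then $\hat\zeta(m) = 0$ for all $m$, which contradicts $\zeta \neq 0$.

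For PSA, $\langle m^\perp, n-m\rangle = \langle m^\perp, n\rangle$. This is non-zero as soon as $n \nparallel m$, and among $(1,0),(0,1)$ at least one is not parallel to $m$. We must also keep $j = n-m \neq 0$; this is automatic, since $n \nparallel m$ excludes $m = n$.

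For LNS, $h_{n-m}(m) = \langle m^\perp, n\rangle\bigl(|n-m|^{-2} - |m|^{-2}\bigr)$, so we additionally need $|n-m| \neq |m|$, that is $1 - 2\, n\cdot m \neq 0$. This always holds because $n\cdot m \in \ZZ$. Hence any $n$ of unit length not parallel to $m$ works.

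The main obstacle is only bookkeeping:
\begin{itemize}
  \item verifying that the real-linear combinations over $c \in \{1,\iota\}$ really isolate each $H^j$ separately;
  \item treating the $n \parallel m$ case via the other unit vector.
\end{itemize}
Both are elementary. If one insists on a real $\bar w$ with the sign conventions of~\eqref{eq:forcing}, we note that $e_j + e_{-j}$ and $\iota(e_j - e_{-j})$ are real, and these are the two choices of $c$.
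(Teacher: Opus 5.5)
Your proof is correct and is essentially the paper's argument, phrased as a contrapositive. The paper takes a mode $k$ with $\hat\zeta(k)\neq 0$, arranges $k\nparallel(1,0)$, and sets $j=(1,0)-k$. It then checks that the same interaction coefficient you compute is nonzero in both models, using for LNS that $|(1,0)-k|=|k|$ would force $k_1=\tfrac12$, which is your $n\cdot m\in\ZZ$ observation. Finally it uses the two choices $c\in\{1,\iota\}$ to pass from $e_j$ to a real field exactly as you do.
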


\begin{proof}
Write $\zeta = \sum_{k \in \Zstar} \zeta_k e_k $. Since $\zeta \ne 0$ there
exists a $k$ with $\zeta_k \neq 0$. As $k \ne 0$, it must be that either $k_1
\ne 0$ or $k_2 \ne 0$ and without loss of generality suppose that $k_2 \ne 0$.
Then set $j = (1,0)-k$, which lies in
$\Zstar$ because $k_2 \ne 0$. For the fixed mode $j$, the only Fourier mode of
$\zeta$ that $L^0_{e_j}$ carries into $e_{(1,0)}$ is $k = (1,0)-j$, so $\langle
e_{(1,0)}, L^0_{e_j}\zeta\rangle = \zeta_k \gamma$, where, using $\langle
j^\perp, k\rangle = j_2 k_1 - j_1 k_2 = -k_2$ for this $j$, we have
  \begin{equation}\label{eq:gamma_value}
    \gamma =
    \begin{cases}
      \dfrac{k_2}{|j|^2}, & \quad \text{(PSA)},\\[2ex]
      k_2\Big(\dfrac{1}{|j|^2} - \dfrac{1}{|k|^2}\Big), & \quad\text{(LNS)},
    \end{cases}
  \end{equation}
the (LNS) value adding the contribution of $B(\zeta,e_j)$, whose coefficient is $-\langle k^\perp, j\rangle|k|^{-2} = -k_2|k|^{-2}$. In the (PSA) case $\gamma \ne 0$ since $k_2 \ne 0$. In the (LNS) case $\gamma = 0$ would force $|(1,0)-k|^2 = |k|^2$, hence $k_1 = \frac12$, which is impossible for $k \in \Zstar$.

Since $(1,0)$ lies in the first shell, $P_1 L^0_{e_j}\zeta \ne
0$. It remains to pass from $e_j$ to a real field $\bar w$, where we have $c$
free to choose in $\{1, \iota\}$. Since $L^0$ is linear in the field, the two
choices produce $P_1 L^0_{e_j}\zeta + P_1 L^0_{e_{-j}}\zeta$ and $\iota\bigl(P_1
L^0_{e_j}\zeta - P_1 L^0_{e_{-j}}\zeta\bigr)$. If both of these were to vanish,
then so would $P_1 L^0_{e_j}\zeta$. Hence one of the two choices
satisfies~\eqref{eq:coupling_nonzero}.
\end{proof}

\subsection{Proof of the accessibility proposition}\label{sec:proof_accessibility}

\begin{proof}[Proof of Proposition~\ref{prop:accessibility}]

  The proof follows from solving a control problem. We will fix the control
  through a series of time intervals. Let us set
  \begin{equation*}
    0 < T_c < T_1 < T_2 < T_3 < \infty .
  \end{equation*}
Then we are given an initial condition $(w_0, \pi_0) \in \mX$ and a parameter $\delta_\star \in (0,1)$. We design a vorticity path $w^{\mathrm{fin}} \in C([0, T_3]; \mH)$ with $w^{\mathrm{fin}}(0, \cdot) = w_0(\cdot)$, realizable through a control $g \in \CM$ in the sense of Section~\ref{sec:tube_controls}, such that the solution to
  \begin{equation*}
    \partial_t \pi = G_{w^{\mathrm{fin}}} \pi - \langle G_{w^\mathrm{fin}} \pi, \pi \rangle \pi, \qquad \pi(0, \cdot) = \pi_0(\cdot)
  \end{equation*}
satisfies
  \begin{equation}\label{eq:skeleton_targets}
    \| \pi_{T_3} - \pi_\star \| \leq \frac{\delta_\star}{2} , \qquad
    \|w^{\mathrm{fin}}_{T_3}\|_{H^{s_\star}} \le \frac{\delta_\star}{2} , \qquad
    \|\pi_{T_3}\|_{H^1} \le 2 .
  \end{equation}
The tube estimate of Lemma~\ref{lem:tube} (which applies also to $\zeta$
replaced by $\pi$) then gives the conclusion.

We first construct a $w^{\mathrm{fin}}$ that is not smooth, then we will mollify
it. Fix an $\ve \in (0, \delta_\star)$, to be chosen below, write
$w^{\mathrm f}$ for the unforced solution started from $w_0$, and set
\begin{equation*}
  w^{\mathrm{fin}}(t) \eqdef
  \begin{cases}
    w^{\mathrm f}_t , \quad& t \in [0, T_{\mathrm c}] ,\\[2pt]
    \bar w_{\mathrm{inj}} , & t \in (T_{\mathrm c}, T_1] ,\\[2pt]
    0 , & t \in (T_1, T_2] ,\\[2pt]
    w^\ve_{\mathrm c}(t - T_2) , & t \in (T_2, T_3] ,
  \end{cases}
\end{equation*}
with the latter three pieces defined as follows.

On $[T_{\mathrm c}, T_1]$ we apply Lemma~\ref{lem:coupling_nondeg} to $\zeta = \pi_{T_{\mathrm c}}$, the deterministic state at the end of the coasting phase, which is non-zero and smooth. The lemma produces a mode $j \in \Zstar$ and a real vorticity field $\bar w_{\mathrm{inj}}$ supported on $\{\pm j\}$ with $P_1 L^0_{\bar w_{\mathrm{inj}}}\pi_{T_{\mathrm c}} \ne 0$. Then for $T_1 - T_{\mathrm c}$ small enough the first shell carries nonzero mass,
\begin{equation*}
  \|P_1\pi_{T_1}\| \ge c_1 (z_0) > 0 .
\end{equation*}
Indeed, if $P_1\pi_{T_{\mathrm c}} \ne 0$, then this holds by continuity. If instead $P_1\pi_{T_{\mathrm c}} = 0$, then $\frac{\ud}{\ud t}\big|_{t=T_{\mathrm c}} P_1\pi_t = -P_1 L^0_{\bar w_{\mathrm{inj}}}\pi_{T_{\mathrm c}} \ne 0$, because the terms $P_1\Delta\pi_{T_{\mathrm c}}$ and $\langle \pi_{T_{\mathrm c}}, G_{\bar w_{\mathrm{inj}}}\pi_{T_{\mathrm c}}\rangle P_1\pi_{T_{\mathrm c}}$ both vanish. The constant $c_1$ is deterministic because the coasting phase is the unforced flow from the given initial data.

On $[T_1, T_2]$ we switch the control off, and $\pi$ solves the projective heat flow $\partial_t\pi = \Delta\pi - \langle\pi, \Delta\pi\rangle\pi$. The two shells are eigenspaces of $\Delta$, so the ratio of tail to shell mass contracts at the spectral-gap rate,
\begin{equation*}
  \frac{\|Q_1\pi_t\|}{\|P_1\pi_t\|}
    \le e^{-(\lambda_{V_2} - \lambda_{V_1})(t - T_1)}\frac{\|Q_1\pi_{T_1}\|}{\|P_1\pi_{T_1}\|}
    \le \frac{1}{c_1}e^{-(\lambda_{V_2} - \lambda_{V_1})(t - T_1)} .
\end{equation*}
Taking $T_2 - T_1$ of order $\log(1/\ve)$ brings this ratio below $\ve$, so that $\pi_{T_2} \in \mK_\ve$.

Finally, on $[T_2, T_3]$, $\pi_{T_2} \in \mK_\ve$ has shell direction $\hat\psi = P_1\pi_{T_2}/\|P_1\pi_{T_2}\|$ with coordinates $(a_0, b_0) = \mF_1\hat\psi$, and the pair $(\pi_{T_2}, (a_0, b_0))$ satisfies the hypothesis~\eqref{eq:ic-assu}. Let $q$ be the control of Corollary~\ref{cor:control} steering $(a_0, b_0)$ to $\mF_1\pi_\star = (1,0)$ in time $T_\star$, and let $w^\ve_{\mathrm c}$ be the associated slowed-down control~\eqref{eq:wespc}, so that $T_3 - T_2 = \ve^{-1}T_\star$. The approximation estimate (Lemma~\ref{lem:shell_approximation}, with the accuracy of Corollary~\ref{cor:control} set to $\delta = \ve$) then gives
\begin{equation}\label{e:estimate-controlled}
  \|\pi_{T_3} - \pi_\star\| = \big\|\pi_{T_3} - \mF_1^{-1}(1,0)\big\|
    \lesssim \ve + \delta \lesssim \ve .
\end{equation}
The control has size $\|w^\ve_{\mathrm c}\| \lesssim \ve$ throughout this
phase. It is supported on finitely many Fourier modes, so in
particular $\|w^{\mathrm{fin}}_{T_3}\|_{H^{s_\star}} \lesssim \ve$, and
$\|\pi_{T_3}\|_{H^1} \le 2$ by Lemma~\ref{lem:h1_tail}. If we choose $\ve$ small
enough in terms of $\delta_\star$ and $c_1$, then the phases together
give~\eqref{eq:skeleton_targets} with $\delta_\star/4$ in place of
$\delta_\star/2$ in the first two bounds. 

The path just built is discontinuous across the times $T_{\mathrm c}, T_1, T_2$.
However we can make it smooth in time, by mollifying at a scale $\kappa <
T_{\mathrm c}/4$. If $\kappa$ is sufficiently small, then we still have~\eqref{e:estimate-controlled}.
Denoting by $w^{\mathrm{fin}}$ also this smoothened path, we find that the
correct control $g$ for the equation is given by
\begin{equation}\label{eq:accessibility_control}
  Q^{1/2} g_t \eqdef \partial_t w^{\mathrm{fin}}_t - \Delta w^{\mathrm{fin}}_t
  + B(w^{\mathrm{fin}}_t, w^{\mathrm{fin}}_t) .
\end{equation}
Since the right side is smooth in space we have $g \in \CM$. To extend the conclusion from $T_3$ to every horizon $T \ge T_3$ we extend the control by zero on $[T_3, T]$.

\end{proof}

\section{Proof of the main theorem via asymptotic coupling}
\label{sec:proof_of_main_theorem}

The aim of this section is to complete the proof of
Theorem~\ref{thm:main_result_intro}. We do so by constructing an asymptotic coupling for
the joint process $z_t = (w_t, \pi_t)$. The existence of such a coupling,
together with the absolute continuity of the law of the coupled process to the
original one, is a
sufficient condition for uniqueness of the stationary measure, by a celebrated criterion
due to Hairer, Mattingly, and Scheutzow~\cite{HMS11}.

The criterion is stated for a Markov operator $P$ on a Polish space
$\mathcal{X}$ (which will correspond to our space $\mX$) with metric $d$, with $P^{[\infty]}\mu$ the law on the path space
$\mathcal{X}^\infty = \mathcal{X}^{\NN}$ of the chain started at $z_0 \sim \mu$
and run with $z_{n+1} \sim P(z_n, \cdot)$. We apply this to our continuous-time
process by taking $P = P^2$, the time-two step of the Markov semigroup
\[
  P^t \varphi(z) \eqdef \EE_z[\varphi(z_t)]
\]
of the joint process. Since every stationary measure of $z_t$ is invariant for $P^2$, uniqueness of $P^2$-invariant measures yields uniqueness for the continuous time process as well.

\begin{theorem}[\cite{HMS11}*{Theorem~1.1}]
  \label{thm:hms_coupling_criterion} Let $P$ be a Markov operator on a Polish space $\mathcal{X}$ admitting two ergodic invariant measures $\mu_1$ and $\mu_2$, and let $m_i = P^{[\infty]}\mu_i$ for $i = 1,2$ be the associated path-space measures on $\mathcal{X}^\infty$. Then $\mu_1 = \mu_2$ whenever there exists a measure $\Gamma$ on the product path space $\mathcal{X}^\infty \times \mathcal{X}^\infty$ such that:
  \begin{enumerate}
    \item The two marginals of $\Gamma$ are absolutely continuous with respect
      to $m_1$ and $m_2$, respectively.
    \item $\Gamma$ assigns positive measure to the diagonal
      \begin{equation}\label{eq:diagonal_at_infinity}
        D = \Big\{ (z, z') \in \mathcal{X}^\infty \times
        \mathcal{X}^\infty : \lim_{n\to\infty} d\big(z_n, z_n' \big) = 0 \Big\} ,
        \qquad \Gamma(D) > 0 .
      \end{equation}
  \end{enumerate}
\end{theorem}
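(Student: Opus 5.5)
The plan is to argue by contradiction. We use Birkhoff's ergodic theorem on path space to separate $m_1$ and $m_2$ by their time averages. We then observe that time averages of a Lipschitz observable cannot distinguish two paths that are asymptotically close.

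First, suppose $\mu_1 \neq \mu_2$. Since $\mathcal{X}$ is Polish, Borel probability measures are determined by their integrals against bounded Lipschitz functions. We may therefore fix a bounded $d$-Lipschitz $\varphi \colon \mathcal{X} \to \RR$ with $\int \varphi \ud\mu_1 \neq \int \varphi \ud\mu_2$.

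Next, let $\theta$ be the left shift on $\mathcal{X}^\infty$. Because $\mu_i$ is $P$-invariant, $m_i = P^{[\infty]}\mu_i$ is $\theta$-invariant. The standard fact we need is that ergodicity of $\mu_i$ for $P$ is equivalent to ergodicity of $m_i$ for $\theta$. This is the one step that takes some care. I would prove it by showing that every bounded $\theta$-invariant function $F$ on path space satisfies $F(z) = h(z_0)$ $m_i$-a.s., where $h(x) \eqdef \EE_x F$. Indeed, the Markov property gives $h(z_n) = \EE[F \circ \theta^n \mid \mathcal{F}_n] = \EE[F \mid \mathcal{F}_n]$, and this converges to $F$ almost surely by martingale convergence. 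It follows that $Ph = h$ $\mu_i$-a.e., so $h$ is constant $\mu_i$-a.e. by ergodicity of $\mu_i$, and hence $F$ is constant.

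With ergodicity in hand, Birkhoff's theorem applied to $z \mapsto \varphi(z_0)$ shows that the sets
\begin{equation*}
  A_i \eqdef \Big\{ z \in \mathcal{X}^\infty : \lim_{n\to\infty} \frac1n \sum_{k<n} \varphi(z_k) = \int \varphi \ud\mu_i \Big\}
\end{equation*}
satisfy $m_i(A_i) = 1$.

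Finally, condition (1) gives $\Gamma\big((A_1 \times \mathcal{X}^\infty)^c\big) = 0$ and $\Gamma\big((\mathcal{X}^\infty \times A_2)^c\big) = 0$. Hence $\Gamma\big(D \cap (A_1 \times A_2)\big) = \Gamma(D) > 0$, and this set is nonempty. Pick $(z,z')$ in it. Since $|\varphi(z_k) - \varphi(z_k')| \le \mathrm{Lip}(\varphi)\, d(z_k, z_k') \to 0$, the Cesàro averages of $\varphi(z_k)$ and of $\varphi(z_k')$ have the same limit. But these limits are $\int \varphi \ud\mu_1$ and $\int \varphi \ud\mu_2$ respectively, which contradicts the choice of $\varphi$.

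The only genuinely nontrivial ingredient is the ergodicity transfer from $\mu_i$ to $m_i$. Everything else is a direct application of the marginal absolute continuity in condition (1) and the positivity of $\Gamma(D)$ in condition (2).
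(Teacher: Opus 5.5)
Your proposal is correct and is essentially the standard argument behind the cited result of Hairer, Mattingly and Scheutzow, which the paper quotes without reproving: apply Birkhoff's theorem on path space to a bounded Lipschitz $\varphi$, use the marginal absolute continuity to get $\Gamma\bigl(D\cap(A_1\times A_2)\bigr)=\Gamma(D)>0$, and compare Ces\`aro averages along a single pair in that set. Your martingale proof that $m_i$ is shift-ergodic is also fine; one small point is that martingale convergence alone does not give the identity $F=h(z_0)$. That identity follows once $h$ is known to be $\mu_i$-a.e.\ constant and $z_n\sim\mu_i$, so it is the conclusion of that step rather than its starting point.
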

Throughout this section, we view the space $\mX = \mH \times \Sph$ as a closed
subset of the separable Hilbert space with the induced metric.
Two distinct stationary measures of the flow are two distinct $P^2$-invariant
measures, whose ergodic decompositions cannot coincide, so they produce two
distinct $P^2$-ergodic measures. Hence we may assume without loss of generality
that $\mu_1$ and $\mu_2$ are $P^2$-ergodic.

The construction of $\Gamma$ in this section rests on two ingredients: the
construction of the coupling that leads to synchronization
(Proposition~\ref{prop:asymptotic_contraction}) and a change of measure that
guarantees the coupled process stays absolutely continuous to the original one
(Proposition~\ref{prop:absolute_continuity}). Subsection~\ref{sec:main_proof}
combines these to obtain the proof of the main result, and Subsection~\ref{sec:fk_consequences}
proves Corollaries~\ref{cor:FK_formula} and~\ref{cor:deterministic_datum}.

\subsection{Construction of the coupling and asymptotic synchronization}\label{sec:coupling_meeting}

Recall that we  denote by $\Phi_{s,t}(\cdot, W) \colon
\mathcal{X} \to \mathcal{X}$, for $0 \le s \le t$, the flow which
maps $z$ to the solution $z_t = (w_t, \pi_t)$
to~\eqref{eq:sns_vorticity_intro} and~\eqref{eq:proj-spde} started at time $s$
in $z$, and with $w$ driven by $W$. We abbreviate
$\Phi_t = \Phi_{0,t}$.

We will construct a coupled system $(z_t, z_t')$.
The first copy $z_t = \Phi_t(z_0, W)$ is driven by a cylindrical Wiener process
$W$, while the second, $z'_t = \Phi_t(z'_0, W')$, by a process $W'$ whose law
will be absolutely continuous with respect to that of the cylindrical Wiener
process. The driver $W'$ is constructed as follows:
\begin{itemize}
  \item It is chosen independently of $W$ up to a ``meeting'' time. Because we
  can force the two solutions to stay close, the time at which we check whether
  the processes have met will be chosen deterministic.
  \item After this time, $W'$ is $W$ shifted by a control
    \begin{equation*}
      W'_{t} - W'_{\sigma_\star} = W_t - W_{\sigma_\star}
      + \int_{\sigma_\star}^t h_s \ud s , \qquad \forall t \geq \sigma_\star,
    \end{equation*}
    where $h$ is constructed by inverting the Malliavin matrix, following the
    standard approach of asymptotic strong Feller proofs, only in our localized setting.
\end{itemize}
We start by proving that two processes started with independent noise have
positive probability of meeting. 
\begin{lemma}\label{lem:meeting_time_new}
  Let $R_\star$ be the constant of Proposition~\ref{prop:accessibility} and let
  $\mu_1, \mu_2$ be $P^2$-invariant probability measures on $\mX$. Let $z_t =
  \Phi_t(z_0, W)$ and $z'_t = \Phi_t(z'_0, W'')$ be two solutions starting from
  $z_0 \sim \mu_1$ and $z'_0 \sim \mu_2$, with $z_0, z'_0, W, W''$ all mutually
  independent. Then for every $\delta_\star \in (0,1)$ and every $R \ge R_\star$
  there exists a $T_\star(\delta_\star, R) >0$ deterministic such that
  \begin{equation*}
    \PP(  \| z_{T_\star} - z_{T_\star}' \| \le \delta_\star ,  \ \text{ and } \ z_{T_\star},z_{T_\star}' \in \mC_R  ) > 0 .
  \end{equation*}
\end{lemma}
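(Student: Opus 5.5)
The plan is to reduce the claim to Proposition~\ref{prop:accessibility}, applied separately to the two independent copies, and then use independence to multiply the probabilities.

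Fix $\delta_\star \in (0,1)$ and $R \ge R_\star$, and apply Proposition~\ref{prop:accessibility} with accuracy $\delta_\star/2$. For each $z_0 \in \mX$ this gives a deterministic time $T_0(z_0, \delta_\star/2)$, measurable in $z_0$, such that for every $T \ge T_0(z_0,\delta_\star/2)$
\begin{equation*}
  p_T(z_0) \eqdef \PP\bigl( \|\Phi_T(z_0,W) - (0,\pi_\star)\| < \delta_\star/2 ,\ \Phi_T(z_0,W) \in \mC_R \bigr) > 0 .
\end{equation*}
The time $T_0$ depends on the initial datum, so a single deterministic horizon has to be selected. Since $T_0(\cdot)$ is a finite measurable function, the sets $\{z : T_0(z,\delta_\star/2) \le n\}$ increase to $\mX$. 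Hence we can fix $n \in \NN$ with $\mu_1(T_0 \le n) > 0$ and $\mu_2(T_0 \le n) > 0$, and set $T_\star \eqdef n$, or its even integer round-up if it is convenient to stay on the $P^2$ skeleton; nothing below requires this. Then $p_{T_\star}(z) > 0$ on $A_i \eqdef \{T_0 \le T_\star\}$, and $\mu_i(A_i) > 0$ for $i=1,2$.

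The first copy depends only on $(z_0, W)$ and the second only on $(z'_0, W'')$, and these pairs are independent. By the Markov property and joint measurability of $(z_0,W) \mapsto \Phi_t(z_0,W)$, the function $p_{T_\star}$ is measurable. Therefore the probability that both copies land in the event $\{\|\cdot - (0,\pi_\star)\| < \delta_\star/2\} \cap \mC_R$ equals
\begin{equation*}
  \int p_{T_\star}\,\ud\mu_1 \cdot \int p_{T_\star}\,\ud\mu_2 \ \ge\ \int_{A_1} p_{T_\star}\,\ud\mu_1 \cdot \int_{A_2} p_{T_\star}\,\ud\mu_2 > 0 ,
\end{equation*}
because the integral of a strictly positive function over a set of positive measure is positive. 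On this event the triangle inequality gives $\|z_{T_\star} - z'_{T_\star}\| < \delta_\star$ and $z_{T_\star}, z'_{T_\star} \in \mC_R$, which is the claim.

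The only delicate point is that the horizon in Proposition~\ref{prop:accessibility} depends on $z_0$ while $T_\star$ must be deterministic. This is handled by the level-set choice of $n$ above, and it is exactly why that proposition records both the measurability of $T_0$ and the validity of the bound for all $T \ge T_0$. Beyond this, the argument needs only measurability of $p_T$, which follows from the pathwise construction of the flow.
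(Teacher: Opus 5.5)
Your proposal is correct and follows essentially the same route as the paper's proof. Both apply Proposition~\ref{prop:accessibility} at accuracy $\delta_\star/2$, choose a deterministic $T_\star$ with $\mu_i(T_0 \le T_\star) > 0$ for $i=1,2$, factor the joint probability using independence, and conclude with the triangle inequality; you additionally spell out the measurability and Fubini steps that the paper leaves implicit.
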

\begin{proof}
  Consider the time $T(z_0, \delta_\star /2)$ of
  Proposition~\ref{prop:accessibility}. Then choose a deterministic time $ T_\star$
  such that $\PP_{\mu_i}(T(z, \delta_\star /2) <  T_\star )> 0$ for both $i \in
  \{1,2\}$ (here $z \sim \mu_i$). Then
  \begin{equation*}
    \PP(  \| z_{T_\star} - z_{T_\star}' \| \le \delta_\star ,  \ \text{ and } \ z_{T_\star},z_{T_\star}' \in \mC_R  ) \geq \prod_{i =1}^2 \PP_{\mu_i}( \|z_{ T_\star} - (0, \pi_\star) \| < \delta_\star/2 \text{ and } z_{T_\star} \in \mC_R ) >0
  \end{equation*}
  by that same proposition.
\end{proof}

We now build the control. Here we start after the meeting time $\sigma_\star$
but to lighten the notation we assume that we start at time zero
from a pair of data $z_0, z'_0$ such that
\begin{eqnarray*}
  \|z_0 - z'_0\| \leq \delta_\star, \qquad z_0, z'_0 \in \mC_R .
\end{eqnarray*}
For a given control $(h_t)_{t \geq 0}$, let $\vartheta_t \eqdef z_t - z'_t$
denote the difference of the two coupled copies, and write $\rho_t =
\|\vartheta_t\|$ for its norm. Define the stopping time
\begin{equation} \label{eq:stopping_time_tau}
  \tau = \inf\{ 2n \in 2\NN  : \rho_{2 n} >  \delta e^{- 2 n} \}, \qquad \delta \in (0, 1) .
\end{equation}
The contraction of the coupling that follows will be local, in
the sense that we stop as soon as the stopping time $\tau$ kicks in (which we
expect to happen with positive probability). The geometric threshold of the
stopping time is used both for synchronization and to guarantee the absolute
continuity of the final measures.
We also define~$A_{s,t}$, the
two-parameter version of the Malliavin derivative operator $A_t = A_{0,t}$ of
Section~\ref{sec:quantitative_nondegeneracy}. For $0 \le s \le t$,
\begin{equation*}
  A_{s,t}\colon \CM \to T_{z_t}\mX, \qquad A_{s,t} h = \int_s^t J_{r,t}(Q^{1/2} h(r), 0)\ud r,
\end{equation*}
its adjoint is $(A_{s,t}^* \eta)(r) = Q^{1/2}\Pi_w(J_{r,t})^* \eta$,
$r \in [s,t]$, where $\Pi_w$ is the projection onto the base component  (as in Section~\ref{sec:quantitative_nondegeneracy}), and
$M_{s,t} = A_{s,t} A_{s,t}^*$ is the Malliavin matrix over $[s,t]$, acting on
the state space.

Each cycle $[2n, 2n+2)$ splits into two halves. On the \emph{active phase} $[2n,
2n+1)$ the control acts, and on the \emph{free phase} $[2n+1, 2n+2)$ no control
is applied and small scales are allowed time to contract through dissipation. 

Throughout this section a single integer subscript denotes the cycle, so that
$J_n \eqdef J_{2n, 2n+1}$, $M_n \eqdef M_{2n, 2n+1}$ and $A_n \eqdef A_{2n,
2n+1}$ are the Jacobian, the Malliavin matrix and the Malliavin derivative over
$[2n, 2n+1]$. Likewise for the free decay phase, we denote $J^\flat_n \eqdef
J_{2n+1, 2n+2}$. 
The energy of a cycle is
\begin{equation}\label{eq:def-8En}
  \mathcal{E}_n \eqdef \int_{2n}^{2n+2}\| w_s \|_{H^1}^2 +  \| \pi_s \|_{H^1}^2  \ud s,
\end{equation}
where the latter are the components of $z_s = (w_s, \pi_s) = \Phi_{0, s}(z_0,
W)$.
Then for arbitrary $K>0$, we introduce a smooth, non-increasing cutoff
function $\psi_K: \R \to [0,1]$ such that $\psi_K(x) = 1$ for $x \le
2K^2$, $\psi_K(x) = 0$ for $x \ge 2(K+1)^2$, and
$|\psi'_K(x)| \le 2$.
On discrete intervals $[2n, 2n+2)$, the control $h$ is then defined by:
\begin{equation} \label{eq:control_h}
  h_s =
  \begin{cases}
    \psi_K(\mathcal{E}_n) \bigl( A^*_n (\beta  I + M_n)^{-1} J_n \vartheta_{2n}^\sharp \bigr)(s) & \text{ on } \mA_{2n}(s), \\
    0 & \text{ otherwise,}
  \end{cases}
\end{equation}
where the active set $\mA_{2n}(s)$ is the condition
\begin{equation*}
  s \in [2n, 2n+1), \quad z_{2n} \in \mathcal{C}_R, \quad \text{ and } \quad 2 n < \tau ,
\end{equation*}
$\beta > 0$ is a Tikhonov regularization parameter, and $\vartheta^\sharp$ is the
tangent projection
\begin{equation*}
  \vartheta^\sharp_s = (w_s - w'_s, P_{\pi_s^\perp} (\pi_s - \pi'_s)) = (w_s - w'_s, - P_{\pi_s^\perp} \pi'_s ).
\end{equation*}
While the control is not adapted at all times, it
is adapted at even integer times so that
\begin{equation*}
  z_{2n},\ z'_{2n},\ \vartheta_{2n},\ \{z_{2n} \in \mC_R\},\ \{ 2n < \tau\}, \ h\vert_{[0, 2n]}
  \quad \text{are } \mF_{2n}\text{-measurable} , \qquad \forall n \in \NN.
\end{equation*}

Note that the control is switched off when the energy
$\mathcal{E}_n$ is too large, because in that case we lose control on the bounds on $J_n$ and on $(\beta I + M_n)^{-1}$. The cutoff is a smooth function of
$\mathcal{E}_n$ rather than a sharp indicator, because $\mathcal{E}_n$ depends
on the noise over the whole cycle $[2n, 2n+2)$, and in
Section~\ref{sec:absolute_continuity} we need the control to be Malliavin
differentiable in order to show that it is an absolutely continuous
perturbation. The two restrictions $z_{2n} \in \mC_R$ and $2n < \tau$ imposed
above through indicators are of a different kind, because they are measurable at
the start of the cycle. For them Malliavin differentiability is not an issue.

\subsection{Asymptotic synchronization}\label{sec:one_step}

Now we pass to proving that the difference $\vartheta$ is contracting under the
effect of the control. To see the contraction we condition on the \lqm good\rqm event
\begin{equation}\label{eq:good_event_EKn}
  E_K^{n} \eqdef
  \left\{ \left( \int_{2n}^{2n+2} \|w_s\|_{H^1}^4 + \|\pi_s\|_{H^1}^4 \ud s \right)^{\frac{1}{4}}
  \le K \right\}.
\end{equation}
For technical reasons this differs slightly from the energy $\mE_n$ introduced
above. We note that in the following we will contract the tangent component
$\vartheta^\sharp$ of the difference $\vartheta$. This is sufficient because:
\begin{equation}\label{eq:radial_defect}
  \| \vartheta_t - \vartheta^\sharp_t  \|= \langle \pi_t, \pi_t - \pi'_t \rangle = 1 - \langle \pi_t, \pi'_t\rangle
  = \frac12 \|\pi_t - \pi'_t\|^2 \le \frac{1}{2}\|\vartheta_t\|^2,
\end{equation}
which is of lower order with respect to $\|\vartheta^\sharp_t\| \simeq
\|\vartheta_t\|$ if $\|\vartheta_t \|< 1$. The following result controls the
difference first on the time interval $[2n, 2n+1]$ where high modes are not yet
compressed by dissipation and then on the
interval $[2n+1, 2n+2]$, where the high modes are dissipated.

\begin{proposition}\label{prop:one_step_contraction}
  The following holds uniformly over $\delta \in (0, 1)$.
  For any $K, R>1, N \in \NN$ and $\delta_0 \in (0,1)$, there exists a
  $\beta(K,R,N,\delta_0) > 0$ such that for any $n \in \NN, z_{2n} \in
  \mathcal{C}_R$ and if $ 2n < \tau$, then the difference
  $\vartheta_{2n+1}$ decomposes as:
  \begin{equation}\label{eq:one_step_decomp}
    \vartheta_{2n +1} = L_n + H_n + R_n ,
  \end{equation}
  with
  \begin{equation}
    L_n = P_N \beta(\beta I + M_n)^{-1} J_n  \vartheta^\sharp_{2n}, \quad H_n = Q_N \beta(\beta I + M_n)^{-1} J_n \vartheta^\sharp_{2n},
  \end{equation}
  and $R_n = \vartheta_{2n+1}-L_n - H_n$. Then
  \begin{enumerate}[label=(\roman*)]
    \item On the event $E_K^n$
      \begin{equation}\label{eq:low_mode_bound}
        \EE\Big[\| L_n \|^2 \cdot \mathbf{1}_{E_K^n}\Big|\mF_{2n}\Big] \le \delta_0^2 \| \vartheta_{2n}\|^2 .
      \end{equation}
    \item  There exists a deterministic constant $C_K> 0$ such that on the event $E_K^n$
      \begin{equation}\label{eq:high_mode_bound}
        \| H_n \| \le \| J_n\vartheta^\sharp_{2n}\| \le C_K \|\vartheta_{2n}\| ,
      \end{equation}
    \item There exists a deterministic constant $C(R,
      K, \beta)>0$ such that on the event $E_K^n$
      \begin{equation}\label{eq:remainder_bound}
        \|R_n\| \le C(R, K, \beta) \|\vartheta_{2n}\|^2 .
      \end{equation}
  \end{enumerate}
  Moreover, for every $\delta_1 \in (0,1)$ there exist
  $N(\delta_1, K) > 0$ and a $\beta(\delta_1, K, R) > 0$, such that
    \begin{equation}\label{eq:one_step_second_moment}
    \EE [ \| \vartheta_{2n+2} \|^2 \mathbf{1}_{E_K^n} \mid \mF_{2n}] \leq \delta_1 \| \vartheta_{2n} \|^2 + C(\delta_1, R, K) \| \vartheta_{2n} \|^4 .
  \end{equation}
\end{proposition}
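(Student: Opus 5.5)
The plan is to compare the controlled second copy with the first-order linearization around $z$ on the active phase $[2n,2n+1]$, and then let the free phase $[2n+1,2n+2]$ damp high modes.

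\emph{Decomposition and the term $L_n$.} On the active set the control (with $\psi_K(\mE_n)=1$ on $E_K^n$, since $\mE_n\le \sqrt{2}K^2$ by Cauchy--Schwarz) is
\[
h=A_n^*(\beta I+M_n)^{-1}J_n\vartheta^\sharp_{2n}.
\]
At first order, the difference obeys the Jacobian equation forced by $-Q^{1/2}h$. Hence
\[
\vartheta_{2n+1}\approx J_n\vartheta^\sharp_{2n}-A_nh=\bigl(I-M_n(\beta I+M_n)^{-1}\bigr)J_n\vartheta^\sharp_{2n}=\beta(\beta I+M_n)^{-1}J_n\vartheta^\sharp_{2n}.
\]
This is exactly $L_n+H_n$, and $R_n$ is defined as the remainder, so the decomposition \eqref{eq:one_step_decomp} holds by definition. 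Part (i) follows from Lemma~\ref{lem:tikhonov_contraction} with $p=2$, applied at the time-shifted start $z_{2n}\in\mC_R$ using the Markov property. We take $\eta=\vartheta^\sharp_{2n}/\|\vartheta^\sharp_{2n}\|$ and use $\|\vartheta^\sharp\|\le\|\vartheta\|$. The event $E_K^n$ contains the event $E_K$ of that lemma for the shifted interval, which has length $1$ instead of $T$; the lemma holds for any $T$. This choice fixes $\beta(K,R,N,\delta_0)$.

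\emph{Part (ii).} Since $0\le R^\beta\le I$ and $Q_N$ is a contraction, $\|H_n\|\le\|J_n\vartheta^\sharp_{2n}\|$. Lemma~\ref{lem:fibre_dissipation}(iii) on $E_K^n$ gives $\int\Gamma\le C K^2$, hence the deterministic bound $C_K=Ce^{CK^2}$.

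\emph{Part (iii).} This is the main obstacle. We write $\vartheta_t=\Phi$-difference and Taylor expand to second order. The nonlinearities are quadratic in $(w,\pi)$ up to the projective normalization terms $\langle\pi,G_w\pi\rangle\pi$, which are cubic. The error $\tilde R_t=\vartheta_t-(J_{2n,t}\vartheta^\sharp_{2n}-A_{2n,t}h)$ solves the linearized equation with source equal to quadratic and cubic expressions in $\vartheta$. We also add the radial defect $\vartheta_{2n}-\vartheta^\sharp_{2n}$, which is $O(\|\vartheta_{2n}\|^2)$ by \eqref{eq:radial_defect}.

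To close the estimate we need, pathwise on $E_K^n$:
\begin{itemize}
\item $\|h\|_{L^2}\le \beta^{-1/2}\|J_n\vartheta^\sharp_{2n}\|\le \beta^{-1/2}C_K\|\vartheta_{2n}\|$, from $\|A^*(\beta+AA^*)^{-1}\|\le (2\sqrt\beta)^{-1}$;
\item an energy bound $\sup_t\|\vartheta_t\|^2+\int\|\vartheta_t\|_{H^1}^2\lesssim_{K}\|\vartheta_{2n}\|^2+\|h\|_{L^2}^2$, from Lemmas~\ref{lem:base_difference} and \ref{lem:fibre_difference}(ii) with $g=Q^{1/2}h$. This needs the second copy's $\Gamma'$ to be controlled as well; we get that because $\pi'$ stays close to $\pi$ in $L^2$, and we bound $\|\pi'\|_{H^1}$ through \eqref{eq:fd_h1} using the closeness of $w'$ to $w$.
\end{itemize}
The source terms are of the form $B(\tilde w,\tilde w)$ and $B(\tilde w,\tilde\pi)$ with $\tilde w=w-w'$, $\tilde\pi=\pi-\pi'$, together with the cubic pieces. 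They are estimated by $\|\vartheta\|\,\|\vartheta\|_{H^1}$ times powers of $\Gamma$, using \eqref{eq:l4_toolkit}. Grönwall against the dissipation of Lemma~\ref{lem:fibre_dissipation}(ii) then gives $\|R_n\|\le C(R,K,\beta)\|\vartheta_{2n}\|^2$. The delicate point is that $\Gamma$ for the primed copy is not directly in $E_K^n$. If needed, we shrink to $\|\vartheta_{2n}\|\le\delta<1$, which holds since $2n<\tau$, so that perturbative bounds hold uniformly in $\delta$.

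\emph{Second moment \eqref{eq:one_step_second_moment}.} On $[2n+1,2n+2]$ no control acts, so $\vartheta_{2n+2}=J^\flat_n\vartheta^\sharp_{2n+1}+O(\|\vartheta_{2n+1}\|^2)$ by the same Taylor argument with $h=0$. We apply $J^\flat_n$ to each piece:
\begin{itemize}
\item $J^\flat_nH_n=J^\flat_nQ_N(\cdot)$ is bounded by $\delta\,C_K\|\vartheta_{2n}\|$ on $E_K^n$, via Lemma~\ref{lem:jacobian_moments} with $\tau_0=1$ and $N\ge N_0$;
\item $\|J^\flat_nL_n\|\le C_K\|L_n\|$, which is controlled in conditional second moment by part (i);
\item the $R_n$ term and the radial defects contribute $C\|\vartheta_{2n}\|^4$.
\end{itemize}
We choose $N$ so that the high-mode contribution is at most $\delta_1/3$, then $\delta_0$ so that $C_K^2\delta_0^2\le\delta_1/3$, which fixes $\beta$. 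Combining with $(a+b+c)^2\le 3(a^2+b^2+c^2)$ and taking conditional expectations yields the claim.
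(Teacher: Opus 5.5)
Your architecture matches the paper's proof step for step:
\begin{itemize}
\item the Tikhonov identity on $E_K^n$;
\item Lemma~\ref{lem:tikhonov_contraction} with the Markov property for (i);
\item Lemma~\ref{lem:fibre_dissipation}(iii) for (ii);
\item the Duhamel remainder plus the radial defect \eqref{eq:radial_defect} for (iii);
\item Lemma~\ref{lem:jacobian_moments} on the free phase.
\end{itemize}
There are two harmless slips. On $E_K^n$ one only has $\mE_n\le 2K^2$, not $\sqrt2K^2$, but this still gives $\psi_K(\mE_n)=1$. In (i), the inclusion you need, and which holds, is $E_K^n\subseteq E_K$ with $E_K$ taken on the shifted unit interval; you stated the reverse.

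The step that fails as written is the deterministic a priori bound $\sup_t\|\vartheta_t\|^2+\int\|\vartheta_t\|_{H^1}^2\lesssim\|\vartheta_{2n}\|^2+\|h\|^2$ in (iii), which you use again on the free phase. You take it from Lemma~\ref{lem:fibre_difference}(ii), whose Gr\"onwall exponent contains $\int\Gamma'_r\ud r$ for the primed copy. You then propose to control $\Gamma'$ through \eqref{eq:fd_h1}. That estimate needs $\|\pi'_{2n}\|_{H^1}$, and nothing bounds it:
\begin{itemize}
\item the active set only imposes $z_{2n}\in\mC_R$ and $\|\vartheta_{2n}\|\le\delta e^{-2n}$ in $\mH\times\mH$, and $z'_{2n}$ need not lie in $\mC_R$;
\item $L^2$-closeness of $\pi'_{2n}$ to $\pi_{2n}$ gives no $H^1$ control;
\item closeness of $\pi'$ to $\pi$ on the cycle is what the estimate is supposed to prove, so invoking it is circular.
\end{itemize}
Your fallback $\|\vartheta_{2n}\|\le\delta<1$ is not a smallness condition either. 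Perturbative bounds need $\|\vartheta_{2n}\|$ small relative to $C(K,\beta)$, and $\beta$ may be tiny, so $\|h\|\lesssim C_K\beta^{-1/2}\delta$ need not be small. Meanwhile the proposition is claimed uniformly in $\delta\in(0,1)$.

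The fix is to keep the second copy's energy out entirely. Write, as the paper does, $\partial_t\vartheta=DF(z_t)\vartheta-\mQ_{z_t}(\vartheta)-(Q^{1/2}h,0)$. Then $\mQ_z(\vartheta)$ obeys \eqref{eq:remainder_integrand} using only $\Gamma(z)$, $\|\pi'\|=1$ and $\|\tilde\pi\|\le 2$. The $\tilde w$ component closes by Lemma~\ref{lem:base_difference}, whose Gr\"onwall constant involves $w$ alone.

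One point remains, which the paper's phrase ``following the same steps as in Lemma~\ref{lem:continuity_in_data}'' leaves implicit. In the $\tilde\pi$ energy identity the normalization produces $-\tfrac12\bigl(\langle\pi,G_w\pi\rangle+\langle\pi',G_{w'}\pi'\rangle\bigr)\|\tilde\pi\|^2$. This contains $+\tfrac12\|\tilde\pi\|^2\|\nabla\tilde\pi\|^2$, which $-\|\nabla\tilde\pi\|^2$ absorbs only while $\|\tilde\pi\|$ stays small. So split into two cases:
\begin{itemize}
\item If $\|\vartheta_{2n}\|\le c(K,\beta)$, a continuity argument keeps $\|\tilde\pi_t\|\le\tfrac12$ on the cycle. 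This yields the a priori bound with constants depending only on $K$ and $\beta$.
\item If $\|\vartheta_{2n}\|>c(K,\beta)$, a crude bound suffices: $\|R_n\|\le\|\tilde w_{2n+1}\|+2+\|L_n+H_n\|\le C(K,\beta)(1+\|\vartheta_{2n}\|)\le C'(K,\beta)\|\vartheta_{2n}\|^2$. This follows from $\|\tilde\pi\|\le 2$, Lemma~\ref{lem:base_difference}, Lemma~\ref{lem:cm_regularity} and part (ii).
\end{itemize}
The same dichotomy handles the free-phase remainder in \eqref{eq:one_step_second_moment} and makes every constant independent of $\delta$. With this replacement the rest of your argument, including the parameter order $N\to\delta_0\to\beta$, goes through.
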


\begin{proof}
  The difference $\vartheta_{2n+1} = \Phi_{2n, 2n+1}(z_{2n}, W) - \Phi_{2n, 2n+1}(z'_{2n}, W + \int h)$ can be written as
  \begin{equation}\label{eq:taylor_expansion}
    \vartheta_{2n+1} = J_n\vartheta_{2n}^\sharp - A_n h + R_n ,
  \end{equation}
  where $J_n$ is the Jacobian of the flow $\Phi$ with respect to initial
  data and $A_n$ is the Malliavin derivative operator of
  the discussion after~\eqref{eq:control_h}, both along the first copy, and
  $R_n$ is defined by the identity.

  Now we use that on the event $E_K^n$ we have
  $\psi_K(\mE_n)=1$, since  on $E_K^n$ we have that $\mE_n \le 2K^2$, so that $h = A^*_n (\beta I + M_n)^{-1} J_n
  \vartheta^\sharp_{2n}$. Substituting this into \eqref{eq:taylor_expansion} and using
  $A_nA^*_n = M_n$, we obtain on $E_K^n$
  \begin{equation}\label{eq:tikhonov_identity}
    J_n\vartheta^\sharp_{2n} - A_nh
    = \bigl(I - M_n(\beta I + M_n)^{-1}\bigr)J_n\vartheta^\sharp_{2n}
    = \beta(\beta I + M_n)^{-1}J_n\vartheta^\sharp_{2n} .
  \end{equation}
  Splitting into high and low frequencies gives the decomposition \eqref{eq:one_step_decomp}.

  Since $M_n$ is self-adjoint and nonnegative, $\beta(\beta I + M_n)^{-1}$ has spectrum in $[0,1]$, so it is self-adjoint with operator norm at most one.
  The two leading terms of the decomposition read \[ L_n = P_N \beta(\beta I + M_n)^{-1} J_n
    \vartheta^\sharp_{2n}, \qquad H_n = Q_N \beta(\beta I + M_n)^{-1} J_n
  \vartheta^\sharp_{2n}. \]
  The low-mode term in part~(i) is a
  consequence of Lemma~\ref{lem:tikhonov_contraction} and the time homogeneity
  of $z_t$.

  For the high-mode term in part~(ii) we use that
  $\|Q_N \beta(\beta I + M_n)^{-1}\|_{\mathrm{op}} \le 1$ and therefore \[ \|H_n\| = \|Q_N \beta(\beta I + M_n)^{-1}
    J_n \vartheta^\sharp_{2n}\| \le \|J_n
  \vartheta^\sharp_{2n}\| \le C_K \|\vartheta_{2n}\| . \]
  The Jacobian is bounded pathwise on $E_K^n$ by
  Lemma~\ref{lem:fibre_dissipation}(iii).

  For part~(iii) we expand around $z_t$. Write the joint
  system~\eqref{eq:sns_vorticity_intro}--\eqref{eq:proj-spde} as
  \[
    \partial_t z = F(z) + (\curl\xi, 0) , \quad z = (w, \pi)
  \]
  where $F$ is a polynomial of degree at most four, and define the remainder of its linearization by
  \begin{equation}\label{eq:exact_remainder}
    \mQ_z(\vartheta) \eqdef F(z - \vartheta) - F(z) + DF(z)\vartheta .
  \end{equation}
  If we subtract the
  two equations, then the difference solves
  \begin{equation}\label{eq:difference_equation}
    \partial_t \vartheta_t = DF(z_t)\,\vartheta_t - \mQ_{z_t}(\vartheta_t)
    - (Q^{1/2} h_t, 0) ,
  \end{equation}
  and Duhamel's formula with the linearized flow $J_{r,2n+1}$ along $z$, compared with~\eqref{eq:taylor_expansion}, gives
  \begin{equation}\label{e:mild-rbar}
    R_n = J_n\bigl( \vartheta_{2n} - \vartheta^\sharp_{2n} \bigr)
    - \int_{2n}^{2n+1} J_{r, 2n+1}\, \mQ_{z_r}(\vartheta_r) \ud r ,
  \end{equation}
  where every coefficient is evaluated on the trajectory of $z$. The first term has norm at most $C_K\|\vartheta_{2n}\|^2$ by~\eqref{eq:radial_defect} and Lemma~\ref{lem:fibre_dissipation}(iii), so it suffices to bound the integral.
  Now following the same steps as in Lemma~\ref{lem:continuity_in_data} we obtain
  from~\eqref{eq:difference_equation} the energy estimate
  \begin{equation}\label{eq:difference_apriori}
    \sup_{r \in [2n, 2n+1]} \|\vartheta_r\|^2
    + \int_{2n}^{2n+1} \|\vartheta_r\|_{H^1}^2 \ud r
    \ \le\ C_K \bigl( \|\vartheta_{2n}\|^2 + \|h\|_{L^2}^2 \bigr)
    \ \le\ C(R, K, \beta)\, \|\vartheta_{2n}\|^2 ,
  \end{equation}
  where the norm of $h$ is controlled in Lemma~\ref{lem:cm_regularity} below. We claim that
    \begin{equation}\label{eq:remainder_integrand}
      \| \mQ_{z}(\vartheta) \|
      \ \le\ C\, \Gamma(z)^{1/2}\, \|\vartheta\| \|\vartheta\|_{H^1}
      + C \|\vartheta\|_{H^1}^2
    \end{equation}
  for every state $z = (w, \pi)$ and every $\vartheta \in \mH \times \mH$, with $\Gamma$ as in~\eqref{eq:gamma_r}.

  Let us write $\vartheta = (\tilde w,\tilde\pi)$ with $\tilde w = w - w'$ and
  $\tilde\pi = \pi - \pi'$. Then one can compute that
  \begin{equation}\label{eq:remainder_terms}
    \mQ_z(\vartheta) = \bigl( -B(\tilde w,\tilde w) , \ -L^0_{\tilde w}\tilde\pi + \mathrm{I} \tilde\pi
    - (\mathrm{II}+\mathrm{III})\pi' \bigr)
  \end{equation}
  with
  \begin{equation*}
    \begin{aligned}
      \mathrm{I} & = \langle\pi,L^0_{\tilde w}\pi\rangle - \langle \tilde\pi,G_w\pi\rangle
      - \langle\pi,G_w\tilde\pi\rangle , \\
      \mathrm{II} & = \langle \tilde\pi,G_w\tilde\pi\rangle - \langle \tilde\pi,L^0_{\tilde w}\pi\rangle
      - \langle\pi,L^0_{\tilde w}\tilde\pi\rangle , \\
      \mathrm{III} & = \langle \tilde\pi,L^0_{\tilde w}\tilde\pi\rangle .
    \end{aligned}
  \end{equation*}
  We estimate these terms with~\eqref{eq:Gw_scalars},~\eqref{eq:Gw_bound} and,
  for $a, f, g \in H^1$,
  \begin{equation}\label{eq:B_bounds}
    \begin{aligned}
      \|B(f,g)\| &\lesssim \|f\|^{1/2}\|f\|_{H^1}^{1/2}\|g\|_{H^1} ,\\
      |\langle a,B(f,g)\rangle| & \lesssim
      \|a\|^{1/2}\|a\|_{H^1}^{1/2}\|f\|\,\|g\|_{H^1} , \\
      \langle a,B(f,a)\rangle & = 0 .
    \end{aligned}
  \end{equation}
  Now we find
  \begin{equation}
    \begin{aligned}
      \|B(\tilde w,\tilde w)\| & \lesssim \|\tilde w\|^{1/2}\|\tilde w\|_{H^1}^{3/2} \lesssim \|\vartheta\|_{H^1}^2 ,\\
      \|L^0_{\tilde w}\tilde\pi\| & \lesssim \|\tilde w\|^{1/2}\|\tilde w\|_{H^1}^{1/2}\|\tilde\pi\|_{H^1}
      + \|\tilde\pi\|^{1/2}\|\tilde\pi\|_{H^1}^{1/2}\|\tilde w\|_{H^1} \lesssim \|\vartheta\|_{H^1}^2 .
    \end{aligned}
  \end{equation}
  This leaves us with estimating only the terms $\mathrm{I}$, $\mathrm{II}$ and $\mathrm{III}$, where we
  write $L^0_f g = B(f,g) + B(g,f)$. Here we find:

  The term $\mathrm{I}$ enters $\mQ_z(\vartheta)$ through $\mathrm{I} \tilde\pi$, whose norm
  is $|\mathrm{I}|\,\|\tilde\pi\| \le |\mathrm{I}|\,\|\vartheta\|$. We find $\langle\pi,B(\tilde w,\pi)\rangle =
  0$ and the remaining terms are (by~\eqref{eq:B_bounds} and~\eqref{eq:Gw_scalars}):
  \begin{equation*}
    \begin{aligned}
      |\langle\pi,B(\pi,\tilde w)\rangle| &\lesssim \|\pi\|_{H^1}^{1/2}\|\tilde w\|_{H^1}
      \lesssim \Gamma^{1/2}\|\vartheta\|_{H^1} , \\
      |\langle \tilde\pi,G_w\pi\rangle| + |\langle\pi,G_w\tilde\pi\rangle|
      &\lesssim \bigl(\|\pi\|_{H^1}+\|w\|_{H^1}\bigr)\|\tilde\pi\|_{H^1}
      \lesssim \Gamma^{1/2}\|\vartheta\|_{H^1} ,
    \end{aligned}
  \end{equation*}
  Hence
  $\|\mathrm{I} \tilde\pi\| \lesssim \Gamma^{1/2}\|\vartheta\|\,\|\vartheta\|_{H^1}$.
  The term $\mathrm{II}$ enters through $\mathrm{II}\pi'$, whose norm is $|\mathrm{II}|$.
  We find
  \begin{equation*}
    \begin{aligned}
      |\langle \tilde\pi,G_w\tilde\pi\rangle| &\lesssim \|\tilde\pi\|_{H^1}^2
      + \|w\|_{H^1}\|\tilde\pi\|\,\|\tilde\pi\|_{H^1}
      \lesssim \|\vartheta\|_{H^1}^2 + \Gamma^{1/2}\|\vartheta\|\,\|\vartheta\|_{H^1} , \\
      |\langle \tilde\pi,B(\tilde w,\pi)\rangle| &\lesssim \|\tilde\pi\|^{1/2}\|\tilde\pi\|_{H^1}^{1/2}
      \|\tilde w\|\,\|\pi\|_{H^1} \lesssim \Gamma^{1/2}\|\vartheta\|\,\|\vartheta\|_{H^1} , \\
      |\langle \tilde\pi,B(\pi,\tilde w)\rangle| &\lesssim \|\tilde\pi\|^{1/2}\|\tilde\pi\|_{H^1}^{1/2}
      \|\tilde w\|_{H^1} \lesssim \|\vartheta\|_{H^1}^2 , \\
      |\langle\pi,B(\tilde w,\tilde\pi)\rangle| + |\langle\pi,B(\tilde\pi,\tilde w)\rangle|
      &\lesssim \|\pi\|_{H^1}^{1/2}\bigl(\|\tilde w\|\,\|\tilde\pi\|_{H^1}
      + \|\tilde\pi\|\,\|\tilde w\|_{H^1}\bigr) \lesssim \Gamma^{1/2}\|\vartheta\|\,\|\vartheta\|_{H^1} ,
    \end{aligned}
  \end{equation*}
  where the first estimate comes from~\eqref{eq:Gw_bound} at $a = b = \tilde\pi$ and the
  others follow from~\eqref{eq:B_bounds}, together with
  $\|\vartheta\|^{3/2}\|\vartheta\|_{H^1}^{1/2} \le \|\vartheta\|\,\|\vartheta\|_{H^1}$ and
  $\|\vartheta\|^{1/2}\|\vartheta\|_{H^1}^{3/2} \le \|\vartheta\|_{H^1}^2$. Hence

  \[
    |\mathrm{II}| \lesssim \Gamma^{1/2}\|\vartheta\|\,\|\vartheta\|_{H^1} + \|\vartheta\|_{H^1}^2.
  \]

  The term $\mathrm{III}$ is cubic and enters through $\mathrm{III}\pi'$. Since $\langle
  \tilde\pi,B(\tilde w,\tilde\pi)\rangle=0$ we are left with
  \begin{equation*}
    |\langle \tilde\pi,B(\tilde\pi,\tilde w)\rangle| \lesssim \|\tilde\pi\|^{1/2}\|\tilde\pi\|_{H^1}^{1/2}
    \|\tilde\pi\|\,\|\tilde w\|_{H^1} \lesssim \|\vartheta\|_{H^1}^2 ,
  \end{equation*}
  where we used $\|\tilde\pi\| \le 2$. Hence $|\mathrm{III}| \lesssim \|\vartheta\|_{H^1}^2$, and we have proven~\eqref{eq:remainder_integrand}.

  Now we estimate the integral in~\eqref{e:mild-rbar}
  with~\eqref{eq:remainder_integrand}, and use
  $\|J_{r,2n+1}\|_{\mathrm{op}} \le C_K$ from
  Lemma~\ref{lem:fibre_dissipation}(iii), then use Cauchy--Schwarz in time
  and $\int \Gamma_r \ud r \le C(1+\mE_n)$ and~\eqref{eq:difference_apriori}, so we
  obtain $\|R_n\| \ \le\ C(R, K, \beta)\|\vartheta_{2n}\|^2$, which
  is~\eqref{eq:remainder_bound}.

  This completes the proof of (iii). We now proceed with the last step, which is to
  control the difference on $[2n+1, 2n+2]$.
  Through first-order expansion over the free phase $[2n+1, 2n+2]$ we rewrite $\vartheta_{2n+2}$ as
  $J^\flat_n L_n + J^\flat_n H_n$ plus a remainder, which collects $J^\flat_n
  R_n$ and the second-order term of the expansion. We bound the three
  contributions
  conditionally on $\mathcal F_{2n}$ and on $E^n_K$, using
  Lemma~\ref{lem:fibre_dissipation}(iii), which implies
  $\|J^\flat_n\|_{\mathrm{op}}\le C_K$, so that together with part~(i) we obtain
  \begin{equation*} \EE\bigl[ \|J^\flat_n L_n\|^2 \mathbf 1_{E_K^n} \bigm| \mathcal F_{2n} \bigr] \ \le\ C_K^2\delta_0^2\|\vartheta_{2n}\|^2 .
  \end{equation*}
  For the high modes, we apply Lemma~\ref{lem:jacobian_moments} to the vector $H_n = Q_N \beta(\beta I + M_n)^{-1} J_n\vartheta^\sharp_{2n}$ with $T = 2$, and $\tau_0 = 1$ and $s = 1 \geq \tau_0/2$
  \begin{equation*} \|J^\flat_n H_n\| \ \le\ \delta_0 \|\beta(\beta I + M_n)^{-1} J_n\vartheta^\sharp_{2n}\| \ \le\ \delta_0 C_K\|\vartheta_{2n}\| ,
  \end{equation*}
  provided that $N$ is chosen sufficiently large depending on $\delta_0$.
  Finally, by the estimates of the previous step the remainder is bounded by
    \begin{equation*}
      C_K\|R_n\| + C(R,K)\|\vartheta_{2n+1}\|^2 \ \le\ C(R,K,\beta)\|\vartheta_{2n}\|^2 .
  \end{equation*}
  To conclude, we obtain
  \begin{equation*}
    \EE\big[\|\vartheta_{2n+2}\|^2\mathbf 1_{E_K^n}\big|\mathcal F_{2n}\big]
    \leq C_K \delta_0^2 \|\vartheta_{2n}\|^2
    + C(R,K,\beta)^2\|\vartheta_{2n}\|^4.
  \end{equation*}
  This completes the proof.
\end{proof}
Now we will build on Proposition~\ref{prop:one_step_contraction} in order to
obtain asymptotic synchronization, iterating the contraction over numerous
steps. Before we prove the synchronization, we will prove some supporting
statements. The first one proves a bound on how much the difference $\rho$ can grow.

\begin{lemma}\label{lem:active_step}
  For every $R, K, \beta > 0$ there is a deterministic index
  $n_0 = n_0(R, K, \beta)$ such that
  for every $n \ge n_0$ with $2n < \tau$, $z_{2n} \in \mC_R$ and
  $\psi_K(\mE_n) > 0$,
  \begin{equation}\label{eq:active_step}
    \rho_{2n+2} \ \le\ e^{C(1 + K^2)}\rho_{2n},
  \end{equation}
  for a constant $C$ that does not depend on any parameter of the problem.
\end{lemma}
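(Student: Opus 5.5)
The plan is to follow the decomposition already used in the proof of Proposition~\ref{prop:one_step_contraction}, but with the localization $E_K^n$ replaced by the weaker information $\psi_K(\mE_n) > 0$. This information gives $\mE_n \le 2(K+1)^2$, and in particular $\int_{2n}^{2n+2}\Gamma_r \ud r \le 2(K+1)^2$. Every $e^{C\int\Gamma}$ factor will therefore be bounded by $e^{C(1+K^2)}$. The parameters $R,\beta$ should enter only through quadratic remainders of the form $C(R,K,\beta)\rho_{2n}^2$. These are absorbed by choosing $n_0$, since on $\{2n<\tau\}$ we have $\rho_{2n}\le \delta e^{-2n}\le e^{-2n}$, and hence $C(R,K,\beta)\rho_{2n}\le 1$ once $n\ge n_0(R,K,\beta)$.

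\emph{Active phase.} On $[2n,2n+1]$ we have $h = \psi_K(\mE_n) A_n^*(\beta I+M_n)^{-1}J_n\vartheta^\sharp_{2n}$. The first-order expansion~\eqref{eq:taylor_expansion} then gives
\begin{equation*}
\vartheta_{2n+1} = \bigl(I - \psi_K(\mE_n) M_n(\beta I+M_n)^{-1}\bigr) J_n\vartheta^\sharp_{2n} + R_n .
\end{equation*}
Since $M_n(\beta I+M_n)^{-1}$ is self-adjoint with spectrum in $[0,1]$ and $\psi_K\in[0,1]$, the operator in brackets has norm at most one. Hence the leading term is bounded by $\|J_n\vartheta^\sharp_{2n}\|$. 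Lemma~\ref{lem:fibre_dissipation}(iii) gives $\|J_n\|_{\mathrm{op}}\le C e^{C\int\Gamma}\le e^{C(1+K^2)}$, with a universal $C$.

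For the remainder I would rerun the argument of Proposition~\ref{prop:one_step_contraction}(iii), with $E^n_K$ replaced by the bound $\mE_n\le 2(K+1)^2$. That argument only used $L^2$-in-time control of $\Gamma$, plus the bound $\|h\|_{\CM}\le \tfrac12\beta^{-1/2}\|J_n\vartheta^\sharp_{2n}\|$ that follows from $\|A_n^*(\beta I+M_n)^{-1}\|\le \tfrac12\beta^{-1/2}$. This should give the a priori bound~\eqref{eq:difference_apriori} and $\|R_n\|\le C(R,K,\beta)\rho_{2n}^2$. Thus $\rho_{2n+1}\le e^{C(1+K^2)}\rho_{2n} + C(R,K,\beta)\rho_{2n}^2 \le e^{C'(1+K^2)}\rho_{2n}$ for $n\ge n_0$.

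\emph{Free phase.} On $[2n+1,2n+2]$ no control acts, so $\tilde w$ solves~\eqref{eq:base_diff_eq} with $g=0$. Lemma~\ref{lem:base_difference} gives
\begin{equation*}
\sup\|\tilde w\|^2+\int\|\nabla\tilde w\|^2\le 2e^{C\int\|w\|\|w\|_{H^1}}\rho_{2n+1}^2 .
\end{equation*}
The exponent is bounded by $C(1+K^2)$ via Young's inequality. Feeding this into Lemma~\ref{lem:fibre_difference}(ii) gives $\rho_{2n+2}^2\le 2e^{A}\bigl(\rho_{2n+1}^2 + C\sup\|\pi'\|_{H^1}^2\,\rho_{2n+1}^2 + C\rho_{2n+1}^2\bigr)$. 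Here $A$ involves $\Gamma$ of both copies, so the quantities of the second copy $z'$ need to be controlled.

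\emph{Main obstacle.} The main obstacle is exactly this: $\mE_n$ only controls the first copy, while both Lemma~\ref{lem:fibre_difference} and the constants appearing through $\|\pi'\|_{H^1}$ need bounds on $z'$. I would write $\Gamma' \le 2\Gamma + 2\|\vartheta\|_{H^1}^2$ and use the a priori estimate for the difference on each half-cycle, which gives $\int\|\vartheta\|_{H^1}^2\le C(R,K,\beta)\rho_{2n}^2\le 1$ for $n\ge n_0$. This yields $\int_{2n}^{2n+2}\Gamma'\le 4(K+1)^2+2$, which keeps the exponent of the form $C(1+K^2)$. For the pointwise factor $\sup\|\pi'\|_{H^1}$ I would use~\eqref{eq:fd_h1}: since $z_{2n}\in\mC_R$, it gives $\|\pi_t\|_{H^1}^2\le R^2e^{C(1+K^2)}$. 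This bound depends on $R$, however, so I would avoid it by estimating the term $\|\pi^2\|_{H^1}^2\|\tilde w\|^2$ with the roles of the copies exchanged, that is, taking $\pi^2=\pi$ and using $\int\|\pi\|_{H^1}^2\le\mE_n$ in integrated form. Combining the two phases and absorbing the radial defect~\eqref{eq:radial_defect} (which is of order $\rho^2$) into the $n_0$ choice then yields~\eqref{eq:active_step}.
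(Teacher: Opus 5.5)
Your argument is correct. On the active half $[2n,2n+1]$ it is exactly the paper's argument. The operator $(1-\psi)I+\psi\beta(\beta I+M_n)^{-1}$ has norm at most one. Lemma~\ref{lem:fibre_dissipation}(iii) with $\mE_n\le 2(K+1)^2$ gives $\|J_n\|_{\mathrm{op}}\le e^{C(1+K^2)}$. Finally, $\|R_n\|\le C(R,K,\beta)\rho_{2n}^2\le\rho_{2n}$ once $e^{-2n}\le C(R,K,\beta)^{-1}$.

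You also correctly notice that~\eqref{eq:remainder_bound} is stated on $E^n_K$, not on $\{\psi_K(\mE_n)>0\}$. Its proof only uses $\int\Gamma$ and $\|h\|_{\CM}\le\frac12\beta^{-1/2}\|J_n\vartheta^\sharp_{2n}\|$, so it transfers.

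The real difference is the free half $[2n+1,2n+2]$. The paper's written proof only bounds $\|\vartheta_{2n+1}\|$ and leaves this half implicit. The completion closest to the paper would linearize along the first copy, as in the last step of Proposition~\ref{prop:one_step_contraction}. One writes $\vartheta_{2n+2}=J^\flat_n\vartheta_{2n+1}$ plus a quadratic remainder absorbed via $n_0$, and only the first copy's $\Gamma$ is involved.

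You instead use the nonlinear two-copy estimates of Lemmas~\ref{lem:base_difference} and~\ref{lem:fibre_difference}(ii). This works. Your relabeling, which puts the first copy in the slot of $\pi^2$, turns the source term into $\sup\|\tilde w\|^2\int\|\pi\|_{H^1}^2\le\sup\|\tilde w\|^2\,\mE_n$ rather than an $R$-dependent supremum. That is exactly what keeps $C$ parameter-free.

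The price is that you must control $\int\Gamma'$ on the free half. There you cannot take $\int\|\tilde\pi\|_{H^1}^2$ from an ``a priori difference estimate'' if that estimate is Lemma~\ref{lem:fibre_difference}(ii) itself, because its constant contains $\int\Gamma'$, so the argument would be circular. Close it instead with a continuity argument, up to the first time $\int_{2n+1}^t\|\tilde\pi\|_{H^1}^2$ reaches $1$. The ingredients are $\Gamma'\le 2\Gamma+2\|\vartheta\|_{H^1}^2$, the non-circular bound on $\tilde w$ from Lemma~\ref{lem:base_difference}, and the smallness $\rho_{2n+1}\le e^{C(1+K^2)}e^{-2n}$ that $n_0$ provides.
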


\begin{proof}
  Write $\psi = \psi_K(\mE_n) \in (0,1]$. The Taylor identity~\eqref{eq:taylor_expansion} with
  the control $h = \psi\, A^*_n(\beta I + M_n)^{-1}J_n\vartheta^\sharp_{2n}$
  gives
  \begin{equation*}
    \vartheta_{2n+1}
    \ =\ \bigl( I - \psi M_n(\beta I + M_n)^{-1} \bigr) J_n \vartheta^\sharp_{2n} + R_n
    \ =\ \Bigl( (1-\psi) I + \psi\, \beta(\beta I + M_n)^{-1} \Bigr)
    J_n \vartheta^\sharp_{2n} + R_n .
  \end{equation*}
  Since $M_n$ is self-adjoint and nonnegative 
  $\|\beta(\beta I + M_n)^{-1}\|_{\mathrm{op}} \le 1$ for every $\beta > 0$. Therefore
  \begin{equation}\label{eq:active_step_convex}
    \|\vartheta_{2n+1}\| \ \le\ \|J_n \vartheta^\sharp_{2n}\| + \|R_n\| .
  \end{equation}
  On $\{\psi_K(\mE_n) > 0\}$ the cutoff gives $\mE_n \le 2(K+1)^2$, and
  Lemma~\ref{lem:fibre_dissipation}  guarantees
  \begin{equation}\label{eq:active_step_jac}
    \|J_n\|_{\mathrm{op}}^2
    \ \le\ C \exp\Bigl( C\!\int_{2n}^{2n+2}\Gamma_r \ud r \Bigr)
    \ \le\ C \exp\bigl( 2 C \bigl( 1 + (K+1)^2 \bigr) \bigr) ,
  \end{equation}
  which is of the correct order.
  For the remainder,~\eqref{eq:remainder_bound} gives
  $\|R_n\| \le C(R,K,\beta)\|\vartheta_{2n}\|^2$. Since $2n < \tau$ we have
  $\rho_{2n} \le e^{-2n}$, so
  \begin{equation*}
    \|R_n\| \ \le\ C(R,K,\beta)\, e^{-2n}\, \rho_{2n}
    \ \le \rho_{2n}
    \qquad\text{as soon as}\qquad
    e^{-2n} \ \le\ \frac{1}{C(R,K,\beta)} ,
  \end{equation*}
  which holds for every $n \ge n_0$, with $n_0$ appropriate.
\end{proof}

Next we recall an estimate on martingale differences. If $m = 1$ this is
\cite{Kifer86}*{Chapter~III, Lemma~2.1}. The case for general $m$ follows identically.
  \begin{lemma}\label{lem:lln_lag}
    Let $m \ge 1$ be an integer and let $(X_k)_{k \ge 0}$ be random variables such that
    $X_k$ is $\mF_{2k+2}$-measurable and $\sup_k \EE[X_k^2] < \infty$. Then
    \begin{equation}\label{eq:lln_lag}
      \lim_{n \to \infty} \frac1n \sum_{k < n}
      \Bigl( X_k - \EE\bigl[ X_k \bigm| \mF_{2k - 2m} \bigr] \Bigr) \ =\ 0
      \qquad \text{almost surely.}
    \end{equation}
  \end{lemma}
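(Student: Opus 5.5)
The plan is to reduce the statement to the classical strong law of large numbers for martingales with bounded second moments, after splitting the lagged centring into $m+1$ interleaved martingale difference sequences. Define $Y_k \eqdef X_k - \EE[X_k \mid \mF_{2k-2m}]$, with the convention $\mF_{j} = \mF_0$ for $j<0$. Each $Y_k$ is $\mF_{2k+2}$-measurable and satisfies $\EE[Y_k^2] \le \EE[X_k^2] \le C$ uniformly in $k$. The difficulty is that the $Y_k$ are not themselves martingale differences with respect to $(\mF_{2k+2})_k$, because the conditioning lags by $m+1$ steps.

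To handle the lag, telescope the conditional expectations:
\begin{equation*}
  Y_k = \sum_{i=0}^{m} \Bigl( \EE\bigl[X_k \bigm| \mF_{2k+2-2i}\bigr] - \EE\bigl[X_k \bigm| \mF_{2k-2i}\bigr] \Bigr) \eqdef \sum_{i=0}^m D^{(i)}_k ,
\end{equation*}
using $\EE[X_k\mid\mF_{2k+2}]=X_k$. For fixed $i$, the term $D^{(i)}_k$ is $\mF_{2(k-i)+2}$-measurable and has zero conditional expectation given $\mF_{2(k-i)}$. Reindexing by $j = k-i$, the sequence $(D^{(i)}_{j+i})_j$ is therefore a martingale difference sequence with respect to the filtration $(\mF_{2j+2})_j$. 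Its second moments are bounded by $4\,\EE[X_k^2] \le 4C$, by the contraction property of conditional expectation in $L^2$.

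For each fixed $i$, apply the martingale strong law. The martingale $\sum_{j} D^{(i)}_{j+i}/(j+1)$ has orthogonal increments, so its $L^2$ norm is bounded by $\sum_j 4C/(j+1)^2 < \infty$. It is therefore an $L^2$-bounded martingale, which converges almost surely. Kronecker's lemma then gives $\frac1n\sum_{j<n} D^{(i)}_{j+i} \to 0$ almost surely. The shift of index by $i$ and the finitely many boundary terms, each almost surely finite, do not affect the Cesàro limit. Summing over the finitely many $i \in \{0,\dots,m\}$ yields \eqref{eq:lln_lag}.

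The only point needing care is the measurability bookkeeping in the telescoping, namely verifying that $D^{(i)}_k$ is adapted and centred with respect to the reindexed filtration, including at small $k$ where $2k-2i<0$. This is routine once the convention $\mF_{j}=\mF_0$ for negative $j$ is fixed, and I expect no genuine obstacle: the case $m=1$ of \cite{Kifer86} carries over verbatim.
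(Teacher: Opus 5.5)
Your proof is correct. The paper gives no argument of its own: it cites \cite{Kifer86}*{Chapter~III, Lemma~2.1} for $m=1$ and states that general $m$ ``follows identically''. Your route, telescoping into $m+1$ interleaved martingale difference sequences and then applying $L^2$ martingale convergence and Kronecker's lemma, is exactly the standard martingale strong law behind that citation. It makes the extension explicit, and it also covers $m=0$, which the paper invokes later even though the statement assumes $m\ge1$.
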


  Next introduce $\widehat V \eqdef V_\rho^{1/2}$ for
  the square root of the functional~\eqref{eq:lyapunov_S4}, at some $\rho > 1$ fixed later.
  \begin{lemma}\label{lem:occupation}
    There exists a constant $C_\star >0 $ (independent of all other parameters) such that, for every $K, \beta > 1$, almost
    surely
    \begin{equation}\label{eq:occupation_bound}
      \limsup_{n \to \infty} \frac1n \sum_{k<n}
        \bigl( \widehat V(u_{2k}) + \widehat V(u'_{2k}) \bigr) \ \le\  C_\star .
    \end{equation}
  \end{lemma}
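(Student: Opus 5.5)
The plan is to derive \eqref{eq:occupation_bound} from the super-Lyapunov property \eqref{eq:super_lyapunov} together with the martingale law of large numbers of Lemma~\ref{lem:lln_lag}. The first copy $u_{2k}$ is simply the Navier--Stokes velocity driven by a Wiener process. The second copy $u'_{2k}$ is driven by $W'$, which is independent of $W$ up to the meeting time and afterwards equal to $W$ plus the control $h$. Since the control only acts while $\psi_K(\mE_n)>0$ and $z_{2n}\in\mC_R$, its size is bounded pathwise, by Lemma~\ref{lem:cm_regularity}, by a constant $C(R,K,\beta)\rho_{2n}\le C(R,K,\beta)$. So the base equation for $u'$ is the SNS equation with an additional bounded-energy forcing on each cycle.

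\textbf{Step 1: one-step drift inequality.} I would show that for $\gamma$ large there is a constant $C$ such that
\[
\EE\bigl[\widehat V(u_{2k+2})\bigm|\mF_{2k}\bigr]\le e^{-\gamma}\widehat V(u_{2k})+C_\gamma ,
\]
and the same bound for $u'$ with a constant $C(R,K,\beta)$. For $u$ this follows from \eqref{eq:super_lyapunov} and Jensen, since $\widehat V=V_\rho^{1/2}$. For $u'$ I would use the Girsanov density on $[2k,2k+2]$. Because $\|h\|_{L^2([2k,2k+2])}\le C(R,K,\beta)$ deterministically, Cauchy--Schwarz gives
\[
\EE[\widehat V(u'_{2k+2})\mid\mF_{2k}]\le \EE^{\mathrm{SNS}}[V_\rho(u_{2}^{u'_{2k}})]^{1/2}\,\EE[\mathcal Z^2]^{1/2}\le e^{C(R,K,\beta)}\bigl(e^{-\gamma}V_\rho(u'_{2k})+C_\gamma\bigr)^{1/2}.
\]
This is the reason for working with the square root $\widehat V$. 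One caveat is that the constant in front now depends on $(R,K,\beta)$. To get a $C_\star$ independent of parameters, I would choose $\gamma=\gamma(R,K,\beta)$ large, so that $e^{C}e^{-\gamma/2}\le\tfrac12$. This is possible because \eqref{eq:super_lyapunov} holds for every $\gamma$. The additive constant $e^{C}C_\gamma^{1/2}$ then still depends on the parameters. The alternative I would actually pursue is to note that once $2k\ge\tau$, or whenever the control is off, $u'$ is an exact SNS solution with some Brownian driver. Since the meeting time is finite and the control is active only when $\rho_{2n}\le e^{-2n}$, its $L^2$ norm is summable, with $\|h\|\lesssim_{R,K,\beta} e^{-2n}$. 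Hence for $k\ge k_0(R,K,\beta)$ the Girsanov factor satisfies $\EE[\mathcal Z^2]\le 2$, and the bound holds with a universal constant.

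\textbf{Step 2: averaging.} Set $X_k=\widehat V(u_{2k+2})$. Iterating the drift with a lag $m$ large gives
\[
\EE[X_k\mid\mF_{2k-2m}]\le e^{-\gamma m}\widehat V(u_{2k-2m})+2C_\gamma .
\]
Summing over $k$, dividing by $n$, and using Lemma~\ref{lem:lln_lag}, the averages satisfy
\[
\limsup_n \tfrac1n\textstyle\sum_k X_k\le e^{-\gamma m}\limsup_n\tfrac1n\textstyle\sum_k X_k+2C_\gamma ,
\]
which yields the bound with $C_\star=4C_\gamma$ for a fixed $\gamma$ such as $\gamma=1$. To make this rigorous, the averages must first be known to be finite. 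I would handle this by truncating, or by applying the argument to $\min(X_k,M)$.

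\textbf{Main obstacle.} Lemma~\ref{lem:lln_lag} requires $\sup_k\EE[X_k^2]<\infty$, that is, uniform moments of $V_\rho(u_{2k})$. For $u$ this follows from \eqref{eq:super_lyapunov} only if $\EE V_\rho(u_0)<\infty$ under the initial law $\mu_i$. I would justify this via stationarity: by \eqref{eq:super_lyapunov}, every invariant measure integrates $V_\rho$. For $u'$ the same moment bound requires the Girsanov control of Step 1, uniformly in $k$, and this is the step I expect to be delicate.
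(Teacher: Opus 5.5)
Your architecture is the paper's. You use a one-step drift bound for $\widehat V=V_\rho^{1/2}$ along both copies, obtained from \eqref{eq:super_lyapunov} and Jensen. It is valid after a deterministic index $n_0(K,\beta,\delta)$, beyond which Lemma~\ref{lem:cm_regularity} and $\rho_{2n}\le\delta e^{-2n}$ force $\|h_n\|_{\CM}\le 1$. Lemma~\ref{lem:lln_lag} then finishes. The paper works at lag one: with $b_k=\widehat V(u_{2k})+\widehat V(u'_{2k})$ and $d_k=b_k-\EE[b_k\mid\mF_{2k-2}]$, it writes $b_k\le\frac12 b_{k-1}+2C_\star+d_k$ and sums over $n_0<k\le n$. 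Where you depart from this, there is a genuine problem.

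The step that fails is the drift for $u'$ via a bounded second moment of a Girsanov density. The shift is not adapted. On $[2n,2n+1)$ the control is built from $M_n$, $J_n$ and $\psi_K(\mE_n)$, which read the noise on the whole cycle $[2n,2n+2]$. The density of the law of $W'$ on a cycle is therefore the Ramer--Kusuoka density \eqref{e:ln-def}, and $\EE[L_n^2\mid\mathcal B_n]$ requires an exponential moment of the Skorohod integral $\delta(h_n)$.

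The bound $\EE[\mathcal Z^2]\le e^{\sup\|h\|^2}$ that you implicitly use holds for Dol\'eans exponentials of adapted shifts. For anticipating shifts, smallness or summability of $\|h_n\|_{\CM}$ alone gives nothing of the sort. The paper does not supply it either: Lemma~\ref{lem:summability} controls only $\EE[(\sqrt{L_n}-1)^2\mid\mathcal B_n]$, through $-\EE[\log L_n\mid\mathcal B_n]$, and this does not bound $\EE[L_n^2\mid\mathcal B_n]$. To save your route you would need a separate exponential-integrability estimate for $\delta(h_n)$, exploiting also $\|D_Wh_n\|_{HS}\lesssim\|\vartheta_{2n}\|$ from Lemma~\ref{lem:malliavin_control_bounds}.

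The paper avoids any change of measure. For $n\ge n_0$ it treats $u'$ as the SNS velocity with an additional forcing $Q^{1/2}h$ of Cameron--Martin norm at most one on each cycle. It then asserts that \eqref{eq:super_lyapunov} survives this perturbation, with modified constants that are parameter-free because the bound on $h$ is. That is the statement you should aim to prove for $u'$: it concerns the perturbed dynamics, not the law of the driver.

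There are two smaller points. First, your inequality between $\limsup$'s needs a priori finiteness. Truncating to $\min(X_k,M)$ does not close it, because the conditional bound for the truncated variable still has the untruncated $\widehat V(u_{2k-2m})$ on the right. No truncation is needed. At finite $n$, $\sum_{m\le k<n}\widehat V(u_{2k-2m})$ is at most $\sum_{k<n}X_k$ plus finitely many initial terms. Hence $(1-e^{-\gamma m})\sum_{k<n}X_k$ is bounded by those terms plus $2C_\gamma n+\sum_{k<n}d_k$, and dividing by $n$ concludes, exactly as in the paper's telescoping.

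Second, for the moment hypothesis of Lemma~\ref{lem:lln_lag}: where the lemma is used, the data lie in $\mC_R$. So $V_\rho(u_0)\le C(R)$ deterministically, since $\|u\|_{H^{\beta_\star}}\lesssim\|w\|_{H^{s_\star}}$, and stationarity is not needed. For $u'$ the uniform moments come from the same drift bound, so they too depend on repairing the step above.
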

  \begin{proof}
    Fix the rate in~\eqref{eq:super_lyapunov} to be $\gamma \eqdef \log 2$ and set
    $C_\star \eqdef C_\gamma^{1/2}$. On $\{2n < \tau\}$
    Lemma~\ref{lem:cm_regularity} and $\rho_{2n} \le \delta e^{-2n}$ give us
    \[
      \|h_n\|_{H_{\mC\mM}} \ \le\ C_K \beta^{-1/2} \delta e^{-2n} \ \le\ 1 ,
      \qquad n \ge n_0 ,
    \]
    for a deterministic $n_0 = n_0(K, \beta, \delta)$. So for $t \ge 2n_0$ the
    shift $h$ is
    bounded uniformly in all parameters, and $u'$, driven by $W + \int h$, obeys
    the same estimates as $u$, up to slightly changing all the constants. By
    turns~\eqref{eq:super_lyapunov} and Jensen, we then obtain
    \begin{equation}\label{eq:bwei}
      \EE\bigl[ \widehat V(u_{t+2}) \bigm| \mF_t \bigr]
        \le \frac12 \widehat V(u_t) + C_\star ,
      \qquad
      \EE\bigl[ \widehat V(u'_{t+2}) \bigm| \mF_t \bigr]
        \le \frac12 \widehat V(u'_t) + C_\star ,
      \qquad \forall t \ge 2n_0 .
    \end{equation}
    Denoting
    \[
    b_k \eqdef \widehat V(u_{2k}) + \widehat V(u'_{2k}),
    \qquad
    d_k \eqdef b_k - \EE[b_k \mid \mF_{2k-2}],
    \]
    we have
    \[
    b_k \ \le\ \frac12 b_{k-1} + 2C_\star + d_k .
    \]
    Hence, summing over $n_0 < k \le n$ and discarding $b_n \ge 0$,
    \[
      \frac12 \sum_{n_0 \le k < n} b_k
      \ \le\ b_{n_0} + 2 C_\star n + \sum_{n_0 < k \le n} d_k .
    \]
    Since
    $b_k \in \mF_{2k}$ with $\sup_k \EE[b_k^2] < \infty$ by~\eqref{eq:super_lyapunov},
    Lemma~\ref{lem:lln_lag} at $m = 1$ gives $n^{-1}\sum_{k \le n} d_k \to 0$.
    This completes the proof.
  \end{proof}

Recall the stopping time $\tau$ in \eqref{eq:stopping_time_tau},
which we use to measure synchronization. The following is the main result of
this section.
\begin{proposition}
  \label{prop:asymptotic_contraction}
    There exist $R, K, \beta > 1$ with the following property. For every
    $\delta \in (0,1)$ there is a $\delta_\star \in (0, \delta)$ such that, with $\tau$ the
    stopping time~\eqref{eq:stopping_time_tau} at that $\delta$,
    \begin{equation}\label{eq:tau_infinite}
      \PP( \tau = \infty ) \ >\ 0 , \qquad
      \forall z_0, z'_0 \in \mC_R \ \text{ with } \ \|z_0 - z'_0\| \le \delta_\star .
    \end{equation}
    In particular the two copies synchronize with positive probability,
    \begin{equation}\label{eq:synchronisation}
      \PP\Bigl( \lim_{n \to \infty}\|z_{2n} - z'_{2n}\| = 0 \Bigr) \ >\ 0 .
    \end{equation}
\end{proposition}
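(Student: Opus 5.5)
The plan is to show that, for suitable parameters, the probability that the stopping time \eqref{eq:stopping_time_tau} is ever triggered stays below one. I would fix the parameters in order. First fix $R \ge R_\star$. Then fix $K$ large enough that the good events $E_K^n$ and the cutoff $\psi_K(\mE_n) = 1$ hold for ``most'' cycles. Here I would exploit the super-Lyapunov bound \eqref{eq:super_lyapunov} together with Proposition~\ref{prop:energy_enstrophy}, which bound the moments of $\mE_n$ and $(\int \Gamma^2)^{1/4}$ given $z_{2n}$ in terms of $\widehat V(u_{2n})$ and $\|\pi_{2n}\|_{H^1}$. Next choose $\delta_1$ tiny and, via Proposition~\ref{prop:one_step_contraction}, pick $N$ and $\beta$. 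Finally choose $\delta_\star$ small.

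The core estimate is a supermartingale-type bound for $\rho_{2n}^2$ on $\{2n<\tau\}$. On cycles where $z_{2n}\in\mC_R$ and $E_K^n$ holds, \eqref{eq:one_step_second_moment} gives
\[
\EE[\rho_{2n+2}^2\mathbf 1_{E_K^n}\mid\mF_{2n}] \le (\delta_1 + C\delta^2 e^{-4n})\rho_{2n}^2 .
\]
On cycles where the control is partially on but $E_K^n$ fails, Lemma~\ref{lem:active_step} gives growth by at most $e^{C(1+K^2)}$. On cycles outside $\mC_R$, or with $\psi_K=0$, there is no control. For these I need a pathwise bound $\rho_{2n+2}\le e^{C\int\Gamma}\rho_{2n}$ from the difference energy estimates of Lemmas~\ref{lem:base_difference} and~\ref{lem:fibre_difference}, with $\Gamma$ evaluated along both trajectories. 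This bound is only useful on average, through $\widehat V$ and the occupation Lemma~\ref{lem:occupation}.

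I would then work with $\log\rho$ rather than $\rho^2$. Write
\[
\log\rho_{2n} \le \log\rho_0 + \sum_{k<n}\xi_k ,
\]
where $\xi_k$ equals $\frac12\log\delta_1$ plus a martingale fluctuation on good cycles, and is bounded by $C(1+\int_{2k}^{2k+2}\Gamma)$ otherwise. Lemma~\ref{lem:lln_lag} and Lemma~\ref{lem:occupation} then show that almost surely $\frac1n\sum_{k<n}\xi_k \le p_{\rm good}\cdot\frac12\log\delta_1 + C(K,R)(1-p_{\rm good}) + o(1)$. The good fraction $p_{\rm good}$ is bounded below uniformly because the occupation of $\mC_R$ and of $E_K^n$ is controlled by $\widehat V$ through Chebyshev. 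Taking $\delta_1$ small enough makes this asymptotic rate less than $-3$, which beats the threshold $e^{-2n}$. Positive probability of $\tau=\infty$ then follows by a standard argument. First, with positive probability the sum $\sup_n(\sum_{k<n}\xi_k+2n)$ is bounded by some $M$, since the limsup of the rate is negative almost surely. Second, choose $\delta_\star\le \delta e^{-M}$.

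The main obstacle is the circularity: the estimates above were derived on $\{2n<\tau\}$ and with controls active, but $\tau$ itself depends on them. I would handle this by comparing with an auxiliary process in which the control is continued, or equivalently by proving the rate bound for a stopped sum. The second difficulty is the uniformity of the large-deviation-type bound in the initial data in $\mC_R$. I expect to obtain it from the uniform Lyapunov drift \eqref{eq:bwei}, using that $h$ is eventually small in $\CM$.

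Synchronization \eqref{eq:synchronisation} is then immediate, since on $\{\tau=\infty\}$ we have $\rho_{2n}\le\delta e^{-2n}\to0$.
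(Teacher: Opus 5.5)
Your outline is correct and has the same skeleton as the paper's proof. It uses a per-cycle bound on $\log(\rho_{2n+2}/\rho_{2n})$ that contracts on good cycles through \eqref{eq:one_step_second_moment}. Otherwise the increment grows by at most $e^{C(1+K^2)}$ (Lemma~\ref{lem:active_step}) or $e^{C(1+\mE_n+\mE'_n)}$ (Lemmas~\ref{lem:base_difference} and~\ref{lem:fibre_difference}). An almost sure rate then follows from Lemma~\ref{lem:lln_lag}, Lemma~\ref{lem:occupation} and the Chebyshev bound $\PP(z_{2k}\notin\mC_R\mid\mF_{2k-2})\lesssim R^{-2}\widehat V(u_{2k-2})$. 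Finally, $\delta_\star$ is read off from the almost sure finiteness of $\sup_n(\,\cdot\,+2n)$.

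The difference lies in how the contraction is extracted. The paper applies Markov's inequality to \eqref{eq:one_step_second_moment} to produce the event $F^n_K=\{\rho_{2n+2}\le e^{-a}\rho_{2n}\}\cap E^n_K$, of conditional probability at least $1-\eta$. It then bounds every increment by $-a+\zeta_n$ with $\zeta_n=(a+Z_n)\mathbf{1}_{(F^n_K)^c}\ge 0$. This forces it to tie $K=(2C_R/\eta)^{1/8}$ and $\delta_1=\tfrac14\eta e^{-2a}$ to $\eta$, so that the cost $C(1+K^2)$ of a failed contraction, weighted by $\eta^{1/2}$, still vanishes; $R$ is then taken large so that the off-$\mC_R$ cost $C/R$ is below $a/4$.

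You instead apply Jensen to the logarithm of the conditional second moment. This gives a drift of order $\log\delta_1$ on cycles starting in $\mC_R$ and absorbs failed contractions on $E^n_K$ with no balancing, so $K$ and $\delta_1$ decouple and the rate can be made as negative as you like. The price is twofold. You need explicitly that a positive fraction of cycles start in $\mC_R$, which the same Chebyshev bound gives for $R$ large. You also need to truncate the logarithm from below, e.g.\ $\tfrac12\log(\rho_{2n+2}^2/\rho_{2n}^2+e^{-2L})$, to meet the second-moment hypothesis of Lemma~\ref{lem:lln_lag}. The paper's nonnegative $\zeta_n$ avoids both. It also makes it immediate to absorb the finitely many cycles $n<n_0$, where the quartic term and Lemma~\ref{lem:active_step} are not yet available, into the constant $\Sigma_0$.

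Two points of your sketch need adjusting.

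First, suppose your stopped sum freezes the increments after $\tau$. Then on $\{\tau<\infty\}$ it stops decreasing and $\sup_n(\,\cdot\,+2n)=\infty$, so ``the rate is almost surely below $-2$'' presupposes $\tau=\infty$. Instead, give cycles with $2k\ge\tau$ a fixed negative increment; in the paper's bookkeeping this amounts to replacing $\zeta_k$ by $\zeta_k\mathbf{1}_{\{2k<\tau\}}$ while keeping $-a$. Continuing the control past $\tau$ is not a substitute, since the quartic term of \eqref{eq:one_step_second_moment} and Lemma~\ref{lem:active_step} are controlled only through $\rho_{2n}\le\delta e^{-2n}$.

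Second, bounding the energy $\mE'_n$ of the second copy in terms of $\|\pi'_{2n}\|_{H^1}$ does not close. The point $z'_{2n}$ need not lie in $\mC_R$, and Lemma~\ref{lem:occupation} controls only averages of $\widehat V$. The paper gets around this by conditioning one cycle back, on $\mF_{2n-2}$, and using Lemma~\ref{lem:uniform_fibre_moment}, which is uniform in the fibre datum.
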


\begin{proof}[Proof of Proposition~\ref{prop:asymptotic_contraction}]

    Fix a parameter $a \ge 4$ (the choice is irrelevant in the proof and the
    parameter will not be optimized over), and consider the energy of the forced process:
  \begin{equation*}
    \mE'_n \eqdef \int_{2n}^{2n+2} \|w'_s\|_{H^1}^2 + \|\pi'_s\|_{H^1}^2 \ud s.
  \end{equation*}
  Recall the good event $E^n_K$ of~\eqref{eq:good_event_EKn}, on which the energy is bounded
    $\mE_n \le 2K^2$. The proof balances the following two estimates (a
    contraction on a good event, and a uniform upper bound in any event):
\begin{equation*}
  \rho_{2n+2} \le e^{-a}\rho_{2n} \quad\text{likely on } \{z_{2n} \in \mC_R\} \cap E^n_K ,
  \qquad
  \rho_{2n+2} \le e^{C(1 + \mE_n + \mE'_n + K^2)}\rho_{2n} \quad\text{always,}
\end{equation*}
    the first by Proposition~\ref{prop:one_step_contraction} and the second by
    Lemmas~\ref{lem:base_difference} and~\ref{lem:fibre_difference}(ii), and
    Lemma~\ref{lem:active_step} (see also the discussion further down).

    In the following proof the parameters are fixed in the order
    \begin{equation}\label{eq:param_order}
      R \ \longrightarrow\ \eta \ \longrightarrow\ K, \delta_1
      \ \longrightarrow\ N, \beta ,
    \end{equation}
    each depending only on those before it (note that we do not need to fix $\delta$). For each
    $\delta \in (0,1)$ we then choose $n_0$ and $\delta_\star \ll \delta$, both depending on $\delta$. 
    Now, denote by $F^n_{K}$ the contracting event
\begin{equation}
      F^n_{K} \eqdef \{ \rho_{2n+2} \le e^{-a} \rho_{2n} \} \cap E^n_K ,
\end{equation}
    which, unlike $\{z_{2n} \in \mC_R\}$, is not $\mF_{2n}$-measurable. For any threshold
    $\eta \in (0,1)$, Lemma~\ref{lem:short_time_energy_proof} at $p = 8$ gives a constant
    $C_R$ with
\begin{equation}\label{eq:energy_decay_conditioning}
      \PP\bigl( (E^n_K)^c \bigm| \mF_{2n} \bigr)
      \ \le\ C_R K^{-8} \ \le\ \frac{1}{2}\eta
      \qquad \text{on } \{ z_{2n} \in \mC_R \} ,
\end{equation}
    where in the last inequality we set $K \eqdef \bigl( 2C_R/\eta \bigr)^{1/8}$ so that
    $C_R K^{-8} = \tfrac12\eta$. On $\{z_{2n} \in \mC_R\} \cap \{2n < \tau\}$ the
    hypotheses of Proposition~\ref{prop:one_step_contraction} hold. Therefore we apply the
    second moment estimate~\eqref{eq:one_step_second_moment} with
    $\delta_1 \eqdef \tfrac14 \eta e^{-2a}$, fixing $N=N(\delta_1, K)$ and
    $\beta=\beta(\delta_1, K, R)$ as functions of $R, \eta$ alone. Since
    \[
      \{\rho_{2n+2} > e^{-a}\rho_{2n}\} \cap E^n_K \subseteq \{ \rho_{2n+2}^2 \mathbf 1_{E^n_K} > e^{-2a}\rho_{2n}^2 \},
    \]
    Markov's inequality followed by~\eqref{eq:one_step_second_moment} gives on $\{ z_{2n} \in \mC_R \} \cap \{ 2n < \tau \}$
\begin{equation*}
      \begin{aligned}
        \PP\bigl( \{ \rho_{2n+2} > e^{-a}\rho_{2n} \} \cap E^n_K \bigm| \mF_{2n} \bigr)
        \ &\le\ \frac{e^{2a}}{\rho_{2n}^2}
        \EE\bigl[ \rho_{2n+2}^2 \mathbf 1_{E^n_K} \bigm| \mF_{2n} \bigr] \\
        \ &\le\ \frac\eta4 + e^{2a} C(\delta_1, R, K)\rho_{2n}^2.
      \end{aligned}
\end{equation*}
    On $\{ 2n < \tau \}$ we have $\rho_{2n} \le \delta e^{-2n}$, so the second term with
    $C(\delta_1, R, K)$ can be bounded by
    \[
      e^{2a} C(\delta_1, R, K)\rho_{2n}^2 \le e^{2a} C(\delta_1, R, K)\delta^2 e^{-4n} \le \frac{1}{4}\eta
    \]
    for every $n \ge n_0$, with $n_0 = n(R,\eta,\delta)$ suitably large and deterministic. Since
    \[
      F^{n,c}_{K} \subseteq \bigl( \{\rho_{2n+2} > e^{-a}\rho_{2n}\} \cap E^n_K \bigr) \cup (E^n_K)^c,
    \]
    adding this to the bound \eqref{eq:energy_decay_conditioning} gives
\begin{equation}\label{eq:bcon}
      \PP\bigl( F^n_{K} \bigm| \mF_{2n} \bigr) \ \ge\ 1-\eta
      \qquad \text{on } \{ z_{2n} \in \mC_R \} \cap \{ 2n < \tau \} .
\end{equation}

    Off $F^n_K$ we only upper bound the growth. For $n \ge n_0$,
\begin{equation*}
  \rho_{2n+2} \le e^{C(1+\mE_n+\mE'_n)}\rho_{2n} \quad\text{if } \psi_K(\mE_n) = 0 ,
  \qquad
  \rho_{2n+2} \le e^{C(1+K^2)}\rho_{2n} \quad\text{if } \psi_K(\mE_n) > 0 ,
\end{equation*}
    by Lemmas~\ref{lem:base_difference} and~\ref{lem:fibre_difference}(ii) in the first case
    and Lemma~\ref{lem:active_step} in the second. Since the control acts only on
    $\{z_{2n} \in \mC_R\}$, both are covered by
\begin{equation}\label{eq:bgro}
      \log\frac{\rho_{2n+2}}{\rho_{2n}} \ \le\ Z_n \eqdef
      C\bigl( 1 + \mE_n + \mE'_n \bigr) + C\bigl( 1 + K^2 \bigr)\mathbf 1_{z_{2n} \in \mC_R} , \qquad n \geq n_0,
\end{equation}
    with $C$ uniform over all parameters. With the contraction on $F^n_K$ this gives
    \begin{equation}\label{eq:binc}
      \log\frac{\rho_{2n+2}}{\rho_{2n}}
      \ \le\ -a\, \mathbf 1_{F^n_{K}} + Z_n \mathbf 1_{F^{n,c}_{K}}
      \ =\ -a + \zeta_n , \qquad
      \zeta_n \eqdef (a + Z_n)\, \mathbf 1_{F^{n,c}_{K}} \ \ge\ 0 ,
      \qquad \forall n \ge n_0 .
    \end{equation}

    For $n<n_0$, the bound~\eqref{eq:bgro} is unavailable, but $2n < \tau$ gives
    $\rho_{2n} \le \delta$, so $\|h_n\|^2 \le C_K\beta^{-1}\delta^2$ by
    Lemma~\ref{lem:cm_regularity} and Lemmas~\ref{lem:base_difference}
    and~\ref{lem:fibre_difference}(ii) give
\begin{equation*}
  \rho_{2n+2} \ \le\ \exp\Bigl( C(1+\mE_n+\mE'_n)
    + \frac12\log\bigl(1+C_K\beta^{-1}\bigr) \Bigr)\rho_{2n} .
\end{equation*}
    Since $n_0$ is deterministic these cycles are finitely many, so the total growth they
    produce,
    \begin{equation}\label{e:def-Sigma0}
      \Sigma_0 \ \eqdef\ \log\frac{\rho_{2n_0}}{\rho_0} \ +\ a n_0 ,
\end{equation}
    is almost surely finite, the offset $a n_0$ absorbing the shift from starting at $n_0$.
    Summing~\eqref{eq:binc} over $n_0 \le k < n$ and using $\zeta_k \ge 0$ to extend the sum
    to every $k < n$,
    \begin{equation}\label{eq:bwalk}
      \rho_{2n} \ \le\ \rho_0\, e^{\Sigma_0}\,
      \exp\Bigl( -a n + \sum_{k<n}\zeta_k \Bigr) ,
      \qquad 2n < \tau .
    \end{equation}
    It remains to bound the long time average of $\zeta$.

    We bound $\zeta_k$ by conditioning first on $\mF_{2k}$. On $\{z_{2k} \in \mC_R\}$,
    Cauchy--Schwarz with~\eqref{eq:bcon} and the moments of
    Lemma~\ref{lem:short_time_energy_proof} give
    $c_\eta \eqdef \bigl( a + C_R + C(1+K^2) \bigr)\eta^{1/2}$, while on
    $\{z_{2k} \notin \mC_R\}$, $\mathbf 1_{F^{k,c}_K} \le 1$ and
    $Z_k = C(1+\mE_k+\mE'_k)$, so
\begin{equation}\label{eq:bin}
      \EE\bigl[ \zeta_k \bigm| \mF_{2k} \bigr]
      \ \le\ c_\eta \mathbf 1_{z_{2k} \in \mC_R}
      + \EE\bigl[ a + C(1 + \mE_k + \mE'_k) \bigm| \mF_{2k} \bigr]
        \mathbf 1_{z_{2k} \notin \mC_R} .
\end{equation}
    Since $K = (2C_R/\eta)^{1/8}$ we have $K^2\eta^{1/2} = (2C_R)^{1/4}\eta^{1/4}$, so
    $c_\eta \to 0$ as $\eta \to 0$ at fixed $R$. For the second term,
\begin{equation*}
  \EE\bigl[ \|w_s\|_{H^1}^4 \bigm| \mF_{2k-2} \bigr] \lesssim \widehat V(u_{2k-2}) ,
  \qquad
  \EE\bigl[ \|\pi_s\|_{H^1}^4 \bigm| \mF_{2k-2} \bigr] \lesssim \widehat V(u_{2k-2}) ,
\end{equation*}
    by Proposition~\ref{prop:energy_enstrophy}(i) at $\rho$ large and by
    Lemma~\ref{lem:uniform_fibre_moment}, so that
\begin{equation}\label{eq:bmom}
  \EE\bigl[ (1 + \mE_k + \mE'_k)^2 \bigm| \mF_{2k-2} \bigr]
  \ \le\ C\bigl( \widehat V(u_{2k-2}) + \widehat V(u'_{2k-2}) \bigr) .
\end{equation}
    Throughout we assume that $t_0 \le 2$, for the $t_0$ of
    Lemma~\ref{lem:uniform_fibre_moment}, which only lightens the notation. The definition
    of $\mC_R$ and Chebyshev's inequality then guarantee
    \begin{equation*}
  \PP\bigl( z_{2k} \notin \mC_R \bigm| \mF_{2k-2} \bigr)
      \ \le\ \frac1{R^2}\EE\bigl[ ( \|w_{2k}\|_{H^{s_\star}} + \|\pi_{2k}\|_{H^1} )^2
  \bigm| \mF_{2k-2} \bigr]
      \ \le\ \frac C{R^2}\,\widehat V(u_{2k-2}) ,
    \end{equation*}
    so that taking $\EE[\,\cdot\mid\mF_{2k-2}]$ in~\eqref{eq:bin} and applying
    Cauchy--Schwarz,
\begin{equation}\label{eq:bout}
      \EE\bigl[ \zeta_k \bigm| \mF_{2k-2} \bigr]
      \ \le\ c_\eta + \frac CR\bigl( \widehat V(u_{2k-2}) + \widehat V(u'_{2k-2}) \bigr) .
\end{equation}

    Now $\zeta_k$ is $\mF_{2k+2}$-measurable with second moments bounded uniformly in $k$
    by~\eqref{eq:bmom}, so Lemma~\ref{lem:lln_lag} at $m = 1$ removes the conditional
    expectation. With~\eqref{eq:bout} and~\eqref{eq:occupation_bound},
    \begin{equation}\label{eq:blim}
      \limsup_{n \to \infty}\frac1n\sum_{k<n}\zeta_k \ \le\ c_\eta + \frac CR
      \qquad\text{almost surely.}
    \end{equation}
    Since $C$ is absolute we may choose $R$ with $C/R \le a/4$, and then $\eta$ with
    $c_\eta \le a/4$, so that the right side of~\eqref{eq:blim} is at most $a/2$.

    Finally, $S_n \eqdef -an + \sum_{k<n}\zeta_k$ satisfies $\limsup_n S_n/n \le -a/2 \le -2$
    by~\eqref{eq:blim}, so that
\begin{equation*}
      S^\star \eqdef \sup_{n \ge 0}\bigl( S_n + 2n \bigr)
\end{equation*}
    is almost surely finite. For $2n < \tau$~\eqref{eq:bwalk} reads
    $\rho_{2n} \le \rho_0 e^{\Sigma_0 + S^\star} e^{-2n}$, so
    $\{ \Sigma_0 + S^\star \le \log(\delta/\rho_0) \} \subseteq \{\tau = \infty\}$. As
    $\Sigma_0 + S^\star$ is almost surely finite there is a $\delta_\star \in (0, \delta)$ with
\begin{equation*}
      \PP( \tau = \infty )
      \ \ge\ \PP\bigl( \Sigma_0 + S^\star \le \log(\delta/\delta_\star) \bigr) \ >\ 0
\end{equation*}
    for every initial pair with $\rho_0 \le \delta_\star$, which is~\eqref{eq:tau_infinite}.

    Finally, on $\{\tau = \infty\}$ we have $\rho_{2n} \le \delta e^{-2n}$ for every $n$ by~\eqref{eq:stopping_time_tau}, so that $\{\tau = \infty\} \subseteq \{\rho_{2n} \to 0\}$ and~\eqref{eq:synchronisation} follows from~\eqref{eq:tau_infinite}.

\end{proof}

\subsection{Some moment bounds}
In this section we collect some moment estimates that are used in the proof of
the contraction.

\begin{lemma}\label{lem:short_time_energy_proof}
  For any $R > 0$ and $p \ge 1$, there exists a $ C_{p, R} > 0$ such that for any $z_{2n} = (w_{2n}, \pi_{2n}) \in \mC_R$:
  \begin{equation}
    \PP\left( \left(\int_{2n}^{2n+2}\|w_s\|_{H^1}^4 + \|\pi_s\|_{H^1}^4 \ud s \right)^{\frac{1}{4}} > K \Big| \mathcal{F}_{2n}\right) \le C_{p, R}  K^{-p}, \qquad \forall K>0.
  \end{equation}
\end{lemma}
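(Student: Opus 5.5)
By the Markov property and time homogeneity it suffices to treat $n = 0$, that is, to show $\PP(Y > K) \le C_{p,R} K^{-p}$ uniformly over $z_0 \in \mC_R$, where $Y \eqdef \bigl(\int_0^2 \|w_s\|_{H^1}^4 + \|\pi_s\|_{H^1}^4 \ud s\bigr)^{1/4}$. By Chebyshev, $\PP(Y > K) \le K^{-p}\EE[Y^p]$. Since $Y^4 \le 2 \sup_{s \le 2}\|w_s\|_{H^1}^4 + \int_0^2 \|\pi_s\|_{H^1}^4 \ud s$, it is enough to show two things. First, that $\EE\bigl[\sup_{s\le 2}\|w_s\|_{H^1}^{p}\bigr]$ is bounded uniformly on $\mC_R$. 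Second, that $\EE\bigl[(\int_0^2\|\pi_s\|_{H^1}^4\ud s)^{p/4}\bigr]$ is bounded uniformly on $\mC_R$. For the second, when $p \ge 4$ we use Jensen to reduce to $\int_0^2 \EE\|\pi_s\|_{H^1}^{p}\ud s$; smaller $p$ follow from larger ones.

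\emph{Base component.} On $\mC_R$ we have $\|w_0\|_{H^{s_\star}} \le R$, and $H^{s_\star} \hookrightarrow \mC^\beta$ for some non-integer $\beta \in (1, s_\star - 1)$. Proposition~\ref{prop:energy_enstrophy}(i) with $T = 2$ then gives $\EE[\sup_{s\le2}\|w_s\|_{\mC^\beta}^p] \le C(1+CR)^{pq}$, which dominates the $H^1$ norm.

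\emph{Fiber component.} Apply Proposition~\ref{prop:energy_enstrophy}(ii) with $s = 0$, $t \in [0,2]$, and exponent $p$ (so $\rho = p$). Since $\pi_0$ is deterministic with $\|\pi_0\|_{H^1} \le R$, this gives
\[
\EE\|\pi_t\|_{H^1}^p \le C(p,2)\,(R^{2p}+1)^{1/2}\, V_p(u_0)^{1/2}.
\]
The constant should be taken as $\sup_{t \le 2} C(p,t)$; if the stated constant is not monotone in $t$, we instead use the pathwise bound~\eqref{eq:fd_h1} as indicated below. It remains to bound $V_p(u_0)$ on $\mC_R$. Here $u_0 = K * w_0$ satisfies $\|u_0\|_{H^{\beta_\star}} \lesssim \|w_0\|_{H^{\beta_\star-1}} = \|w_0\|_{H^{s_\star}} \le R$, because $\beta_\star - 1 = s_\star$. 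Hence $V_p(u_0) \le (1+CR^2)^p e^{c_\star C R^2}$, and the fiber moment is bounded by a constant depending only on $p$ and $R$.

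The main obstacle I expect is the uniformity in $t \in [0,2]$ of the constant in~\eqref{eq:ee_fiber}, since the paper stresses that fiber moments are delicate. If this uniformity is unavailable, the fallback is the pathwise estimate~\eqref{eq:fd_h1}:
\[
\sup_{t\le2}\|\pi_t\|_{H^1}^2 \le R^2\exp\Bigl(C\int_0^2\|w_r\|_{H^1}^2\ud r\Bigr).
\]
Its $p$-th moments are controlled by exponential moments of the enstrophy integral. These follow from the $\exp(c_\star\|u\|_{H^1}^2)$ factor in $V_\rho$ together with~\eqref{eq:super_lyapunov}, at least for the small prefactor available after splitting $[0,2]$ into short subintervals. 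If this fails, the statement can be weakened to a sub-polynomial tail in $K$ (for example, $\log$-type), which would still suffice for the use in~\eqref{eq:energy_decay_conditioning}.
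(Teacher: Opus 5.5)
Your argument is correct and follows the paper's proof essentially step by step. Chebyshev and Jensen reduce the claim to moment bounds. The vorticity moments come from the standard estimates: you use Proposition~\ref{prop:energy_enstrophy}(i) where the paper invokes the super-Lyapunov bound~\eqref{eq:super_lyapunov}, which is an inessential difference. The fiber moments come from \cite{HPRY24}*{Lemma~6.4}, that is Proposition~\ref{prop:energy_enstrophy}(ii), with $V_p(u_0)$ bounded on $\mC_R$ because $\beta_\star - 1 = s_\star$; the paper likewise does not comment on the uniformity of $C(p,t)$ for $t \le 2$.

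The fallback is unnecessary, and its final claim is wrong. A logarithmic tail would not suffice downstream: the proof of Proposition~\ref{prop:asymptotic_contraction} takes $K = (2C_R/\eta)^{1/8}$ and needs $K^2\eta^{1/2} \to 0$, which requires a polynomial tail $K^{-p}$ with $p > 4$.
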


\begin{proof}

  The result follows if we prove that there exists $C_{p, R}>0$ such that
  \begin{equation}\label{eq:algebraic_moment_pi}
    \EE\bigg[\bigg(\int_{2n}^{2n+2} \|w_s\|_{H^1}^4 + \|\pi_s\|_{H^1}^4 \ud s\bigg)^p \bigg| \mathcal{F}_{2n}\bigg] \le C_{p, R} .
  \end{equation}
  By Jensen we have
  \begin{equation}\label{eq:moment_reduction}
    \EE\bigg[\bigg(\int_{2n}^{2n+2}\|w_s\|_{H^1}^4+\|\pi_s\|_{H^1}^4\ud s\bigg)^{p}\bigg|\mathcal F_{2n}\bigg]
    \lesssim \int_{2n}^{2n+2}\Big(\EE\big[\|w_s\|_{H^1}^{4p}\big|\mathcal F_{2n}\big]+\EE\big[\|\pi_s\|_{H^1}^{4p}\big|\mathcal F_{2n}\big]\Big)\ud s ,
  \end{equation}
  so that it suffices to prove
    \begin{equation*}
      \sup_{s \in [2n, 2n+2]} \Bigl(
        \EE\bigl[\|w_s\|_{H^1}^{4p} \bigm| \mathcal F_{2n}\bigr]
      + \EE\bigl[\|\pi_s\|_{H^1}^{4p} \bigm| \mathcal F_{2n}\bigr] \Bigr)
      \ \le\ C_{p,R} , \qquad z_{2n} \in \mC_R .
  \end{equation*}
  For the vorticity, $z_{2n}\in\mC_R$ gives
  $\|w_{2n}\|_{H^{s_\star}}\le R$, so the moment estimates
  follow from the Lyapunov functional~\eqref{eq:super_lyapunov}.

  For the projective process, we use \cite{HPRY24}*{Lemma~6.4}, which guarantees that for any $p \ge1$ and $0\le s\le t$,
  \begin{equation}\label{eq:hpry64moment}
    \EE\big[\|\pi_t\|_{H^1}^p \big| \mathcal F_s\big]\le C(p,t-s)\big(\|\pi_s\|_{H^1}^{2p}+1\big)^{1/2}V_\rho(u_s)^{1/2},
    \qquad V_\rho(u)=(1+\|u\|_{H^{\beta_\star}}^2)^\rho e^{c_\star\|u\|_{H^1}^2},
  \end{equation}
  with $V_\rho$ the functional in~\eqref{eq:lyapunov_S4} at $\rho = p$, and
  $\beta_\star=s_\star+1$, as in~\eqref{eq:X_star}. 
\end{proof}
The next moment bound on the projective component is uniform in the initial data
of that component. This result follows from~\cite{HPRY24}.

  \begin{lemma}\label{lem:uniform_fibre_moment}
    There is a $t_0 > 0$ such that the following holds. For every
    $q \ge 1$,  and all $t_0 \le t_1 \le t_2 < \infty$ there are
    $C = C(q, t_1, t_2)$ and $\rho =
    \rho(q, t_1, t_2)$ such that
    \begin{equation}\label{eq:uniform_fibre_moment}
      \sup_{t \in [t_1, t_2]} \EE_{z}\bigl[ \|\pi_t\|_{H^1}^{q} \bigr]
      \ \le\ C V_\rho(u)^{1/2}.
    \end{equation}
  \end{lemma}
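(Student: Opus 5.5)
We want Lemma~\ref{lem:uniform_fibre_moment}: a moment bound on $\|\pi_t\|_{H^1}^q$ at times $t\ge t_0$ that does not depend on the initial fiber point $\pi_0\in\Sph$. The plan is to exploit parabolic smoothing of the projective equation over an initial layer $[0,t_0]$, and then to propagate from $t_0$ to $t\in[t_1,t_2]$ using the conditional estimate~\eqref{eq:hpry64moment} (that is, \cite{HPRY24}*{Lemma~6.4}) with the Markov property at time $t_0$.

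\textbf{Step 1: an $L^2$-only smoothing bound on $[0,t_0]$.} We use the Dirichlet quotient $\mathcal D_t \eqdef \|\nabla\pi_t\|^2$. Since $\|\pi_t\|=1$, it obeys a differential inequality of the type $\partial_t \mathcal D \le -\|\Delta\pi - \langle\pi,\Delta\pi\rangle\pi\|^2 + C\|w\|_{H^{1+}}^2 \mathcal D$, which is the mechanism behind~\eqref{eq:fd_dirichlet}. We first integrate the $L^2$-level identity $\tfrac{d}{dt}\tfrac12\|\zeta\|^2 = -\|\nabla\zeta\|^2 + \langle\zeta, B(\zeta,w)\rangle$ for the unnormalized solution $\zeta$ of~\eqref{eq:linearized_abstract}; in the PSA case the last term is absent. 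This gives
\[
\int_0^{t_0}\mathcal D_s\,\ud s = -\log\|\zeta_{t_0}\| + \text{(terms controlled by } w),
\]
and the key input is a lower bound on $\log\|\zeta_{t_0}\|$. We then combine this with the pathwise Gr\"onwall~\eqref{eq:fd_dirichlet} applied from a time $s\in[0,t_0]$ chosen by the mean value theorem where $\mathcal D_s$ is at most its average. This yields $\|\pi_{t_0}\|_{H^1}^2 \le t_0^{-1}\bigl(\int_0^{t_0}\mathcal D\bigr)\exp\bigl(C\int_0^{t_0}\|w\|_{H^1}^2\bigr)$.

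\textbf{Main obstacle: the initial layer.} The bound on $\int_0^{t_0}\mathcal D$ requires $\|\zeta_{t_0}\|\ge c$ uniformly in $\zeta_0\in\Sph$. The heat semigroup destroys this, since very high frequency data decays like $e^{-|k|^2 t_0}$. So a direct uniform-in-$\pi_0$ $H^1$ bound at a fixed time cannot come from the Dirichlet quotient alone. Instead I will use the Lyapunov structure of \cite{HPRY24}: the spectral-median functional built there controls the projective process and is shown to come down from infinity in a fixed time $t_0$, with a bound $\EE_z[\,\cdot\,]\le CV_\rho(u)^{1/2}$ independent of the fiber datum. Concretely, I expect \cite{HPRY24} to supply some $p_0>0$ with $\EE_z\|\pi_{t_0}\|_{H^1}^{p_0}\le C V_\rho(u)^{1/2}$ uniformly over $\pi_0$, via the spectral median reaching a bounded level by time $t_0$. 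This step is where $t_0$ is fixed and is the genuine difficulty; everything after it is bookkeeping.

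\textbf{Step 2: upgrade and propagate.} Conditioning on $\mF_{t_0}$ and applying~\eqref{eq:hpry64moment} between $t_0$ and $t\in[t_1,t_2]$ gives
\[
\EE_z\|\pi_t\|_{H^1}^q \le C\,\EE_z\Bigl[(\|\pi_{t_0}\|_{H^1}^{2q}+1)^{1/2}V_\rho(u_{t_0})^{1/2}\Bigr].
\]
We split this by Cauchy--Schwarz. The $V_\rho(u_{t_0})$ factor is controlled by~\eqref{eq:super_lyapunov} in terms of $V_\rho(u)$. The high moment of $\|\pi_{t_0}\|_{H^1}$ is obtained by running Step~1 at higher exponent; if \cite{HPRY24} only provides low moments, we apply Step~1 at time $t_0/2$ and use~\eqref{eq:fd_h1} with exponential moments of $\int\|w\|_{H^1}^2$ (available through the $e^{c_\star\|u\|_{H^1}^2}$ factor in $V_\rho$) to raise the exponent. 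Choosing $\rho$ large depending on $q,t_1,t_2$ then gives~\eqref{eq:uniform_fibre_moment}.
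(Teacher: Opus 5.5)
You are right that the whole difficulty sits in the initial layer, and right to point to the spectral-median machinery of \cite{HPRY24}. However, the proposal does not close at that point, and the mechanism you give for reaching arbitrary $q$ fails.

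First, the input you posit, $\EE_z\|\pi_{t_0}\|_{H^1}^{p_0}\le CV_\rho(u)^{1/2}$ uniformly over $\pi_0$ at a deterministic time, is essentially the lemma itself for a single exponent with $t_1=t_2=t_0$. So nothing has been reduced to a known statement.

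Second, the upgrade from $p_0$ to the $2q$-th moment you need in Step~2 does not work. ``Running Step~1 at higher exponent'' is circular, since you have already observed that the Dirichlet-quotient argument gives nothing uniform in $\pi_0$ ($\|\zeta_{t_0}\|$ can be arbitrarily small). The bound \eqref{eq:fd_h1} is multiplicative, $\|\pi_t\|_{H^1}\le\|\pi_s\|_{H^1}\exp\bigl(C\int_s^t\|w_r\|_{H^1}^2\ud r\bigr)$, so it can never certify more integrability for $\|\pi_t\|_{H^1}$ than $\|\pi_s\|_{H^1}$ already has. It would also require exponential moments of $Cq\int\|w\|_{H^1}^2$ at a rate growing with $q$, whereas the factor $e^{c_\star\|u\|_{H^1}^2}$ in $V_\rho$ comes with a fixed $c_\star$. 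The same linearity in $\|\pi_s\|_{H^1}$ holds for \eqref{eq:hpry64moment} and \eqref{eq:import_transport}. Forward propagation never raises the moment exponent, so all moments must already be present in the initial-layer input.

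What the paper actually imports is \cite{HPRY24}*{Lemma~3.7 and Proposition~3.8}. There is a deterministic $t_0$ and a \emph{random} time $\eta\le t_0$ at which, uniformly in $\pi_0$ and for \emph{every} $p\ge1$, $\EE_z\|\pi_\eta\|_{H^1}^{2p}\le C_p(1+\|w\|_{H^{s_\star}})^{\rho_0 p}$; this is \eqref{eq:import_prop38}. Because $\eta$ is random, the paper does not condition at a fixed time. Instead it propagates pathwise from $s=\eta$ to $t\ge t_0$ using \eqref{eq:import_transport}. The growth factor there, $\exp\bigl(C\int\|u_r\|_{H^\zeta}\ud r\bigr)$, involves the first power of the norm, so it has exponential moments of every order, bounded by $V_\rho^{1/2}$ via \cite{HPRY24}*{Lemma~7.4}. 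Cauchy--Schwarz then closes the argument.

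The polynomial form of the input is also what produces the exponent $\tfrac12$ in the conclusion. The factor $(1+\|w\|_{H^{s_\star}})^{\rho_0 q}$ is absorbed by enlarging $\rho$, because $\|w\|_{H^{s_\star}}\simeq\|u\|_{H^{\beta_\star}}$ with $\beta_\star=s_\star+1$. With an input of the form $CV_\rho^{1/2}$, as you posit, your Cauchy--Schwarz step would give order $V_\rho^{1/4}\cdot V_\rho^{1/2}=V_\rho^{3/4}$. That cannot be bounded by $V_{\rho'}^{1/2}$ for any $\rho'$, since the exponential weight $e^{c_\star\|u\|_{H^1}^2}$ has a fixed constant.
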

  \begin{proof}
From 
\cite{HPRY24}*{Lemma~3.7} there exists a deterministic time $t_0 >0$ and a
random time $\eta \le t_0$ such that by \cite{HPRY24}*{Proposition~3.8}, for
every $p \ge 1$ (and up to performing a Sobolev embedding) and some $\rho_0>0$
    \begin{equation}\label{eq:import_prop38}
      \EE_z\bigl[ \|\pi_\eta\|_{H^1}^{2p} \bigr]
      \ \le\ C_p \bigl( 1 + \|w\|_{H^{s_\star}} \bigr)^{\rho_0 p} ,
    \end{equation}
Next, \cite{HPRY24}*{Lemma~6.4}, guarantees that for $\zeta \in (2,3)$ and $0
\le s \le t $
    \begin{equation}\label{eq:import_transport}
      \|\pi_t\|_{H^1} \ \le\ C(\zeta)
      \exp\Bigl( C(\zeta) \int_s^t \|u_r\|_{H^{\zeta}} \ud r \Bigr)
      \Bigl( \|\pi_s\|_{H^1}
      + (t-s) \sup_{s \le r \le t} \|u_r\|_{H^{\zeta+1}} \Bigr).
    \end{equation}
Finally \cite{HPRY24}*{Lemma~7.4} bounds
$\EE_z \exp\bigl( C_0 \int_0^{t_2} \|u_r\|_{H^{\zeta}} \ud r \bigr)$ by a
constant multiple of $V^{1/2}_{\rho}$ for some $\rho>0$.
 Collecting all the estimates with $ s =\eta \leq t_0 = t$, we obtain
    \begin{equation*}
      \EE_z\bigl[ \|\pi_t\|_{H^1}^{q} \bigr]
      \lesssim
      V^{1/2}_{\rho}.
    \end{equation*}
  \end{proof}

\subsection{Absolute continuity of path measures}
\label{sec:absolute_continuity}

The second part of the proof establishes that the law of the perturbed noise
path is absolutely continuous with respect to the original Wiener measure. Note
that our construction works in two steps. On a first time interval $[0,
T_\star]$ we run the two noises independently. Then (if at that time the
processes meet), we activate a control. Absolute continuity on $[0, T_\star]$ is
obvious (the law is the same), so we only check the absolute continuity when we
perturb with a control. Therefore, throughout this section we are in a setting
in which $\|z_0 - z'_0\| \le \delta_\star$ and $z_0, z'_0 \in \mC_R$ and $W' = W
+ \int^\cdot h_s \ud s$ with $h$ as in~\eqref{eq:control_h}. The main result of
this section is the following.

\begin{proposition}\label{prop:absolute_continuity}
  The measure induced by the perturbed noise path, $\mathcal{L}(W')$, is
  mutually absolutely continuous with respect to the Wiener measure
  $\mathcal{L}(W)$.
\end{proposition}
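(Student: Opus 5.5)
The plan is to follow the strategy of Hairer--Mattingly (and Dong--Peng) for anticipative, Malliavin-differentiable shifts: apply a Girsanov-type theorem for non-adapted shifts (Ramer--Kusuoka, in the form of \cite{Bogachev1998}, or the Nualart version). This theorem asserts that if $T(W) = W + \int_0^\cdot h_s \ud s$ with $h$ valued in $\CM$, locally $\mathbb{D}^{1,2}$ (that is, $h$ is $H$-differentiable), $I + Dh$ almost surely invertible, and $Dh$ almost surely Hilbert--Schmidt, then $\mathcal{L}(T(W)) \sim \mathcal{L}(W)$. Since $h$ is defined cycle by cycle, I will reduce to a finite-horizon statement and then pass to the infinite horizon.

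\textbf{Step 1.} Prove a bound on the total size of the control, which is presumably the content of the referenced Lemma~\ref{lem:cm_regularity}. On each active cycle we have $\|h|_{[2n,2n+1)}\|_{\CM} \le \|A_n^*(\beta+M_n)^{-1}\|\,\|J_n\vartheta^\sharp_{2n}\| \le \beta^{-1/2} C_K \rho_{2n}$, using $\|A_n^*(\beta+M_n)^{-1}\| \le \beta^{-1/2}$ and the cutoff $\psi_K$, which bounds $J_n$ through Lemma~\ref{lem:fibre_dissipation}(iii). On $\{2n<\tau\}$ we also have $\rho_{2n} \le \delta e^{-2n}$. Hence $\|h\|_{\CM}^2 \le C_K \beta^{-1}\delta^2\sum_n e^{-4n} < \infty$ \emph{deterministically}. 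This is exactly why $\tau$ uses a geometric threshold.

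\textbf{Step 2.} Establish Malliavin differentiability of $h$ in each cycle. The indicators $\mathbf 1_{z_{2n}\in\mC_R}$ and $\mathbf 1_{2n<\tau}$ are $\mF_{2n}$-measurable, but they are not differentiable in the past noise. To handle this I would treat the shift sequentially. Conditionally on $\mF_{2n}$, the shift on $[2n,2n+2)$ depends smoothly, through $J_n$, $M_n$, $A_n$ and $\psi_K(\mE_n)$, only on the noise increments on $[2n,2n+2)$ and on the $\mF_{2n}$-measurable data. The maps $z\mapsto$ flow, the Jacobian and the Malliavin matrix are Fréchet differentiable in the Cameron--Martin direction. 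The smooth cutoff ensures that on the support of $\psi_K$ all these quantities, together with their derivatives, are bounded by constants $C(K,\beta,R)$, and the regularisation $(\beta+M_n)^{-1}$ is smooth with norm at most $\beta^{-1}$. Moreover $Dh$ restricted to a cycle has finite rank in the $\CM$-direction, up to Hilbert--Schmidt terms through $Q^{1/2}$, which is trace class, so it is Hilbert--Schmidt. I would then apply the Girsanov theorem for non-adapted shifts on each single cycle $[2n,2n+2)$, conditionally on $\mF_{2n}$. Alternatively, since $h$ is small on that cycle (of order $\beta^{-1/2}C_K\rho_{2n}$, hence small in its derivative after $n_0$), one can use the Kusuoka condition that $I+Dh$ is invertible when $\|Dh\|_{op}<1$. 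This gives equivalence of the conditional laws on each cycle, and induction over $n$ gives equivalence on $[0,2n]$ for every $n$.

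\textbf{Step 3: pass to infinite time.} Equivalence on every finite horizon does not by itself imply equivalence on $[0,\infty)$. I would control the Radon--Nikodym densities $\mathcal{D}_n$ of the individual cycles. These have the Carleman--Fredholm form $\det_2(I+Dh_n)\exp(-\delta(h_n)-\tfrac12\|h_n\|^2)$. Using the summable bounds $\|h_n\|_{\CM} \lesssim e^{-2n}$ and $\|Dh_n\|_{HS}\lesssim e^{-2n}$, together with $\delta(h_n)$ having second moment of order $e^{-4n}$, I would show that $\sum_n \EE|\log\mathcal D_n|<\infty$, or more simply bound the Hellinger distance per cycle by $Ce^{-2n}$. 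Kakutani's theorem for product-type (here martingale) densities then gives mutual absolute continuity on the full path space. The main obstacle is Step 2: making rigorous the differentiability and the bounds on derivatives of $(\beta+M_n)^{-1}$ and $J_n$ without global moments of the Jacobian. I would try first to use the cutoff $\psi_K(\mE_n)$ to localize every derivative estimate to $\{\mE_n\le 2(K+1)^2\}$, where Lemma~\ref{lem:fibre_dissipation} yields pathwise deterministic bounds, including for the second-order variation equation.
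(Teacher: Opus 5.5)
Your proposal is correct and follows essentially the same route as the paper. You apply the Ramer--Kusuoka theorem cycle by cycle, conditionally on the past, with $\|h_n\|_{H_{\mathcal{C}\mathcal{M}}}$ and $\|D_W h_n\|_{HS}$ both bounded by $C(K,\beta)\rho_{2n}\le C(K,\beta)\delta e^{-2n}$ through the localization $\psi_K$, and then conclude with a summable conditional Hellinger bound and the Kakutani-type criterion of Jacod--Shiryaev. The one point to tighten is that invertibility of $I+D_W h_n$, via $\|D_W h_n\|_{\mathrm{op}}<1$, is needed on every cycle and not only after some $n_0$; the paper obtains it for all $n$ at once by choosing $\delta=\delta(K,\beta)$ small, using $\rho_{2n}\le\delta$ whenever $2n<\tau$ (Lemma~\ref{lem:path_space_invertibility}).
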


We prove Proposition~\ref{prop:absolute_continuity} one interval $[2n, 2n+2)$ at a time and
then pass to the infinite horizon. The map
$W \mapsto W' = W + \int h$ is a non-adapted Cameron--Martin shift, and
absolute continuity will be determined via the Girsanov--Ramer--Kusuoka theorem. We
start this section by recalling this result and the setting of its statement.

Let us fix some time horizon $T>0$ and recall that the reference noise $W$ is space-time white, that is, the cylindrical Wiener
process $W = \sum_{k \in \ZZ^2_*} e_k W_k$ with $W_k$ as in~\eqref{eq:forcing},
which we consider with values in
$ H^{-\alpha}(\TT^2)$ for any $\alpha > d/2$. The Cameron--Martin space associated to $W$ is 
\[
  H_{\mC\mM} =  L^2\bigl([0,T]; \mH\bigr)
    = L^2\bigl([0,T]\times\TT^2\bigr).
\]
Next, for a smooth cylindrical functional
$F = f\bigl(W(g_1),\dots,W(g_m)\bigr)$, with $f \in C^\infty_b(\RR^m)$ and
$g_i \in H_{\mC\mM}$ and $W(g) = \int_0^T \langle g(s), \ud W_s\rangle$, the Malliavin derivative is the $H_{\mC\mM}$-valued random
variable
\[
  D_W F = \sum_{i=1}^m \partial_i f\bigl(W(g_1),\dots,W(g_m)\bigr) g_i .
\]
Equivalently $\langle D_W F, g\rangle_{H_{\mC\mM}}
= \tfrac{\ud}{\ud\epsilon}\big|_{\epsilon = 0} F\bigl(W + \epsilon\int g\bigr)$
is the derivative of $F$ along the Cameron--Martin shift by $\int g$.
In our setting, we will consider the control $h$ from~\eqref{eq:control_h} as a
Malliavin differentiable functional $h \colon H^{-\alpha} \to H_{\mC\mM}$, so
that the Malliavin derivative $D_W h$ is a random operator on $H_{\mC\mM}$.

In particular, we write the space $\mathbb{D}^{1,2}(H_{\mC \mM})$ of once Malliavin differentiable functionals as the completion of smooth
cylinder functionals under the norm
\[
  \|h\|_{\mathbb{D}^{1,2}(H_{\mC\mM})}^2 = \EE\|h\|_{H_{\mC\mM}}^2
+ \EE\|D_W h\|_{HS}^2
\]
so $h \in L^2(\Omega; H_{\mC \mM})$ with
Malliavin derivative being a Hilbert--Schmidt operator. 
Here, for a bounded operator $A$ on a separable Hilbert
space $\mathcal{K}$ and any orthonormal basis $\{f_j\}_j$ of $\mathcal{K}$, the
Hilbert--Schmidt norm is
\[
  \|A\|_{HS}^2 = \sum_j \|A f_j\|_{\mathcal{K}}^2 = \mathrm{Tr}(A^*A).
\]
For an
$H_{\mC\mM}$-valued functional $h$, the derivative $D_W h$ is an
element of the tensor product $H_{\mC\mM} \otimes H_{\mC\mM}$, that is, an operator
on $H_{\mC\mM}$. The
Hilbert--Schmidt norm of that operator reads
\[
  \|D_W h\|_{HS}^2 = \sum_j \|D_{f_j} h\|_{H_{\mC\mM}}^2 ,
  \qquad D_{f_j} h = \langle D_W h, f_j\rangle .
\]
Next, we recall that the Skorohod integral $\delta$ is defined as the adjoint of $D_W$:
\[
  \EE\bigl[F \delta(u)\bigr]
    = \EE\bigl[\langle D_W F, u\rangle_{H_{\mC\mM}}\bigr]
    \qquad \forall F \in \mathbb{D}^{1,2},
\]
with domain $\mathrm{Dom}(\delta) \subseteq L^2(\Omega; H_{\mC\mM})$ the set of
$u$ for which $F \mapsto \EE[\langle D_W F, u\rangle_{H_{\mC\mM}}]$ is bounded on
$L^2(\Omega)$. On adapted integrands $\delta(u)$ coincides with the It\^o integral
$\int_0^T \langle u_s, \ud W_s\rangle$, and on anticipating integrands it is its
extension. In particular $\EE[\delta(u)] = 0$.

Finally we recall the Carleman--Fredholm determinant, defined for an arbitrary
Hilbert--Schmidt operator $A$ on a separable Hilbert space $\mK$ (note that
Hilbert--Schmidt implies compact, so we may list its nonzero eigenvalues
$\{\lambda_k\}$ with algebraic multiplicity). We define the determinant
\[
  \det\nolimits_2(I + A) = \prod_k (1 + \lambda_k) e^{-\lambda_k} .
\]
The regularizing factor $e^{-\lambda_k}$ makes
$\log\bigl[(1 + \lambda_k) e^{-\lambda_k}\bigr] = O(|\lambda_k|^2)$, so the product
converges absolutely as soon as $\sum_k |\lambda_k|^2 < \infty$. This holds for
every Hilbert--Schmidt $A$, since by Weyl's inequality
\[
  \sum_k |\lambda_k|^2 \le \sum_k s_k^2 = \|A\|_{HS}^2 < \infty ,
\]
where $\{s_k\}$ are the singular values of $A$. By contrast the ordinary determinant $\det(I + A) = \prod_k (1 + \lambda_k)$
requires $A$ trace-class, and when $A$ is trace-class the two are related by
$\det\nolimits_2(I + A) = \det(I + A) e^{-\mathrm{Tr} A}$.
In this setting, we are ready to state the Girsanov--Ramer--Kusuoka theorem.

\begin{theorem}\label{thm:grkn}
Fix $T>0$ and let $W$ be space-time white noise as in the setting above. Let
$h \in \mathbb{D}^{1,2}(H_{\mC\mM})$ be a shift satisfying
almost surely $\|D_W h\|_{\mathrm{op}} < 1$ and $\|D_W h\|_{\mathrm{HS}} < \infty$. Then
$W \mapsto W' = W + h(W)$ is almost surely bijective, the laws $\mathcal{L}(W')$
and $\mathcal{L}(W)$ are mutually absolutely continuous, and the Radon--Nikodym
derivative $L = \ud \mathcal{L}(W')/ \ud \mathcal{L}(W)$ satisfies
\[
  \log L = \delta(h) - \frac{1}{2}\|h\|_{H_{\mC\mM}}^2 - \log|\det\nolimits_2(I + D_W h)| .
\]
\end{theorem}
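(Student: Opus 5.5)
The statement is the classical Ramer--Kusuoka change of variables formula on abstract Wiener space. I would not reprove it from scratch; I would reduce it to the version in Kusuoka (1982) and in the Üstünel--Zakai monograph on transformations of measure on Wiener space. The only real work is checking that our setting fits those hypotheses and that the stated formula is the correct reading of theirs. The setting $(H^{-\alpha}\text{-valued paths}, \PP, H_{\mC\mM} = L^2([0,T];\mH))$ is an abstract Wiener space, so the theorem applies verbatim once the hypotheses on $h$ are matched.

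\textbf{Step 1: bijectivity.} I would use $\|D_W h\|_{\mathrm{op}}<1$ to show that, for a.e.\ $W$, the map $k\mapsto -h(W+k)$ is a strict contraction on $H_{\mC\mM}$. This gives a unique fixed point $k$, and then $W' = W+h(W)$ is inverted by a Banach fixed-point argument along each Cameron--Martin fibre $W+H_{\mC\mM}$. To make this rigorous for an $H$-differentiable (not Fréchet) $h$ in $\mathbb{D}^{1,2}$, I would use the $H$-$C^1$ version of the argument from Üstünel--Zakai. This is where the operator-norm hypothesis is essential.

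\textbf{Step 2: finite-dimensional formula.} I would project onto $\mathrm{span}(f_1,\dots,f_n)$ of an orthonormal basis of $H_{\mC\mM}$ and apply the classical change of variables for the Gaussian measure $\gamma_n$ on $\RR^n$, with $T_n(x)=x+h_n(x)$. The Jacobian $\det(I+\nabla h_n)$ combines with the Gaussian density ratio $\exp(-\langle x,h_n\rangle-\tfrac12|h_n|^2)$. I would then rewrite $\det = \det_2\cdot e^{\mathrm{Tr}\nabla h_n}$ and use that $\langle x,h_n\rangle - \mathrm{Tr}\nabla h_n$ is exactly the finite-dimensional Skorohod integral $\delta(h_n)$. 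The point is that the individually divergent pieces $\mathrm{Tr}$ and $\langle x,h\rangle$ only appear in the convergent combination $\delta(h)$.

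\textbf{Step 3: passage to the limit.} I would let $n\to\infty$, using:
\begin{itemize}
\item $h_n\to h$ in $\mathbb{D}^{1,2}(H_{\mC\mM})$, hence $\delta(h_n)\to\delta(h)$ in $L^2$;
\item continuity of $\det_2$ in the Hilbert--Schmidt norm, from the bound $|\log[(1+\lambda)e^{-\lambda}]|\lesssim|\lambda|^2$;
\item the condition $\|D_Wh\|_{\mathrm{op}}<1$, which keeps $\det_2(I+D_Wh)\neq0$ so that the logarithm is finite.
\end{itemize}
This identifies $L$ and yields mutual absolute continuity, because the density is a.s.\ positive and finite.

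I expect the main obstacle to be Step 3. Uniform integrability of the approximate densities is not automatic without an exponential moment bound, and I would handle it as Kusuoka does: establish the identity $\EE[F(W')L]=\EE[F(W)]$ for bounded cylindrical $F$ by Fatou in one direction and the bijectivity from Step 1 in the other. I would also track carefully which side of the change of variables (density of $\mathcal{L}(W')$ evaluated at $W$ versus at $W'$) the formula refers to, since the sign conventions in $\delta(h)-\tfrac12\|h\|^2$ depend on it.
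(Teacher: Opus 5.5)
Your route matches the paper's, which gives no argument and only cites Ramer, Kusuoka and \"{U}st\"{u}nel--Zakai. Your Step~2 (Gaussian change of variables, $\det=\det\nolimits_2\,e^{\mathrm{Tr}}$, and $\langle x,h_n\rangle-\mathrm{Tr}\,\nabla h_n=\delta(h_n)$) is the standard core of that proof. The reduction breaks where you say the theorem ``applies verbatim once the hypotheses on $h$ are matched''. They do not match, and the statement as written is false, so no care in Steps~1 and~3 can close it.

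\emph{Step~1.} A bound $\|D_Wh\|_{\mathrm{op}}<1$ holding $\PP$-almost surely says nothing about a fixed fibre $V+H_{\mC\mM}$, which is $\PP$-null. Even where the bound holds, it only gives Lipschitz constant $1$ along the fibre, not a strict contraction. For a counterexample, take a unit vector $f\in H_{\mC\mM}$, write $x=W(f)$, and set $h(W)=g(x)f$ with $g(x)=\arctan x-x$. Then $h\in\mathbb{D}^{1,2}(H_{\mC\mM})$, $h$ is even $H$-$C^\infty$, and $\|D_Wh\|_{\mathrm{op}}=\|D_Wh\|_{\mathrm{HS}}=x^2/(1+x^2)<1$ everywhere. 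Yet $W'(f)=\arctan W(f)\in(-\pi/2,\pi/2)$, so $W\mapsto W'$ is not onto, and $\mathcal{L}(W')$ gives zero mass to $\{|W(f)|\ge\pi/2\}$. The same example shows your finite-dimensional maps $x\mapsto x+h_n(x)$ need not be onto, so the exact change of variables in Step~2 also fails.

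The missing hypothesis is a deterministic constant: $\|D_Wh\|_{\mathrm{op}}\le c<1$ almost surely. Under it:
\begin{itemize}
\item the Enchev--Stroock (Rademacher) theorem on Wiener space gives a version of $h$ that is $c$-Lipschitz along $H_{\mC\mM}$;
\item $k\mapsto-h(V+k)$ is then a $c$-contraction on every fibre;
\item the approximations $\pi_n\EE[h\mid W(f_1),\dots,W(f_n)]$, with $\pi_n$ the projection onto $\mathrm{span}(f_1,\dots,f_n)$, inherit the constant $c$ and so are bi-Lipschitz homeomorphisms of $\RR^n$.
\end{itemize}
This is all the paper uses, since Lemma~\ref{lem:path_space_invertibility} gives $\|D_Wh_n\|_{\mathrm{op}}\le\frac12$ on every path.

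\emph{The formula.} Tracking ``which side'' cannot rescue the displayed identity. Your Step~2 computation, carried to the limit, gives $\EE[F(W')\Lambda_h]=\EE[F(W)]$ with $\Lambda_h=|\det\nolimits_2(I+D_Wh)|\exp\bigl(-\delta(h)-\frac12\|h\|_{H_{\mC\mM}}^2\bigr)$. Hence $L(W')=\Lambda_h^{-1}$, that is
\[
  \log L(W')=\delta(h)+\tfrac12\|h\|_{H_{\mC\mM}}^2-\log\bigl|\det\nolimits_2(I+D_Wh)\bigr| .
\]
The stated expression, with $-\frac12\|h\|^2$ and every term at the same path, equals $\log L(W)$ only when $h$ is deterministic.

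For example, $h(W)=aW(f)f$ with $0<|a|<1$ satisfies even the strengthened hypothesis, since $\|D_Wh\|_{\mathrm{op}}=|a|$. Here $\delta(h)=a(W(f)^2-1)$ and $\det\nolimits_2(I+D_Wh)=(1+a)e^{-a}$. The stated right side is then $(a-\frac{a^2}{2})W(f)^2-\log(1+a)$, whose exponential has $\PP$-mean $(1-a^2)^{-1}\neq1$. The direct Gaussian computation instead gives $\log L(W')=(a+\frac{a^2}{2})W(f)^2-\log(1+a)$, in agreement with the display above.

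So Step~3 should end with the corrected identity. The corrected identity still serves the paper's purpose in Lemma~\ref{lem:summability}. Bound $\EE[(\sqrt{L}-1)^2]$ by the relative entropy of $\mathcal{L}(W')$ with respect to $\mathcal{L}(W)$. Since $\EE[\delta(h)]=0$, that entropy equals $\EE[\log L(W')]=\frac12\EE\|h\|^2-\EE\log|\det\nolimits_2(I+D_Wh)|$.
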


The result is the anticipating Girsanov theorem due to Ramer~\cite{Ramer1974} and
Kusuoka~\cite{Kusuoka1982}. In the form stated here it can be found in~\cite{UstunelZakai00}*{Theorem~3.2.1}.

Now we apply the result to our choice of control. To do so, consider $h$ as
in~\eqref{eq:control_h} and fix $n \in \NN$. Then let us consider the control
restricted to $[2n, 2n+1]$, defined as:
\begin{equation*}
  h_n (s) = h(s) ,\qquad \forall s \in [2n, 2n+1].
\end{equation*}
Then $h_n$ is a control in $H_{\mC\mM} = L^2([2n, 2n+1] \times \TT^2)$ (with
some abuse of notation since the initial time is $2n$) as in the previous discussion, with time
horizon $T=1$.
To apply Theorem~\ref{thm:grkn} we must prove that $h_n \in
\mathbb{D}^{1,2}(H_{\mC\mM})$ together with some precise estimates on its size.

  \begin{lemma}\label{lem:cm_regularity}
    For any $n \geq 0$ and $K, \beta >0$, the control $h_n$ satisfies for some deterministic constant $C_K>0$
    \begin{equation}\label{eq:cm_norm_bound} 
      \begin{aligned}
              \|h_n\|_{H_{\mC\mM}}^2 & \le C_K  \beta^{-1} \|\vartheta_{2n}\|^2.
      \end{aligned}
    \end{equation}
  \end{lemma}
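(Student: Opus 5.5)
The plan is a direct spectral computation, followed by a pathwise bound on the Jacobian on the support of the cutoff $\psi_K$.

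\emph{Trivial case.} Outside the active set, that is if $z_{2n}\notin\mC_R$ or $2n\ge\tau$, we have $h_n\equiv0$ by~\eqref{eq:control_h}, and there is nothing to prove. The same holds if $\psi_K(\mE_n)=0$. We may therefore assume $\psi\eqdef\psi_K(\mE_n)\in(0,1]$.

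\emph{Reduction to a resolvent bound.} Write $x\eqdef J_n\vartheta^\sharp_{2n}\in T_{z_{2n+1}}\mX$. Since $A_nA_n^*=M_n$,
\begin{equation*}
  \|h_n\|_{H_{\mC\mM}}^2
  = \psi^2\bigl\|A_n^*(\beta I+M_n)^{-1}x\bigr\|^2
  = \psi^2\bigl\langle M_n(\beta I+M_n)^{-1}x,\,(\beta I+M_n)^{-1}x\bigr\rangle
  = \psi^2\bigl\langle M_n(\beta I+M_n)^{-2}x,\,x\bigr\rangle .
\end{equation*}
The operator $M_n$ is self-adjoint and nonnegative. By the spectral theorem and the scalar inequality $\lambda/(\beta+\lambda)^2\le 1/(4\beta)$ for $\lambda\ge0$, we get $\|M_n(\beta I+M_n)^{-2}\|_{\mathrm{op}}\le(4\beta)^{-1}$. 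Hence
\begin{equation*}
  \|h_n\|_{H_{\mC\mM}}^2\le \frac{\psi^2}{4\beta}\,\|J_n\vartheta^\sharp_{2n}\|^2 .
\end{equation*}

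\emph{Pathwise Jacobian bound.} On $\{\psi_K(\mE_n)>0\}$, the definition of the cutoff gives $\mE_n\le 2(K+1)^2$. Since $\Gamma_r=\|w_r\|_{H^1}^2+\|\pi_r\|_{H^1}^2$, we have $\int_{2n}^{2n+1}\Gamma_r\ud r\le\mE_n$. Lemma~\ref{lem:fibre_dissipation}(iii), applied with $s=2n$ and $t=2n+1$, then yields
\begin{equation*}
  \|J_n\|_{\mathrm{op}}^2\le C\exp\bigl(2C(1+(K+1)^2)\bigr)\eqdef 4C_K ,
\end{equation*}
a deterministic constant. This is the same estimate as~\eqref{eq:active_step_jac}. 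Finally, $\vartheta^\sharp_{2n}$ is obtained from $\vartheta_{2n}$ by an orthogonal projection in the fiber component, so $\|\vartheta^\sharp_{2n}\|\le\|\vartheta_{2n}\|$. Combining the three displays and using $\psi\le1$ gives~\eqref{eq:cm_norm_bound}.

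\emph{Main obstacle.} I expect no serious obstacle. The only point needing care is that the bound must be deterministic even though $J_n$ has no moments. This is exactly why the smooth cutoff $\psi_K(\mE_n)$ is placed in front of $h$: wherever $h_n\neq0$, the energy over the cycle is bounded by $2(K+1)^2$, and hence so is $\|J_n\|_{\mathrm{op}}$. One should also check that the energy entering Lemma~\ref{lem:fibre_dissipation}(iii) is computed along the first copy $z$, the same trajectory along which $J_n$, $M_n$ and $\mE_n$ are defined. This holds by construction.
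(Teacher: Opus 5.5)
Your proposal is correct and matches the paper's proof step by step. It uses the identity $\|h_n\|_{H_{\mC\mM}}^2=\psi_K(\mE_n)^2\langle M_n(\beta I+M_n)^{-2}J_n\vartheta^\sharp_{2n},J_n\vartheta^\sharp_{2n}\rangle$, the spectral bound $\sup_{\lambda\ge0}\lambda/(\beta+\lambda)^2=1/(4\beta)$, the pathwise Jacobian bound of Lemma~\ref{lem:fibre_dissipation}(iii) on the support $\{\mE_n< 2(K+1)^2\}$ of the cutoff, and $\|\vartheta^\sharp_{2n}\|\le\|\vartheta_{2n}\|$, exactly as the paper does.
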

  \begin{proof}
    Let us assume $z_{2n} \in \mC_R$ and $2n < \tau$, otherwise the 
    statement is trivially true.
    For clarity, let us write $M = M_{2n,2n+1}$ and $J = J_{2n, 2n+1}$ for the
    Malliavin matrix and Jacobian (respectively) on the
    interval $[2n, 2n+1]$.
    Recall that $$h_n(s) = \psi_{K}(\mE_n) \mK^*_{s,2n+1}(\beta I +
    M)^{-1}J \vartheta^\sharp_{2n}$$ for $s \in [2n, 2n+1)$. Setting $\eta = \psi_{K}(\mE_n)
    (\beta I + M)^{-1}J \vartheta^\sharp_{2n}$, the Cameron--Martin norm of $h_n$
    is given by
    \begin{align}
      \|h_n\|_{H_{\mC\mM}}^2 &= \int_{2n}^{2n+1}\|\mK^*_{s,2n+1}\eta\|^2\ud s = \langle M \eta, \eta\rangle , \label{eq:cm_norm_computation}
    \end{align}
    since $M = \int \mK \mK^*\ud s$ by definition. Now:
    \begin{equation}
      \langle M\eta, \eta\rangle = \psi_{K}(\mE_n)^2 \langle M(\beta I + M)^{-2}J\vartheta^\sharp, J\vartheta^\sharp\rangle \le \psi_{K}(\mE_n)^2 \|M(\beta I+M)^{-2}\|_{\mathrm{op}} \|J\vartheta^\sharp\|^2 .
    \end{equation}
    By the spectral theorem, $\|M(\beta I+M)^{-2}\|_{\mathrm{op}} \le 
    \frac{1}{4\beta}$ since $\sup_{\lambda \ge 0} \lambda/(\beta + \lambda)^2 =
    \frac{1}{4\beta}$. Therefore $\|h_n\|_{H_{\mC\mM}}^2 \le
    \frac{\psi_{K}(\mE_n)^2}{4\beta}\|J\vartheta^\sharp_{2n}\|^2$.
    Now, if $\mE_n \leq 2(K+1)^2$ (which is the case when $\psi_{K}(\mE_n)$ is nonzero), then by Lemma~\ref{lem:fibre_dissipation}(iii) we have
    $\|J_{2n,2n+1}\vartheta^\sharp_{2n}\| \le C_K \|\vartheta^\sharp_{2n}\|$ for some
    constant $C_K>0$.
    Finally $\|\vartheta^\sharp_{2n}\| \le \|\vartheta_{2n}\|$, because
    $P_{\pi_{2n}^\perp}$ is an orthogonal projection, completing the proof.
  \end{proof}

  While the previous result guarantees that $h_n$ is square integrable, the next
  one guarantees the Malliavin differentiability.

   Before stating the lemma, we record the structure of $D_W h_n$ on the whole
  interval $[2n,2n+2)$, whose Cameron--Martin space splits into its two unit blocks,
  \begin{equation*}
    H_{\mC\mM} = L^2\big([2n,2n+2)\times\TT^2\big)
      = L^2\big([2n,2n+1)\big) \oplus L^2\big([2n+1,2n+2)\big).
  \end{equation*}
  The control takes values only on the first block: since $h_n(s)=0$ on the free
  phase $[2n+1,2n+2)$. However the Malliavin derivative does not vanish on the
  second half of the interval. Recall that $h$ contains a cut-off
  $\psi_K(\mE_n)$, where $\mE_n$ is the energy
  \begin{equation*}
    \mE_n = \int_{2n}^{2n+2} \|w_u\|_{H^1}^2 + \|\pi_u\|_{H^1}^2 \,\ud u .
  \end{equation*}
  Writing
  $\tilde h_n(s) = \mK^*_{s,2n+1}(\beta I + M)^{-1} J \vartheta^\sharp_{2n}$ for the
  control without the cutoff, the product rule gives
  \begin{equation*}
    D_W h_n = \psi_K(\mE_n)\, D_W \tilde h_n
      + \psi_K'(\mE_n)\,\big( D_W \mE_n \otimes \tilde h_n \big),
  \end{equation*}
  where $D_W \tilde h_n$ is supported on $[2n,2n+1)\times[2n,2n+1)$, whereas $D_W \mE_n$ has components on the whole cycle.
  Therefore $D_W h_n$ is block upper triangular
  \begin{equation*}
    D_W h_n = \begin{pmatrix} \ast & \ast \\ 0 & 0 \end{pmatrix},
  \end{equation*}
  and so also $I + D_W h_n$ is block upper triangular, so its Carleman--Fredholm determinant $\det\nolimits_2(I+D_W h_n)$
  reduces to that of the restriction to the active block $[2n,2n+1)$ alone. In
  the following result, we therefore note that $h_n$ is considered on the full
  interval $[2n, 2n+2]$.

  \begin{lemma}\label{lem:malliavin_control_bounds}
    For any $n \ge 0$ and $K, \beta >1$, the derivative $D_W h_n$ satisfies for some
    $C(K, \beta)>0$:
    \begin{equation}\label{eq:malliavin_derivative_bounds}
      \|D_W h_n\|_{HS} \le C(K,\beta) \|\vartheta_{2n}\|.
    \end{equation}
     In particular, $h_n \in \mathbb{D}^{1,2}(H_{\mC\mM})$.
  \end{lemma}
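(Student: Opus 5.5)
We differentiate the product structure $h_n = \psi_K(\mE_n)\,\tilde h_n$ from the display preceding the statement, and bound the two pieces separately in Hilbert--Schmidt norm. We may assume $z_{2n}\in\mC_R$ and $2n<\tau$; otherwise $h_n\equiv 0$, because these indicators are $\mF_{2n}$-measurable and hence constant under shifts supported on $[2n,2n+2)$. We also work on $\{\psi_K(\mE_n)+\psi_K'(\mE_n)\neq 0\}\subseteq\{\mE_n\le 2(K+1)^2\}$, since both terms vanish off this set. On this set Lemma~\ref{lem:fibre_dissipation}(iii) gives the pathwise bound $\|J_{r,t}\|_{\mathrm{op}}\le C_K$ for $2n\le r\le t\le 2n+2$. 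Moreover, $\|\mK^*_{\cdot,2n+1}(\beta I+M)^{-1/2}\|_{\mathrm{op}}\le 1$, $\|(\beta I+M)^{-1}\|\le\beta^{-1}$, and $\|J\vartheta^\sharp_{2n}\|\le C_K\|\vartheta_{2n}\|$, exactly as in Lemma~\ref{lem:cm_regularity}. The initial data $z_{2n},z'_{2n}$, and hence $\vartheta^\sharp_{2n}$, are $\mF_{2n}$-measurable, so they are not differentiated.

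\textbf{Term $\psi_K'(\mE_n)\,D_W\mE_n\otimes\tilde h_n$.} This is rank one, so its HS norm equals $|\psi_K'|\,\|D_W\mE_n\|_{H_{\mC\mM}}\|\tilde h_n\|$. The factor $\|\tilde h_n\|\le C_K\beta^{-1/2}\|\vartheta_{2n}\|$ follows from Lemma~\ref{lem:cm_regularity}. For the other factor, note that $D_g\mE_n=2\int\langle w_u,D_gw_u\rangle_{H^1}+\langle\pi_u,D_g\pi_u\rangle_{H^1}\ud u$ with $D_gz_u=\int_{2n}^u J_{r,u}(Q^{1/2}g(r),0)\ud r$. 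We control this using the $H^1$-smoothing part of \eqref{eq:jacobian_pathwise} and $\|Q^{1/2}\|_{\mathrm{op}}\le C$. After one integration by parts (moving one derivative onto $w_u,\pi_u$, whose $H^2$ norms are controlled through $\Gamma$, or alternatively using $Q^{1/2}g\in H^1$ cheaply since $Q^{1/2}$ is smoothing of order $\alpha_\star$), we obtain $\|D_W\mE_n\|\le C\,e^{C\mE_n}(1+\mE_n)\le C_K$.

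\textbf{Term $\psi_K D_W\tilde h_n$, the main obstacle.} We differentiate $\tilde h_n(s)=Q^{1/2}\Pi_wJ_{s,2n+1}^*\,\eta$, where $\eta=(\beta I+M)^{-1}J\vartheta^\sharp_{2n}$. By the product rule, $D_W\tilde h_n$ is a sum of three contributions: (a) $D_W(J^*_{s,2n+1})\eta$; (b) $\mK^*(\beta I+M)^{-1}(D_WM)(\beta I+M)^{-1}J\vartheta^\sharp$; and (c) $\mK^*(\beta I+M)^{-1}(D_WJ)\vartheta^\sharp$. Each factor outside the derivative contributes a $\beta^{-1/2}$ or $\beta^{-1}$ and a factor $C_K$, and in every term one factor $\|\vartheta^\sharp_{2n}\|\le\|\vartheta_{2n}\|$ survives. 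It therefore suffices to show that $g\mapsto D_gJ_{r,t}$ is Hilbert--Schmidt from $H_{\mC\mM}$ into bounded operators with norm $\le C_K$. The derivative $D_gJ_{r,t}$ solves the second variation equation, whose forcing is $D^2F(z_u)[D_gz_u,J_{r,u}\cdot]$; Duhamel then gives $D_gJ_{r,t}=\int J_{u,t}D^2F[D_gz_u,J_{r,u}\cdot]\ud u$. Summing over an orthonormal basis $g=f_j$, the key point is that $\sum_j\|D_{f_j}z_u\|^2\lesssim\sum_k\sigma_k^2\,C_K<\infty$, because $Q^{1/2}$ is trace class ($\alpha_\star>10$). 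This is what makes the map HS rather than merely bounded. The bilinear form $D^2F$ is controlled as in \eqref{eq:remainder_integrand} by $\Gamma^{1/2}\|\cdot\|\|\cdot\|_{H^1}+\|\cdot\|_{H^1}^2$. Combined with the $L^2_tH^1$ bounds of \eqref{eq:jacobian_pathwise} and Cauchy--Schwarz in time, this yields $C_K$. Tracking the $H^1$ integrability in the term $\|D_gz\|_{H^1}\|J\cdot\|_{H^1}$ is the delicate point. I would handle it by using the smoothing of $Q^{1/2}$, which puts $D_{f_j}z_u$ in $L^\infty_tH^1$ with square-summable norms.

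Collecting the terms gives $\|D_Wh_n\|_{HS}\le C(K,\beta)\|\vartheta_{2n}\|$. Then $\|\vartheta_{2n}\|\le 2+\|w_{2n}-w'_{2n}\|$ together with $\mC_R$ and $\tau$ gives bounded second moments. Finally, membership in $\mathbb{D}^{1,2}(H_{\mC\mM})$ follows by the standard closability argument: approximate via Galerkin or finite-dimensional noise truncations, which satisfy the same uniform bounds, and use the smoothness of $\psi_K$.
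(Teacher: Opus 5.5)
Your proposal is correct and follows essentially the paper's argument: the product rule for $\psi_K(\mE_n)\tilde h_n$ restricted to $\{\mE_n\le 2(K+1)^2\}$, the second-variation formula for the Malliavin derivative of $J$ bounded through the $L^\infty_tL^2\cap L^2_tH^1$ Jacobian estimates of Lemma~\ref{lem:fibre_dissipation}(iii) and Cauchy--Schwarz in time, and summability from $\sum_k\sigma_k^2<\infty$. Your two detours are unnecessary. For $D_W\mE_n$ no integration by parts is needed; moving a derivative onto $w_u,\pi_u$ would actually require $H^2$ norms, which $\Gamma$ does not control. Cauchy--Schwarz in $H^1$ and in time with the $L^2_tH^1$ bound already closes, and likewise the $\|D_gz\|_{H^1}\|J\cdot\|_{H^1}$ term closes with two $L^2_tH^1$ factors, so no $L^\infty_tH^1$ bound on $D_{f_j}z_u$ is required.
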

  \begin{proof}
    We will prove the result only for the LNS case, since the PSA case is
    strictly simpler.
    As above, we may assume $h_n \neq 0$, since otherwise $D_W h_n = 0$ and the bound is trivial.
    Then, 
    taking the Malliavin derivative yields:
    \begin{equation}\label{eq:control_product_rule}
      D_W h_n = \psi(\mathcal{E}_n) D_W \tilde{h}_n + \psi'(\mathcal{E}_n) (D_W \mathcal{E}_n \otimes \tilde{h}_n) ,
    \end{equation}
    where
    \begin{equation*}
      \tilde{h}_n (s)= \mK^*_{s,2n+1}(\beta I +
    M)^{-1}J \vartheta_{2n}.
    \end{equation*}
    Because $\psi$ and $\psi'$ vanish for $\mathcal{E}_n > (K+1)^2$, it suffices to
    bound each term on $\{\mathcal{E}_n \le (K+1)^2\}$.

By the chain rule, estimating $D_W \tilde{h}$ requires estimating the Malliavin
derivatives $D_W M, D_W J$ and $D_W \mK$.

We discuss the case of $D_W M$, since it implies all the other ones (see the end
of the proof). Note that $M = A A^*$
with $A = A_{2n,2n+1}$, and for $i \in \ZZ^2_*$ and $s \in [2n, 2n+1]$ let
$\mathcal{D}^i_s F \eqdef \langle (D_W F)_s, e_i \rangle_{\mH}$ be the component of the
Malliavin derivative $D_W F \in H_{\mC\mM} = L^2([2n,2n+2];\mH)$ in the mode
$e_i$ at time $s$. The operators $M$, $J$ and $A$ read the noise
only on the interval $[2n, 2n+1]$, so their Malliavin derivatives are supported
on $[2n,2n+1]$. Instead the noise in $[2n+1, 2n+2]$ enters $D_W h_n$ only through $D_W\mE_n$, giving the block-triangular
structure noted before the lemma. Since
$\mathcal{D}^i_s A^* = (\mathcal{D}^i_s A)^*$ we have
\begin{equation}\label{eq:leibniz_M}
  \mathcal{D}^i_s M = (\mathcal{D}^i_s A) A^* + A (\mathcal{D}^i_s A)^* ,
\end{equation}
so it suffices to bound $\|A\|_{\mathrm{op}}$ and $\|\mathcal{D}^i_s A\|_{\mathrm{op}}$
on $\{\mathcal{E}_n \le (K+1)^2\}$.
By Lemma~\ref{lem:jacobian_moments}(i) we have that on $\{\mathcal{E}_n \le (K+1)^2\}$,
\begin{equation}\label{eq:Jop_EK}
\begin{aligned}
    \|J_{u,v}\|_{\mathrm{op}} & \le C_K , \qquad & \forall 2n \le u \le v \le 2n+1 , \\
    \int_u^{2n+1} \|J_{u,v}\eta\|_{H^1}^2 \ud v & \le C_K \|\eta\|^2 , \qquad & \xi \in T_{z_u}\mX,\ u \in [2n,2n+1] .
\end{aligned}
\end{equation}
Now, the Malliavin derivative of the Jacobian solves the
second--variation equation
\begin{equation}\label{eq:secondvar_J}
  \mathcal{D}^i_s J_{r,t}\eta
    = \int_{s \vee r}^t J_{u,t} (D_z L_{z_u})\bigl[J_{s,u}(Q^{1/2}e_i,0)\bigr] J_{r,u}\eta \ud u ,
\end{equation}
where we have used the linearized operator $L_z$
of~\eqref{eq:grand_jacobian_system} together with the identity
\begin{equation*}
  \mD^i_s z_u = J_{s, u}(Q^{1/2}e_i,0) \mathbf{1}_{\{s \leq u\}}.
\end{equation*}
The operator $L_z$ is affine in $w$ and quadratic in $\pi$: the base
block $L^{ww}_w = \Delta - B(w,\cdot) - B(\cdot,w)$ is affine in $w$, while the coupling
and fiber blocks~\eqref{eq:coupling_operator}--\eqref{eq:fiber_operator} are quadratic in
$\pi$ and affine in $w$. More precisely:
\begin{equation*}
  (D_z L_z)[\zeta]\phi=
  \begin{pmatrix}
    -B(\zeta_w,\phi_w) - B(\phi_w,\zeta_w) \\[2pt]
    D_z L^{\pi w}_z [\zeta] \phi_w  + D_z L^{\pi \pi}_z [\zeta] \phi_\pi
  \end{pmatrix},
  \qquad \zeta=(\zeta_w,\zeta_\pi),\quad \phi=(\phi_w,\phi_\pi),
\end{equation*}
where the last two terms are defined as follows, with $P^\perp v \eqdef v - \langle\pi,v\rangle\pi$ denoting the projection onto $\pi^\perp$,
$(DL^0_w)[a]b \eqdef B(a,b) + B(b,a)$ is the derivative
of~\eqref{eq:two_operators_intro_NS},
$G_w$ is as in~\eqref{eq:proj-spde}, and the derivatives of the coupling
block~\eqref{eq:coupling_operator} and of the fiber block~\eqref{eq:fiber_operator} are
\begin{align*}
  D_z L^{\pi w}_z [\zeta] \phi_w  &= -P^\perp (DL^0_w)[\phi_w]\zeta_\pi
    + \langle\pi,(DL^0_w)[\phi_w]\pi\rangle\zeta_\pi
    + \langle\zeta_\pi,(DL^0_w)[\phi_w]\pi\rangle\pi , \\
  D_z L^{\pi \pi}_z [\zeta] \phi_\pi &= -P^\perp (DL^0_w)[\zeta_w]\phi_\pi
    + \langle\pi,(DL^0_w)[\zeta_w]\pi\rangle\phi_\pi
    + \langle\phi_\pi,(DL^0_w)[\zeta_w]\pi\rangle\pi \\
    &\quad - \bigl(\langle\pi,G_w\phi_\pi\rangle + \langle\phi_\pi,G_w\pi\rangle\bigr)\zeta_\pi
    - \bigl(\langle\zeta_\pi,G_w\phi_\pi\rangle + \langle\phi_\pi,G_w\zeta_\pi\rangle\bigr)\pi \\
    &\quad - \bigl(\langle\zeta_\pi,G_w\pi\rangle + \langle\pi,G_w\zeta_\pi\rangle\bigr)\phi_\pi .
\end{align*}
Here to obtain the second identity, we differentiate the four terms of
$L^{\pi\pi}_z\phi_\pi = G_w\phi_\pi - \langle\pi,G_w\phi_\pi\rangle\pi
- \langle\phi_\pi,G_w\pi\rangle\pi - \langle\pi,G_w\pi\rangle\phi_\pi$
of~\eqref{eq:fiber_operator} in $z$, with
$D_w G_w[\zeta_w] = -(DL^0_w)[\zeta_w]$. Therefore, term by term:
\begin{align*}
  D_z\bigl(G_w\phi_\pi\bigr) &= -(DL^0_w)[\zeta_w]\phi_\pi , \\
  D_z\bigl(-\langle\pi,G_w\phi_\pi\rangle\pi\bigr)
    &= +\langle\pi,(DL^0_w)[\zeta_w]\phi_\pi\rangle\pi
       - \langle\zeta_\pi,G_w\phi_\pi\rangle\pi
       - \langle\pi,G_w\phi_\pi\rangle\zeta_\pi , \\
  D_z\bigl(-\langle\phi_\pi,G_w\pi\rangle\pi\bigr)
    &= +\langle\phi_\pi,(DL^0_w)[\zeta_w]\pi\rangle\pi
       - \langle\phi_\pi,G_w\zeta_\pi\rangle\pi
       - \langle\phi_\pi,G_w\pi\rangle\zeta_\pi , \\
  D_z\bigl(-\langle\pi,G_w\pi\rangle\phi_\pi\bigr)
    &= +\langle\pi,(DL^0_w)[\zeta_w]\pi\rangle\phi_\pi
       - \bigl(\langle\zeta_\pi,G_w\pi\rangle + \langle\pi,G_w\zeta_\pi\rangle\bigr)\phi_\pi .
\end{align*}
Now we use that $\| D L^0_w [a]b \| \lesssim \|a \|_{H^1} \| b \|_{H^1} $ and
$\| \pi \|=1$, so
that 
\begin{equation}\label{eq:DzL_bound}
  \bigl\| (D_z L_z)[\zeta]\phi \bigr\|
    \lesssim \|\zeta\|_{H^1}\|\phi\|_{H^1}
      + \bigl(\|w\|_{H^1} + \|\pi\|_{H^1}\bigr)\bigl(\|\zeta\|_{H^1}\|\phi\| + \|\zeta\|\|\phi\|_{H^1}\bigr).
\end{equation}

Here to bound all the terms involving  $G_w$ we use the following estimate:
\begin{equation}\label{eq:Gw_bound}
  |\langle a, G_w b\rangle|
    \lesssim \|a\|_{H^1}\|b\|_{H^1}
      + \|w\|_{H^1}\bigl(\|a\|_{H^1}\|b\| \ \wedge\ \|a\|\|b\|_{H^1}\bigr),
\end{equation}
where $\wedge$ is the minimum. Indeed
$\langle a,\Delta b\rangle=-\langle\nabla a,\nabla b\rangle$ gives the first
term. For the transport part, with $u_f = K*f$, one 
has $$|\langle a, u_w \cdot \nabla b \rangle| \lesssim \|a \|\| u_w\|_\infty \|
b\|_{H^1} \lesssim \|a \| \|w\|_{H^1} \|
b\|_{H^1} $$
and the roles of $a$ and $b$ can be inverted by integration by parts. Instead,
for the stretching term we have
$|\langle a, B(b,w)\rangle| = |\langle a,u_b\cdot\nabla w\rangle| \le
\|a\|_{L^4}\|u_b\|_{L^4}\| w\|_{H^1} \wedge \|a\|_{L^2}\|u_b\|_{L^{\infty}}\| w\|_{H^1}$.
Using either $\|u_f\|_{L^4}\lesssim\|f\|$ or $\|u_f\|_{L^\infty}\lesssim\|f\|_{H^1}$ and
$\|f\|_{L^4}\lesssim\|f\|_{H^1}$ delivers the appropriate estimate.

Now we insert \eqref{eq:DzL_bound} into \eqref{eq:secondvar_J} with
$\zeta_u = J_{s,u}(Q^{1/2}e_i,0)$ and $\phi_u = J_{r,u}\eta$, and use the bound
$\|J_{u,t}\|_{\mathrm{op}} \le C_K$ by \eqref{eq:Jop_EK} to obtain by repeated
use of Lemma~\ref{lem:jacobian_moments}
\begin{equation*}
  \begin{aligned}
      \|\mathcal{D}^i_s J_{r,t}\eta\| & \lesssim_K \int_{s \vee r}^t \|\zeta_u\|_{H^1}\|\phi_u\|_{H^1}
      + \bigl(\|w_u\|_{H^1} + \|\pi_u\|_{H^1}\bigr)\bigl(\|\zeta_u\|_{H^1}\|\phi_u\| + \|\zeta_u\|\|\phi_u\|_{H^1}\bigr) \ud u \\
      & \lesssim_K \int_{s \vee r}^t \|\zeta_u\|_{H^1}\|\phi_u\|_{H^1}
      + \bigl(\|w_u\|_{H^1} + \|\pi_u\|_{H^1}\bigr)\bigl(\|\zeta_u\|_{H^1} \| \eta \| + \sigma_i \|\phi_u\|_{H^1}\bigr) \ud u \\
      & \lesssim_K \Bigl(\int_{s}^{2n+1} \|\zeta_u\|_{H^1}^2\ud u\Bigr)^{1/2}
        \Bigl(\int_{r}^{2n+1} \|\phi_u\|_{H^1}^2\ud u\Bigr)^{1/2} \\
        & \qquad + \Bigl(\int_{s}^{2n+1} \|\zeta_u\|_{H^1}^2\ud u\Bigr)^{1/2} \|\eta \| + \Bigl(\int_{r}^{2n+1} \|\phi_u\|_{H^1}^2\ud u\Bigr)^{1/2} \sigma_i \\
      & \lesssim_K \sigma_i  \| \eta  \|.
  \end{aligned}
\end{equation*}
It follows that on $\{\mE_n \leq (K+1)^2\}$ and as an operator on $T_{z_r} \mX$, we have
\[
  \|\mathcal{D}^i_s J_{r,t}\|_{\mathrm{op}} \le C_K \sigma_i,
\]
uniformly in $s,r,t \in [2n,2n+1]$.

Since
$Ah = \int_{2n}^{2n+1} J_{r,2n+1}(Q^{1/2}h(r),0)\ud r$ depends on the noise only
through $J$, differentiating under the integral and using the previous bound,
\[
  \|\mathcal{D}^i_s A\|_{\mathrm{op}}
    \le \int_{2n}^{2n+1} \|\mathcal{D}^i_s J_{r,2n+1}\|_{\mathrm{op}}\|Q^{1/2}\|_{\mathrm{op}}\ud r
    \le C_K \sigma_i ,
\]
while $\|A\|_{\mathrm{op}} \le C_K \|Q^{1/2}\|_{\mathrm{op}}$ by \eqref{eq:Jop_EK}.
Inserting both into the Leibniz identity \eqref{eq:leibniz_M},
\begin{equation}\label{eq:DM_mode}
  \|\mathcal{D}^i_s M\|_{\mathrm{op}}
    \le 2\|\mathcal{D}^i_s A\|_{\mathrm{op}}\|A\|_{\mathrm{op}}
    \le C_K \sigma_i , \qquad s \in [2n,2n+1],
\end{equation}
Summing over the noise modes and integrating in $s$, the
Malliavin derivative of $M$ is controlled by the trace of the noise covariance:
\[
  \|D_W M\|_{H_{\mC\mM}}^2
    = \sum_i \int_{2n}^{2n+1} \|\mathcal{D}^i_s M\|_{\mathrm{op}}^2 \ud s
    \le C_K \sum_i \sigma_i^2 < \infty ,
\]
which is finite because $\sigma_i \sim |i|^{-\alpha_\star}$ with $\alpha_\star > 10$
by Assumption~\ref{ass:noise}. The same arguments apply to $D_W A$ and $D_W
\mK^*$ (which is the kernel of $A$).

We are left with estimating the Malliavin derivative $D_W\mathcal{E}_n$, which
appears in the second term of~\eqref{eq:control_product_rule}. Recall that
\[
  \mathcal{E}_n = \int_{2n}^{2n+2} \|w_u\|_{H^1}^2 + \|\pi_u\|_{H^1}^2 \ud u ,
\]
and that the Malliavin derivative of the state in the mode $e_i$ at time $s$ is
$\mathcal{D}^i_s z_u = J_{s,u}(Q^{1/2}e_i,0) \mathbf{1}_{\{s \le u\}}$, with base and fiber
components $\mathcal{D}^i_s w_u$ and $\mathcal{D}^i_s \pi_u$.
Differentiating the two
quadratic integrands we obtain
\[
  \mathcal{D}^i_s \mathcal{E}_n
    = 2\int_{s}^{2n+2} \langle w_u, \mathcal{D}^i_s w_u\rangle_{H^1}
      + \langle \pi_u, \mathcal{D}^i_s \pi_u\rangle_{H^1} \ud u .
\]
Hence by Cauchy--Schwarz and since we are working on the event $\mE_n \leq (K+1)^2$:
\begin{equation*}
  \begin{aligned}
  |\mathcal{D}^i_s \mathcal{E}_n|
    & \le 2\int_{s}^{2n+2} \bigl(\|w_u\|_{H^1} + \|\pi_u\|_{H^1}\bigr)
      \|\mathcal{D}^i_s z_u\|_{H^1}\ud u \\
    & \lesssim_K \left( \int_{s}^{2n+2} 
      \|\mathcal{D}^i_s z_u\|_{H^1}^2\ud u \right)^{\frac{1}{2}} .
  \end{aligned}
\end{equation*}
Now we use the bound for the dissipative term of
Lemma~\ref{lem:jacobian_moments}(i) to obtain
\[
  \int_{s}^{2n+2} \|\mathcal{D}^i_s z_u\|_{H^1}^2 \ud u
    = \int_{s}^{2n+2} \|J_{s,u}(Q^{1/2}e_i,0)\|_{H^1}^2 \ud u
    \lesssim_K \sigma_i^2 .
\]
Therefore, on the event $\mE_n \leq (K+1)^2$ we have the following bound:
\[
  \|D_W \mathcal{E}_n\|_{H_{\mC\mM}}^2
    = \sum_i \int_{2n}^{2n+2} |\mathcal{D}^i_s \mathcal{E}_n|^2 \ud s
    \lesssim_K 1,
\]
by Assumption~\ref{ass:noise}. 

  Hence, overall we bound the second term of~\eqref{eq:control_product_rule} via
 the previous display and Lemma~\ref{lem:cm_regularity} for $\|\tilde{h}_n\|_{H_{\mC\mM}} \le
  C\beta^{-1/2}\|\vartheta_{2n}\|$:
\[
  \bigl\| \psi'(\mathcal{E}_n)(D_W \mathcal{E}_n \otimes \tilde{h}_n) \bigr\|_{HS}
    = |\psi'(\mathcal{E}_n)|\|D_W \mathcal{E}_n\|_{H_{\mC\mM}}\|\tilde{h}_n\|_{H_{\mC\mM}}
    \le C(K,\beta)\|\vartheta_{2n}\| .
\]
The same works for the first term $\psi(\mathcal{E}_n)D_W \tilde{h}_n$ of~\eqref{eq:control_product_rule}:
\[
  \bigl\| \psi(\mathcal{E}_n)D_W \tilde{h}_n \bigr\|_{HS}^2
    \le \psi(\mathcal{E}_n)^2  \sum_i \int_{2n}^{2n+1} \|\mathcal{D}^i_s \tilde{h}_n\|_{H_{\mC\mM}}^2 \ud s
    \lesssim_{K,\beta} \Bigl(\sum_i \sigma_i^2\Bigr)\|\vartheta_{2n}\|^2 ,
\]
as desired. The proof is complete.
  \end{proof}

  For the next result, recall that we introduced a smallness parameter
  $\delta$ in the definition of the stopping time $\tau$
  in~\eqref{eq:stopping_time_tau}. By that definition, $2n < \tau$ forces 
  $\|\vartheta_{2n}\| \le \delta e^{-2n} \le \delta$, or if $\tau
  \leq 2n$ then $h_n = 0$.
  \begin{lemma}\label{lem:path_space_invertibility}
    There exists a $\delta \in (0, 1)$ such that $(I + D_W h_n): H_{\mC\mM} \to H_{\mC\mM}$ is invertible for all $n \ge 0$.
  \end{lemma}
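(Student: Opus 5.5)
The plan is a Neumann series argument: show that $\|D_W h_n\|_{\mathrm{op}} \le \tfrac12$ almost surely once $\delta$ is small, so that $I + D_W h_n$ is invertible with inverse $\sum_{k \ge 0} (-D_W h_n)^k$. Since the operator norm is dominated by the Hilbert--Schmidt norm, it suffices to bound $\|D_W h_n\|_{HS}$. The bound needed is exactly Lemma~\ref{lem:malliavin_control_bounds}:
\[
  \|D_W h_n\|_{\mathrm{op}} \le \|D_W h_n\|_{HS} \le C(K,\beta)\,\|\vartheta_{2n}\| .
\]
Here $K$ and $\beta$ are the parameters already fixed in Proposition~\ref{prop:asymptotic_contraction}. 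By the order in~\eqref{eq:param_order}, they depend only on $R$ and $\eta$, not on $\delta$.

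I split into cases according to the active set in~\eqref{eq:control_h}. If $z_{2n} \notin \mC_R$ or $\tau \le 2n$, then $h_n \equiv 0$ on the entire cycle. This case is not quite immediate, because these conditions could themselves depend on $W$. However, both are $\mF_{2n}$-measurable, and the Malliavin derivative of the indicator of an $\mF_{2n}$-measurable event does not enter the derivative on $[2n,2n+2)$. More directly, on these events $h_n$ vanishes identically in a Cameron--Martin neighborhood of the cycle's noise, so $D_W h_n = 0$ and $I + D_W h_n = I$. I will want to say this carefully, treating the cycle's Cameron--Martin space $L^2([2n,2n+2);\mH)$ as in the block discussion preceding Lemma~\ref{lem:malliavin_control_bounds}, with the past noise frozen.

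In the active case, $2n < \tau$ gives $\|\vartheta_{2n}\| \le \delta e^{-2n} \le \delta$. Choosing $\delta \le \bigl(2C(K,\beta)\bigr)^{-1}$ then yields $\|D_W h_n\|_{\mathrm{op}} \le \tfrac12$ uniformly in $n$, and the Neumann series converges. This same choice also supplies the hypothesis $\|D_W h\|_{\mathrm{op}} < 1$ of Theorem~\ref{thm:grkn}, cycle by cycle.

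The main obstacle is a possible circularity rather than an estimate. Lemma~\ref{lem:malliavin_control_bounds} must hold with a constant independent of $\delta$, and the constraint on $\delta$ must be compatible with Proposition~\ref{prop:asymptotic_contraction}. That proposition allows any $\delta \in (0,1)$ once $R, K, \beta$ are fixed, so shrinking $\delta$ is harmless there. Only $\delta_\star$ then shrinks with $\delta$, which is allowed.

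I also need to check that the constant $C(K,\beta)$ in Lemma~\ref{lem:malliavin_control_bounds} is deterministic and uniform in $n$. The bounds there use only the localization $\mE_n \le 2(K+1)^2$ enforced by the cutoff $\psi_K$, the Jacobian estimates of Lemma~\ref{lem:fibre_dissipation}(iii), and $\sum_i \sigma_i^2 < \infty$. None of these depends on $n$ or on the initial data, so uniformity holds.
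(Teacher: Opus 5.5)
Your proposal is correct and is essentially the paper's proof: a Neumann series based on $\|D_W h_n\|_{\mathrm{op}} \le \|D_W h_n\|_{HS} \le C(K,\beta)\|\vartheta_{2n}\| \le C(K,\beta)\,\delta$ on $\{2n<\tau\}$, with $\delta \le (2C(K,\beta))^{-1}$. Your additional remarks make explicit two points the paper handles implicitly or in the proof of the main theorem: that $D_W h_n = 0$ on the $\mF_{2n}$-measurable inactive events once the past noise is frozen, and that $C(K,\beta)$ depends on neither $\delta$ nor $n$, so shrinking $\delta$ only shrinks $\delta_\star$ in Proposition~\ref{prop:asymptotic_contraction}.
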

  \begin{proof}
    By Lemma~\ref{lem:malliavin_control_bounds}, $D_W h_n$ is
    Hilbert--Schmidt, so it is a bounded operator and it suffices to show $\|D_W h_n\|_{\mathrm{op}} < 1$, since
    then $(I + D_W h_n)$ is invertible by a Neumann series. By~\eqref{eq:malliavin_derivative_bounds} we have $\|D_W h_n\|_{\mathrm{op}}
    \le C(K,\beta)\|\vartheta_{2n}\|$, so it suffices to take $\delta \le \min\{ 1/2,\ (2 C(K, \beta))^{-1} \}$. 
  \end{proof}
The three lemmas above prove by Theorem~\ref{thm:grkn} that on each interval
$[2n, 2n+2)$ the laws of $W$ and $W'$ are equivalent. 
To be precise, for $n \ge 0$ write $$W_n(t) = W(t) - W(2n), \qquad \forall t \in
[2n, 2n+2]$$ for the increment of the cylindrical
Wiener process $W$ on
$[2n, 2n+2] \times \TT^2$ and let
\begin{equation*}
  \mathcal{B}_n \eqdef \sigma\big( W_j : j < n \big) ,
\end{equation*}
so that $(\mathcal{B}_n)_{n \ge 0}$ is a filtration with $\mathcal{B}_\infty = \sigma(W)$.
Then with the control $h_n$ of~\eqref{eq:control_h} we have $$W'_n (t)  = W_n(t) 
+ \int_{2n}^t h_n(s) \ud s , \qquad \forall t \in [2n, 2n+2] .$$
Conditionally on $\mathcal{B}_n$, because $h_n$ depends on the past, Theorem~\ref{thm:grkn} applied on $[2n, 2n+2)$ (its
hypotheses hold by Lemmas~\ref{lem:cm_regularity}--\ref{lem:path_space_invertibility})
makes the conditional laws of $W'_n$ and $W_n$ equivalent, with density
\begin{equation}\label{e:ln-def}
  L_n \eqdef \frac{\ud \mathcal{L}(W'_n \mid \mathcal{B}_n)}{\ud \mathcal{L}(W_n)} ,
  \qquad
  \log L_n = \delta(h_n) - \frac{1}{2}\|h_n\|_{H_{\mC\mM}}^2
    - \log\big|\det\nolimits_2(I + D_W h_n)\big| ,
\end{equation}
where we used that $\mathcal{L}(W_n \mid \mathcal{B}_n) = \mathcal{L}(W_n)$, the reference
noise having independent blocks.
To pass to the infinite
horizon we use a Markovian equivalent of Kakutani's
equivalence criterion that can be found in \cite{JacodShiryaev2003}*{Theorem IV.2.36(a), p. 253}. To verify the conditions of that theorem we need the
following bound on the density $L_n$.

\begin{lemma}\label{lem:summability}
  Fix any $K, \beta > 1$ and define $h_n$ as in~\eqref{eq:control_h}. For
  $\delta = \delta(K, \beta) \in (0, 1)$ sufficiently small, as in
  Lemma~\ref{lem:path_space_invertibility}, the densities $L_n$ of~\eqref{e:ln-def} satisfy
  \begin{equation}\label{eq:conditional_hellinger_bound}
    \EE\big[ (\sqrt{L_n} - 1)^2 \,\big|\, \mathcal{B}_n \big] \ \le\ C e^{-2  n} ,
  \end{equation}
  with a deterministic $C = C(K, \beta)$. In particular 
  $\sum_{n \ge 0} \EE[(\sqrt{L_n}-1)^2 \mid \mathcal{B}_n]
  \le C \sum_{n \ge 0} e^{-2 n} < \infty$.
\end{lemma}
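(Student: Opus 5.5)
The plan is to bound the conditional squared Hellinger distance by a conditional Kullback--Leibler divergence, and then read off that divergence directly from the Ramer--Kusuoka formula of Theorem~\ref{thm:grkn}. In that formula the Skorohod integral has mean zero, and the remaining terms are controlled by Lemmas~\ref{lem:cm_regularity} and~\ref{lem:malliavin_control_bounds}. Throughout I condition on $\mathcal B_n$. The quantities $\vartheta_{2n}$, $\{z_{2n}\in\mC_R\}$ and $\{2n<\tau\}$ are $\mathcal B_n$-measurable, since the driving path of the second copy up to time $2n$ is itself a function of $W$ on $[0,2n]$. Because the reference blocks $W_n$ are independent, I can treat $\vartheta_{2n}$ as a deterministic parameter and argue on $[2n,2n+2)$ with fresh white noise.

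\emph{Step 1 (trivial cases).} If $2n\ge\tau$ or $z_{2n}\notin\mC_R$, then $h_n\equiv0$ and $L_n=1$, so there is nothing to prove. Otherwise $\|\vartheta_{2n}\|\le\delta e^{-2n}\le\delta$. Moreover, Lemma~\ref{lem:path_space_invertibility} gives $\|D_Wh_n\|_{\mathrm{op}}\le C(K,\beta)\delta\le\tfrac12$.

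\emph{Step 2 (Hellinger $\le$ KL).} Write $P=\mathcal L(W_n)$ and $Q=\mathcal L(W'_n\mid\mathcal B_n)$. Then
\[
\EE[(\sqrt{L_n}-1)^2\mid\mathcal B_n]=2\bigl(1-\EE_Q[\sqrt{\ud P/\ud Q}]\bigr)\le -\EE_Q\bigl[\log(\ud P/\ud Q)\bigr]=\mathrm{KL}(Q\,|\,P),
\]
where the inequality is Jensen's, $\EE e^{Y}\ge e^{\EE Y}\ge 1+\EE Y$ applied with $Y=\tfrac12\log(\ud P/\ud Q)$. The point of working with the symmetric Hellinger form is that $Q$-expectations can be pulled back to $P$ through the map $T(w)=w+\int h_n(w)$, namely $\EE_Q[g]=\EE_P[g\circ T]$. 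Since $T$ is a.s.\ bijective by Theorem~\ref{thm:grkn}, this gives
\[
\mathrm{KL}(Q\,|\,P)=\EE_P\bigl[\log L_n(T w)\bigr].
\]

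\emph{Step 3 (identifying $\log L_n\circ T$ and bounding it).} This is where I expect the main care to be needed. One has to check which form of the Ramer--Kusuoka identity is being used, namely whether $\log L_n$ in~\eqref{e:ln-def} is evaluated at $w$ or at $T(w)$, and get the signs right. The change-of-variables form $\EE_P[f(T)\,|\det_2(I+D_Wh_n)|\,e^{-\delta(h_n)-\frac12\|h_n\|^2}]=\EE_P[f]$ (\cite{UstunelZakai00}*{Theorem~3.2.1}) gives
\[
\log L_n(Tw)=\delta(h_n)+\tfrac12\|h_n\|_{H_{\mC\mM}}^2-\log|\det\nolimits_2(I+D_Wh_n)| .
\]
If the other convention is the right one, the same three terms appear with the opposite signs, and the argument below goes through unchanged. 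Now take the $P$-expectation term by term:
\begin{itemize}
\item[] the Skorohod term has zero mean, because $h_n\in\mathbb D^{1,2}\subseteq\mathrm{Dom}(\delta)$ by Lemma~\ref{lem:malliavin_control_bounds};
\item[] Lemma~\ref{lem:cm_regularity} gives $\tfrac12\|h_n\|^2\le C_K\beta^{-1}\|\vartheta_{2n}\|^2$ pathwise;
\item[] for the determinant, the eigenvalues satisfy $|\lambda_k|\le\tfrac12$, so $|\log[(1+\lambda_k)e^{-\lambda_k}]|\le C|\lambda_k|^2$, and Weyl's inequality together with Lemma~\ref{lem:malliavin_control_bounds} gives $|\log|\det_2(I+D_Wh_n)||\le C\|D_Wh_n\|_{HS}^2\le C(K,\beta)\|\vartheta_{2n}\|^2$.
\end{itemize}

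\emph{Conclusion.} Putting the pieces together,
\[
\EE[(\sqrt{L_n}-1)^2\mid\mathcal B_n]\le C(K,\beta)\|\vartheta_{2n}\|^2\le C(K,\beta)\,\delta^2e^{-4n}\le Ce^{-2n}.
\]
Summability of the bounds over $n$ then follows.
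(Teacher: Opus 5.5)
Your proof is correct, but it runs the Kullback--Leibler bound in the opposite direction from the paper. The paper uses $(\sqrt{x}-1)^2\le x-1-\log x$ to bound the Hellinger term by $-\EE[\log L_n\mid\mathcal B_n]$. It then reads $\log L_n$ off \eqref{e:ln-def} at the reference path, drops $\delta(h_n)$ because it has zero mean, and needs only the one-sided estimate $\log|\det\nolimits_2(I+U)|\le C\|U\|_{HS}^2$, which holds for every Hilbert--Schmidt $U$.

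You bound instead by $\mathrm{KL}(Q\,|\,P)=\EE_P[\log L_n\circ T]$ and use the change of variables $\EE[f(T)\,|\Lambda|]=\EE[f]$. This gives $\log L_n\circ T=\delta(h_n)+\frac12\|h_n\|^2-\log|\det\nolimits_2(I+D_Wh_n)|$ exactly, as a functional of the reference path.

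What this buys is that the zero mean of the Skorohod integral is used where it actually holds. For an anticipating shift, $\log L_n(w)$ is that same expression evaluated at $T^{-1}w$. In general this agrees with \eqref{e:ln-def} read at $w$ only when $h_n$ is deterministic; the linear map $x\mapsto(1+a)x$ on the standard Gaussian on $\RR$ already shows a discrepancy at order $a^2$. So in the paper's direction, the identity $\EE[\delta(h_n)\mid\mathcal B_n]=0$ is exact only for deterministic shifts, while in yours it applies directly under the reference measure.

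The price is that the determinant now enters with a minus sign, so you need a lower bound on $\log|\det\nolimits_2|$. You obtain the two-sided bound $\bigl|\log|\det\nolimits_2(I+D_Wh_n)|\bigr|\le C\|D_Wh_n\|_{HS}^2$ correctly, from $\|D_Wh_n\|_{\mathrm{op}}\le\frac12$ (Lemma~\ref{lem:path_space_invertibility}), the eigenvalue estimate and Weyl's inequality.

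One correction: your hedge that under the other convention ``the argument goes through unchanged'' is not right as stated. If \eqref{e:ln-def} held literally at $w$, pulling back through $T$ would evaluate $\delta(h_n)$ at $T(w)$, and that need not have zero mean. The point is moot, though, because the change-of-variables identity you quote fixes the convention, and it is the correct one. Your final bound $C\delta^2e^{-4n}$ is also slightly stronger than the stated $Ce^{-2n}$.
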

\begin{proof}
  For $x > 0$ we use the (somewhat classical in this context) inequality $(\sqrt{x} - 1)^2 \le x - 1 - \log x$. Conditionally on $\mathcal{B}_n$
  the variable $L_n$ is a Radon--Nikodym density, so $\EE[L_n \mid \mathcal{B}_n] = 1$ and
  \begin{equation*}
    \EE\big[(\sqrt{L_n} - 1)^2 \,\big|\, \mathcal{B}_n\big]
      \ \le\ -\EE\big[\log L_n \,\big|\, \mathcal{B}_n\big]
      \ =\ \frac{1}{2}\EE\big[\|h_n\|_{H_{\mC\mM}}^2 \,\big|\, \mathcal{B}_n\big]
        + \EE\big[\log\big|\det\nolimits_2(I + D_W h_n)\big| \,\big|\, \mathcal{B}_n\big] ,
  \end{equation*}
  by~\eqref{e:ln-def} and since the Skorokhod integral satisfies
  $\EE[\delta(h_n) \mid \mathcal{B}_n] = 0$. Next we apply the estimate
  $\log|\det_2(I+U)| \le C\|U\|_{HS}^2$ of \cite[Theorem~9.2]{simon2005trace}
  together with Lemma~\ref{lem:path_space_invertibility}, which give us the
  uniform bound
  $\|D_W h_n\|_{\mathrm{op}} \le 1/2$. By Lemmas~\ref{lem:cm_regularity}
  and~\ref{lem:malliavin_control_bounds} and the definition of $\tau$
  in~\eqref{eq:stopping_time_tau}, we deduce
  \begin{equation*}
    \frac{1}{2}\|h_n\|_{H_{\mC\mM}}^2 + C\|D_W h_n\|_{HS}^2
      \ \le\ C (2\delta)^2 e^{-2 n}\, \mathbf{1}_{\{2n < \tau\}}
      \ \le\ C e^{-2 n} ,
  \end{equation*}
  and~\eqref{eq:conditional_hellinger_bound} follows.
\end{proof}

With these results in hand, we are ready to prove the main result of this subsection.
\begin{proof}[Proof of Proposition~\ref{prop:absolute_continuity}]
  The measures $\mathcal{L}(W_n)$ and $\mathcal{L}(W'_n)$ are equivalent for
  every $n$
  by Theorem~\ref{thm:grkn}. By Lemma~\ref{lem:summability}
  the assumption of~\cite{JacodShiryaev2003}*{Theorem IV.2.36(a), p. 253} holds under either measure. Applying the theorem once with
  $(P, Q) = (\mathcal{L}(W), \mathcal{L}(W'))$ and once with the roles exchanged yields both
  inclusions, so the two path measures are mutually absolutely continuous.
\end{proof}

\subsection{Proof of the main theorem}\label{sec:main_proof}

We now combine all the results obtained so far.

\begin{proof}[Proof of the uniqueness of the stationary measures in Theorem~\ref{thm:main_result_intro}]
  We consider two $P^2$-ergodic measures $\mu_1 \ne \mu_2$ and prove that
  necessarily $\mu_1 =\mu_2$.

  Next we fix the parameters of the asymptotic coupling. Note that the coupling
  depends on $\delta_\star, R$ through the time $T_\star(\delta_\star, R)$ of
  the initial independent coupling. Then the control $h$ depends additionally on parameters
  $K, \beta >1$ and $\delta \in (0, 1)$, where $\delta$ appears in the
  definition of the stopping time $\tau$.
  The parameters $R, K, \beta > 1$ are fixed in
  Proposition~\ref{prop:asymptotic_contraction} and do not depend on $\delta$. This 
  leaves $\delta$ free to be fixed in
  Lemma~\ref{lem:path_space_invertibility} as a function of $K$ and $\beta$.
  Then we fix
  $\delta_\star$ in Proposition~\ref{prop:asymptotic_contraction} as a function of
  $\delta$. Finally, we fix $T_\star(\delta_\star, R)$ in
  Lemma~\ref{lem:meeting_time_new}.

  In this setting Proposition~\ref{prop:asymptotic_contraction} and
  Proposition~\ref{prop:absolute_continuity} verify the assumption of
  Theorem~\ref{thm:hms_coupling_criterion} so that the claim is proven.
\end{proof}

\subsection{The Furstenberg--Khasminskii formula}\label{sec:fk_consequences}

In this section we prove some simple consequences of the uniqueness of stationary
measures that was the main aim of this work. In particular, we prove a
Furstenberg--Khasminskii formula, and that the
Lyapunov exponent is attained for any deterministic initial condition.

  \begin{proof}[Proof of Corollaries~\ref{cor:FK_formula} and~\ref{cor:deterministic_datum}]
    Write $\Lambda(w,\pi) \eqdef \langle \pi, G_w \pi \rangle$. The chain
    rule gives
    \begin{equation}\label{eq:log_derivative_integrated}
      \log\|\zeta_T\| - \log\|\zeta_0\| = \int_0^T \Lambda(w_t, \pi_t) \ud t , \qquad
      \Lambda(w, \pi) = -\|\nabla \pi\|^2 - \langle \pi, (K*\pi)\cdot\nabla w \rangle ,
    \end{equation}
    the second term being absent in the PSA case~\eqref{eq:two_operators_intro_PS}, and
    H\"older with interpolation give
    \begin{equation}\label{eq:Lambda_pointwise}
      |\Lambda(w,\pi)| \ \le\ \frac32 \|\pi\|_{H^1}^2 + C\bigl( 1 + \|w\|_{H^1}^2 \bigr) .
    \end{equation}
    So $\Lambda \in L^1(\nu)$ by \cite[Corollary~2.3]{HPRY24}, with $\nu$
    the now unique stationary measure of $z = (w,\pi)$. Setting
    $\lambda_\nu \eqdef \int_\mX \Lambda \ud\nu$, we claim
    \begin{equation}\label{eq:time_average_Lambda}
      \frac1T \int_0^T \Lambda(z_t) \ud t \ \to\ \lambda_\nu
      \qquad \text{almost surely, for every deterministic } z_0 \in \mX .
    \end{equation}
    This is \cite{Kifer86}*{Chapter~III, Corollary~2.1}, after
    \cite{FurstenbergKifer83}, but proved there through Birkhoff and hence only for
    $\nu$-almost every $z_0$, on a compact fiber and for $\Lambda$ of finite semi-norm. We
    prove it as stated. By
    Remark~\ref{rem:smoothing} and the Markov property we may fix initial data
    $z_0 \in \mX_\star$, hence in a central region $\mC_{R_0}$ since
    $\beta_\star > 1$, at the cost of an almost surely finite additive constant
    in~\eqref{eq:log_derivative_integrated}. 
    We write $\mE_n$ for the cycle
    energy~\eqref{eq:def-8En}, $\widehat V (u)= V_\rho^{1/2} (u)$ with the
    Lyapunov functional as in~\eqref{eq:lyapunov_S4}, and we consider $\rho$
    sufficiently large for all our purposes.

   We will use the following. If $\{X_k\}_{k \ge 0}$ is nonnegative with $X_k$
    measurable for $\mF_{2k+2}$ and
    \begin{equation}\label{eq:cycle_average_hyp}
      \sup_{k \ge 0} \EE_{z_0}\bigl[ X_k^2 \bigr] < \infty , \qquad
      \EE\bigl[ X_k \bigm| \mF_{2k-2} \bigr]
      \le C_\star \bigl( 1 + \widehat V(u_{2k-2}) \bigr) \quad (k \ge 1) ,
    \end{equation}
    then almost surely
    \begin{equation}\label{eq:cycle_average_principle}
      \limsup_{n \to \infty} \frac1n \sum_{k<n} X_k < \infty , \qquad \frac{X_n}{n} \to 0 .
    \end{equation}
    The second assertion follows from Chebyshev and Borel--Cantelli. For the
    first, split $X_k = \EE[X_k \mid \mF_{2k-2}] + d_k$. Lemma~\ref{lem:lln_lag} at $m = 1$
    gives $n^{-1}\sum_{k<n} d_k \to 0$, and~\eqref{eq:cycle_average_hyp}
    with~\eqref{eq:occupation_bound} bound the rest.

    Now, we will apply~\eqref{eq:cycle_average_hyp} to the sequences
    \begin{equation*}
      X_k = 1 + \mE_k , \qquad
      X_k = \|\pi_{2k}\|_{H^1}^2 + \widehat V(u_{2k}) .
    \end{equation*}
    The fact that both satisfy~\eqref{eq:cycle_average_hyp} follows, in the
    first case, by~\eqref{eq:bmom}, and in the second case by
    Lemma~\ref{lem:uniform_fibre_moment}.
    Hence, we find that almost surely,
    \begin{equation}\label{eq:pathwise_averages}
      \Sigma \eqdef \sup_{n \ge 1} \frac1n \sum_{k<n}
      \bigl( 1 + \mE_k + \|\pi_{2k}\|_{H^1}^2 + \widehat V(u_{2k}) \bigr) < \infty , \qquad
      \frac{\mE_n}{n} \to 0 .
    \end{equation}

        Now let $\nu_n \eqdef \frac1n \sum_{j<n} \delta_{z_{2j}}$ be the occupation measures of $\{z_{2j}\}$. Then
    by~\eqref{eq:pathwise_averages} the sequence $\nu_n$ is almost surely
    tight. Fix a countable convergence-determining family. For $f$ in it
    the chain is Feller \cite[proof of Corollary~2.3]{HPRY24}, so $P^2 f$ is bounded
    continuous on $\mX$ and
    \begin{equation*}
      \frac1n \sum_{j<n}\bigl[ f(z_{2j+2}) - (P^2f)(z_{2j}) \bigr] \to 0 ,
      \qquad
      \frac1n \sum_{j<n}\bigl[ f(z_{2j+2}) - f(z_{2j}) \bigr]
      = \frac{f(z_{2n}) - f(z_0)}{n} \to 0 ,
    \end{equation*}
    the first by Lemma~\ref{lem:lln_lag} at $m = 0$, so $\nu_n(P^2 f - f) \to 0$. The
    family being countable this holds off one null set, where every limit point of $\nu_n$
    is therefore $P^2$-invariant. Such a measure is $\nu$ by
    Theorem~\ref{thm:main_result_intro}, so $\nu_n \Rightarrow \nu$ almost surely.
    Since $\Lambda$ is neither continuous (on $\mX$) nor bounded, we smooth it. We set
    \begin{equation*}
      Y_j \eqdef \int_{2j}^{2j+2}\Lambda(z_t) \ud t , \qquad
      F \eqdef \int_2^4 P^r\Lambda \ud r , \qquad
      \EE\bigl[ Y_{j+1} \bigm| \mF_{2j} \bigr] = F(z_{2j}) ,
    \end{equation*}
    the last identity by the Markov property.

    In the following discussion, we will prove two properties of $F$. The first is the bound
    \begin{equation}\label{eq:F_square_bound}
      F^2 \ \le\ C \widehat V ,
    \end{equation}
    which replaces the boundedness of the functional.
    The second is that $F$ is continuous on sublevel sets $\{\widehat V \le K\}$ for every
    $K > 0$ (both bounds together imply that on sublevel sets of the Lyapunov
    function $F$ is continuous and bounded). 

    For the first bound, by~\eqref{eq:Lambda_pointwise} we have
    \begin{equation*}
      \Lambda(z_r)^2 \ \le\ C \bigl( \|\pi_r\|_{H^1}^4
      +\|w_r\|_{H^1}^4 \bigr) .
    \end{equation*}
    The first term is bounded by~\eqref{eq:uniform_fibre_moment} at $q = 4$. For
    the second we find
    \begin{equation*}
      \|w\|_{H^1}^4 \ \le\  C V_\rho(u) ,
    \end{equation*}
    for $\rho$ sufficiently large.
 Collecting the two terms we obtain
    \begin{equation}\label{eq:lambda_second_moment}
      \EE_z\bigl[ \Lambda(z_r)^2 \bigr] \ \le\ C \widehat V(u) ,
      \qquad r \in [2, 4] ,
      \quad z = (w, \pi) \in \mX , \ w \in H^{s_\star} ,
    \end{equation}
    uniformly over $\pi$. Jensen then implies~\eqref{eq:F_square_bound}. 
    Since
    $\int \widehat V \ud\nu_n \le \Sigma$ by~\eqref{eq:pathwise_averages}, we
    also have that almost surely 
    \begin{equation}\label{eq:F_uniform_bound}
      \sup_{n \ge 1} \int F^2 \ud\nu_n \ \le\ C \Sigma.
    \end{equation}

    We turn to the continuity. The functional $\widehat V$ is lower
    semicontinuous, so $\{\widehat V \le K\}$ is closed. On it
    $\|u\|_{H^{\beta_\star}}$ is bounded, and so is $\|w\|_{H^{s_\star}}$
    because $\beta_\star - 1 = s_\star$. The Gr\"onwall constants of
    Lemma~\ref{lem:base_difference} and Lemma~\ref{lem:fibre_difference}(ii) are
    therefore uniform there, so $z \mapsto (w, \pi)$ is continuous from $\mX$ into
    $L^2([2,4]; H^1)$. In that norm $\Lambda$ is quadratic, so
    $z \mapsto \int_2^4 \Lambda(z_r) \ud r$ is continuous pathwise.
    By~\eqref{eq:lambda_second_moment} these integrals are bounded in $L^2$
    uniformly on $\{\widehat V \le K\}$, hence uniformly integrable, and Vitali's
    theorem implies that $F$ is
    continuous on $\{\widehat V \le K\}$.

    Next, the differences $d_j \eqdef Y_{j+1} - F(z_{2j})$ satisfy
    $\EE[d_j \mid \mF_{2j}] = 0$, by the identity defining $F$, and
    $\EE_{z_0}[d_j^2] \le 4c_\Lambda$, so Lemma~\ref{lem:lln_lag} with $m = 0$ gives
    $n^{-1}\sum_{j<n} d_j \to 0$ almost surely. The complements of $\{\widehat
    V \le K\}$ carry at most $\Sigma/K$ mass with respect to $\nu_n$ and $\nu$.
    Then, since $F \in L^1(\nu)$, we have
    \begin{equation*}
      \int |F| \mathbf{1}_{\{|F| > L\}} \ud\nu_n \ \le\ \frac{C}{L} ,
    \end{equation*}
    by $F^2 \le C\widehat V$ and $\int \widehat V^{1/2} \ud\nu \lesssim 1$.
    Truncating first in $K$, then in $L$, and sending $n \to \infty$, we obtain
    \begin{equation*}
      \int F \ud\nu_n \ \to\ \int F \ud\nu = 2\lambda_\nu ,
      \qquad \text{hence} \qquad
      \frac1n \sum_{j<n} F(z_{2j}) \ \to\ 2\lambda_\nu
      \quad \text{almost surely,}
    \end{equation*}
    where the middle identity holds by invariance of $\nu$ and Fubini. From here
    we deduce that
    $T^{-1}\int_0^T \Lambda(z_t) \ud t \to \lambda_\nu$
    almost surely, which with~\eqref{eq:log_derivative_integrated} reads
    \begin{equation}\label{eq:as_exponent}
      \lim_{T \to \infty} \frac1T \log\|\zeta_T\| = \lambda_\nu \qquad \text{almost surely,}
    \end{equation}
    for every deterministic initial pair, hence for every pair independent of
    the noise.
    
    It remains to identify $\lambda_\nu$ with $\lambda_1$. By the multiplicative ergodic theorem (see e.g. \cite{BlumenthalPunshonSmith23}*{Theorem~2.2} for the LNS and PSA cocycles), for $\mu \otimes \PP$-almost every $(w_0,\omega)$ there is a closed subspace $L = L(w_0,\omega) \subset \mH$ with
    \begin{equation}\label{eq:cocycle_exponent}
      \lim_{T \to \infty} \frac1T \log\|\zeta_T\| \ =\ \lambda_1 ,
      \qquad \zeta_0 \notin L .
    \end{equation}
    Since $\lambda_1 > -\infty$ by \cite{HPRY24}, we have that $L$ is proper.

    Taking $w_0 \sim \mu$ and comparing with~\eqref{eq:lambda1-def} gives $\lambda_\nu \le \lambda_1$. Suppose $\lambda_\nu < \lambda_1$, and fix $w_0$ for which the above holds for almost every $\omega$. Since~\eqref{eq:as_exponent} holds from every deterministic initial state, every $\zeta_0 \ne 0$ has exponent $\lambda_\nu$, so $\zeta_0 \in L$ almost surely. Fixing a countable dense $\{\zeta_i\} \subset \mH \setminus \{0\}$, this holds for all $i$ on a full $\PP$-measure set, where $L \supseteq \overline{\{\zeta_i\}} = \mH$ because $L$ is closed, which goes against properness. Hence $\lambda_\nu = \lambda_1$, which is Corollary~\ref{cor:FK_formula}, and~\eqref{eq:as_exponent} is Corollary~\ref{cor:deterministic_datum}.

  \end{proof}

\appendix

\section{Moment bounds on the Jacobian}\label{app:jacobian_moments}

This appendix proves
Lemma~\ref{lem:fibre_dissipation}, Lemma~\ref{lem:fibre_smoothing} and
Lemma~\ref{lem:jacobian_moments}, all by means of parabolic energy estimates or
similar parabolic regularity tools. Throughout, $\Gamma = \Gamma(z)$ denotes~\eqref{eq:gamma_r} at the state $z$ under consideration.

\begin{proof}[Proof of Lemma~\ref{lem:fibre_dissipation}]
We argue only for LNS (the PSA case is strictly simpler), and treat separately
the blocks $J^{ww}$, $J^{\pi w}$ and $J^{\pi\pi}$ of~\eqref{eq:J-matrix}. We
start with $J^{ww}$. For fixed $\delta w$ the curve $\delta w_r \eqdef
J^{ww}_{s,r}\delta w$ solves the random linear equation
  \begin{equation}\label{eq:lin_vort_blue}
    \partial_r \delta w_r = L_{w_r}\delta w_r
    = \Delta\delta w_r - B(w_r,\delta w_r) - B(\delta w_r,w_r), \qquad \delta w_s = \delta w ,
  \end{equation}
in which the noise enters only through the coefficient field $w_r$.
We start with a standard $L^2$ estimate for~\eqref{eq:lin_vort_blue}. For any
$\ve \in (0,1)$ and $p_\ve \eqdef \frac{2}{2-\ve} \in (1,2)$, via~\eqref{eq:l4_toolkit}, interpolation and Young with exponents $2/\ve$
and $p_\ve$ we find 
  \begin{equation}\label{eq:gronwall_blue}
    \sup_{r\in[s,t]}\|\delta w_r\|^{2} + \int_s^t \|\delta w_r\|_{H^1}^2 \ud r
    \lesssim_{t-s} \exp\Big( 2C\int_s^t \|w_r\|_{H^1}^{p_\ve}\ud r \Big) \| \delta w \|^{2} .
  \end{equation}

  Next, we prove~\eqref{eq:fiber_dissipation} for an arbitrary $\delta\pi \in \mH$.
Then
\begin{equation}\label{eq:fd_terms}
  \langle\delta\pi, L^{\pi\pi}_z\delta\pi\rangle
  \ =\ \underbrace{-\|\nabla\delta\pi\|^2}_{\rm(I)}
  \ \underbrace{-\ \langle\delta\pi, B(\delta\pi,w)\rangle}_{\rm(II)}
  \ \underbrace{-\ \langle\pi,\delta\pi\rangle
    \bigl( \langle\pi,G_w\delta\pi\rangle
    + \langle\delta\pi,G_w\pi\rangle \bigr)}_{\rm(III)}
  \ \underbrace{-\ \langle\pi,G_w\pi\rangle\,\|\delta\pi\|^2}_{\rm(IV)} .
\end{equation}
Term~(I) is the dissipation and is kept. The other three are estimated as
follows.
\begin{enumerate}[label=(\roman*)]
  \item[(II)] Writing $u_\pi \eqdef K * \pi$ for the divergence-free velocity
    attached to $\pi$, so that $\|\nabla u_\pi\| = \|\pi\|$ and
    $\|u_\pi\|_{L^4} \lesssim \|\pi\|$, we find
    \begin{equation}\label{eq:fibre_stretch_absorb}
      |\langle\delta\pi,B(\delta\pi,w)\rangle|
      \ \le\ \frac12\|\nabla\delta\pi\|^2 + C\Gamma\|\delta\pi\|^2 .
    \end{equation}
  \item[(III)] For every $\varphi \in H^1$, we have
    \begin{equation}\label{eq:Gw_scalars}
      |\langle\pi,G_w\varphi\rangle| + |\langle\varphi,G_w\pi\rangle|
      \ \le\ C\bigl( \|\pi\|_{H^1} + \|w\|_{H^1} \bigr)\|\varphi\|_{H^1} .
    \end{equation}
  \item[(IV)] By antisymmetry of the transport term,
    $-\langle\pi,G_w\pi\rangle = \|\nabla\pi\|^2 + \langle\pi,B(\pi,w)\rangle$,
    and the two inequalities of~(II) give
    $|\langle\pi,B(\pi,w)\rangle| \lesssim \|\nabla\pi\|^{1/2}\|\nabla w\|
    \lesssim \Gamma$. Hence
    $-\langle\pi,G_w\pi\rangle \le C\Gamma$, which is~\eqref{eq:rayleigh_bound}.
\end{enumerate}
Adding the four contributions and using Young's inequality gives~\eqref{eq:fiber_dissipation}.

For the coupling term~\eqref{eq:coupling_operator}, in which
$(DL^0_w)[\delta w]\pi = B(\delta w,\pi) + B(\pi,\delta w)$, we have
\begin{equation}\label{eq:coupling_split}
  \langle\delta\pi,L^{\pi w}_z\delta w\rangle
  \ =\ \underbrace{-\,\bigl\langle \delta\pi,\, B(\delta w,\pi) + B(\pi,\delta w) \bigr\rangle}_{\rm(I)}
  \ +\ \underbrace{\langle\pi,\delta\pi\rangle
    \bigl\langle \pi,\, B(\delta w,\pi) + B(\pi,\delta w) \bigr\rangle}_{\rm(II)} .
\end{equation}
\begin{enumerate}
  \item[(I)] We use~\eqref{eq:l4_toolkit} together with $\|\pi\| = 1$ to obtain
    $|{\rm(I)}| \le C\|\pi\|_{H^1}\|\delta\pi\|\,\|\delta w\|_{H^1}$.
  \item[(II)] First,
    $\langle\pi,B(\delta w,\pi)\rangle = 0$. Then, the second term obeys
    $|\langle\pi,B(\pi,\delta w)\rangle| \lesssim
    \|\pi\|_{H^1}^{1/2}\|\delta w\|_{H^1}$ after one integration by parts
    and~\eqref{eq:l4_toolkit}.
\end{enumerate}
Since $\|\pi\|_{H^1} + \|\pi\|_{H^1}^2 \le C\Gamma$, applying Young and adding the two bounds gives
\begin{equation}\label{eq:coupling_bound}
  \bigl| \langle\delta\pi,L^{\pi w}_z\delta w\rangle \bigr|
  \ \le\ C\Gamma\,\|\delta\pi\|^2 + C\|\delta w\|_{H^1}^2 .
\end{equation}
Combining these last estimates with~\eqref{eq:fiber_dissipation} we obtain
    \begin{equation}\label{eq:fibre_energy_ineq}
      \frac{\ud}{\ud r}\|\delta\pi_r\|^2 + \frac12\|\nabla\delta\pi_r\|^2
      \le C\Gamma_r\|\delta\pi_r\|^2 + C\|\delta w_r\|_{H^1}^2 .
    \end{equation}
Then the claim~\eqref{eq:jacobian_pathwise} follows via 
Gr\"onwall together with~\eqref{eq:fibre_energy_ineq}   and~\eqref{eq:gronwall_blue}.
\end{proof}

Now we work toward the proof of Lemma~\ref{lem:jacobian_moments} with two
preliminary results. First we find the following.

\begin{lemma}\label{lem:fibre_smoothing}
  For every $T, K > 0$ there is a constant $C_{K,T} > 0$ such that, on
  $E_K$, $\forall s \in (0, T]$:
  \begin{equation}\label{eq:fibre_smoothing}
    \|\pi_s\|_{H^1} \le C_{K, T}  s^{-1/4}  .
  \end{equation}
\end{lemma}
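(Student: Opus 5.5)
The plan is a parabolic smoothing argument for the projective equation~\eqref{eq:proj-spde}, started from $\pi_0 \in \Sph$, which is only in $L^2$. The weight $s^{-1/4}$ is weaker than the heat-kernel rate $s^{-1/2}$ for $L^2 \to H^1$. I will aim for a time-weighted $H^1$ bound using only quantities that are controlled on $E_K$, namely $\int_0^T \|w_r\|_{H^1}^4 \ud r \le K^4$. I will not use any bound on $\pi$ itself, since $\pi$ appears in $E_K$ only through $\int \|\pi_r\|^4_{H^1}$ and that is exactly what we want to improve pointwise.

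\emph{Step 1 (energy/Dirichlet quotient).} Since $\|\pi\|=1$, the relevant quantity is $D_r \eqdef \|\nabla \pi_r\|^2 = -\langle \pi_r, \Delta\pi_r\rangle$. I will redo the computation behind~\eqref{eq:fd_dirichlet}, now keeping the dissipative term. Differentiating gives
\[
\tfrac12 \tfrac{\ud}{\ud r} D_r = -\|\Delta \pi_r\|^2 + D_r^2 + \langle \Delta\pi_r, L^0_{w_r}\pi_r\rangle - \langle \pi_r, L^0_{w_r}\pi_r\rangle D_r .
\]
The combination $-\|\Delta\pi\|^2 + D^2 = -\|\Delta\pi + D\pi\|^2 \le 0$ is the Rayleigh-quotient dissipation. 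The transport and stretching terms are then estimated as in Lemma~\ref{lem:fibre_dissipation} with~\eqref{eq:l4_toolkit}. This yields
\[
\tfrac{\ud}{\ud r} D_r \le -\|\Delta\pi_r + D_r\pi_r\|^2 + C\|w_r\|_{H^1}^2 D_r + C\|w_r\|_{H^1}^2 ,
\]
which is the pathwise statement~\eqref{eq:fd_dirichlet} with an additional forcing term coming from the stretching.

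\emph{Step 2 (time weight).} Multiply by $r^{1/2}$ and set $Y_r = r^{1/2} D_r$. Then
\[
\dot Y_r \le C\|w_r\|_{H^1}^2 Y_r + C r^{1/2}\|w_r\|_{H^1}^2 + \tfrac12 r^{-1/2} D_r .
\]
The last term is the crux, because I need an integral bound on $r^{-1/2}D_r$ that does not involve $\pi$. For this I use the $L^2$ level: $\|\pi\|=1$ and~\eqref{eq:rayleigh_bound} give $\tfrac{\ud}{\ud r}\log\|\zeta_r\| = -D_r - \langle\pi_r, B(\pi_r,w_r)\rangle$. Hence $D_r$ is dominated by the negative derivative of a logarithmic norm plus a term of size $D_r^{1/4}\|w_r\|_{H^1}$, and the latter is absorbed by Young's inequality. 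Integrating against $r^{-1/2}$ by parts is delicate, since $\log\|\zeta_r\|$ could drop to $-\infty$. My first attempt is therefore a comparison instead: Step 1 shows that $D$ is almost monotone, because its growth factor is $\exp(C\int\|w\|^2_{H^1}) \le e^{C K^2 T^{1/2}}$ on $E_K$. Near-monotonicity gives
\[
s^{1/2} D_s \lesssim_K \int_0^s r^{-1/2} D_r \ud r + s^{1/2}\cdots ,
\]
and it reduces everything to bounding $\int_0^s D_r \ud r$.

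\emph{Step 3 (obstacle).} The main difficulty is to bound $\int_0^s D_r\ud r$ uniformly in $\pi_0\in\Sph$. I do not see how to do this from the $\zeta$-norm alone, since $\log\|\zeta_0\|$ is fine but $\log\|\zeta_s\|$ has no lower bound. I expect the resolution to use the event $E_K$ directly: by H\"older, $\int_0^s D_r \ud r \le s^{3/4}\bigl(\int_0^s \|\pi_r\|^4_{H^1}\bigr)^{1/2} \le s^{3/4}K^2$. Combining this with near-monotonicity, $D_s \le C_K s^{-1}\int_0^s D_r\ud r \le C_K s^{-1/4}K^2$, which gives $\|\pi_s\|_{H^1} \le C_{K,T} s^{-1/8}$. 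I then need to tune the exponents. The cleaner route is to apply H\"older to $\|\pi\|_{H^1}^4$ directly, $s^{-1}\int_0^s\|\pi_r\|^2_{H^1}\ud r \le s^{-1/2}K^2$, which gives exactly $\|\pi_s\|_{H^1}\le C_K s^{-1/4}$ once near-monotonicity from Step 1 holds with constants uniform on $E_K$.
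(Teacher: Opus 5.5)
Your final ``cleaner route'' is correct and is essentially the paper's proof: on $E_K$ the bound $\int_0^T\|\pi_r\|_{H^1}^4\ud r\le K^4$ gives an earlier time at which $\|\pi_r\|_{H^1}^2\lesssim K^2 s^{-1/2}$, and the Gr\"onwall bound \eqref{eq:fd_dirichlet} carries this to time $s$ at the cost of a factor $\exp(C T^{1/2}K^2)$ (the paper picks a single $r\in[s/2,s]$ by the mean value theorem, while you average over $r\in[0,s]$ via Cauchy--Schwarz). Drop the time-weight detour of Step~2 and the first computation in Step~3: Cauchy--Schwarz gives $\int_0^s\|\pi_r\|_{H^1}^2\ud r\le s^{1/2}K^2$, not $s^{3/4}K^2$, and this is exactly what yields $s^{-1/4}$ rather than $s^{-1/8}$.
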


\begin{proof}
  Fix $s \in (0,T]$ and work on $E_K$. Since
  $\int_{s/2}^{s}\|\pi_r\|_{H^1}^4 \ud r \le K^4$ and the interval has length
  $s/2$, there is an $r \in [s/2, s]$ with
  \begin{equation}\label{eq:fs_mvt}
    \|\pi_r\|_{H^1}^4 \ \le\ \frac{2K^4}{s} ,
    \qquad\text{that is}\qquad
    \|\pi_r\|_{H^1}^2 \ \le\ \frac{\sqrt2\, K^2}{s^{1/2}} .
  \end{equation}
  We propagate this from $r$ to $s$. By~\eqref{eq:fd_dirichlet} the Dirichlet
  quotient obeys
  $\frac{\ud}{\ud t}\|\pi_t\|_{H^1}^2 \le C\|w_t\|_{H^1}^2 \|\pi_t\|_{H^1}^2$,
  so Gr\"onwall on $[r,s]$ gives
  \begin{equation*}
     \|\pi_s\|_{H^1}^2
    \ \le\  \|\pi_r\|_{H^1}^2   
    \exp\Bigl( C\!\int_r^s \|w_t\|_{H^1}^2 \ud t \Bigr) .
  \end{equation*}
  Hence
  $\|\pi_s\|_{H^1} \le C_{K,T}\, s^{-1/4} $, which
  implies the desired result.
\end{proof}

The following is another supporting statement.
  \begin{lemma}\label{lem:heat_decay}
Fix $T, K > 0$, write $\ve \eqdef \frac18$, and let $\Gamma_r$ be as in~\eqref{eq:gamma_r}. For every $m > 0$ there is a constant $ C(m, K, T) > 0$ such that the following holds for every $N \ge 1$, every $\varrho \in [0,1]$ and all $0 \le s \le t \le T$. Let $\phi \colon [s,t] \to \mH$ solve
    \begin{equation}\label{eq:heat_decay_eq}
      \partial_r \phi_r = \Delta\phi_r + F_r , \qquad \phi_s = Q_N\phi_s ,
    \end{equation}
and suppose that on the event $E_K$ of~\eqref{eq:good_event_EK}:
    \begin{enumerate}[label=(\alph*)]
      \item The flow obeys the a priori bound
        \begin{equation}\label{eq:heat_decay_apriori}
          \sup_{r\in[s,t]}\|\phi_r\|^2 + \int_s^t \|\phi_r\|_{H^1}^2 \ud r
          \le m \|\phi_s\|^2 .
        \end{equation}
      \item The forcing splits as $F_r = a_r\phi_r + G_r + R_r$, where $a_r \le m\Gamma_r$, and
        \begin{equation}\label{eq:heat_decay_forcing}
          \|G_r\|_{H^{-1}} \le m\,\Gamma_r^{1/2}\|\phi_r\|_{H^\ve} ,
        \end{equation}
and the remainder satisfies for some $\varsigma_r \ge 0$,
        \begin{equation}\label{eq:heat_decay_source}
          |\langle R_r, Q_N\phi_r\rangle| + |\langle R_r, P_N\phi_r\rangle|
          \le \varsigma_r , \qquad
          \int_s^t \varsigma_r \ud r \le \varrho \, \|\phi_s\|^2 .
        \end{equation}
    \end{enumerate}
Then, pathwise on $E_K$,
    \begin{equation}\label{eq:heat_decay_concl}
      \|\phi_t\| \le C(m, K, T)\bigl( e^{-N^2(t-s)/2} + N^{-3/8} + \varrho^{1/2} \bigr)
      \|\phi_s\| .
    \end{equation}
  \end{lemma}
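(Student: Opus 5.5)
The plan is to split $\phi$ into low and high frequencies, $\phi_r = P_N\phi_r + Q_N\phi_r$, and treat the two parts differently. The low part starts at zero, and we will show it stays small. The high part starts at full size, and we will show it is damped by the spectral gap $N^2$ of the Laplacian on $\Ran Q_N$. Throughout we work pathwise on $E_K$. There, Hölder in time together with the definition~\eqref{eq:good_event_EK} gives $\int_s^t \Gamma_r \ud r \le C\sqrt{T} K^2$ and $\int_s^t \Gamma_r^2 \ud r \le CK^4$. Every Grönwall constant below therefore depends only on $(m,K,T)$.

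\textbf{Low modes.} We test~\eqref{eq:heat_decay_eq} against $P_N\phi_r$ and use $P_N\phi_s = 0$:
\[
\tfrac12\tfrac{\ud}{\ud r}\|P_N\phi_r\|^2 \le -\|\nabla P_N\phi_r\|^2 + a_r\langle\phi_r,P_N\phi_r\rangle + \langle G_r,P_N\phi_r\rangle + \varsigma_r .
\]
The $a_r$ term is bounded by $m\Gamma_r\|\phi_r\|\,\|P_N\phi_r\|$, which is already linear in $P_N\phi$ with a coefficient controlled by~\eqref{eq:heat_decay_apriori}. For the $G$ term we combine $\|G_r\|_{H^{-1}}\|P_N\phi_r\|_{H^1}$ with interpolation, $\|\phi_r\|_{H^\ve} \le \|\phi_r\|^{1-\ve}\|\phi_r\|_{H^1}^{\ve}$, and absorb half of the dissipation. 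What remains is a source of size $m^2\Gamma_r\|\phi_r\|^{2-2\ve}\|\phi_r\|_{H^1}^{2\ve}$.

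The key point is that this source is not small in $N$ by itself. The smallness has to come from the high part of $\phi$: the contribution of $Q_N\phi$ to $\|\phi\|_{H^\ve}$ costs $N^{\ve-1}\|Q_N\phi\|_{H^1}$, while the low part is fed back into Grönwall. We integrate with Hölder in time using $\int\Gamma^2\le CK^4$ and $\int\|\phi\|_{H^1}^2\le m\|\phi_s\|^2$. This should give
\[
\sup_r\|P_N\phi_r\|^2 \lesssim_{m,K,T} \bigl(N^{-3/4}+\varrho\bigr)\|\phi_s\|^2 ,
\]
where the exponent $3/4 = 2(1-\ve)\cdot\tfrac12$ arises from $\ve=\tfrac18$ after balancing. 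This is the source of the $N^{-3/8}$ in~\eqref{eq:heat_decay_concl}.

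\textbf{High modes.} We test against $Q_N\phi_r$. Now $\|\nabla Q_N\phi\|^2 \ge N^2\|Q_N\phi\|^2$. We use only half of the dissipation for decay and the other half to absorb the $G$ term via $\|G\|_{H^{-1}}\|Q_N\phi\|_{H^1}$. This gives
\[
\tfrac{\ud}{\ud r}\|Q_N\phi_r\|^2 \le \bigl(-N^2 + 2m\Gamma_r\bigr)\|Q_N\phi_r\|^2 + C\Gamma_r\|\phi_r\|_{H^\ve}^2 + 2\varsigma_r + (\text{cross term } a_r\langle P_N\phi,Q_N\phi\rangle).
\]
Variation of constants then yields a leading term $e^{-N^2(t-s)}e^{2m\int\Gamma}\|\phi_s\|^2$, which gives the $e^{-N^2(t-s)/2}$ factor after taking square roots. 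The source terms are handled as follows:
\begin{itemize}
\item the $\varsigma$ term contributes $\varrho\|\phi_s\|^2$;
\item the $\Gamma\|\phi\|_{H^\ve}^2$ term is convolved against $e^{-N^2(t-r)}$, and Hölder gives a factor $N^{-2(1-\ve)}$ or better in $L^2$-in-time pairings;
\item the cross term is controlled by the low-mode bound above.
\end{itemize}
Adding the two bounds and taking square roots gives~\eqref{eq:heat_decay_concl}.

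\textbf{Main obstacle.} The delicate step is the $G$-source in the low-mode estimate. We only know $\|G\|_{H^{-1}}\lesssim\Gamma^{1/2}\|\phi\|_{H^\ve}$ with $\Gamma\in L^2_t$, so extracting a negative power of $N$ requires carefully splitting $\|\phi\|_{H^\ve}$ into its $P_N$ and $Q_N$ parts. For the $Q_N$ part we use the Bernstein gain $\|Q_N\phi\|_{H^\ve}\le N^{\ve-1}\|Q_N\phi\|_{H^1}$. The $P_N$ part must go into Grönwall with a time-integrable coefficient $\Gamma^{1/(1-\ve)}$, and since $\Gamma\in L^2_t$ this works precisely because $1/(1-\ve)<2$. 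If the exponents fail to close, the first fallback is to use the a priori bound~\eqref{eq:heat_decay_apriori} more crudely and accept a weaker power of $N$, since any negative power suffices downstream.
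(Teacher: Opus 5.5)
Your architecture matches the paper's. You split $\phi=P_N\phi+Q_N\phi$, spend half of the high-mode dissipation on the gap $N^2$ and half on absorbing $G$, start the low modes from zero, and send the $P_N$-part of the $G$-source into Gr\"onwall with coefficient $\Gamma^{1/(1-\varepsilon)}=\Gamma^{8/7}\le\Gamma^2$. Two steps, however, fail as you wrote them.

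\textbf{The $a_r$-term in the low-mode estimate.} You keep it as $m\Gamma_r\|\phi_r\|\,\|P_N\phi_r\|$ and use $\|\phi_r\|\le\sqrt m\,\|\phi_s\|$ from (a). That is a source of size $\Gamma_r\|\phi_s\|\,\|P_N\phi_r\|$, and Gr\"onwall then only gives $\|P_N\phi_t\|\lesssim\|\phi_s\|\int_s^t\Gamma_r\,\mathrm{d}r$, with no decay in $N$. You need orthogonality: $\langle P_N\phi_r,a_r\phi_r\rangle=a_r\|P_N\phi_r\|^2\le m\Gamma_r\|P_N\phi_r\|^2$. This is a Gr\"onwall coefficient, and it is also why the one-sided bound $a_r\le m\Gamma_r$ suffices. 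For the same reason your high-mode ``cross term'' $a_r\langle P_N\phi,Q_N\phi\rangle$ is identically zero.

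\textbf{The $Q_N$-part of the low-mode $G$-source.} This is the more serious problem, and it is your resolution of the main obstacle. With the full Bernstein gain $\|Q_N\phi\|_{H^\varepsilon}\le N^{\varepsilon-1}\|Q_N\phi\|_{H^1}$, Young leaves the source $N^{2\varepsilon-2}\Gamma_r\|Q_N\phi_r\|_{H^1}^2$. The integral $\int_s^t\Gamma_r\|Q_N\phi_r\|_{H^1}^2\,\mathrm{d}r$ is not controlled by what you have, namely $\Gamma\in L^2_t$ on $E_K$ and $\|\phi\|_{H^1}\in L^2_t$ by (a). You would need $\Gamma\in L^\infty_t$ or $\|\phi\|_{H^1}\in L^4_t$.

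The missing idea is to trade only half a derivative. For $|k|>N$ one has $|k|^{2\varepsilon}\le N^{-(1-2\varepsilon)}|k|$, hence $\|Q_N\phi_r\|_{H^\varepsilon}^2\le N^{-3/4}\|Q_N\phi_r\|\,\|Q_N\phi_r\|_{H^1}$. Cauchy--Schwarz in time against $\sup_r\|\phi_r\|$, $(\int_s^t\Gamma_r^2\,\mathrm{d}r)^{1/2}$ and $(\int_s^t\|\phi_r\|_{H^1}^2\,\mathrm{d}r)^{1/2}$ then gives $CN^{-3/4}\|\phi_s\|^2$, which is the paper's step.

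So the exponent is $1-2\varepsilon=3/4$. Your formula $2(1-\varepsilon)\cdot\frac12$ equals $7/8$, and no interpolation compatible with $\Gamma\in L^2_t$ beats $N^{-(1-2\varepsilon)}$ for this integral. Already a unit-size single mode $|k|\sim N$, on a time window of length $N^{-2}$ where $\Gamma\sim N$, makes it of order $N^{-3/4}$. Your weaker-power fallback would not give the stated $N^{-3/8}$, but with the half-derivative trade it is not needed.

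\textbf{The high-mode convolution.} The same exponent budget applies here. After $\|\phi_r\|_{H^\varepsilon}^2\lesssim\|\phi_r\|^{7/4}\|\phi_r\|_{H^1}^{1/4}$, H\"older in time with exponents $(2,\frac83,8)$ puts $e^{-N^2(t-r)}$ in $L^{8/3}$, whose norm is of order $N^{-3/4}$, not $N^{-2(1-\varepsilon)}$. This does not affect the conclusion, but the rate you state there is wrong.
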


  \begin{proof}
Throughout we write $C_K$ for a constant that depends only on $m$, $K$ and $T$,
and we use twice that on $E_K$
\begin{equation}\label{eq:hd_gamma_moments}
  \int_s^t \Gamma_r \ud r \le C_K
  \qquad\text{and}\qquad
  \int_s^t \Gamma_r^2 \ud r \le C_K ,
\end{equation}
where the second holds because
$\Gamma_r^2 \le 2(\|w_r\|_{H^1}^4 + \|\pi_r\|_{H^1}^4)$. We decompose
$\phi_r = Q_N\phi_r + P_N\phi_r$, we close an energy estimate on each half, and
then we add the two.

\emph{High modes.} We apply $Q_N$ to~\eqref{eq:heat_decay_eq} and pair with
$Q_N\phi_r$, so that
$\tfrac12\frac{\ud}{\ud r}\|Q_N\phi_r\|^2
= -\|\nabla Q_N\phi_r\|^2 + \langle Q_N\phi_r, F_r\rangle$. Every mode of
$Q_N\phi_r$ has $|k| > N$, so we may extract the spectral gap and still keep
half of the dissipation for the transport term,
\[
  -\|\nabla Q_N\phi_r\|^2 \ \le\ -\frac12 N^2\|Q_N\phi_r\|^2
  - \frac12\|\nabla Q_N\phi_r\|^2 .
\]
We use~(b) to split $\langle Q_N\phi_r, F_r\rangle$ into three terms and we
estimate each. The first two are
\[
  \langle Q_N\phi_r, a_r\phi_r\rangle = a_r\|Q_N\phi_r\|^2
  \ \le\ m\Gamma_r\|Q_N\phi_r\|^2 ,
  \qquad
  |\langle Q_N\phi_r, R_r\rangle| \ \le\ \varsigma_r ,
\]
the second by~\eqref{eq:heat_decay_source}. For the third, duality,
\eqref{eq:heat_decay_forcing} and Young give
\[
  |\langle Q_N\phi_r, G_r\rangle|
  \ \le\ \|G_r\|_{H^{-1}}\|Q_N\phi_r\|_{H^1}
  \ \le\ m\Gamma_r^{1/2}\|\phi_r\|_{H^\ve}\|Q_N\phi_r\|_{H^1}
  \ \le\ \frac14\|\nabla Q_N\phi_r\|^2 + C\Gamma_r\|\phi_r\|_{H^\ve}^2 ,
\]
where the last step uses $\|Q_N\phi_r\|_{H^1}^2 \le 2\|\nabla Q_N\phi_r\|^2$ for
$N \ge 1$. We collect these and multiply by $2$,
\[
  \frac{\ud}{\ud r}\|Q_N\phi_r\|^2
  \ \le\ -N^2\|Q_N\phi_r\|^2 + C\Gamma_r\|Q_N\phi_r\|^2
  + C\Gamma_r\|\phi_r\|_{H^\ve}^2 + 2\varsigma_r .
\]
Since $\|Q_N\phi_s\| = \|\phi_s\|$, Gr\"onwall and~\eqref{eq:hd_gamma_moments}
give
\[
  \|Q_N\phi_t\|^2 \ \le\ C_K e^{-N^2(t-s)}\|\phi_s\|^2
  + C_K\!\int_s^t e^{-N^2(t-r)}\Gamma_r\|\phi_r\|_{H^\ve}^2\ud r
  + C_K\varrho\|\phi_s\|^2 .
\]
It remains to estimate the middle integral. We interpolate
$\|\phi_r\|_{H^\ve}^2 \lesssim \|\phi_r\|^{7/4}\|\phi_r\|_{H^1}^{1/4}$ at
$\ve = \tfrac18$, and we apply H\"older in time with the exponents
$(2,\tfrac83,8)$, admissible because $\tfrac12+\tfrac38+\tfrac18 = 1$. We place
$\Gamma_r$ in $L^2$, the weight in $L^{8/3}$ and $\|\phi_r\|_{H^1}^{1/4}$ in
$L^8$, while $\|\phi_r\|^{7/4}$ goes in $L^\infty$. The weight has mass
\[
  \Bigl(\int_s^t e^{-\frac83 N^2(t-r)}\ud r\Bigr)^{3/8}
  \ \le\ \Bigl(\frac{3}{8N^2}\Bigr)^{3/8} \ =\ C N^{-3/4} ,
\]
and the two members of~\eqref{eq:heat_decay_apriori} give
$\sup_r\|\phi_r\|^{7/4} \le m^{7/8}\|\phi_s\|^{7/4}$ and
$\bigl(\int_s^t\|\phi_r\|_{H^1}^2\ud r\bigr)^{1/8} \le m^{1/8}\|\phi_s\|^{1/4}$.
The middle integral is therefore at most $C_K N^{-3/4}\|\phi_s\|^2$, and
\begin{equation}\label{eq:hd_high}
  \|Q_N\phi_t\|^2
  \ \le\ C_K\bigl( e^{-N^2(t-s)} + N^{-3/4} + \varrho \bigr)\|\phi_s\|^2 .
\end{equation}

\emph{Low modes.} For the low modes we have
$\tfrac12\frac{\ud}{\ud r}\|P_N\phi_r\|^2
= -\|\nabla P_N\phi_r\|^2 + \langle P_N\phi_r, F_r\rangle$. The terms
$\langle P_N\phi_r, a_r\phi_r\rangle$ and $\langle P_N\phi_r, R_r\rangle$ are
estimated as above. For $\langle P_N\phi_r, G_r\rangle$, duality
and~\eqref{eq:heat_decay_forcing} give
\[
  |\langle P_N\phi_r, G_r\rangle|
  \ \le\ \|G_r\|_{H^{-1}}\|P_N\phi_r\|_{H^1}
  \ \le\ m\Gamma_r^{1/2}\|\phi_r\|_{H^\ve}\|P_N\phi_r\|_{H^1} ,
\]
and on the right side we split
$\|\phi_r\|_{H^\ve} \le \|Q_N\phi_r\|_{H^\ve} + \|P_N\phi_r\|_{H^\ve}$. We
use Young with exponents
$\bigl(\tfrac{2}{1+\ve},\tfrac{2}{1-\ve}\bigr)$:
\[
  m\Gamma_r^{1/2}\|P_N\phi_r\|_{H^\ve}\|P_N\phi_r\|_{H^1}
  \ \lesssim\ m\Gamma_r^{1/2}\|P_N\phi_r\|^{1-\ve}\|P_N\phi_r\|_{H^1}^{1+\ve}
  \ \le\ \frac14\|P_N\phi_r\|_{H^1}^2
  + C\Gamma_r^{\frac{1}{1-\ve}}\|P_N\phi_r\|^2 ,
\]
where $\Gamma_r^{1/(1-\ve)} = \Gamma_r^{8/7} \le \Gamma_r^2$ because
$\Gamma_r \ge 1$. The high-mode half we estimate instead by
\[
  m\Gamma_r^{1/2}\|Q_N\phi_r\|_{H^\ve}\|P_N\phi_r\|_{H^1}
  \ \le\ \frac14\|P_N\phi_r\|_{H^1}^2 + C\Gamma_r\|Q_N\phi_r\|_{H^\ve}^2 ,
\]
so that, collecting and multiplying by $2$,
\[
  \frac{\ud}{\ud r}\|P_N\phi_r\|^2
  \ \le\ C\Gamma_r^2\|P_N\phi_r\|^2 + C\Gamma_r\|Q_N\phi_r\|_{H^\ve}^2
  + 2\varsigma_r ,
\]
where the source carries $Q_N\phi_r$ alone. The data vanishes, so Gr\"onwall
from $P_N\phi_s = 0$ gives
\[
  \|P_N\phi_t\|^2 \ \le\ C_K\!\int_s^t \Gamma_r\|Q_N\phi_r\|_{H^\ve}^2\ud r
  + C_K\varrho\|\phi_s\|^2 .
\]
We estimate this integral using only that $Q_N\phi_r$ is supported on the
modes $|k| > N$, where $1 + |k|^2 > N^2$ and therefore
$(1+|k|^2)^{\ve} \le N^{-3/4}(1+|k|^2)^{1/2}$. Summing this over the modes, and
applying Cauchy--Schwarz to the sum that results,
\[
  \|Q_N\phi_r\|_{H^\ve}^2 \ \le\ N^{-3/4}\|Q_N\phi_r\|_{H^{1/2}}^2
  \ \le\ N^{-3/4}\|Q_N\phi_r\|\,\|Q_N\phi_r\|_{H^1} .
\]
Hence, by Cauchy--Schwarz in time, \eqref{eq:hd_gamma_moments} and the two members
of~\eqref{eq:heat_decay_apriori} give
\[
  \int_s^t \Gamma_r\|Q_N\phi_r\|_{H^\ve}^2\ud r
  \ \le\ N^{-3/4}\Bigl(\sup_{r\in[s,t]}\|\phi_r\|\Bigr)
    \Bigl(\int_s^t\Gamma_r^2\ud r\Bigr)^{1/2}
    \Bigl(\int_s^t\|\phi_r\|_{H^1}^2\ud r\Bigr)^{1/2}
  \ \le\ C_K N^{-3/4}\|\phi_s\|^2 .
\]
Hence
\begin{equation}\label{eq:hd_low}
  \|P_N\phi_t\|^2 \ \le\ C_K\bigl( N^{-3/4} + \varrho \bigr)\|\phi_s\|^2 .
\end{equation}
Finally, since
$\|\phi_t\|^2 = \|Q_N\phi_t\|^2 + \|P_N\phi_t\|^2$, the two
bounds~\eqref{eq:hd_high} and~\eqref{eq:hd_low} give
\[
  \|\phi_t\|^2 \ \le\ C_K\bigl( e^{-N^2(t-s)} + N^{-3/4} + \varrho \bigr)
  \|\phi_s\|^2.
\]

  \end{proof}

\begin{proof}[Proof of Lemma~\ref{lem:jacobian_moments}]
We argue again only for LNS, and write $\ve = \frac18$ as in
Lemma~\ref{lem:heat_decay} ($\ve$ need only be sufficiently small). We
must show $\|J_{s,t}Q_N\eta\|\le\delta\|\eta\|$ on $E_K$. Writing $\eta=(\delta
w,\delta\pi)$, the block structure~\eqref{eq:grand_jacobian_system} gives us the decomposition
$$J_{s,t}Q_N\eta=\bigl(J^{ww}_{s,t}Q_N\delta w,\ J^{\pi\pi}_{s,t}Q_N\delta\pi +
J^{\pi w}_{s,t}Q_N\delta w\bigr), $$ 
so it suffices to prove high-mode decay
for each of the three blocks separately.

For the first block we set
$v_r = J^{ww}_{s,r}Q_N\delta w$, which solves~\eqref{eq:heat_decay_eq}
with initial data $v_s = Q_N\delta w$ and forcing
$F_r = -B(w_r,v_r) - B(v_r,w_r)$, so that we can set $a_r = 0$ and $R_r = 0$, and hence
$\varrho = 0$, and $G_r =F_r$ which satisfies the forcing
bound~\eqref{eq:heat_decay_forcing} because
$B(a,b) = \div(u_ab)$ with $\|u_a\|_{L^\infty}\lesssim\|a\|_{H^\ve}$
by~\eqref{eq:l4_toolkit}, so that
$\|F_r\|_{H^{-1}} \lesssim \|w_r\|_{H^1}\|v_r\|_{H^\ve} \le C\Gamma_r^{1/2}\|v_r\|_{H^\ve}$.

Moreover, from Lemma~\ref{lem:fibre_dissipation}(iii) applied
to the initial data $Q_N\delta w$, we obtain the a priori
bound~\eqref{eq:heat_decay_apriori} as follows for some $C_K>0$
    \begin{equation}\label{eq:apriori_v}
      \sup_{r\in[s,T]}\|v_r\|^2 \le C_K\|\delta w\|^2 ,
      \qquad \int_s^t\|v_r\|_{H^1}^2\ud r \le C_K\|\delta w\|^2 .
    \end{equation}
Here and in the next block Lemma~\ref{lem:heat_decay} is applied with $m = C_K$.
Therefore, we obtain
    \begin{equation}\label{eq:jww_decay}
      \|J^{ww}_{s,t}Q_N\delta w\|
      \le C_K\bigl(e^{-N^2(t-s)/2} + N^{-3/8}\bigr)\|\delta w\| ,
    \end{equation}
which is at most $\delta\|\delta w\|$ on $E_K$ as soon as $t - s \ge \tau_0$ and $N \ge N_0(\delta,\tau_0,K,T)$.

Next, we set $\phi_r \eqdef J^{\pi\pi}_{s,r}Q_N\delta\pi$, which solves
$\partial_r\phi_r = L^{\pi\pi}_{z_r}\phi_r$ with initial data
$\phi_s = Q_N\delta\pi$. We write $G_w = \Delta - L^0_w$ and recall
from~\eqref{eq:fiber_operator} that 
\[ L^{\pi\pi}_z\delta\pi = G_w\delta\pi -
\langle\pi,G_w\delta\pi\rangle\pi - \langle\delta\pi,G_w\pi\rangle\pi -
\langle\pi,G_w\pi\rangle\delta\pi . \]
Hence this is~\eqref{eq:heat_decay_eq}
with $a_r = -\langle\pi_r,G_{w_r}\pi_r\rangle$, with $G_r = -L^0_{w_r}\phi_r$,
and with the rank-one remainder
$R_r = -\bigl(\langle\pi_r,G_{w_r}\phi_r\rangle + \langle\phi_r,G_{w_r}\pi_r\rangle\bigr)\pi_r$.
The scalar obeys the one-sided bound
$a_r = -\langle\pi_r,G_{w_r}\pi_r\rangle \le C\Gamma_r$, which is
precisely~\eqref{eq:rayleigh_bound}. The transport part
obeys~\eqref{eq:heat_decay_forcing} exactly as for $J^{ww}$. 

As for the
remainder, scalars in $R_r$ are at most
$C\bigl(\|\pi_r\|_{H^1} + \|w_r\|_{H^1}\bigr)\|\phi_r\|_{H^1}$
by~\eqref{eq:Gw_scalars}. There remains the pairing of $\pi_r$ with the two projections of
$\phi_r$. Since $Q_N$ is self-adjoint and $\|Q_N\pi\|\le N^{-1}\|\pi\|_{H^1}$,
\[ |\langle\pi_r,Q_N\phi_r\rangle| = |\langle Q_N\pi_r,Q_N\phi_r\rangle| \le
N^{-1}\|\pi_r\|_{H^1}\|\phi_r\| , \] 
while for the pairing
$\langle\pi_r,P_N\phi_r\rangle = \langle\pi_r,\phi_r\rangle -
\langle\pi_r,Q_N\phi_r\rangle$, we find:
\[ \frac{\ud}{\ud
r}\langle\pi_r,\phi_r\rangle = \langle G_{w_r}\pi_r - \langle
G_{w_r}\pi_r,\pi_r\rangle\pi_r,\phi_r\rangle +
\langle\pi_r,L^{\pi\pi}_{z_r}\phi_r\rangle =
-2\langle\pi_r,G_{w_r}\pi_r\rangle\langle\pi_r,\phi_r\rangle , \] so that
$\langle\pi_r,\phi_r\rangle = \langle Q_N\pi_s,\phi_s\rangle
\exp\bigl(-2\int_s^r\langle\pi_u,G_{w_u}\pi_u\rangle\ud u\bigr)$ because the
initial condition is projected onto high frequencies.
Next we observe that $-\langle\pi,G_w\pi\rangle \le C\Gamma$ by~\eqref{eq:rayleigh_bound} and
$|\langle Q_N\pi_s,\phi_s\rangle| \le N^{-1}\|\pi_s\|_{H^1}\|\phi_s\|$, and
since
$s \ge \tau_0/2$, by
Lemma~\ref{lem:fibre_smoothing} we obtain
$\|\pi_s\|_{H^1} \le C_K (\tau_0/2)^{-1/4}  = C_{K,\tau_0}$
on $E_K$. 

Now we have
\begin{equation}\label{e:aprior-phip}
  \sup_{r \in [s, t]}\|\phi_r\|^2 + \int_s^t \| \phi_r \|_{H^1}^2
\ud s \le C_K\|\phi_s\|^2
\end{equation}
which is Lemma~\ref{lem:fibre_dissipation}(iii) applied to
$\eta = (0, Q_N\delta\pi)$.
Hence we
conclude that 
\[ |\langle\pi_r,Q_N\phi_r\rangle| +
|\langle\pi_r,P_N\phi_r\rangle| \le C_{K,\tau_0}N^{-1} 
\|\pi_r\|_{H^1} \|\phi_s\| , \] 
so that~\eqref{eq:heat_decay_source} holds with 
\[ \varsigma_r \eqdef C_{K,\tau_0}N^{-1}
\bigl(\|\pi_r\|_{H^1}+\|w_r\|_{H^1}\bigr)
\|\pi_r\|_{H^1} \|\phi_r\|_{H^1}\|\phi_s\| . \] 
H\"older in time
with exponents $(4,4,2)$, the fourth moments of $E_K$ and the a priori bound
\eqref{e:aprior-phip} give the estimate $\int_s^t\varsigma_r\ud r \le
C_{K,\tau_0}N^{-1}\|\phi_s\|^2$, which implies
$\varrho = C_{K,\tau_0}N^{-1}$. Since $N^{-1/2} \le N^{-3/8}$ for $N \ge 1$,
Lemma~\ref{lem:heat_decay} now gives \[
\|J^{\pi\pi}_{s,t}Q_N\delta\pi\| \le C_{K,\tau_0}\bigl(e^{-N^2(t-s)/2}
+ N^{-3/8}\bigr)\|\delta\pi\| . \] 
Now the right side tends to
zero as $N \to \infty$, so
$\|J^{\pi\pi}_{s,t}Q_N\delta\pi\|\le\delta\|\delta\pi\|$ for
$N \ge N_0(\delta,\tau_0,K,T)$.

Now we pass to the last block, which starts from zero initial datum.
We use the notation $v_r = J^{ww}_{s,r} Q_N \delta w$ and
$\xi_r = J^{\pi w}_{s,r} Q_N \delta w$, so that
$(v_r,\xi_r) = J_{s,r}(Q_N\delta w,0) \in T_{z_r}\mathcal{X}$ and hence
$\xi_r \in \pi_r^\perp$, and $\xi_r$ solves
$\partial_r \xi_r = L^{\pi\pi}_{z_r}\xi_r + L^{\pi w}_{z_r} v_r$ with
$\xi_s = 0$. 
We have
$\tfrac12\frac{\ud}{\ud r}\|\xi_r\|^2
= \langle\xi_r, L^{\pi\pi}_{z_r}\xi_r\rangle + \langle\xi_r, L^{\pi w}_{z_r}v_r\rangle$.
The first pairing obeys~\eqref{eq:fiber_dissipation}, that is
$\langle\xi_r,L^{\pi\pi}_{z_r}\xi_r\rangle \le -\tfrac14\|\nabla\xi_r\|^2 + C\Gamma_r\|\xi_r\|^2$.
In the second, the rank one term of~\eqref{eq:coupling_operator} has range $\RR\pi_r$
and therefore drops, because $\xi_r \in \pi_r^\perp$, which leaves
\[
  \langle\xi_r, L^{\pi w}_{z_r}v_r\rangle
  \ =\ -\langle \xi_r, B(v_r,\pi_r)\rangle - \langle\xi_r, B(\pi_r,v_r)\rangle .
\]
In both terms we move the derivative onto $\xi_r$, which is legitimate because the
transporting fields $K*v_r$ and $K*\pi_r$ are divergence free, and we then use
$\|\pi_r\| = 1$ with the third estimate of~\eqref{eq:l4_toolkit}. This gives
\[
  \begin{aligned}
    |\langle\xi_r, B(v_r,\pi_r)\rangle|
      &= |\langle \pi_r, (K*v_r)\cdot\nabla\xi_r\rangle|
       \ \le\ \|K*v_r\|_{L^\infty}\|\nabla\xi_r\|
       \ \lesssim\ \|v_r\|_{H^\ve}\|\nabla\xi_r\| , \\
    |\langle\xi_r, B(\pi_r,v_r)\rangle|
      &= |\langle v_r, (K*\pi_r)\cdot\nabla\xi_r\rangle|
       \ \le\ \|K*\pi_r\|_{L^\infty}\|v_r\|\|\nabla\xi_r\| \\
      &\ \lesssim\ \|\pi_r\|_{H^1}\|v_r\|\|\nabla\xi_r\| ,
  \end{aligned}
\]
because $\|\pi_r\|_{H^\ve} \le \|\pi_r\|_{H^1}$. Young's inequality leaves us
then with
    \begin{equation*}
      \frac{\ud}{\ud r}\|\xi_r\|^2 \le C\Gamma_r\|\xi_r\|^2
      + C \|\pi_r\|_{H^1}^2 \|v_r\|^2 + C \|v_r\|_{H^\ve}^2 ,
    \end{equation*}

Then Gr\"onwall and $\xi_s = 0$ give
\[
  \|\xi_t\|^2 \ \le\ C\exp\Bigl(C\!\int_s^t\Gamma_r\ud r\Bigr)
  \int_s^t \Bigl( \|\pi_r\|_{H^1}^2\|v_r\|^2 + \|v_r\|_{H^\ve}^2 \Bigr)\ud r
  \ \le\ C_K\bigl( \mathrm{I} + \mathrm{II} \bigr) ,
\]
because $\int_s^t\Gamma_r\ud r \le C(1+K^2)$ on $E_K$, where $\mathrm{I}$ and
$\mathrm{II}$ denote the integrals of the two forcing terms. We start by
observing that if we square~\eqref{eq:jww_decay} and integrate in $r$, we obtain
\[
  \begin{aligned}
    \int_s^t\|v_r\|^2\ud r
      &\ \le\ C_K\int_s^t\bigl( e^{-N^2(r-s)} + N^{-3/4} \bigr)\ud r\,\|\delta w\|^2 \\
      &\ \le\ C_K\bigl( N^{-2} + TN^{-3/4} \bigr)\|\delta w\|^2
       \ \le\ C_K N^{-3/4}\|\delta w\|^2 .
  \end{aligned}
\]
For the first term we must estimate, Cauchy--Schwarz in time and the fourth moments of $E_K$ give
\[
  \begin{aligned}
    \mathrm{I} &\ \le\ \Bigl(\int_s^t\|\pi_r\|_{H^1}^4\ud r\Bigr)^{1/2}
      \Bigl(\int_s^t\|v_r\|^4\ud r\Bigr)^{1/2} , \\
    \int_s^t\|v_r\|^4\ud r &\ \le\ \Bigl(\sup_{r\in[s,t]}\|v_r\|^2\Bigr)
      \int_s^t\|v_r\|^2\ud r \ \le\ C_K N^{-3/4}\|\delta w\|^4 ,
  \end{aligned}
\]
by the supremum bound of~\eqref{eq:apriori_v}, so that
$\mathrm{I} \le C_K N^{-3/8}\|\delta w\|^2$. For the second term, the
interpolation $\|v_r\|_{H^\ve} \lesssim \|v_r\|^{1-\ve}\|v_r\|_{H^1}^{\ve}$ and
H\"older in time with exponents $\bigl(\tfrac{1}{1-\ve},\tfrac1\ve\bigr)$ give
\[
  \mathrm{II} \ \lesssim\ \int_s^t\|v_r\|^{2(1-\ve)}\|v_r\|_{H^1}^{2\ve}\ud r
  \ \le\ \Bigl(\int_s^t\|v_r\|^2\ud r\Bigr)^{1-\ve}
  \Bigl(\int_s^t\|v_r\|_{H^1}^2\ud r\Bigr)^{\ve}
  \ \le\ C_K N^{-3(1-\ve)/4}\|\delta w\|^2 ,
\]
by the integral bound of~\eqref{eq:apriori_v}. Taking $N$ sufficiently large
leads to the desired result.

\end{proof}

\section{Analyticity of the passive scalar with respect to Cameron--Martin shifts}\label{app:analyticity}

This appendix proves Lemma~\ref{lem:analytic}, the Cameron--Martin analyticity
of the passive scalar solution map. This rests on a Weierstrass
convergence theorem \cite{ABHN2011}*{Theorem~A.5} and the fact that
Navier--Stokes is holomorphic in the forcing, which was already somewhat known
(analyticity is proven in~\cite[Theorem~2.1]{Kuksin82}), so we only sketch the
argument of the proof.

\begin{proof}[Proof of Lemma~\ref{lem:analytic}]

Write $h^x \eqdef \sum_{i=1}^n x_i h_i$ for a Cameron--Martin shift, and $w^x$
for the associated vorticity, which solves:
  \begin{equation*}
    \partial_t w^x + (K*w^x) \cdot \nabla w^x = \Delta w^x + \curl \xi + Q^{1/2}\dot{h}^x
  \end{equation*}
and $u^x \eqdef K * w^x$ for the associated velocity, $\zeta^x$ for the passive
scalar transported by $u^x$ from $\zeta_0$. Since $D_B$ is a polynomial it carries
holomorphic maps of $z$ to holomorphic maps.

The problem is thereby reduced to the holomorphy of
$z \mapsto \zeta^z_T \in H^1(\TT^2,\CC)$, where the maps above are extended
from $\RR^n$ to $\CC^n$. To this end, we write $\zeta^z$ in
its mild form:
  \begin{equation}\label{eq:duhamel_scalar}
    \zeta^z_t = e^{t\Delta}\zeta_0
    - \int_0^t e^{(t-r)\Delta}\bigl(u^z_r\cdot\nabla\zeta^z_r\bigr)\ud r.
  \end{equation}
and iterate, to build an iterated expansion:
  \begin{equation*}
    \zeta^z_t = \sum_{m\geq0}\zeta^{z,(m)}_t, \qquad \zeta^{z,(m)}_t = - \int_0^t e^{(t-s) \Delta} \Big[ u^{z}_s \cdot \nabla \zeta^{z, (m-1)}_s \Big] \ud s, \qquad \zeta^{z, (0)}_t = e^{t \Delta } \zeta_0.
  \end{equation*}
In particular $\zeta^{z,(m)}_t$ is a fixed bounded $m$-linear form evaluated on the diagonal $(u^z,\dots,u^z)$.

First, we will find a measurable set $\Omega_0$ with $\PP(\Omega_0)=1$ and a warm-up time $\eta \in (0, T \wedge 1)$ such that for every $\omega\in\Omega_0$ and every real point $x_0\in\RR^n$ there is an $r=r(x_0)>0$ for which, writing
  \begin{equation}\label{eq:polydisc}
    \mathcal{P} \eqdef \{z\in\CC^n \colon |z_j - x_{0,j}| < r \text{ for } j=1,\dots,n\}
  \end{equation}
for the polydisc of radius $r$ about $x_0$, the map
  \begin{equation}\label{eq:vel_holo}
    \mathcal{P} \ni z \mapsto u^z \in C\bigl([\eta,T];C^1(\TT^2,\CC^2)\bigr) \quad \text{ is holomorphic on $\mathcal{P}$},
  \end{equation}
and satisfies
  \begin{equation}\label{eq:vel_bound}
    M_{\mathcal{P}} \eqdef \sup_{z\in\mathcal{P}}\ \sup_{\eta \le \tau\leq T}\ \|u^z_\tau\|_{C^1} \ <\ \infty .
  \end{equation}
Second, we will prove that each term obeys, uniformly over $m\geq 0$ and $z\in\mathcal{P}$ (for an arbitrary polydisc $\mathcal{P}$):
  \begin{equation}\label{eq:neumann_bound}
    \bigl\|\zeta^{z,(m)}_T\bigr\|_{H^1}
    \ \leq\ \|\zeta_0\|_{H^1}\ \frac{(C M_{\mathcal{P}} \sqrt{T})^{m}}{\Gamma\bigl(1+\frac m2\bigr)}.
  \end{equation}
Granting these, $z\mapsto\zeta^{z,(m)}_T$ is a continuous polynomial in $u^z$
composed with a holomorphic map, hence holomorphic on
$\mathcal{P}$, and the series converges uniformly there, so
the Weierstrass convergence theorem \cite{ABHN2011}*{Theorem~A.5}, applied in each variable separately,
shows that $z\mapsto\zeta^z_T$ is holomorphic on $\mathcal{P}$
with values in $H^1(\TT^2,\CC)$.

We only sketch the proof of~\eqref{eq:vel_holo} and~\eqref{eq:vel_bound} since
this is somewhat standard (and analyticity was already proved in~\cite[Theorem~2.1]{Kuksin82}).
We start with the first claim.
We define $Z$ to be the $z$-independent stochastic convolution $\partial_\tau Z=\Delta Z+\curl\xi$ with $Z_0=0$, and fix any $s \in (2, 3)$, then set
  \begin{equation}\label{eq:Omega0}
    \Omega_0\eqdef\bigl\{\omega:\ Z\in C([0,T];H^s(\TT^2))\bigr\},
  \end{equation}
so that $\PP(\Omega_0)=1$ by parabolic regularity.
On $\Omega_0$ we define $v^z\eqdef w^z-Z$, which solves the random PDE
  \begin{equation}\label{eq:v_eq}
    \partial_\tau v^z=\Delta v^z-B(Z+v^z,Z+v^z)+g^z,\qquad
    g^z\eqdef Q^{1/2}\dot h^z,\qquad v^z_0=w_0,
  \end{equation}
and it suffices to prove~\eqref{eq:vel_holo} and~\eqref{eq:vel_bound} for $v^z$.

One observation to make is that for complex-valued forcing the usual
cancellation in the non-linearity which is used for the global well-posedness of
the vorticity equation $\langle B(a,b), b\rangle = 0$ does not vanish, so we are
not able to construct the solution $v^z$ from scratch. 
Instead,  we can consider $v^z$ as a perturbation of the global real solution
$v^{x_0}$ (with $x_0$ real) over small intervals. Since $w_0$ lies only in $\mH$
and so is irregular (not in $C^1$) we measure in two regimes separated
by $\eta$. Before $\eta$ we use the norm
    \begin{equation*}
      \|v\|_{\mathcal K_\eta} \eqdef \sup_{\tau \le \eta} \bigl( \|v_\tau\|
      + \tau^{\gamma}\|v_\tau\|_{H^{1+\delta}} \bigr) ,
      \qquad \delta \in \bigl(0, \frac14\bigr)  ,
      \qquad \gamma = \frac{1+\delta}{2} < \frac34 ,
    \end{equation*}
and after $\eta$ we consider the uniform norm in $H^s$. We write $\|\cdot\|_{\mathcal K(I)}$ for the same
norm restarted at the left endpoint $a$ of an interval $I$. Namely for any $I = [a, a+t]\subseteq [0,\eta]$ or $ I \subseteq [\eta, T]$, write
$\|\cdot\|_I$ for the norm 
\[
  \|d\|_{I} \eqdef \|d\|_{\mathcal K(I)} \quad\text{for}\quad I \subseteq [0,\eta] ,
  \qquad
  \|d\|_{I} \eqdef \sup_{\tau\in I}\|d_\tau\|_{H^s}
  \quad\text{for}\quad I \subseteq [\eta,T] ,
\]
Now let us set
    \begin{equation}\label{eq:polydisc_R}
      R \eqdef M_Z + \mathcal M_0 + 1 ,
      \qquad M_Z \eqdef \sup_{\tau\le T}\|Z_\tau\|_{H^s} ,
      \qquad \mathcal M_0 \eqdef \|v^{x_0}\|_{\mathcal K_\eta}
      + \sup_{\eta/2 \le \tau\le T}\|v^{x_0}_\tau\|_{H^s} ,
    \end{equation} 
Then set $V' = Z + v^{x_0}$. Then $d=v^z - v^{x_0}$ on $I$ if and only if it is a fixed
point $d = \Phi_I d$ of
\[
  (\Phi_I d)_\tau \eqdef e^{(\tau-a)\Delta} d_a
  - \int_a^\tau e^{(\tau-r)\Delta}
    \Bigl[ B\bigl(d_r, V'_r + d_r\bigr) + B\bigl(V'_r, d_r\bigr) \Bigr] \ud r
  + \int_a^\tau e^{(\tau-r)\Delta}\bigl(g^z_r - g^{x_0}_r\bigr) \ud r .
\]
On the unit ball
$\mathcal B_I \eqdef \{ d \colon \|d\|_I \le 1 \}$, since $\|d\|_I \le 1$ we
find a $\mu >0$ such that
    \begin{equation}\label{eq:contraction_modulus}
      \bigl\|\Phi_I d - \Phi_I \tilde d\bigr\|_I
      \ \le\ C R\, t^{1-\mu}\, \bigl\|d - \tilde d\bigr\|_I ,
      \qquad d, \tilde d \in \mathcal B_I .
    \end{equation}
We therefore choose a $t_R$ such that for $t <
t_R$~\eqref{eq:contraction_modulus} gives a contraction.

    Now we partition
    $[0,\eta]$ and $[\eta, T]$ separately into $N$ intervals of length at most
    $t_R$, where $\eta$ is the $j_0$-th interval starting point, and let us
    denote by $\ve$ the maximal radius
    \begin{equation}\label{eq:polydisc_eps}
      \ve \eqdef \sup_{z \in \mathcal{P}}
      \bigl\|g^z - g^{x_0}\bigr\|_{L^2([0,T];H^s)}.
    \end{equation}
  Then, provided $\ve$ is sufficiently small (which corresponds to $r$ small in
  the definition of the polydisc), we can successively solve for $d$
  on each small interval of length $t_R$, thus obtaining~\eqref{eq:vel_bound}.
  Moreover, the fixed point guarantees an analytic expansion in the forcing, so
  that also~\eqref{eq:vel_holo} is satisfied.

Finally, for~\eqref{eq:neumann_bound} write $F_m(t) \eqdef \bigl\|\zeta^{z,(m)}_t\bigr\|_{H^1}$ for
$t \in [\eta, T]$. Parabolic smoothing and H\"older give
$\bigl\|e^{(t-r)\Delta}f\bigr\|_{H^1} \le C(t-r)^{-1/2}\|f\|$ and
$\|u^z_r \cdot \nabla \zeta\| \le \|u^z_r\|_{L^\infty}\|\zeta\|_{H^1}
\le M_{\mathcal{P}}\|\zeta\|_{H^1}$ by~\eqref{eq:vel_bound}, so recursion collapses to the scalar inequality
\[
  F_m(t) \ \le\ C M_{\mathcal{P}} \int_\eta^t (t-r)^{-1/2} F_{m-1}(r) \ud r ,
  \qquad
  F_0(t) \ \le\ \bigl\|\zeta^z_\eta\bigr\|_{H^1} .
\]
Iterating it needs one integral only, the Euler Beta integral
    \begin{equation}\label{eq:beta_iteration}
      \int_\eta^t (t-r)^{-1/2}\,(r-\eta)^{\frac{m-1}{2}} \ud r
      \ =\ \frac{\Gamma\bigl(\frac12\bigr)\,\Gamma\bigl(\frac{m+1}{2}\bigr)}
                {\Gamma\bigl(\frac m2 + 1\bigr)}\ (t-\eta)^{\frac m2} .
    \end{equation}
Feeding the recursion the bound at $m-1$ returns the very same expression at
$m$, because the numerator $\Gamma\bigl(\frac{m+1}{2}\bigr)$
of~\eqref{eq:beta_iteration} is the denominator
$\Gamma\bigl(1+\frac{m-1}{2}\bigr)$ carried over from the previous step. Hence
induction from $F_0$ gives
\[
  F_m(t) \ \le\ \bigl\|\zeta^z_\eta\bigr\|_{H^1}\,
  \frac{\bigl( C M_{\mathcal{P}}\, \Gamma\bigl(\frac12\bigr)\, \sqrt{t-\eta} \bigr)^{m}}
       {\Gamma\bigl(1 + \frac m2\bigr)} ,
\]
which is~\eqref{eq:neumann_bound} at $t = T$, once $\Gamma(\tfrac12) = \sqrt\pi$
is absorbed into $C$. The Gamma factor grows faster than any geometric
sequence, so the series $\sum_m \zeta^{z,(m)}_T$ converges uniformly on
$\mathcal{P}$.

\end{proof}

\end{document}